\newtheorem{thm}{Theorem}[section]
\newtheorem{pr}[thm]{Proposition}
\newtheorem{df}[thm]{Definition}
\newtheorem{lm}[thm]{Lemma}
\newtheorem{cor}[thm]{Corollary}
\newtheorem{cn}[thm]{Conjecture}
\newtheorem{ex}[thm]{Example}
\newcommand{\sm}{\raisebox{2.33pt}{~\rule{6.4pt}{1.3pt}~}}
\begin{document}

\title{Wild ramification and
the cotangent bundle\footnote{
MSC-classification:
14F20, 14G17, 11S15}}
\author{{\sc Takeshi Saito}}
\maketitle

\begin{abstract}
We define the characteristic cycle
of a locally constant \'etale sheaf
on a smooth variety in positive characteristic 
ramified along boundary
as a cycle in the cotangent
bundle of the variety,
at least on a neighborhood of
the generic point of the divisor
on the boundary.
The crucial ingredient in the definition
is an additive structure on the boundary
induced by the groupoid structure
of multiple self products.

We prove a compatibility with pull-back
and local acyclicity 
in non-characteristic situations.
We also give a relation 
with the characteristic cohomology class
under a certain condition
and a concrete example
where the intersection with the $0$-section
computes the Euler-Poincar\'e characteristic.
\end{abstract}

Let $k$ be a perfect field of 
characteristic $p>0$,
$X$ be a smooth scheme
of dimension $d$ over $k$
and $D$ be a divisor of $X$
with simple normal crossings.
Let $\Lambda$ be a 
ring finite over ${\mathbf Z}_\ell[\zeta_p]$
or a finite extension of
${\mathbf Q}_\ell(\zeta_p)$
for a prime $\ell\neq p$
and ${\cal F}$
be a locally constant constructible sheaf of
free $\Lambda$-modules
on the complement $U=X\sm D$.

We formulate a condition
that the ramification of ${\cal F}$
along $D$ is {\em non-degenerate} along $D$
in Definition \ref{dfic}.
The condition roughly says that
the ramification along $D$ is uniformly
controlled by the ramification
at the generic points of irreducible components
and is satisfied
after shrinking $X$ to
a neighborhood of each generic point.
It is a non-logarithmic variant
of that studied
in \cite{mu} and \cite{Tohoku},
which is a generalization of the cleanness
originally introduced by Kato in \cite{Kato}.

Assuming this condition,
we define the characteristic cycle
${\rm Char}({\cal F})$
in Definition \ref{dfccy}
as a $d$-dimensional cycle
on the cotangent bundle $T^*X$.
We show that the characteristic cycle has 
coefficients in ${\mathbf Z}[\frac1p]$ in
Proposition \ref{prHA}.
The author does not know
how to define the characteristic cycle
without the non-degenerate assumption. 

For a morphism $f\colon X'\to X$ of
smooth schemes over $k$,
we define the condition that
$f$ is non-characteristic with
respect to ${\cal F}$ on $X$
in Definition \ref{dfncF}
in terms of ${\rm Char}({\cal F})$.
We prove that
the construction of the characteristic cycles
commutes with the pull-back by
non-characteristic morphisms
in Proposition \ref{prncF}.
This is a special case of
an expected compatibility
of the construction of the characteristic cycle
with pull-back, cf.\ \cite[Proposition 9.4.3]{KS}.
We deduce a characterization of
the support of the characteristic cycle
in terms of the restrictions to
curves transversally meeting
the boundary
in Proposition \ref{prcv}.

The results in this article
are non-logarithmic variants
of the logarithmic version studied 
in \cite{mu} and \cite{Tohoku}
obtained by a different method of localization.
The relation between the two methods
is discussed in Section \ref{slog}.
A significant advantage of the non-logarithmic version
is that it behaves better with
the restriction to curves.
In particular,
Proposition \ref{prcv}
enables us to deduce the local acyclicity,
Proposition \ref{pracy}.

We also define 
the condition for
a smooth morphism $f\colon X\to Y$
of smooth schemes to be non-characteristic with
respect to ${\cal F}$ on $X$
in Definition \ref{dfncf}
in terms of 
${\rm Char}({\cal F})$.
This property is an analog of
that in a transcendental
context \cite[Proposition 8.6.4]{KS}.
We deduce the local acyclicity
for a non-characteristic morphism in Proposition \ref{pracy}
from a result of Deligne-Laumon \cite{DL}.

We give a relation with the characteristic class
defined in \cite{cc} in Corollary \ref{corcc}.
As an application, we give a concrete example
of a computation of the Euler-Poincar\'e 
characteristic in Example \ref{excc}.
This is an extreme case of
an expected compatibility
of the construction of the characteristic cycle
with proper push-forward, cf.\ \cite[Proposition 9.4.2]{KS},
for a morphism to a point.

The crucial ingredient in the definition 
of the characteristic cycle
is an additive structure on
the boundary established in
Corollary \ref{cormain}.
Using a classification of
a vector bundle in characteristic $p>0$
by a finite \'etale group scheme
of ${\mathbf F}_p$-vector spaces
recalled in Section \ref{ssExt},
we relate the additive structure
to the cotangent bundle in Definition \ref{dfcf}.
As an application of this relation,
we study the graded quotients
of the filtration by non-logarithmic
ramification groups of
a local field of equal characteristic
with imperfect residue field
in Section \ref{sslf}.

The additive structure
on the boundary 
is defined as the restriction
of an extension constructed
in Theorem \ref{thmmain}
of the groupoid structure of multiple self products.
Functoriality of multiple self products
is interpreted as a groupoid structure
using an equivalence 
of categories Proposition
\ref{prgpd} stated in terms of an extra structure
on simplicial objects introduced in Section \ref{sssimp}.

More precisely speaking,
the additive structure
is defined on the boundary 
of the largest \'etale
scheme inside the normalizations
of some partial blow-ups,
called dilatations,
of the multiple self products
in ramified coverings.
Some functorial properties of the \'etale part of
normalizations are established in Section
\ref{ssnm}.
We study properties of dilatations
abstractly in Section \ref{ssdl} and more concretely
in Section \ref{ssPn}.

An essential part of this work
was done during the
author's stay at IHES
in September and October
2012.
He thanks the 
hospitality of Ahmed Abbes.
The author also thanks
Pierre Deligne for 
sending him unpublished notes
\cite{bp}
on ramification of sheaves,
Vladimir Drinfeld
for an inspiring comment
on the groupoid structure.
He would like to thank
Luc Illusie
for discussion on the cohomology of 
classifying spaces and local acyclicity
and Alexander Beilinson
for discussion on the
definition of the characteristic cycles.
He thanks to an anonymous referee
for careful reading and numerous helpful comments.
The research was partially supported
by JSPS Grants-in-Aid 
for Scientific Research
(A) 22244001.

\tableofcontents

\section{Preliminaries}

\subsection{Oversimplicial objects
and groupoids}\label{sssimp}

To describe groupoid objects
in a category, we equip
an extra structure on simplicial object
and call it an oversimplicial object.
Although one could restrict to $n\leqq 3$ in the following
description for this purpose,
we do not make this restriction
for esthetic reason.

Recall that a category 
is said to be {\em finite complete}
if finite inverse limits
are representable.

\begin{df}\label{dfss}
Let ${\mathcal C}$
be  a category and let
$\widetilde \Delta$
denote the full subcategory
of the category of sets
consisting of the objects
$[0,n]=\{0,1,\ldots,n\}$ for 
integers $n\geqq  0$.

{\rm 1.}
We call a contravariant functor
$P\colon 
\widetilde \Delta\to {\mathcal C}$
an {\rm oversimplicial object}
of ${\mathcal C}$.
For an oversimplicial object $P$
and an integer $n\geqq  0$,
we write $P_n$ for $P([0,n])$.
For oversimplicial objects
$P$ and $Q$ of ${\mathcal C}$,
we call a morphism
$f\colon P\to Q$ of functors
a morphism of
oversimplicial objects.

{\rm 2.}
We call a cocartesian diagram
\begin{equation}
\begin{CD}
[0,m]@>>> [0,n]\\
@AAA@AAA\\
[0]@>>>[0,l]
\end{CD}
\label{eqadd}
\end{equation}
in $\widetilde \Delta$
an {\rm additive} cocartesian diagram.
We say that an
additive cocartesian diagram
{\rm (\ref{eqadd})} is a {\em standard} one
if the upper horizontal arrow
$[0,m]\to [0,n]$ is the inclusion
and 
if the right vertical arrow
$[0,l]\to [0,n]$ is the addition by $m$.

Assume that ${\cal C}$ is finite complete.
We say that an oversimplicial object
$P$ of ${\cal C}$
is {\rm multiplicative}, if
every additive cocartesian diagram
{\rm (\ref{eqadd})}
defines a cartesian diagram
\begin{equation}
\begin{CD}
P_m@<<< P_n\\
@VV{\qquad\Box }V@VVV\\
P_0@<<<P_l.
\end{CD}
\label{eqmn}
\end{equation}
We say a multiplicative object of
$P$ is {\rm strictly} multiplicative
if the two morphisms
$P_1\to P_0$ are equal.

We say that a 
morphism $Q\to P$ of oversimplicial objects
of ${\cal C}$ is {\rm multiplicative}, if,
for every additive cocartesian diagram
{\rm (\ref{eqadd})},
the diagram
\begin{equation}
\begin{CD}
Q_m\times_{P_m}P_n@<<< Q_n\\
@VVV @VVV\\
P_n@<<<Q_l\times_{P_l}P_n
\end{CD}
\label{eqmnPQ}
\end{equation}
defined by the commutative diagrams
\begin{equation}
\begin{CD}
Q_m@<<< Q_n@>>>Q_l\\
@VVV @VVV@VVV\\
P_m@<<<P_n@>>>P_l
\end{CD}
\label{eqmnPQb}
\end{equation}
induced by {\rm (\ref{eqadd})} is cartesian.
\end{df}

The category $\widetilde \Delta$
contains the subcategory
$\Delta$ with the same underlying set
and increasing morphisms.
Consequently, an oversimplicial object
defines a simplicial object
by restriction.

In the rest of this subsection,
we assume that ${\cal C}$ denotes 
a category that is finite complete.
We will write an oversimplicial
object $P$ as $P_\bullet$
and a morphism of oversimplicial
objects $f$ as $f_\bullet$
in the following.
For morphisms $Q_\bullet\to P_\bullet$ and
$R_\bullet\to P_\bullet$ of 
oversimplicial objects,
the fibered product
$Q_\bullet\times_{P_\bullet}R_\bullet$
defined component wise 
is an oversimplicial objects.

\begin{lm}\label{lmgrpd}
Let $P_\bullet$ be a multiplicative
oversimplicial object.
Then, for
a morphism $f_\bullet\colon 
Q_\bullet\to P_\bullet$
of oversimplicial objects,
the following conditions
{\rm (1)} and {\rm (2)}
are equivalent
if $f_0\colon Q_0\to P_0$
is a monomorphism.

{\rm (1)}
$Q_\bullet$ is multiplicative.

{\rm (2)}
$f_\bullet$ is multiplicative.
\end{lm}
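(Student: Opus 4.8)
The plan is to treat a fixed additive cocartesian diagram (\ref{eqadd}) and to reduce each of (1) and (2) to the assertion that a single canonical morphism out of $Q_n$ is an isomorphism, after which the two targets can be identified. Write $\alpha\colon P_n\to P_m$ and $\beta\colon P_n\to P_l$ for the maps induced by the two arrows $[0,m]\to[0,n]$ and $[0,l]\to[0,n]$, and $u\colon P_m\to P_0$, $v\colon P_l\to P_0$ for those induced by $[0]\to[0,m]$ and $[0]\to[0,l]$; write $u_Q,v_Q$ for the analogues on $Q_\bullet$. Naturality of $f_\bullet$ applied to these arrows gives $u\circ f_m=f_0\circ u_Q$ and $v\circ f_l=f_0\circ v_Q$. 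By definition, condition (1) for this diagram is exactly that the natural morphism $Q_n\to Q_m\times_{Q_0}Q_l$ is an isomorphism.

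Next I would rewrite condition (2). Since $P_\bullet$ is multiplicative, $(\alpha,\beta)\colon P_n\xrightarrow{\sim}P_m\times_{P_0}P_l$. Cancelling fibre products then gives $Q_m\times_{P_m}P_n\cong Q_m\times_{P_0}P_l$ and $Q_l\times_{P_l}P_n\cong P_m\times_{P_0}Q_l$, compatibly with their projections to $P_n=P_m\times_{P_0}P_l$. Taking the fibre product of these two objects over $P_n$ and cancelling once more yields
\[
(Q_m\times_{P_m}P_n)\times_{P_n}(Q_l\times_{P_l}P_n)\;\cong\;Q_m\times_{P_0}Q_l ,
\]
where the structure maps $Q_m\to P_0$ and $Q_l\to P_0$ are $u\circ f_m$ and $v\circ f_l$. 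Under this identification the morphism out of $Q_n$ determined by the square (\ref{eqmnPQ}) becomes the pair $(Q_n\to Q_m,\ Q_n\to Q_l)$ built from the two face maps. Hence (2) for this diagram says precisely that $Q_n\to Q_m\times_{P_0}Q_l$ is an isomorphism.

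It remains to compare the two targets $Q_m\times_{Q_0}Q_l$ and $Q_m\times_{P_0}Q_l$, and here I would use that $f_0$ is injective, i.e. a monomorphism. For any test object $T$, a pair of maps $T\to Q_m$, $T\to Q_l$ satisfies the matching condition over $P_0$ iff $f_0\circ u_Q=f_0\circ v_Q$ after composition, which, $f_0$ being monic, holds iff $u_Q=v_Q$, i.e. iff the pair already matches over $Q_0$. Thus the canonical map $Q_m\times_{Q_0}Q_l\to Q_m\times_{P_0}Q_l$ is an isomorphism, and it is compatible with the comparison morphisms out of $Q_n$, both of which are $(Q_n\to Q_m,\ Q_n\to Q_l)$. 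Consequently $Q_n\to Q_m\times_{Q_0}Q_l$ is an isomorphism iff $Q_n\to Q_m\times_{P_0}Q_l$ is, which is the equivalence of (1) and (2) for the chosen diagram; since both conditions are quantified over the same additive cocartesian diagrams, running this argument over all of them completes the proof.

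The step I expect to be the main obstacle is the fibre-product bookkeeping in the second paragraph: one must match $\alpha,\beta,u,v$ to the correct arrows of (\ref{eqadd}) and check that the chain of cancellations is compatible with every projection to $P_n$, so that the morphism out of $Q_n$ produced by the cartesian square (\ref{eqmnPQ}) genuinely coincides with the one assembled from the face maps $Q_n\to Q_m$ and $Q_n\to Q_l$. Once multiplicativity of $P_\bullet$ is used to split $P_n$ and the monomorphism property of $f_0$ is used to collapse $\times_{P_0}$ to $\times_{Q_0}$, the remaining verifications are formal.
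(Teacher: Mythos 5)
Your proof is correct and follows essentially the same route as the paper's: both use multiplicativity of $P_\bullet$ to rewrite, by cancellation of fibre products, the target $(Q_m\times_{P_m}P_n)\times_{P_n}(Q_l\times_{P_l}P_n)$ of the comparison morphism in (\ref{eqmnPQ}) as $Q_m\times_{P_0}Q_l$, and then use the injectivity (monicity) of $f_0$ to identify this with $Q_m\times_{Q_0}Q_l$, so that the two comparison morphisms out of $Q_n$ coincide. The only difference is presentational: you make explicit the compatibility of the identifications with the projections, which the paper leaves implicit.
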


\begin{proof}
For an additive
cocartesian diagram (\ref{eqadd}),
if $P_n\to P_m\times_{P_0}P_l$
is an isomorphism, then the map
$$(Q_m\times_{P_m}P_n)
\times_{P_n}
(Q_l\times_{P_l}P_n)
=
Q_m\times_{P_m}P_n
\times_{P_l}Q_l
\to
Q_m\times_{P_m}
P_m\times_{P_0}P_l
\times_{P_l}Q_l
=
Q_m\times_{P_0}Q_l$$
is also an isomorphism.
If $Q_0\to P_0$
is a monomorphism.,
the last term is
$Q_m\times_{Q_0}Q_l.$
Hence the diagram
(\ref{eqmn}) with $P$ replaced by $Q$
is cartesian if and only if
the diagram
(\ref{eqmnPQ})
is cartesian.
\end{proof}

For a pair of morphisms 
$s,t\colon P\to S$,
we fix notation
for fibered multi-products.
For integers $n\geqq  0$,
we will define 
\begin{equation}
s_n,t_n\colon P_S^{\times n}
=P\times_SP\times_S
\cdots \times_SP\to S
\label{eqPSxn}
\end{equation}
inductively as follows.
For $n=0$, we set
$P_S^{\times 0}=S$ and $s_0=t_0={\rm id}_S$.
For $n=1$, we set
$P_S^{\times 1}=P$ and $s_1=s,t_1=t$.
For $n\geqq  1$,
we define $P_S^{\times n+1}=
P_S^{\times n}\times_SP$
by the cartesian diagram
\begin{equation}
\begin{CD}
P_S^{\times n}@<{{\rm pr}_1}<< P_S^{\times n+1}\\
@V{t_n}VV @VV{{\rm pr}_2}V\\
S@<{s}<<P
\end{CD}
\label{eqPn}
\end{equation}
and set
$s_{n+1}=
s_n\circ {\rm pr}_1,
t_{n+1}=
t\circ {\rm pr}_2.$
For integers $n=l+m$,
a canonical isomorphism
$(l,m)\colon P^{\times n}_S
\to
P^{\times l}_S
\times_S
P^{\times m}_S$
is defined
by induction on $m$.
Here and in the following,
for pairs of two morphisms
$X\to S$ and $Y\to S$
denoted by $s$ and $t$
or their variants,
the fiber product
$X\times_SY$ is taken
with respect to $t\colon X\to S$
and $s\colon Y\to S$
if otherwise is explicitly stated.

\begin{ex}\label{eggpd}
{\rm
Let $X\to S$ be a morphism in ${\mathcal C}$.
Then by setting $P_n=X_S^{\times (n+1)}
=(X\times_SX)_X^{\times n}$,
we obtain a multiplicative oversimplicial object.

Let $G$ be a group.
We set $P_n=G^{\times (n+1)}/\Delta G$
to be the quotient by the diagonal action
for integer $n\geqq  0$.
Then, they define a multiplicative
oversimplicial object $P_\bullet$.

Let $S$ be a scheme
and $E$ be a vector bundle over $S$.
Then by setting $P_n=
{\rm Coker}({\rm diag}\colon E$
$\to E_S^{\times (n+1)})$,
we obtain a strictly multiplicative oversimplicial object.}
\end{ex}

\begin{df}\label{dfgpd}
Let ${\cal C}$ be
a category that is finite complete.
We say
that a 7-ple $(P,S,s,t,e,$ $\mu,\iota)$
is a {\rm groupoid} in ${\cal C}$
if $P,S$ are objects of ${\cal C}$,
$s,t\colon P\to S,
e\colon S\to P,
\mu\colon P^{\times2}_S=P
\times_SP\to P,
\iota \colon P\to P$
are morphisms of ${\cal C}$
such that the following diagrams
are commutative:
$$
\xymatrix{
P\times_SP\times_SP
\ar[r]^{\ \ \ \mu\times{\rm id}}
\ar[d]_{{\rm id}\times \mu}&
P\times_SP
\ar[d]^{\mu}\\
P\times_SP
\ar[r]^{\mu}&P,
}\qquad
\xymatrix{
P\ar[r]^{\!\!\!(e\circ s)\times{\rm id}}
\ar[d]_{{\rm id}\times (e\circ t)}
\ar[dr]^{\rm id}
&P\times_SP
\ar[d]^{\mu}\\
P\times_SP
\ar[r]^{\mu}&P,
}$$
$$\begin{CD}
P@>{s}>>S\\
@V{{\rm id}\times \iota}VV
@VV{e}V\\
P\times_SP
@>{\mu}>>P,
\end{CD}\qquad
\begin{CD}
P@>{\iota\times{\rm id}}>>
P\times_SP\\
@VtVV
@VV{\mu}V\\
S@>{e}>>P.
\end{CD}$$
We say a groupoid
is a {\rm group} if
$s=t$.
\end{df}

We show that a multiplicative oversimplicial object defines a groupoid.
Let $P_\bullet$ be a multiplicative oversimplicial object.
We define 
$s,t\colon P_1\to P_0,
e\colon P_0\to P_1,
\iota\colon P_1\to P_1$
to be the morphisms
defined by the maps
$[0]\to [0,1]$
sending $0$ to $0$ and
to $1$ respectively,
by the unique map
$[0,1]\to [0]$
and by the map
$[0,1]\to [0,1]$
switching $0$ and $1$.
We define
$\mu\colon P_1
\times_{P_0} P_1\to P_1$
to be the composition 
of the inverse of
the isomorphism
$P_2\to P_1\times_{P_0}P_1$
defined by the cartesian
diagram (\ref{eqmn})
corresponding to the 
standard additive cocartesian diagram
for $l=m=1$
with the map
$P_2\to P_1$
defined by the map
$[0,1]=\{0,1\}\to 
[0,2]=\{0,1,2\}$
sending $0$ to $0$
and $1$ to $2$.

\begin{pr}\label{prgpd}
Let ${\cal C}$ be
a category that is finite complete.

{\rm 1.}{\rm (\cite[Proposition 2.2.3]{cxct})}
Let $P_\bullet$ be a multiplicative oversimplicial object.
Then $(P_1,P_0,$ $s,t,e,\mu,\iota)$ defined above
is a groupoid in ${\mathcal C}$.

{\rm 2.}
Let ${\cal G}$
be the category of groupoids
in ${\cal C}$
and ${\cal M}$
be the full subcategory
of the category of
oversimplicial 
objects in ${\cal C}$
consisting of multiplicative objects.
Then, the functor
\begin{equation}
{\cal M}\to {\cal G}
\label{eqGE}
\end{equation}
defined by the construction 
in {\rm 1.} is an equivalence of 
categories.
The functor {\rm (\ref{eqGE})}
induces an equivalence
of categories on
the full subcategories
consisting of
strictly multiplicative
objects of ${\cal C}$
and of groups in ${\cal C}$.
\end{pr}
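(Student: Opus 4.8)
The plan is to exploit throughout that the objects of $\widetilde\Delta$ are finite sets and its morphisms are \emph{all} maps of sets, and to identify the fibered products $P_n$ with objects of composable arrows. For part 1, I would first record the following consequence of multiplicativity: for every $n$ the canonical morphism $P_n\to (P_1)^{\times n}_{P_0}$, the $n$-fold fibered product formed with $t$ on the left and $s$ on the right as in (\ref{eqPn}), is an isomorphism. This follows by induction on $n$ from the cartesian diagram (\ref{eqmn}) attached to the additive cocartesian diagram gluing $[0,n-1]$ and $[0,1]$ along their common vertex, which gives $P_n\cong P_{n-1}\times_{P_0}P_1$. Under this identification every structural morphism is of the form $P(\phi)$ for an explicit $\phi$ in $\widetilde\Delta$, and I would verify each groupoid axiom by exhibiting the corresponding commuting diagram in $\widetilde\Delta$ and applying the contravariant functor $P$. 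Associativity comes from the two factorizations through $[0,2]$ of the map $[0,1]\to[0,3]$, $0\mapsto0$, $1\mapsto3$; the unit axioms come from the degeneracy $[0,1]\to[0]$ together with the faces defining $\mu$; and the inverse axioms come from the switch of $[0,1]$ and the non-monotone collapse $[0,2]\to[0,1]$, $0\mapsto0$, $1\mapsto1$, $2\mapsto0$, via the identity that its composite with the face $0\mapsto0$, $1\mapsto2$ defining $\mu$ is the constant map at $0$. Since all of these diagrams commute on the nose in $\widetilde\Delta$, functoriality of $P$ yields the commutativity of the groupoid diagrams in $\mathcal C$.

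For part 2, I would construct a quasi-inverse $N\colon\mathcal G\to\mathcal M$, the symmetric nerve. For a finite set $[0,n]$ let $\mathbf{[0,n]}$ denote the codiscrete groupoid on it, with object set $\{0,\dots,n\}$ and a unique morphism between any two objects; any set map $[0,m]\to[0,n]$ induces a unique functor $\mathbf{[0,m]}\to\mathbf{[0,n]}$, so $[0,n]\mapsto\mathbf{[0,n]}$ is a covariant functor on $\widetilde\Delta$. Given a groupoid $G=(P,S,s,t,e,\mu,\iota)$ I would set $N_nG=P^{\times n}_S$, with $N_0G=S$ and $N_1G=P$, the object representing functors $\mathbf{[0,n]}\to G$, and for $\phi\colon[0,m]\to[0,n]$ I would assemble $N(\phi)\colon N_nG\to N_mG$ out of $s,t,e,\mu,\iota$ according to the induced functor $\mathbf{[0,m]}\to\mathbf{[0,n]}$. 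It is precisely on the non-monotone maps of $\widetilde\Delta$ that $\iota$ is forced into the definition, which is why groupoids rather than mere categories correspond to oversimplicial objects; functoriality $N(\psi\phi)=N(\phi)N(\psi)$ is exactly the bundle of groupoid axioms checked in part 1.

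It remains to see that $NG$ is multiplicative and that the two composites are the identity up to natural isomorphism. An additive cocartesian diagram (\ref{eqadd}) exhibits $[0,n]$ as $[0,m]$ and $[0,l]$ glued along the single vertex $0$, and a functor out of the codiscrete groupoid on the glued set is the same datum as a compatible pair of functors out of the two pieces, its value on an arrow crossing the two halves being forced by composition through the glue vertex; hence $N_nG\cong N_mG\times_{N_0G}N_lG$, which is the cartesian condition (\ref{eqmn}). The counit is an isomorphism since the groupoid extracted from $NG$ returns $(P,S,s,t,e,\mu,\iota)$ on the nose, and the unit is the isomorphism $P_\bullet\cong N(P_1,P_0,s,t,e,\mu,\iota)$ assembled from the isomorphisms $P_n\cong(P_1)^{\times n}_{P_0}$ of part 1, natural in $P_\bullet$. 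Finally, strict multiplicativity is by definition the equality of the two morphisms $P_1\to P_0$, that is $s=t$, which is the defining condition of a group; hence the equivalence restricts to the full subcategories of strictly multiplicative objects and of groups.

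The step I expect to be the main obstacle is the construction of the quasi-inverse on non-monotone maps together with the verification of its functoriality: writing down $N(\phi)$ for a general set map $\phi$ forces a systematic use of $\iota$, and checking $N(\psi\phi)=N(\phi)N(\psi)$ amounts to re-deriving the associativity, unit, and inverse relations in families. The cleanest organization is to reduce every set map to a composite of faces, degeneracies, and transpositions and to check the relations among these generators; once that bookkeeping is in place, the remaining points reduce to the elementary combinatorics of finite sets and to the cartesian diagrams supplied by multiplicativity.
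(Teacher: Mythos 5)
Your proposal is correct, and your part 1 is the paper's argument verbatim in outline: identify $P_n$ with $(P_1)^{\times n}_{P_0}$ using the cartesian diagrams (\ref{eqmn}), then deduce each groupoid axiom from a commuting diagram in $\widetilde\Delta$; the four diagrams you describe (associativity from the two factorizations of $0\mapsto 0,1\mapsto 3$ through $[0,2]$, units from the degeneracy, inverses from the switch and the collapse $0\mapsto 0,1\mapsto 1,2\mapsto 0$) are exactly the ones displayed in the paper's proof. Where you genuinely diverge is the organization of part 2. The paper builds the quasi-inverse by explicit induction in ${\cal C}$: the morphisms $f^*\colon P_n\to P_m$ are assembled from $s,t,e,\mu,\iota$ via the formulas (\ref{eqi*}), (\ref{eqmun}), (\ref{eqij}), (\ref{eqf*}), and functoriality $(g\circ f)^*=f^*\circ g^*$ is checked by reduction to the cases $l=0$ (direct computation) and $l=1$, the latter being Lemma \ref{lmass}, an elementary statement about groupoids in sets. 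You instead define the same object $N_nG=P^{\times n}_S$ by its universal property: on $T$-points it is the set of functors from the codiscrete groupoid $\mathbf{[0,n]}$ to the groupoid of $T$-points of $G$, the assignment $[0,n]\mapsto\mathbf{[0,n]}$ is functorial on \emph{all} set maps, so $N(\phi)$ exists by Yoneda, functoriality is automatic, and multiplicativity of the nerve falls out of the fact that a functor on the codiscrete groupoid of a wedge is a compatible pair of functors on the two pieces. The combinatorial kernel is identical in both routes --- your claim that a composable string extends uniquely to a functor out of $\mathbf{[0,n]}$ \emph{is} Lemma \ref{lmass} --- but your packaging makes functoriality and multiplicativity of the quasi-inverse transparent and explains conceptually why non-monotone maps force inverses, i.e.\ why groupoids rather than categories appear; what the paper's explicitness buys is morphisms written directly in ${\cal C}$ with no detour through points. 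You also address items the paper leaves implicit (multiplicativity of the nerve, the unit and counit, the restriction to strictly multiplicative objects and groups); the one spot where you assert rather than verify is the naturality of the unit, i.e.\ that the identifications $P_n\cong(P_1)^{\times n}_{P_0}$ commute with every $P(f)$ --- this is a routine check on points, again via Lemma \ref{lmass}, and the paper is equally silent on it, so it is not a gap relative to the paper's own standard.
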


We say that a multiplicative
oversimplicial object $P_\bullet$
is associated to
the groupoid $P_1$ over $P_0$.
If $P_\bullet$ is associated to 
a groupoid $P$ over $S$,
the groupoid $P$ over $S$
is determined by $P_0,P_1,P_2$
and the morphisms between them.

\begin{proof}
1.
Define an isomorphism 
$P_n\to P_{1P_0}^{\times n}$
inductively to be the composition 
$P_n\to P_{n-1}\times_{P_0}P_1
\to P_{1P_0}^{\times n-1}\times_{P_0}P_1
=P_{1P_0}^{\times n}$
where the first isomorphism
is defined by the standard 
additive cocartesian diagram
for $m=n-1$ and $l=1$.
We identify $P_n$ with $P_{1P_0}^{\times n}$
by this isomorphism.
Then, the commutative diagrams
in Definition \ref{dfgpd} follow
from the commutative diagrams
$$
\xymatrix{
[0,3]&&[0,2]\ar[ll]_{0\mapsto 0,1\mapsto 2,
2\mapsto 3}\\
[0,2]\ar[u]^{0\mapsto 0,1\mapsto 1,
2\mapsto 3}
&&
[0,1]\ar[u]_{0\mapsto 0,1\mapsto 2}
\ar[ll]_{0\mapsto 0,1\mapsto 2},}
\quad
\xymatrix{
[0,1]&&[0,2]\ar[ll]_{0\mapsto 0,1\mapsto 0,
2\mapsto 1}\\
[0,2]\ar[u]^{0\mapsto 0,1\mapsto 1,
2\mapsto 1}
&&
[0,1]\ar[u]_{0\mapsto 0,1\mapsto 2}
\ar[llu]_{\rm id}
\ar[ll]_{0\mapsto 0,1\mapsto 2},}
$$
$$
\xymatrix{
[0,1]&&[0]\ar[ll]_{0\mapsto 0}\\
[0,2]\ar[u]^{0\mapsto 0,1\mapsto 1,
2\mapsto 0}
&&
[0,1]\ar[u]
\ar[ll]_{0\mapsto 0,1\mapsto 2},}
\quad\quad\quad\quad
\xymatrix{
[0,1]&&[0,2]\ar[ll]_{0\mapsto 1,1\mapsto 0,
2\mapsto 1}\\
[0]\ar[u]^{0\mapsto 1}
&&
[0,1]\ar[ll]
\ar[u]_{0\mapsto 0,1\mapsto 2},}
$$
in $\widetilde \Delta$.

2.
We construct a quasi-inverse functor.
Let $(P,S,s,t,e,\mu,\iota)$
be a groupoid in ${\cal C}$.
We put $P_n=P^{\times n}_S$
for integer $n\geqq  0$.
For a map $f\colon [0,m]\to [0,n]$
in $\widetilde \Delta$,
we define a morphism
$f^*\colon P_n\to P_m$ inductively on $m\geqq 0 $.

Assume $m=0$ and
$i\colon [0]\to [0,n]$
be the map sending $0$ to $i\in [0,n]$.
Then, we define 
$i^*\colon P_n\to P_0$
to be the composition
\begin{equation}
\begin{CD}
P_n
@>{\rm can}>>
P_i\times_SP_{n-i}
@>{t_i\times s_{n-i}}>>
S\times_SS
=P_0.
\end{CD}
\label{eqi*}
\end{equation}

Next, we consider the case $m=1$.
We define $\mu_n\colon P_n
\to P_1$ for $n\geqq  0$
inductively as follows.
We set $\mu_0=e,
\mu_1={\rm id}$
and $\mu_2=\mu$.
For $n\geqq  1$,
we define $\mu_{n+1}$
to be the composition
\begin{equation}
\begin{CD}
P_{n+1}= P_n\times_SP_1
@>{\mu_n\times {\rm id}}>> P_1\times_SP_1
@>{\mu}>>P_1.
\end{CD}
\label{eqmun}
\end{equation}
If $0\leqq i\leqq j\leqq n$,
for the map
$(ij)\colon [0,1]\to [0,n]$
sending $0\mapsto i$ and $1\mapsto j$,
we define $(ij)^*$
to be the composition
\begin{equation}
\begin{CD}
P_n@>{\rm can}>>
P_i\times_SP_{j-i}\times_SP_{n-j}
@>{t_i\times\mu_{j-i}\times s_{n-j}}>> 
S\times_SP_1\times_SS
=P_1
\end{CD}
\label{eqij}
\end{equation}
and define $(ji)^*$
to be the composition
$\iota\circ (ij)^*$.

Let $f\colon [0,m+1]\to [0,n]$ be a map.
Let $g\colon [0,m]\to [0,n]$
be the restriction of $f$
and define $h\colon [0,1]\to [0,n]$
by $h(0)=f(m)$ and $h(1)=f(m+1)$.
Then, we define
$f^*\colon P_n\to P_{m+1}$ to be
\begin{equation}
\begin{CD}
P_n@>{g^*\times h^*}>>
P_m\times_{P_0}P_1=P_{m+1}
\end{CD}
\label{eqf*}
\end{equation}

For an integer $i\in [0,m]$
and a morphism $f\colon 
[0,m]\to [0,n]$,
if $f_i\colon [0,i]\to [0,n]$
denotes the restriction of $f$
and $f'_i\colon [0,m-i]\to [0,n]$
be the composition of
$+i\colon [0,m-i]\to [0,m]$
with $f$.
Then,
by induction on $m-i$,
one shows that
$f^*\colon P_n\to P_m$ is the composition 
\begin{equation}
\begin{CD}
P_n@>{f_i^*\times f_i^{\prime*}}>>
P_{i}\times_{P_0}P_{m-i}
@>{{\rm can}^{-1}}>>
P_m.
\end{CD}
\label{eqfgh}
\end{equation}

We show that $P_\bullet$
defined above is a functor.
It suffices to show
that, for morphisms $f\colon[0,l]\to [0,m]$
and $g\colon[0,m]\to [0,n]$,
we have $(g\circ f)^*=f^*\circ g^*$.
By the inductive definition (\ref{eqf*}),
it suffices to consider
the cases $l=0$ and $l=1$.
We show the case $l=0$. 
Let $i\colon [0]\to[0,m]$
be the map sending $0\mapsto i$.
Then, by the decomposition
(\ref{eqfgh})
and by the definition
(\ref{eqi*}),
the composition $i^*\circ g^*
\colon P_n\to P_0$ is the composition
\begin{equation}
\begin{CD}
P_n
@>{\rm can}>>
P_{f(i)}\times_{P_0}P_{n-f(i)}
@>{f_i^*\times f_i^{\prime*}}>>
P_{i}\times_{P_0}P_{m-i}
@>{t_i\times s_{m-i}}>> 
S\times_SS
=P_0.
\end{CD}
\label{eqfi}
\end{equation}
Since 
$t_i\circ f_i^*=t_{f(i)},
s_{m-i}\circ f_i^{\prime*}=s_{n-f(i)}$,
it is equal to
$f(i)^*\colon P_n\to P_0$
as required.

The case $l=1$
is an immediate consequence
of the following elementary fact on
groupoid.
\end{proof}

\begin{lm}\label{lmass}
Let $(P,S,s,t,e,\mu,i)$
be a groupoid in the category of sets
and $x_1,\ldots,x_n$ be
elements of $P$
satisfying $s(x_i)=t(x_{i-1})$
for $i=2,\ldots,n$.
For $0\leqq i\leqq j\leqq n$,
let $x_{ij}$ denote the product
$x_{i+1}\cdots x_j$
and $x_{ji}$ the inverse $x_{ij}^{-1}$.

Then, for a morphism
$f\colon [0,m]\to [0,n]$
and $0\leqq i\leqq j\leqq m$,
we have
$$x_{f(i)f(j)}
=x_{f(i)f(i+1)}\cdots
x_{f(j-1)f(j)}.$$
\end{lm}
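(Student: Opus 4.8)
The plan is to collapse the whole identity into a single telescoping computation by expressing every symbol $x_{ab}$ through partial products taken from the left end of the chain. First I would fix notation for the objects: set $y_0=s(x_1)$ and $y_k=t(x_k)$ for $1\leq k\leq n$, so that the composability hypothesis $s(x_i)=t(x_{i-1})$ says precisely that each $x_k$ is a morphism from $y_{k-1}$ to $y_k$. I then introduce the partial products $w_0=e(y_0)$ and $w_k=x_1x_2\cdots x_k$ for $1\leq k\leq n$; each $w_k$ is a morphism from $y_0$ to $y_k$, and it is invertible because $P$ is a groupoid.

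The key reformulation I would establish is
\begin{equation*}
x_{ab}=w_a^{-1}w_b\qquad\text{for all }0\leq a,b\leq n.\tag{$\dagger$}
\end{equation*}
For $a\leq b$ this follows from the definition $x_{ab}=x_{a+1}\cdots x_b$ together with associativity of $\mu$, since $w_a^{-1}w_b=(x_1\cdots x_a)^{-1}(x_1\cdots x_b)=x_{a+1}\cdots x_b$ after cancelling $x_a^{-1}\cdots x_1^{-1}x_1\cdots x_a$ by the inverse and unit axioms of Definition \ref{dfgpd}. For $a>b$ the lemma's own convention gives $x_{ab}=x_{ba}^{-1}$, and since $b<a$ the already-treated case yields $x_{ba}=w_b^{-1}w_a$, whence $x_{ab}=(w_b^{-1}w_a)^{-1}=w_a^{-1}w_b$, using that inverses reverse composition. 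I would remark here that all composites written are legitimate in the groupoid: the morphism $w_a^{-1}w_b$ runs $y_a\to y_0\to y_b$, matching the source $y_a$ and target $y_b$ of $x_{ab}$.

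With $(\dagger)$ in hand the assertion is immediate. The right-hand side of the claimed identity becomes
$$x_{f(i)f(i+1)}x_{f(i+1)f(i+2)}\cdots x_{f(j-1)f(j)}=(w_{f(i)}^{-1}w_{f(i+1)})(w_{f(i+1)}^{-1}w_{f(i+2)})\cdots(w_{f(j-1)}^{-1}w_{f(j)}),$$
and each consecutive pair $w_{f(k)}w_{f(k)}^{-1}=e(y_{f(k)})$ cancels by the unit axiom, leaving $w_{f(i)}^{-1}w_{f(j)}=x_{f(i)f(j)}$; for $i=j$ both sides equal the identity $e(y_{f(i)})=w_{f(i)}^{-1}w_{f(i)}$. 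To make the informal ``$\cdots$'' rigorous I would run an induction on $j-i$, whose inductive step is the three-index cocycle relation $x_{ab}x_{bc}=x_{ac}$; this too drops straight out of $(\dagger)$ via $w_a^{-1}w_bw_b^{-1}w_c=w_a^{-1}w_c$.

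The only genuine subtlety, and hence the step I would be most careful about, is that $f$ is merely a morphism in $\widetilde\Delta$ and need not be monotone, so some segments $x_{f(k)f(k+1)}$ may point ``backwards'' and be honest inverses. A direct manipulation of the products $x_{i+1}\cdots x_j$ would therefore branch into many cases according to the relative order of the values $f(k)$. Formula $(\dagger)$ is exactly what dissolves this: it is uniform in the order of $a$ and $b$, so backward segments are treated on the same footing as forward ones and the whole argument collapses to the telescoping above. The honest work is thus concentrated in verifying $(\dagger)$ uniformly across the cases $a\leq b$ and $a>b$, together with the bookkeeping that every composite occurs between matching objects of the groupoid.
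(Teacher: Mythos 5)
Your proof is correct and takes essentially the same route as the paper, whose entire proof reads ``It follows by induction on $j-i$'': your telescoping argument is exactly that induction, with the uniform formula $x_{ab}=w_a^{-1}w_b$ supplying the case-free cocycle relation $x_{ab}x_{bc}=x_{ac}$ needed in the inductive step, detail the paper leaves implicit. (One cosmetic slip: with your left-to-right composition convention the cancelling pair $w_{f(k)}w_{f(k)}^{-1}$ equals $e(y_0)$ rather than $e(y_{f(k)})$, but this identity morphism is absorbed either way and the argument is unaffected.)
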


\begin{proof}
It follows by induction on
$j-i$.
\end{proof}
\subsection{Normalization and \'etaleness}\label{ssnm}

\begin{lm}\label{lmnsf}
We consider a 
commutative diagram
\begin{equation}
\begin{CD}
V@<<< V'\\
@VVV @VVV\\X@<f<< X'
\end{CD}
\label{eqnsf}
\end{equation}
of morphisms
of normal schemes
where the vertical arrows
are \'etale, separated and of finite type.
Let $Y$ and $Y'$ be
the normalizations of
$X$ and $X'$ in
$V$ and $V'$
and let
$W\subset Y$
and
$W'\subset Y'$
be the largest open subschemes
\'etale over $X$ and over $X'$
respectively.

{\rm 1.}
The map $V'\to V$
is extended uniquely to a map
$g\colon Y'\to Y$.

{\rm 2.}
Assume that the diagram
{\rm (\ref{eqnsf})} is cartesian.
Assume either that 
each component of $X'$ dominates
a component of $X$
or that $V\to X$ is
the composition $V\to U\to X$ of
a finite \'etale map $V\to U$
and an open immersion $U\to X$.
Then, 
we have $g^{-1}(W)=W\times_XX'\subset W'$.

{\rm 3.}
Assume that $f\colon X'\to X$
is smooth.
Then, we have $g(W')
\subset W$.
\end{lm}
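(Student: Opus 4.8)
The plan is to reduce the statement to a base change computation plus a monotonicity property of the étale locus under the normalization map, and to isolate the latter as the real content. Write $\widetilde V=V\times_XX'$, which is étale, separated and of finite type over $X'$, and let $\widetilde Y$ be the normalization of $X'$ in $\widetilde V$. Since $f$ is smooth, it is flat with geometrically regular fibres, so the base change $Y\times_XX'\to Y$ is smooth; hence $Y\times_XX'$ is normal, and because $f$ is flat its generic points lie over generic points of $Y$, which lie in $V$, so $\widetilde V$ is dense in $Y\times_XX'$. Therefore $\widetilde Y=Y\times_XX'$, and I write $q\colon\widetilde Y=Y\times_XX'\to Y$ for the projection, itself a smooth morphism. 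The commutativity of $(\ref{eqnsf})$ gives a morphism $V'\to\widetilde V$ over $X'$, which is étale as a morphism between schemes étale over $X'$; by Lemma \ref{lmnsf}.1 it extends to $h\colon Y'\to\widetilde Y$, and since $q\circ h$ and $g$ both extend $V'\to V$, uniqueness in Lemma \ref{lmnsf}.1 gives $g=q\circ h$. Let $W\subset Y$, $\widetilde W\subset\widetilde Y$ and $W'\subset Y'$ denote the maximal open subschemes étale over $X$, over $X'$ and over $X'$ respectively.

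With this notation I would first dispatch the descent step, namely $q(\widetilde W)\subset W$. For $\widetilde y\in\widetilde W$ with images $x'\in X'$ and $y=q(\widetilde y)$, $x=f(x')$, the morphism $\widetilde Y\to X'$ is the base change of $Y\to X$ along the flat map $f$, so $\mathcal O_{Y,y}\to\mathcal O_{\widetilde Y,\widetilde y}$ is local and flat, hence faithfully flat, and $\mathcal O_{X,x}\to\mathcal O_{X',x'}$ is flat. Unramifiedness descends because $\Omega_{\widetilde Y/X'}=q^{*}\Omega_{Y/X}$ and a faithfully flat pullback detects vanishing, while flatness of $\mathcal O_{X,x}\to\mathcal O_{Y,y}$ follows from flatness of $\mathcal O_{X,x}\to\mathcal O_{\widetilde Y,\widetilde y}$ together with faithful flatness of $\mathcal O_{Y,y}\to\mathcal O_{\widetilde Y,\widetilde y}$. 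Thus $Y\to X$ is étale at $y$, i.e. $q(\widetilde W)\subset W$. Consequently it suffices to prove the monotonicity statement $h(W')\subset\widetilde W$, for then $g(W')=q(h(W'))\subset q(\widetilde W)\subset W$, which is the assertion.

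The main obstacle is precisely this last inclusion $h(W')\subset\widetilde W$, which expresses that ramification cannot decrease along $h$; I would prove it as follows. After passing to connected components I may assume $\widetilde y:=h(y')$ lies on a component of $\widetilde Y$ dominating $X'$. Localizing, set $A=\mathcal O_{X',x'}$, which is regular since $X'$ is smooth, $C=\mathcal O_{\widetilde Y,\widetilde y}$, a normal local domain, and $B'=\mathcal O_{Y',y'}$, so that $A\to B'$ is étale because $y'\in W'$; one has local injections $A\hookrightarrow C\hookrightarrow B'$ with $B'$ finite over $C$. To show $A\to C$ is étale I would invoke Zariski–Nagata purity of the branch locus, valid because $A$ is regular and $C$ is normal, which reduces étaleness of $A\to C$ to étaleness at the height-one primes $\mathfrak q\subset C$. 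At such a $\mathfrak q$, lying over a height-one prime $\mathfrak p\subset A$, both $A_{\mathfrak p}$ and $C_{\mathfrak q}$ are discrete valuation rings; choosing by going-up a prime $\mathfrak q'\subset B'$ over $\mathfrak q$, the étaleness of $A\to B'$ makes $B'_{\mathfrak q'}$ unramified over $A_{\mathfrak p}$ with separable residue extension. Multiplicativity of ramification indices in $A_{\mathfrak p}\to C_{\mathfrak q}\to B'_{\mathfrak q'}$ forces the ramification index of $C_{\mathfrak q}/A_{\mathfrak p}$ to be $1$, and separability of the residue extension of $C_{\mathfrak q}/A_{\mathfrak p}$ follows from that of $B'_{\mathfrak q'}/A_{\mathfrak p}$; hence $C_{\mathfrak q}$ is étale over $A_{\mathfrak p}$. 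By purity $A\to C$ is étale, so $\widetilde y\in\widetilde W$, completing the argument. The two routine inputs are the compatibility of normalization with smooth base change and faithfully flat descent; the substantive point is the purity plus ramification-index step, where regularity of $X'$ (hence of $A$) is used in an essential way.
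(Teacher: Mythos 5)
Your proposal addresses only assertion 3 of the lemma: assertions 1 and 2 are never proved, and indeed you invoke assertion 1 as an ingredient to produce the extensions $h$ and $g$. Within assertion 3, your opening reduction is sound and coincides with the paper's: identifying the normalization $\widetilde Y$ of $X'$ in $V\times_XX'$ with $Y\times_XX'$ (using smoothness of $f$ and density of $V\times_XX'$), factoring $g=q\circ h$, and descending flatness and unramifiedness along the faithfully flat projection $q$ is exactly the paper's step ``by replacing $X$ and $V$ by $X'$ and $V\times_XX'$, it is reduced to the case where $X=X'$.''

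The genuine gap is in your proof of the remaining inclusion $h(W')\subset\widetilde W$. You set $A=\mathcal O_{X',x'}$ and assert it is ``regular since $X'$ is smooth,'' but the lemma does not assume $X'$ is smooth over a field: $X'$ is only a \emph{normal} scheme, and the smoothness hypothesis concerns the morphism $f\colon X'\to X$ with $X$ itself merely normal. So $A$ is normal but need not be regular, and Zariski--Nagata purity, the engine of your argument, is false over a normal non-regular base. The paper's own Example following Lemma \ref{lmet} exhibits precisely this failure: for the affine cone $X$ over an elliptic curve and $V\to U$ the pull-back of an isogeny over the complement $U$ of the vertex (a complement of codimension $2$), the normalization $Y\to X$ is \'etale in codimension one yet not \'etale at the point over the vertex. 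Hence the codimension-one \'etaleness you extract from the sandwich $A\subset C\subset B'$ cannot, by itself, yield \'etaleness of $A\to C$. The correct way to exploit that sandwich --- and what the paper's proof does with ``we may assume $X$, $Y$ and $Y'$ are strictly local and then it is clear'' --- is strict localization: since $A\to B'$ is \'etale and local with separably closed residue field after strict henselization, one has $B'^{sh}\cong A^{sh}$, so the composite $A^{sh}\to C^{sh}\to B'^{sh}=A^{sh}$ is the identity and gives a retraction $C^{sh}\to A^{sh}$; its kernel is a prime of height $0$ by the dimension equality for the integral extension $A^{sh}\to C^{sh}$, hence zero because $C^{sh}$ is a normal local domain, so $C^{sh}=A^{sh}$ and $\widetilde Y\to X'$ is \'etale at $\widetilde y$. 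This argument needs no regularity. As written, your proof is valid only under the extra hypothesis that $X'$ is regular, which happens to hold in the paper's applications (everything there is smooth over a perfect field) but not in the stated generality of the lemma; to repair it, replace the purity-plus-ramification-index step by the henselization argument above, and supply proofs of assertions 1 and 2.
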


\begin{proof}
1. Since the assertion
is local on $X$ and on $X'$,
we may assume
$X={\rm Spec}\ A,
Y={\rm Spec}\ B$
and 
$X'={\rm Spec}\ A'$
are affine and
$A'$ is an integral domain.
Let $K'$ be the fraction field
of $A'$. Then,
$Y'={\rm Spec}\ B'$
for the integral closure
$B'$ of $A'$ in
the \'etale $K'$-algebra
$L'=\Gamma(V'\times_{X'}K',{\cal O})$.
The image of
any element in $B$ in $L'$ is
integral over $A'$
and hence contained in $B'$.

2.
If the diagram
{\rm (\ref{eqnsf})} is cartesian,
$W\times_XX'$ is a normal scheme
containing 
$V'=V\times_XX'$ as an open
subscheme.
Further if each component of $X'$ dominates
a component of $X$
if or
$V\to X$ is 
the composition of
a finite \'etale map
and an open immersion, then
$V'=V\times_XX'$ is dense
in $W\times_XX'$.
Hence $Y'$ contains
$g^{-1}(W)=W\times_XX'$.

3.
Since $f\colon X'\to X$ is assumed smooth,
the base change $Y\times_XX'$ is
normal.
Hence, by replacing
$X$ and $V$ by 
$X'$ and $V\times_XX'$,
it is suffices to show the assertion 
in the case where $X=X'$.
Further we may assume
$X, Y$ and $Y'$ are strictly local
and then it is clear.
\end{proof}

\begin{cor}\label{cornsf}
Let \begin{equation}
\begin{CD}
V_1@<{g_1}<<V@>{g_2}>>V_2\\
@VVV @VVV @VVV\\
X_1@<{f_1}<<X@>{f_2}>>X_2
\end{CD}
\label{eqpr}
\end{equation}
be a commutative diagram
of normal schemes
where the vertical arrows are \'etale, separated and of finite type.
Let $Y,Y_1,Y_2$
be the normalizations of $X,X_1,X_2$
in $V,V_1,V_2$
and 
let $W,W_1,W_2$ be the largest open subschemes
of $Y,Y_1,Y_2$
\'etale over $X,X_1,X_2$ 
respectively.

Assume that
$f_1\colon X\to X_1$
and
$f_2\colon X\to X_2$
are smooth and that
the diagram
\begin{equation}
\begin{CD}
V_1\times_{X_1}X@<<<
V\\
@VV{\qquad\quad\Box }V @VVV\\
X@<<<V_2\times_{X_2}X
\end{CD}
\label{eqUV}
\end{equation}
induced by {\rm (\ref{eqpr})} is cartesian.
Then
the diagram {\rm (\ref{eqUV})}
induces a cartesian diagram
\begin{equation}
\begin{CD}
W_1\times_{X_1}X
@<<<W
\\
@VV{\qquad\quad\Box }V @VVV\\
X@<<<
W_2\times_{X_2}X.
\end{CD}
\label{eqUW}
\end{equation}
\end{cor}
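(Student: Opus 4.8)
The plan is to deduce everything from Lemma \ref{lmnsf} by first understanding the two base-changed normalizations and then comparing $W$ with the fibre product $P=(W_1\times_{X_1}X)\times_X(W_2\times_{X_2}X)$.

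First I would identify the normalization of $X$ in $V_i\times_{X_i}X$ for $i=1,2$. Since $f_i$ is smooth, $Y_i\times_{X_i}X$ is normal, and since $f_i$ is flat, going-down shows that the open subscheme $V_i\times_{X_i}X$ is dense in it; as $Y_i\times_{X_i}X\to X$ is integral, being the base change of the integral $Y_i\to X_i$, it is therefore the normalization of $X$ in $V_i\times_{X_i}X$. Its largest \'etale open subscheme $W_i'$ contains $W_i\times_{X_i}X$ because the base change of an \'etale morphism is \'etale, and Lemma \ref{lmnsf}.3 applied to the smooth $f_i$ gives the reverse inclusion $W_i'\subset g^{-1}(W_i)=W_i\times_{X_i}X$, where $g$ is the projection $Y_i\times_{X_i}X\to Y_i$. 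Hence $W_i'=W_i\times_{X_i}X$.

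Next I would produce the comparison morphism. Applying Lemma \ref{lmnsf}.1 to the $X$-morphism $V\to V_i\times_{X_i}X$ coming from the cartesian square (\ref{eqUV}) extends it to a morphism $Y\to Y_i\times_{X_i}X$, and Lemma \ref{lmnsf}.3, taken with $f=\mathrm{id}_X$, shows that this morphism carries $W$ into $W_i'=W_i\times_{X_i}X$. Combining the two components gives an $X$-morphism $h\colon W\to P$ which restricts to the identity on $V$, by cartesianness of (\ref{eqUV}). Here $P$ is \'etale over $X$, hence normal, and I would check that $V=(V_1\times_{X_1}X)\times_X(V_2\times_{X_2}X)$ is dense in $P$: each factor $V_i\times_{X_i}X$ is dense in $W_i\times_{X_i}X$ by the previous paragraph, so both $(V_1\times_{X_1}X)\times_X(W_2\times_{X_2}X)$ and $(W_1\times_{X_1}X)\times_X(V_2\times_{X_2}X)$ are dense in $P$ by going-down, and $V$ is their intersection, hence dense.

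Finally I would show $h$ is an isomorphism. Since $P$ is normal with $V$ dense open, $P$ is its own normalization in $V$, so Lemma \ref{lmnsf}.1 extends $\mathrm{id}_V$ to a morphism $P\to Y$, and Lemma \ref{lmnsf}.3, for the \'etale and hence smooth morphism $P\to X$, places its image in $W$, giving an $X$-morphism $g\colon P\to W$. Both composites $g\circ h$ and $h\circ g$ restrict to the identity on the dense open $V$; since $W$ and $P$ are reduced and separated over $X$, the equalizer of each composite with the relevant identity is closed and contains the dense $V$, so each composite equals the identity. Thus $h$ is the desired isomorphism and (\ref{eqUW}) is cartesian. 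The main work is the two density verifications and the correct identification of the base-changed normalizations with $Y_i\times_{X_i}X$; once these are established the comparison is formal, and I expect that part 2 of Lemma \ref{lmnsf} is not needed.
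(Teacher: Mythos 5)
Your proof is correct and takes essentially the same route as the paper's: both construct the comparison maps in the two directions between $W$ and the fibre product $(W_1\times_{X_1}X)\times_X(W_2\times_{X_2}X)$ from Lemma \ref{lmnsf}.1 and \ref{lmnsf}.3, and identify the composites with the identity because they agree on the dense open $V$ and everything is reduced and separated. The only difference is presentational: the paper gets the map from the fibre product to $W$ by noting that $V$ sits inside it as an open subscheme of a normal scheme \'etale over $X$ (so it embeds into $Y$), whereas you spell out the underlying density checks and the identification of $Y_i\times_{X_i}X$ as the normalization of $X$ in $V_i\times_{X_i}X$, which the paper leaves implicit.
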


\begin{proof}
By Lemma \ref{lmnsf}.1,
the morphisms
$g_1\colon V\to V_1$ and
$g_2\colon V\to V_2$ induce 
$Y\to Y_1$ and $Y\to Y_2$.
By the assumption that
$f_1$ and $f_2$ are smooth and
by Lemma \ref{lmnsf}.3,
they induce
$W\to W_1$ and $W\to W_2$.
Hence, we obtain a commutative diagram
(\ref{eqUW}).

We define an \'etale scheme $W'$ over $X$
by the cartesian diagram
\begin{equation}
\begin{CD}
W_1\times_{X_1}X
@<<<W'
\\
@VV{\qquad\quad\Box }V @VVV\\
X@<<<
W_2\times_{X_2}X.
\end{CD}
\label{eqW'}
\end{equation}
The cartesian diagram
(\ref{eqUV}) defines
an open immersion
$V\to W'$.
It induces an open immersion
$W'\to W$.
Since the commutative diagram (\ref{eqUW})
and the cartesian diagram (\ref{eqW'})
define the inverse
$W\to W'$,
the assertion follows.
\end{proof}

We prove a key lemma
for the proof of Theorem \ref{thmmain}.

\begin{lm}\label{lmkey}
Let $V_\bullet \to X_\bullet$ be an \'etale, 
separated, of finite type
and multiplicative morphism
of oversimplicial normal schemes.

{\rm 1.}
For integers $n\geqq 0 $,
let $Y_n$ be the normalization of
$X_n$.
Then, the schemes $Y_n$ form 
an oversimplicial normal scheme
$Y_\bullet$
containing
$V_\bullet$
as an oversimplicial subscheme.

{\rm 2.}
For integers $n\geqq 0 $, let
$W_n\subset Y_n$
be the largest open subschemes
\'etale over $X_n$.
Assume that the following conditions
are satisfied:
\begin{itemize}
\item[{\rm (1)}]
For each injection
$[0,m]\to [0,n]$,
the morphism
$X_n\to X_m$ is smooth.
\item[{\rm (2)}]
The morphism
$V_0\to V_1$
induces $W_0\to W_1$.
\end{itemize}
Then, the schemes $W_n$ form an
oversimplicial open subscheme $W_\bullet$
of $Y_\bullet$
and the restriction 
$W_\bullet \to X_\bullet$
of the morphism
$Y_\bullet \to X_\bullet$
is multiplicative.
\end{lm}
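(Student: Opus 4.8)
The plan is to treat the two parts in turn, using Lemma \ref{lmnsf} to pass between a scheme and its normalization and Corollary \ref{cornsf} to propagate cartesian diagrams to the \'etale loci.

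For Part 1, I would build the oversimplicial structure on $Y_\bullet$ by extending that of $V_\bullet$. For each map $f\colon[0,m]\to[0,n]$ in $\widetilde\Delta$, the morphism $V_\bullet\to X_\bullet$ gives a commutative square formed by the transition maps $V_n\to V_m$ and $X_n\to X_m$ with \'etale, separated, finite type vertical arrows $V_n\to X_n$ and $V_m\to X_m$. Lemma \ref{lmnsf}.1 then extends $V_n\to V_m$ uniquely to a morphism $Y_n\to Y_m$ of normalizations. Uniqueness makes the assignment functorial: for composable $f,g$, both $(g\circ f)^\ast$ and $f^\ast\circ g^\ast$ extend the same morphism on $V_n$ and hence agree, and identities extend to identities. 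Since each $V_n$ is open in $Y_n$ and the extensions restrict to the transition maps of $V_\bullet$, the object $V_\bullet$ sits inside $Y_\bullet$ as an oversimplicial subscheme.

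For Part 2 the goal splits into showing that each transition map $Y_n\to Y_m$ carries $W_n$ into $W_m$ (so the $W_n$ form an oversimplicial subscheme) and that the resulting $W_\bullet\to X_\bullet$ is multiplicative. I would first dispose of the injections: if $[0,m]\to[0,n]$ is injective then $X_n\to X_m$ is smooth by condition (1), so Lemma \ref{lmnsf}.3 shows $Y_n\to Y_m$ sends $W_n$ into $W_m$. Next, in any additive cocartesian diagram (\ref{eqadd}) all four maps are injective, so the maps $W_n\to W_m$ and $W_n\to W_l$ are already available, and the square (\ref{eqmnPQ}) for $V_\bullet\to X_\bullet$ is cartesian by multiplicativity of the morphism. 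Corollary \ref{cornsf}, applied with $X=X_n$, $X_1=X_m$, $X_2=X_l$ and $V=V_n$, $V_1=V_m$, $V_2=V_l$, then yields the cartesian square (\ref{eqmnPQ}) for the \'etale loci $W_n,W_m,W_l$. This is precisely the multiplicativity of $W_\bullet\to X_\bullet$, and it provides the product decompositions $W_n\simeq W_m\times_{X_m}X_n\times_{X_l}W_l$ that I will use for the remaining maps.

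The main obstacle is the surjection (degeneracy) direction, where the relevant maps $X_k\to X_m$ are diagonal-type and not smooth, so Lemma \ref{lmnsf}.3 does not apply; this is exactly where condition (2) enters, handling the basic unit $[0,1]\to[0]$, i.e.\ $W_0\to W_1$. I would factor an arbitrary $f$ as a surjection followed by an injection and, the injections being done, reduce to surjections, which are composites of permutations and elementary surjections. A permutation of $[0,n]$ induces an automorphism of $Y_n$ over the corresponding automorphism of $X_n$ and so preserves the intrinsic largest \'etale open $W_n$. For an elementary surjection I would use a cocartesian diagram isolating the merged pair, so that under the product decomposition above the map becomes the unit $e\colon W_0\to W_1$ of condition (2) in one factor and identities on the rest, hence carries $W_{m-1}$ into $W_m$. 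Concretely, every transition map is a composite of the groupoid building blocks $s,t$ (injections), $e$ (condition (2)), $\mu$ (an injection-map composed with a product isomorphism), $\iota$ (a permutation), and the canonical product isomorphisms, each shown to preserve the $W$-loci; assembling these by the same formulas (\ref{eqi*})--(\ref{eqf*}) that express the transition maps in terms of a groupoid produces candidate maps $W_n\to W_m$ extending the transition maps of $V_\bullet$, and by the uniqueness in Lemma \ref{lmnsf}.1 they coincide with the restrictions of those of $Y_\bullet$. This establishes that the $W_n$ form an oversimplicial subscheme, which together with the cartesian squares above completes the proof.
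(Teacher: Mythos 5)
Your proposal is correct and takes essentially the same route as the paper's proof: Lemma \ref{lmnsf}.1 for Part 1, and for Part 2 the same three ingredients, namely injections handled by condition (1) plus Lemma \ref{lmnsf}.3, the unit map $W_0\to W_1$ supplied by condition (2), and the product decomposition of the $W_n$ obtained from Corollary \ref{cornsf} applied to the cartesian squares expressing multiplicativity of $V_\bullet\to X_\bullet$. The only difference is combinatorial bookkeeping: the paper assembles a general map $[0,n+1]\to[0,m]$ by induction on $n$, splitting off the last segment via the cocartesian diagram (\ref{eqn+1}), whereas you factor maps as injections composed with permutations and elementary surjections; both reductions rest on the same lemmas and are interchangeable.
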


\begin{proof}
1. It follows from
Lemma \ref{lmnsf}.1.

2.
We show that
$W_n$ form an
oversimplicial subscheme $W_\bullet$
of $Y_\bullet$.
It suffices to show that
for each map $[0,n]\to [0,m]$,
the morphism
$Y_m\to Y_n$ maps
$W_m$ to $W_n$.
We show this by induction on $n\geqq 0 $.
If $[0,n]\to [0,m]$ is an injection,
it follows from Lemma \ref{lmnsf}.3
by the assumption (1).
Thus the case $n=0$ is proved.
We show the case $n=1$.
It suffices to consider the case
where $[0,1]\to [0,m]$ is not an injection.
In this case, it is uniquely
decomposed as
$[0,1]\to [0]\to [0,m]$.
Hence, it follows from the
case where $n=0$
and the assumption (2).

We consider an additive cocartesian diagram (\ref{eqadd}).
Let
\begin{equation}
\begin{CD}
V_m@<<< V_n@>>>V_l\\
@VVV @VVV @VVV\\
X_m@<<< X_n@>>>X_l
\end{CD}
\label{eqVVVXXX}
\end{equation}
be the induced commutative diagram.
Since $V_\bullet\to X_\bullet$
is assumed multiplicative,
the diagram (\ref{eqVVVXXX})
defines a cartesian diagram
\begin{equation}
\begin{CD}
V_m\times_{X_m}X_n
@<<< V_n\\
@VV{\qquad\quad\Box }V @VVV\\
X_n@<<< V_l\times_{X_l}X_n
\end{CD}
\label{eqVmult}
\end{equation}
By Corollary \ref{cornsf},
it is extended to a cartesian diagram
\begin{equation}
\begin{CD}
W_m\times_{X_m}X_n
@<<< W_n\\
@VV{\qquad\quad\Box }V @VVV\\
X_n@<<< W_l\times_{X_l}X_n
\end{CD}
\label{eqWmult}
\end{equation}
since the lower horizontal arrows
in (\ref{eqVVVXXX}) are smooth
by the assumption (1).

Apply the
above consideration to
the additive cocartesian diagram
\begin{equation}
\begin{CD}
[0,n]@>{\rm inclusion}>> [0,n+1]\\
@A{0\mapsto n}AA@AA{+n}A\\
[0]@>{\rm inclusion}>>[0,1].
\end{CD}
\label{eqn+1}
\end{equation}
Then, to extend
$V_m\to V_{n+1}$ to
$W_m\to W_{n+1}$,
it suffices to extend the commutative diagram
\begin{equation}
\begin{CD}
V_n@<<< V_m@>>>V_1\\
@VVV @VVV@VVV\\
X_n@<<<X_{n+1}@>>> X_1
\end{CD}
\label{eqVkn}
\end{equation}
corresponding to  $V_m\to V_{n+1}$ 
by the cartesian diagram  (\ref{eqVmult}) to a commutative diagram
\begin{equation}
\begin{CD}
W_n@<<< W_m@>>>W_1\\
@VVV @VVV@VVV\\
X_n@<<<X_{n+1}@>>> X_1
\end{CD}
\label{eqWkn}
\end{equation}
Since $V_m\to V_n$
is extended to $W_m\to W_n$
by the induction hypothesis,
the morphism $V_m\to V_{n+1}$
is extended to $W_m\to W_{n+1}$
as required.

By the cartesian diagram (\ref{eqWmult}),
the morphism $W_\bullet\to X_\bullet$
is multiplicative.
\end{proof}

\begin{lm}[{\cite[Lemma 2.7]{Tohoku}}]\label{lmet}
Let $$\begin{CD}
X'@<{\supset}<<U'@<<<V'\\
@VV{\qquad\Box }V @VV{\qquad\Box }V @VVV\\
X@<{\supset}<<U@<<<V
\end{CD}$$
be a cartesian diagram
of normal schemes.
Assume that the vertical arrows
are quasi-finite,
that each component of $X'$ dominates
a component of $X$,
that the left horizontal arrows
are open immersions
and that
the right horizontal arrows
are finite and \'etale.
Let $X\gets Y$ and $X'\gets Y'$ be the normalizations
in $V$ and $V'$
respectively.

Let
\begin{equation}
\begin{CD}
Z'@>{\subset}>> X'\\
@VVV @VVV\\
Z@>{\subset}>> X
\end{CD}
\label{eqXZ}
\end{equation}
be a commutative diagram
where the horizontal arrows
are closed immersions.
Assume that it is lifted to a commutative diagram
$$\begin{CD}
Z'@>{s'}>> Y'\\
@VVV @VVV\\
Z@>{s}>> Y.
\end{CD}$$
Then, $Y\to X$ is \'etale
on a neighborhood of $s(Z)$,
if $Y'\to X'$ is \'etale
on a neighborhood of $s'(Z')$
and if the following conditions
are satisfied:

{\rm (1)}
$Z'\to Z$ is surjective.

{\rm (2)}
The base change of
the diagram {\rm (\ref{eqXZ})}
by $U\to X$
is cartesian.

{\rm (3)}
$Z\cap U$ is dense in $Z$.
\end{lm}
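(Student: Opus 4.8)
The plan is to reformulate the conclusion as the inclusion $s(Z)\subseteq W$, where $W\subseteq Y$ denotes the largest open subscheme \'etale over $X$, and to verify it point by point. Since $V\to U$ is finite \'etale and $U\to X$ is an open immersion, the part $Y\times_XU$ equals $V$ and is already contained in $W$; hence $s(Z\cap U)\subseteq V\subseteq W$ automatically, and by the density assumption {\rm (3)} the whole problem concerns the boundary points $z\in Z\sm U$. For such a $z$ I would pass to the strict henselization $\tilde X$ of $X$ at a geometric point lying over the image of $s(z)$; this neither affects \'etaleness nor the formation of the normalization, so that $\tilde Y=Y\times_X\tilde X$ is normal and the point $s(z)$, lying over the closed point, sits in the finite part of $\tilde Y$, a disjoint union of strictly henselian local normal schemes. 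In these terms $s(z)\in W$ becomes the assertion that the connected component of $\tilde Y$ carrying the section maps isomorphically to $\tilde X$, equivalently that the connected component of the pulled-back cover $V\times_X\tilde X$ through which the section passes is isomorphic to $\tilde U=U\times_X\tilde X$.

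To bring in the hypothesis upstairs I would use the canonical extension $g\colon Y'\to Y$ of $V'\to V$ furnished by Lemma \ref{lmnsf}.1, which is compatible with the liftings, so that $g\circ s'=s\circ(Z'\to Z)$. Combined with the surjectivity {\rm (1)} this gives $s(Z)=g(s'(Z'))\subseteq g(W')$, so that every boundary point $y=s(z)$ has the form $g(y')$ with $y'\in W'$ \'etale over $X'$; the task is thus to descend \'etaleness along $g$. After the same strict localization upstairs, and using that the given diagram is cartesian so that $V'=V\times_XX'$ and $\tilde U'=\tilde U\times_{\tilde X}\tilde X'$, the component of $V'\times_{X'}\tilde X'$ through $s'$ is a connected component of the base change to $\tilde U'$ of the downstairs component through $s$, and it is isomorphic to $\tilde U'$ by the \'etaleness upstairs.

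The main obstacle is precisely this descent of a splitting along the quasi-finite, component-wise dominant, but neither flat nor smooth morphism $X'\to X$: a priori a ramified sheet downstairs could be resolved after the base change in the manner of Abhyankar's lemma, so that \'etaleness upstairs need not by itself imply \'etaleness downstairs. This is where conditions {\rm (2)} and {\rm (3)} must enter, namely to guarantee that over the dense good locus the upstairs cover and section are the honest pullbacks of the downstairs ones, thereby reducing the question to the single connected component of $V\times_X\tilde X$ through the section. I would then translate the information that this component stays connected, of the same degree, into the statement that the base change along $\tilde X'\to\tilde X$ preserves connectivity \emph{along the section}, so that the section component, being split after such a connectivity-preserving pullback, is already split over $\tilde U$. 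Finally, normality of $Y$ is used to upgrade the resulting birationality of this component onto $\tilde X$ to an isomorphism, which is \'etaleness at $s(z)$. Establishing that connectivity along the section is indeed preserved — the step that rules out the Abhyankar phenomenon — is where the real content of the lemma lies and is the hardest part of the argument.
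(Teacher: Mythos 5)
You have reproduced the paper's framework --- strict localization at compatible geometric points $\bar x$, $\bar x'$ (which is where condition {\rm (1)} is used, to find $x'$ above $x$), the canonical extension $g\colon Y'\to Y$ of Lemma \ref{lmnsf}.1, and the reformulation that the connected component of the localized normalization through the section must map isomorphically to the strictly local base --- but the proposal stops exactly where the proof has to begin. The statement you defer, that ``base change along $\tilde X'\to \tilde X$ preserves connectivity along the section,'' is not a technical verification that can be postponed: it is the entire content of the lemma, and, as you yourself note, such descent of a splitting fails in general (the Example following the lemma in the paper, where $X'\to X$ is a blow-up, satisfies {\rm (1)}--{\rm (3)} and yet the conclusion fails; so quasi-finiteness, not only {\rm (2)} and {\rm (3)}, is an essential ingredient of whatever argument closes the gap). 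A proof that never engages the hypotheses beyond saying they ``must enter'' is missing its core.

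For comparison, the paper closes this gap not by proving any connectivity-preservation statement under base change, but by counting the connected components of $Y'$ over the strictly local $X'$ and showing $Y'\to X'$ is an isomorphism outright. Three facts are combined: (a) $Z'$ is a local scheme (a closed subscheme of the strictly local $X'$), so $s'(Z')$ is connected and meets exactly one component of $Y'$; (b) by the \'etale hypothesis, the component containing $s'(x')$ is the strict localization of $Y'$ at $s'(\bar x')$, hence isomorphic to $X'$; (c) \emph{every} component $Y'_1$ of $Y'$ meets $s'(Z')$ --- this is where {\rm (2)}, {\rm (3)} and quasi-finiteness act concretely: condition {\rm (2)} identifies $s(Z\cap U)\times_Y Y'$ with $(Z\cap U)\times_X X'=Z'\cap U'=s'(Z'\cap U')$, condition {\rm (3)} makes $s(Z\cap U)$ nonempty, and $Y'_1\to Y$ is finite and dominant onto the irreducible normal local $Y$, hence surjective, so $s(Z\cap U)\times_Y Y'_1$ is nonempty. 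Then $Y'$ is connected and $Y'\cong X'$, forcing $V'\to U'$ to have degree one; since $U'\to U$ is dominant and $U$ is irreducible, $V\to U$ has degree one, and normality of $X$ gives $Y\cong X$, i.e.\ \'etaleness at $s(x)$. The two ingredients absent from your proposal --- the connectedness of $s'(Z')$ coming from locality of $Z'$, and the surjectivity of every component of $Y'$ onto $Y$ --- are precisely what rule out the Abhyankar phenomenon; without them your ``connectivity along the section'' remains an unproved assertion.
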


For the reader's convenience,
we include the proof
in \cite[Lemma 2.7]{Tohoku}.

\begin{proof}
Let $x$ be a point of $Z$.
We show that
$Y\to X$ is \'etale at $s(x)$.
Let $x'$ be a point of $Z'$ above $x$
and take a geometric point $\bar x$
of $Z$ above $x$
and a geometric point $\bar x'$
of $Z'$ above $x'$ and $\bar x$.
By replacing $X,Y,X'$
by their strict localizations
at $\bar x, s(\bar x)$ and at $\bar x'$,
we may assume $X,Y,X'$
are strictly local and $X'\to X$ is finite
and surjective.
It suffices to prove that
$Y\to X$ is an isomorphism.

Since $Y'$ is the normalization of
$X'\times_XY$,
it suffices to show that
$Y'\to X'$ is an isomorphism.
Since $Y'$ is the disjoint union
of finitely many connected components,
it suffices to show the following.

(a) There exists a unique component
meeting $s'(Z')$.

(b) A component meeting $s'(Z')$
is isomorphic to $X'$.

(c) Every component meets $s'(Z')$.

Since $Z'$ is a closed subscheme of
$X'$, its image $s'(Z')$ is also a local scheme and
(a) follows.
Since the component of $Y'$ 
containing $s'(x')$ is
the strict localization 
at $s'(\bar x')$,
it is isomorphic to $X'$.
Hence (b) is proved.
We show (c).
By the assumption (2), 
$s(Z\cap U)\times_YY'$
is isomorphic to
$(Z\cap U)\times_XX'
=
Z'\cap U'$
and hence is equal to
$s'(Z'\cap U')$.
By the assumption (3),
$s(Z\cap U)\subset Y$ is not empty.
Let $Y'_1$ be a component of $Y'$. 
Since $Y'_1\to Y$ is finite and dominant,
it is surjective
and hence
$s(Z\cap U)\times_YY'_1
=s'(Z'\cap U')\cap Y'_1\subset
s'(Z')\cap Y'_1$
is not empty.
Thus (c) is proved.
\end{proof}

The following example shows
that we can't drop the
assumption that $X'\to X$ is
quasi-finite even if $U'=U$.

\begin{ex}
{\rm
Let $k$ be an algebraically closed
field of characteristic $p>0$,
$E$ be an elliptic curve over $k$
and ${\cal L}$ be
a very ample invertible ${\cal O}_E$-module e.g.\ 
${\cal O}(3\cdot[0])$.
Let $X={\rm Spec}\ 
\bigoplus \Gamma(E,{\cal L}^{\otimes n})$
be the affine cone.
The blow-up $X'$ of $X$ at
the origin is the line bundle $L$
over $E$ defined
by the symmetric algebra
$\bigoplus_{n\geqq 0 } {\cal L}^{\otimes n}$.
The complement
$U\subset X$
of the origin is the complement
$L\sm E$
of the $0$-section.
Define a finite \'etale morphism $V\to U$ to be
the pull-back of 
an \'etale isogeny $E'\to E$ of degree $>1$.

The normalization $Y'$ of $X'$ in $V$
is the pull-back $E'\times_EX'$
and is \'etale over $X'$.
The canonical map
$Y'\to Y$ 
to the normalization $Y$ of $X$ in $V$
contracts the closed fiber $E'$
to a point and
the morphism $Y\to X$ is not \'etale.

The inverse image $Z$ of
the origin $0\in E$
is a fiber of the line bundle
$X'\to E$ and is
isomorphic to ${\mathbf A}^1_k$.
The composition
$Z\to X'\to X$
is a closed immersion
since ${\cal L}$ is base point free.
The inverse image of
$0\in E'$
defines closed immersions
$Z\to Y'$ and $Z\to Y$
lifting
$Z\to X'$ and $Z\to X$.
They satisfy the conditions
{\rm (1)--(3)} in Lemma {\rm\ref{lmet}}.}
\end{ex}

%\newpage
\subsection{Dilatations}\label{ssdl}

\begin{df}\label{dfdl}
Let 
\begin{equation}
\begin{CD}
D@>>>P@<<<X
\end{CD}
\label{eqPDX}
\end{equation}
be closed immersions
of schemes.
Assume that
$D$ is a Cartier divisor of $P$ and
let $P'\to P$ denote the blow-up at the intersection
$D\cap X=D\times_P X$.
We call the complement
$$\widetilde P=P^{(D\cdot X)}\subset P'$$
of the proper transform of $D$
the {\rm dilatation} of
$P$ with respect to
$D$ and $X$.
\end{df}

If $P={\rm Spec}\ A$,
if the Cartier divisor $D$
is defined by a non-zero divisor
$t\in A$
and if the closed subscheme
$X\subset P$
is defined by an ideal $I\subset A$,
we have
$P^{(D\cdot X)}={\rm Spec}\ \widetilde A$
for the subring
$$\widetilde A=A\left[\dfrac It\right]
\subset A\left[\dfrac 1t\right].$$

The closed subscheme 
$\widetilde D$ of $\widetilde P$
defined by the cartesian diagram
$$\begin{CD}
\widetilde D@>>>
\widetilde P&\ =P^{(D\cdot X)}\\
@VV{\quad\ \Box }V@VVV\\
D@>>> P&\longleftarrow X.
\end{CD}$$ 
is a Cartier divisor
and is a scheme over $D\cap X
=D\times_PX$.
The canonical map
$\widetilde P\to P$
induces an isomorphism
$\widetilde P\sm\widetilde D
\to P\sm D$.
If $X=P$,
the canonical map
$\widetilde P\to P$
is an isomorphism.

\begin{ex}\label{exdnc}
{\rm 1.
Let $Y$ be a closed subscheme
of a scheme $X$.
Set $P={\mathbf A}^1_X$
and regard the 0-section
$X\to P$ 
as a Cartier divisor $D$ of $P$.
Then, the dilatation
$P^{(D\cdot {\mathbf A}^1_Y)}$
is the deformation to the normal cone of
$Y$ in $X$.

2.
Let $D$ be a Cartier divisor of $X$
and $Z$ be the 0-section of
$P={\mathbf A}^1_X$.
Then, the dilatation 
$P^{({\mathbf A}^1_D\cdot Z)}$
is the line bundle $L(D)$
over $X$ defined by
the symmetric ${\cal O}_X$-algebra
$\bigoplus_{n\geqq  0}
{\cal I}_D^{n}$.}
\end{ex}

The dilatation has the following universality,
as an immediate consequence of
that of blow-up.
Let (\ref{eqPDX}) and $\widetilde P=
P^{(D\cdot X)}$ be as in Definition \ref{dfdl}.
Let ${\cal I}_D\subset {\cal O}_P$
and ${\cal I}_X\subset {\cal O}_P$
be the ideals defining 
$D$ and $X$ respectively.
Then, for a scheme $T$ over $P$,
if the pull-back
${\cal I}_D{\cal O}_T$
is an invertible ideal of ${\cal O}_T$ 
and contains
${\cal I}_X{\cal O}_T$,
then there exists a unique morphism
$T\to \widetilde P$ of
schemes over $P$.

For example, if $D\cap X=D_X$
is a divisor of $X$,
then the immersion $X\to P$
is uniquely lifted to $X\to \widetilde P=
P^{(D\cdot X)}$
by the universality
of dilatation.
If $D=D_1+D_2$ is the sum
of Cartier divisors,
then we have a canonical morphism
$\widetilde P=
P^{(D\cdot X)}
\to 
\widetilde P_1=
P^{(D_1\cdot X)}$
by the universality.
Further assume that
$D_1\cap X$
is a divisor of $X$,
regard $X$ as a closed
subscheme of $\widetilde P_1$
and let $\widetilde D_2$
denote the pull-back of $D_2$ to
$\widetilde P_1$. Then, we have
a canonical isomorphism
$\widetilde P
\to 
\widetilde P_1^{(\widetilde D_2\cdot X)}$.

We study the structure of
dilatations.
For a regular immersion $X\to P$ of
schemes (\cite[D\'efinition 1.4]{Be}
for the definition),
the normal bundle $T_XP$
is the vector bundle over $X$ defined
by the symmetric algebra
$S^\bullet {\cal N}_{X/P}$
where the conormal sheaf
${\cal N}_{X/P}={\cal I}_X/{\cal I}_X^2$
is defined by
the ideal sheaf ${\cal I}_X\subset
{\cal O}_P$.
If $X$ and $P$ are smooth over a scheme
$S$ and if 
the immersion $X\to P$
is a morphism over $S$,
it fits in an exact sequence
$$0\to TX\to TP\times_PX\to T_XP\to0$$
of vector bundles on $X$
where $TX$ and $TP$ denote
the tangent bundles defined
by the symmetric algebras
of the locally free modules
$\Omega^1_{X/S}$ and
$\Omega^1_{P/S}$ respectively.

For vector bundles $E$ and $E'$
on a scheme $S$,
let $E\otimes E'$ denote
the tensor product.
For a line bundle $L$
on a scheme $S$
and an integer $n$,
let $L^{\otimes n}$
denote the $n$-th power.
For a vector bundle $E$ on 
a scheme $S$ and a Cartier divisor $D$ of $S$,
let $E(D)=E\otimes L(D)$ 
denote the tensor product
with the line bundle $L(D)$ on $S$.
We have $T_DS=L(D)\times_SD$.

\begin{lm}\label{lmdl}
We consider a cartesian diagram
$$\begin{CD}
\widetilde D@>>>
\widetilde P&\ =P^{(D\cdot X)}\\
@VV{\quad\ \Box }V@VVV\\
D@>>> P&\longleftarrow X\end{CD}$$ 
where the arrows in the
lower line are closed immersions
as in {\rm (\ref{eqPDX})}.
Assume that
$X\to P$ is a regular immersion
and that
$D\cap X=D_X$ is a Cartier divisor of $X$.

{\rm 1.}
We have a 
canonical isomorphism 
\begin{equation}
\widetilde D
\to T_XP(-D_X)\times_XD_X
\label{eqPD}
\end{equation}
on $D_X$.

{\rm 2.}
The immersion $X\to \widetilde P$
is also a regular immersion and
there is a canonical isomorphism
\begin{equation}
T_X\widetilde P
\to T_XP(-D_X)
\label{eqnb}
\end{equation}
for the normal bundle.
\end{lm}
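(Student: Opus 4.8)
The assertions are local on $P$, so by the affine description given after Definition \ref{dfdl} I would reduce to the case $P={\rm Spec}\ A$ with $A$ Noetherian local, $D$ defined by a non-zero-divisor $t\in A$, and $X$ defined by an ideal $I=(f_1,\ldots,f_r)$. The hypothesis that $X\to P$ is a regular immersion says that $f_1,\ldots,f_r$ is a regular sequence, and the hypothesis that $D\cap X=D_X$ is a Cartier divisor of $X$ says that the image $\bar t$ of $t$ is a non-zero-divisor on $A/I$; after permuting, $t,f_1,\ldots,f_r$ is then a regular sequence in $A$. By the affine description, $\widetilde A=A[I/t]=A[f_1/t,\ldots,f_r/t]\subset A[1/t]$. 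Writing $Y_i$ for $f_i/t$ and $B=A[Y_1,\ldots,Y_r]/(h_1,\ldots,h_r)$ with $h_i=tY_i-f_i$, the tautological surjection $B\to\widetilde A$ becomes an isomorphism after inverting $t$, since $B[1/t]=A[1/t]$. So the whole lemma comes down to showing that $t$ is a non-zero-divisor on $B$, and this is the step I expect to be the main obstacle.

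The plan for that step is an explicit Koszul-syzygy computation. Suppose $\beta\in A[Y]$ with $t\beta=\sum_i a_ih_i$; rearranging gives $\sum_i a_if_i\in(t)$, so modulo $t$ one gets a relation $\sum_i\bar a_i\bar f_i=0$ in $(A/t)[Y]$. Since $t,f_1,\ldots,f_r$ is regular, $\bar f_1,\ldots,\bar f_r$ is a regular sequence in $A/t$, hence in $(A/t)[Y]$, so its first syzygies are Koszul: there are $c_{ij}=-c_{ji}$ in $A[Y]$ with $a_i\equiv\sum_j c_{ij}f_j\pmod t$. Substituting these back and pairing $(i,j)$ with $(j,i)$, I would use the identities $f_jh_i-f_ih_j=t(f_jY_i-f_iY_j)$ and $f_jY_i-f_iY_j=h_iY_j-h_jY_i$ to cancel the factor $t$ (no division by $2$ is needed, so characteristic $p$ is harmless) and conclude $\beta\in(h_1,\ldots,h_r)$. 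This gives $B\xrightarrow{\sim}\widetilde A$, whence
\[
\widetilde A/t\widetilde A=B/tB=(A/(t,I))[Y_1,\ldots,Y_r]={\cal O}_{D\cap X}[Y_1,\ldots,Y_r],
\]
exhibiting $\widetilde D$ as a rank-$r$ vector bundle over $D_X=D\cap X$, which is the content of 1 before the twist is pinned down.

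For 2, the lift $X\to\widetilde P$ coming from the universality of the dilatation corresponds to $\widetilde A=B\to A/I$, $a\mapsto\bar a$, $Y_i\mapsto 0$; since $f_i=tY_i$ in $B$, its kernel is $\widetilde{\cal I}=(Y_1,\ldots,Y_r)$. I would check that $Y_1,\ldots,Y_r$ is a regular sequence on $B$ by the same permutation trick: $Y_1,\ldots,Y_r,h_1,\ldots,h_r$ is regular in $A[Y]$ because modulo the $Y_i$ one has $h_i\equiv-f_i$ with $f_1,\ldots,f_r$ regular, and permuting shows $Y_1,\ldots,Y_r$ is regular on $A[Y]/(h_i)=B$. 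Hence $X\to\widetilde P$ is a regular immersion with conormal sheaf $\widetilde{\cal I}/\widetilde{\cal I}^2$ free over ${\cal O}_X$ on the classes of $Y_1,\ldots,Y_r$.

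It then remains to make the identifications canonical. The point is that the linear coordinate appearing on both sides is $Y_i=f_i/t$, which I read as the image of $\bar f_i\otimes t^{-1}$ under the assignment $\bar f\otimes t^{-1}\mapsto\overline{f/t}$, where $t^{-1}$ is the canonical local generator of ${\cal O}_X(D_X)={\cal I}_{D_X}^{-1}$. This assignment is well defined (it kills ${\cal I}_X^2$ and is ${\cal O}_X$-linear) and independent of the choices of $f_i$ and $t$, so it defines a canonical isomorphism ${\cal N}_{X/P}\otimes{\cal O}_X(D_X)\xrightarrow{\sim}\widetilde{\cal I}/\widetilde{\cal I}^2$. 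Applying $S^\bullet$ and recalling from the conventions preceding the lemma that $T_XP(-D_X)={\rm Spec}_X\,S^\bullet({\cal N}_{X/P}\otimes{\cal O}_X(D_X))$ gives the isomorphism (\ref{eqnb}) of 2; its base change to $D_X$, combined with the computation of $\widetilde A/t\widetilde A$, gives (\ref{eqPD}) of 1. Being local and choice-free, these isomorphisms glue over $P$. The essential difficulty is the non-zero-divisor property of $t$ on $B$; the remaining steps are bookkeeping with the regular sequence and the twist by $L(\pm D_X)$.
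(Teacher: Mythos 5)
Your argument is correct and follows the same route as the paper's proof: reduce to an affine local situation where $D$ and $X$ are cut out by a common regular sequence, present the dilatation as $\widetilde A\cong A[Y_1,\ldots,Y_r]/(tY_1-f_1,\ldots,tY_r-f_r)$, read off assertion 1 from $\widetilde A/t\widetilde A\cong (A/(t,I))[Y_1,\ldots,Y_r]$, and read off assertion 2 from the fact that $X\subset\widetilde P$ is defined by $(Y_1,\ldots,Y_r)$. The one genuine difference is that the paper simply asserts this presentation of $\widetilde A$, whereas you identify it as the crux and prove it --- your Koszul-syzygy argument that $t$ is a non-zero-divisor on $B$ is complete and correct --- and you also spell out the canonical map $\bar f\otimes t^{-1}\mapsto\overline{f/t}$ where the paper only says ``easily checked''; so your write-up is strictly more detailed than the paper's own. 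One small point to tighten: in part 2 you permute a regular sequence inside $A[Y]$, which is not a local ring, and permutability of regular sequences can fail outside the local (or graded) setting; for instance $x,\ y(1-x),\ z(1-x)$ is a regular sequence in $k[x,y,z]$ while $y(1-x),\ z(1-x),\ x$ is not, since $z(1-x)\cdot y\equiv 0 \bmod (y(1-x))$. Since being a regular immersion is a stalk-local condition, the fix is immediate: localize $A[Y]$ at a prime containing $(Y_1,\ldots,Y_r,h_1,\ldots,h_r)$ --- regular sequences remain regular under such localization, by flatness --- and permute in that Noetherian local ring; your first permutation, producing $t,f_1,\ldots,f_r$ from $f_1,\ldots,f_r,t$, needs no such repair because there the ambient ring $A$ is local.
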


\begin{proof}
1.
First, we prove the assertion locally on $P$.
We assume that
$P={\rm Spec}\ A$,
that
$t_0,t_1,\ldots,t_m$
is a regular sequence of
$A$ and that
$D$ and $X$ 
are defined by the ideals
$(t_0)$ and 
$(t_1,\ldots,t_m)$
respectively.
Then, we have
$P^{(D\cdot X)}
={\rm Spec}\ \widetilde A$ for 
$\widetilde A=A[T_1,\ldots,T_m]/
(t_0T_1-t_1,
\cdots,t_0T_m-t_m)$.
Hence, we have
an isomorphism
\begin{equation}
A/(t_0,t_1,\ldots,t_m)
[T_1,\ldots,T_m]
\to
\widetilde A/(t_0).
\label{eqA'}
\end{equation}
Since $D_X=
{\rm Spec}\ A/(t_0,t_1,\ldots,t_m)$
and $t_0$ and $t_1,\ldots,t_m$
define linear coordinates of 
$T_DP$ and of $T_XP$
respectively,
the isomorphism (\ref{eqA'})
defines an isomorphism
(\ref{eqPD}).
It is easily checked that
the isomorphism
(\ref{eqPD}) thus defined
is independent of the choices
and defined globally.

2.
In the description in the proof of 1,
$X\subset \widetilde P$
is defined by the ideal of $\widetilde A$
generated by the
regular sequence $T_1,\ldots,T_m$.
Hence, 
we obtain an isomorphism
(\ref{eqnb}).
It is easily checked that
the isomorphism
(\ref{eqnb}) thus defined
is independent of the choices
and defined globally.
\end{proof}

\begin{cor}\label{cordl}
Assume that $P$ and $X$ are regular and
that $D\subset P$
is a divisor with normal crossings
meeting $X$ transversally.
Then $\widetilde P=P^{(D\cdot X)}$ is also regular and the pull-back
$\widetilde D=D\times_P\widetilde P$
is a divisor with normal crossings.
\end{cor}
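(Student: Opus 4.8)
The plan is to reduce both assertions to a local computation. Each of them is local on $P$, and the formation of $\widetilde P=P^{(D\cdot X)}$ is compatible with restriction to opens of $P$, so I may assume $P=\operatorname{Spec}A$ for a regular local ring $A$. That $D$ has normal crossings and meets $X$ transversally means that we may choose a regular system of parameters $t_1,\dots,t_r,u_1,\dots,u_c,s_1,\dots,s_e$ of $A$ in which $D$ is defined by $t=t_1\cdots t_r$ and $X$ by $I=(u_1,\dots,u_c)$; the branch equations $t_i$ and the equations $u_j$ of $X$ are disjoint parts of one regular system of parameters. Then $t,u_1,\dots,u_c$ is a regular sequence, $D_X=D\cap X$ is defined in $\mathcal O_X=A/I$ by $t_1\cdots t_r$ (a normal crossings, in general singular, divisor of $X$), and, by the local description following Definition~\ref{dfdl} used in the proof of Lemma~\ref{lmdl}, we obtain the presentation
\[
\widetilde A=A[U_1,\dots,U_c]/(tU_1-u_1,\dots,tU_c-u_c),\qquad U_i=u_i/t,
\]
together with the isomorphism $\widetilde A/(t)\cong\mathcal O_{D_X}[U_1,\dots,U_c]$ of (\ref{eqA'}).

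For regularity, the open part $\widetilde P\sm\widetilde D\cong P\sm D$ is regular, so only $\widetilde D=V(t)$ is at issue. I would factor the dilatation into single branches. Writing $D=D_1+\dots+D_r$ with $D_i=V(t_i)$ and iterating the isomorphism recalled just before Lemma~\ref{lmdl} (each partial intersection of $X$ with a sum of branches being a Cartier divisor of $X$), I realize $\widetilde P$ as a tower $\widetilde P=P_r\to\cdots\to P_0=P$ where $P_i=P_{i-1}^{(\widetilde{D_i}\cdot X)}$ is the dilatation of $P_{i-1}$ along the pullback $\widetilde{D_i}$ of the single branch $D_i$ and along $X$. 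For a single smooth branch transversal to $X$, Lemma~\ref{lmdl}.1 identifies the divisor $\widetilde{D_i}\times_{P_{i-1}}P_i$ with a vector bundle over the smooth scheme $\widetilde{D_i}\cap X$, hence regular; being an effective Cartier divisor, its regularity forces the ambient dilatation to be regular along it, and therefore everywhere. Thus each $P_i$ is regular, by Lemma~\ref{lmdl}.2 the immersion $X\to P_i$ remains a regular immersion of regular schemes, and in the coordinates above one checks that the pullbacks of the remaining branches $D_{i+1},\dots,D_r$ stay smooth and transversal to $X$, so the induction proceeds. In particular $\widetilde P=P_r$ is regular.

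The coordinate description also gives the normal crossings of $\widetilde D$: at each point of $V(t)$ the relations $u_i=tU_i$ let one replace the $u_i$ by the $U_i$, and the functions $t_1,\dots,t_r$ vanishing at that point remain part of a regular system of parameters of $\widetilde A$, so $\widetilde D=V(t_1\cdots t_r)$ has normal crossings. I expect the main obstacle to be precisely the regularity of $\widetilde P$ at the points of $\widetilde D$ lying over the singular locus of $D_X$, where several branches of $D$ meet: there Lemma~\ref{lmdl}.1 only exhibits $\widetilde D$ as a vector bundle over the singular scheme $D_X$, so $\widetilde D$ is itself singular and regularity of $\widetilde P$ cannot be read off from it. This is what forces the passage to single branches, where the divisors encountered are smooth; alternatively one verifies directly that, $t_1\cdots t_r,u_1,\dots,u_c$ being part of a regular system of parameters, the elements $tU_i-u_i$ form part of a regular system of parameters of $A[U_1,\dots,U_c]$ at every point of $V(t)$.
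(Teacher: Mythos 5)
Your proof is correct, but it takes a longer route than the paper's, and the ``main obstacle'' you single out is not actually an obstacle for the paper's argument. The paper also reduces to the local normal crossings situation (note that your choice of a regular system of parameters with $D=V(t_1\cdots t_r)$ is an \emph{\'etale}-local statement, not a Zariski-local one; this is harmless because regularity, normal crossings, and the formation of the dilatation --- Lemma \ref{lmdlf}.1 applied to an \'etale map --- all behave well under \'etale localization, and the paper says ``\'etale local'' explicitly). It then applies Lemma \ref{lmdl}.1 \emph{once}, to the full divisor $D$: the hypotheses hold since $D_X=D\cap X$ is a Cartier divisor of $X$, singular though it may be. You are right that the resulting description of $\widetilde D$ as a vector bundle over the singular $D_X$ gives nothing directly; but the paper never uses $\widetilde D$ as a whole. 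It restricts the isomorphism (\ref{eqPD}) to each component: $\widetilde D_i=\widetilde D\times_D D_i$ is identified with $T_XP(-D_X)\times_X(D_i\cap X)$, a vector bundle over the \emph{regular} scheme $D_i\cap X$, so each $\widetilde D_i$ is a regular Cartier divisor of $\widetilde P$; these divisors cover $\widetilde D$, meet transversally, and your own closing observation (a scheme is regular at any point where an effective Cartier divisor through that point is regular) then yields both assertions with no induction at all. Your two alternatives are both sound: the tower $P_r\to\cdots\to P_0=P$ of single-branch dilatations, justified by the iterated-dilatation isomorphism stated before Lemma \ref{lmdl}, and the direct verification that $tU_1-u_1,\dots,tU_c-u_c$ form part of a regular system of parameters of $A[U_1,\dots,U_c]$ at every point of $V(t)$. (For the latter, and for your ``replace $u_i$ by $U_i$'' step, keep in mind the case distinction according to how many $t_j$ vanish at the point: if several do, then $t\in\mathfrak p^2$ and the relations kill the $\bar u_i$ outright; if only one $t_{j_0}$ does, the relation reads $\bar u_i=\bar U_i\bar t$ in the cotangent space and one must still check that $\bar t_{j_0}$ survives in the quotient, which it does since no combination of the relations can produce $\bar t_{j_0}$ without involving some $\bar u_i$.) What your route costs is the inductive bookkeeping --- at each stage of the tower one must re-verify that the remaining branches pull back to regular divisors whose intersections with the lifted $X$ are Cartier divisors of $X$ --- which the paper's single application of (\ref{eqPD}) avoids entirely.
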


\begin{proof}
Since the assertion is \'etale local on $P$,
we may assume that
$D$ has simple normal crossings.
Let $D_1,\ldots,D_h$ be
the irreducible components of $D$.
Then, 
by the isomorphism
(\ref{eqPD}),
the inverse images 
$\widetilde D_i=\widetilde D
\times_DD_i$ are
regular divisors of $\widetilde P$
meeting transversally. 
Since the complement
$\widetilde P\sm 
\widetilde D$ is 
isomorphic to $P\sm D$
and is regular, the scheme
$\widetilde P$ is regular.
\end{proof}

We consider the functoriality
of dilatations.
Let
\begin{equation}
\begin{CD}
E@>>> Q@<<< Y\\
@VV{\quad\ \Box }V @VfVV @VVV\\
D@>>> P@<<< X
\end{CD}
\label{eqdlf}
\end{equation}
be a commutative diagram 
of schemes satisfying 
the following properties:
The horizontal arrows
are closed immersions and
 $D$ and $E$ are Cartier divisors
of $P$ and $Q$ respectively.
Further, the left square is cartesian.
Then, the map $Q\to P$
is uniquely lifted
to a morphism
$\tilde f\colon
\widetilde Q=Q^{(E\cdot Y)}\to 
\widetilde P=P^{(D\cdot X)}$
by the universality of dilatations.

\begin{lm}\label{lmdlf}
We consider the commutative diagram
{\rm (\ref{eqdlf})}
of schemes satisfying the conditions
following it. Assume that $f\colon Q\to P$ is flat.

{\rm 1.}
If the diagram
{\rm (\ref{eqdlf})}
is cartesian,
the diagram
\begin{equation}
\begin{CD}
Q@<<<\widetilde Q&=Q^{(E\cdot Y)}\\
@VV{\quad\ \Box }V @VVV&\\
P@<<<\widetilde P&=P^{(D\cdot X)}
\end{CD}
\label{eqPPQ}
\end{equation}
is cartesian and the vertical arrows are flat.

{\rm 2.}
Assume that $D\cap X=D_X$ 
and $E\cap Y=E_Y$ are Cartier divisors
of $X$
and of $Y$ respectively and 
that $E_Y\to D_X$ is flat.
If $X\to P$ and $Y\to Q\times_PX$ 
are regular immersions, the morphism 
$\tilde f\colon
\widetilde Q=Q^{(E\cdot Y)}\to
\widetilde P=P^{(D\cdot X)}$ is flat.
\end{lm}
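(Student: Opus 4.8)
The plan is to establish flatness of $\tilde f$ by the local criterion of flatness over the Cartier divisor $\widetilde D\subset\widetilde P$, thereby reducing the whole statement to the flatness of the morphism induced by $\tilde f$ on the exceptional divisors, which is then read off from the vector bundle description in Lemma \ref{lmdl}.1.

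First I would unwind the hypotheses. Since the left square of (\ref{eqdlf}) is cartesian we have $E=D\times_PQ$, so the local equation $t$ of $D$ pulls back to a local equation of $E$; as $f$ is flat this $t$ stays a non-zero-divisor, and likewise the pullback $Z=Q\times_PX\to Q$ of the regular immersion $X\to P$ is again a regular immersion whose conormal sheaf is the pullback of $\mathcal N_{X/P}$. Composing with the regular immersion $Y\to Z$ shows that $Y\to Q$ is a regular immersion, so $\widetilde Q=Q^{(E\cdot Y)}$ is defined; together with the assumption that $E_Y$ is a Cartier divisor of $Y$, Lemma \ref{lmdl} then applies to both $\widetilde P$ and $\widetilde Q$. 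In particular the pullbacks $\widetilde D\subset\widetilde P$ and $\widetilde E\subset\widetilde Q$ of $D$ and of $E$ are Cartier divisors, both cut out by $t$, and $t$ is a non-zero-divisor on $\mathcal O_{\widetilde P}$ and on $\mathcal O_{\widetilde Q}$.

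Next I would invoke the local criterion of flatness over $\widetilde D=V(t)$. Off the exceptional divisors, $\tilde f$ restricts to $f\colon Q\setminus E\to P\setminus D$, which is flat; and at a point of $\widetilde E$ the vanishing $\operatorname{Tor}_1^{\mathcal O_{\widetilde P}}(\mathcal O_{\widetilde D},\mathcal O_{\widetilde Q})=\ker(t\colon\mathcal O_{\widetilde Q}\to\mathcal O_{\widetilde Q})=0$ (valid because $t$ is a non-zero-divisor on $\mathcal O_{\widetilde Q}$) reduces flatness of $\tilde f$ to flatness of the restriction $\tilde f_0\colon\widetilde E\to\widetilde D$ to the exceptional divisors; here I would pass to Noetherian local stalks to apply the criterion. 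By Lemma \ref{lmdl}.1, $\widetilde D$ is the vector bundle $T_XP(-D_X)\times_XD_X$ over $D_X$ and $\widetilde E$ is the vector bundle $T_YQ(-E_Y)\times_YE_Y$ over $E_Y$. Using $E=D\times_PQ$ one checks $E_Y=D_X\times_XY$, i.e.\ $E_Y$ is the pullback of the Cartier divisor $D_X$ under $Y\to X$; consequently the pullback $h^*\widetilde D$ of the bundle $\widetilde D$ along $h\colon E_Y\to D_X$ is $T_XP(-E_Y)\times_YE_Y$. Then $\tilde f_0$ factors over $h$ as $\widetilde E\to h^*\widetilde D\to\widetilde D$: the second arrow is the base change of the flat morphism $h$, hence flat, while the first arrow is, fibrewise, the surjection $T_YQ(-E_Y)\twoheadrightarrow T_XP(-E_Y)$ obtained by twisting and restricting the locally split exact sequence $0\to T_YZ\to T_YQ\to T_XP|_Y\to0$ attached to the regular immersions $Y\subset Z\subset Q$. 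Its kernel $T_YZ(-E_Y)$ being a vector bundle, the first arrow is a vector bundle projection, in particular flat, so $\tilde f_0$ is flat and $\tilde f$ is flat.

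The step I expect to be the main obstacle is precisely the identification of $\tilde f_0$ with this linear map, i.e.\ the compatibility of the canonical isomorphisms of Lemma \ref{lmdl}.1 with $\tilde f$. The cleanest way to settle it is to argue in the local coordinates used to prove Lemma \ref{lmdl}: choose $P=\operatorname{Spec}A$ with $D=V(t_0)$ and $X=V(t_1,\dots,t_m)$ for a regular sequence $t_0,\dots,t_m$, pull back along the flat map $f$ to obtain a regular sequence on $Q=\operatorname{Spec}B$ cutting out $E$ and $Z$, and extend it by $u_1,\dots,u_r$ defining $Y$ in $Z$ with $t_0$ a non-zero-divisor on $\mathcal O_Y$. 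Then $\widetilde P=\operatorname{Spec}A[T_i]/(t_0T_i-t_i)$ and $\widetilde Q=\operatorname{Spec}B[T_i,U_j]/(t_0T_i-t_i,\ t_0U_j-u_j)$, and reducing modulo $t_0$ presents $\tilde f_0$ as the polynomial extension adjoining the $U_j$ composed with the base change of $h\colon E_Y\to D_X$ along $\mathcal O_{D_X}\to\mathcal O_{E_Y}$; since adjoining variables is flat and $h$ is flat by hypothesis, $\tilde f_0$ is flat. The only remaining care is bookkeeping: checking that the chosen sequences stay regular after the flat pullback, so that the coordinate descriptions of the dilatations and of Lemma \ref{lmdl} are legitimate, and that $t_0$ remains a non-zero-divisor on $\mathcal O_{\widetilde Q}$, both of which follow from flatness of $f$ together with the Cartier and regular-immersion hypotheses.
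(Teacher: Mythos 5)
There is a genuine gap: the lemma makes two assertions, and your proposal proves only the second. Assertion 1 --- that when the whole diagram (\ref{eqdlf}) is cartesian, the diagram (\ref{eqPPQ}) is cartesian, i.e.\ the canonical map $Q\times_P\widetilde P\to \widetilde Q$ is an isomorphism, with flat vertical arrows --- is nowhere addressed. Your opening claim that the local flatness criterion ``reduces the whole statement'' to a computation on the exceptional divisors is not right: assertion 1 is not a statement about flatness of $\tilde f$ at all, it requires none of the regular-immersion hypotheses of assertion 2, and it is invoked separately later in the paper (e.g.\ in Lemma \ref{lmPR}.4 and Lemma \ref{lmPRMb}). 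The missing argument is short but must be supplied; the paper does it locally: writing $P=\operatorname{Spec} A$, $Q=\operatorname{Spec} B$, $D=V(t)$ and $I\subset A$ for the ideal of $X$, flatness of $A\to B$ makes $B\otimes_A A[\frac{I}{t}]\to B\otimes_A A[\frac{1}{t}]=B[\frac{1}{t}]$ injective, so $B\otimes_A A[\frac{I}{t}]$ maps isomorphically onto its image $B[\frac{IB}{t}]$, which is precisely the coordinate ring of $\widetilde Q=Q^{(E\cdot Y)}$ because $Y$ is cut out by $IB$ and $E$ by $t$; cartesianness follows, and flatness of $\widetilde Q\to\widetilde P$ is then just base change of the flat $Q\to P$.

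Your proof of assertion 2, on the other hand, is correct and in substance identical to the paper's. The reduction to flatness of $\widetilde E\to\widetilde D$, using that $\widetilde E=\tilde f^*\widetilde D$ is again a Cartier divisor and that $\tilde f$ is flat off $\widetilde D$, is exactly what the paper does (implicitly; your $\operatorname{Tor}_1$ computation makes the criterion explicit). The identification of $\widetilde E\to\widetilde D$, via the isomorphisms (\ref{eqPD}), with the composite of the vector-bundle surjection coming from $0\to T_YZ\to T_YQ\to T_XP|_Y\to 0$ and the flat base change of $h\colon E_Y\to D_X$ is the paper's commutative diagram (\ref{eqXYDE}) together with the two sentences following it. Your explicit coordinate verification of the compatibility of (\ref{eqPD}) with $\tilde f$ is a sound replacement for the paper's one-line appeal to functoriality of these isomorphisms, and the bookkeeping you flag (regularity of the pulled-back sequences, permuting $t_0$ to the front) is handled the same way as in the proof of Lemma \ref{lmdl} itself.
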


\begin{proof}
1.
Since the question is local,
we may assume $P={\rm Spec}\ A$ and
$Q={\rm Spec}\ B$ are affine,
the divisor $D$ 
is defined by a non-zero divisor $t\in A$
and 
$X$ is defined by an ideal
$I\subset A$.
Then, $\widetilde P={\rm Spec}\ A[\frac It]$
and 
$\widetilde Q={\rm Spec}\ B[\frac {IB}t]$.
Since $A\to B$ is flat,
the injection
$A[\frac It]
\to A[\frac 1t]$ induces an injection
$B\otimes_AA[\frac It]
\to B\otimes_AA[\frac 1t]
=B[\frac 1t]$
and the assertion follows.

2.
The restriction
$\widetilde Q
\sm \widetilde E
\to
\widetilde P\sm
\widetilde D$
to the complements
is flat by the assumption.
Since the pull-backs $\widetilde D=
\widetilde P\times_PD
\subset \widetilde P$
and $\widetilde E=
\widetilde Q\times_QE
\subset \widetilde Q$
are Cartier divisors
and
$\tilde f^*\widetilde D
=\widetilde E$,
it suffices to show that
$\widetilde E
\to
\widetilde D$ is flat by
\cite[Proposition (15.1.21)]{EGA4-1}.

Since $Q\to P$ is flat,
the composition $Y\to 
Q\times_PX\to Q$
is a regular immersion.
The isomorphisms (\ref{eqPD})
for 
$\widetilde D$ and
$\widetilde E$ are functorial
and make a commutative diagram
\begin{equation}
\begin{CD}
\widetilde E@>>>
T_YQ(-E_Y)
\times_YE_Y
\\
@VVV@VVV\\
\widetilde D@>>>
T_XP(-D_X)
\times_XD_X
\end{CD}
\label{eqXYDE}
\end{equation}
Since $Q\to P$ is flat and
the immersions
$Y\to Q\times_PX$ 
and $X\to P$ are regular
immersions,
the linear map
$T_YQ
\to T_XP\times_XY$
of vector bundles
is a surjection.
By $f^*D=E$,
we have a canonical isomorphism
$L(-E_Y)
\to L(-D_X)\times_XY$.
Since $E_Y\to D_X$
is flat,
the right vertical arrow is flat as required.
\end{proof}

\begin{cor}\label{cordlf}
{\rm 1.}
We consider the commutative diagram
{\rm (\ref{eqdlf})}
of schemes satisfying the conditions loc.\ cit. 
Assume that $D\cap X=D_X$ 
and $E\cap Y=E_Y$ are Cartier divisors
of $X$
and of $Y$ respectively
and that the vertical arrows
and $E_Y\to D_X$
are smooth.
If $X\to P$ is a regular immersion,
the morphism 
$Q^{(E\cdot Y)} \to
P^{(D\cdot X)}$
is smooth.

{\rm 2.}
We consider a cartesian diagram
\begin{equation}
\begin{CD}
P_1@<<< P_3\\
@VV{\qquad \Box }V @VVV\\
S@<<<P_2
\end{CD}
\label{eqdlfp2}
\end{equation}
of flat separated morphisms of schemes.
Let $X_1\subset  P_1$ and 
$X_2\subset  P_2$ be closed subschemes
and define a closed subscheme $X_3=
X_1\times_SX_2\subset P_3$.
Let $D$ be a Cartier divisor of $S$
and $D_1,D_2,D_3$
be the pull-back to
$P_1,P_2,P_3$.
Assume $X_1$ is flat over $S$
and the immersion
$X_1\to P_1$ is a regular immersion.
Then, the diagram {\rm (\ref{eqdlfp2})}
induces a cartesian diagram
\begin{equation}
\begin{CD}
P_1^{(D_1\cdot X_1)}@<<< 
P_3^{(D_3\cdot X_3)}\\
@VV{\qquad\ \Box }V @VVV\\
S@<<<P_2^{(D_2\cdot X_2)}
\end{CD}
\label{eqdlfp}
\end{equation}
\end{cor}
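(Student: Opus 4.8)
For part 1, the plan is to reduce the smoothness assertion to the flatness already established in Lemma \ref{lmdlf}.2 by checking that the fibers are smooth, or more directly by invoking the infinitesimal criterion together with the flatness output. First I would verify that the hypotheses of Lemma \ref{lmdlf}.2 are satisfied: since the vertical arrows $Q\to P$ and $E\to D$ are smooth, they are in particular flat, and $E_Y\to D_X$ is smooth hence flat; moreover the regularity of $X\to P$ together with flatness of $Q\to P$ forces $Y=Q\times_P X\to Q\times_P X$ to be (trivially) a regular immersion, so the condition on $Y\to Q\times_P X$ holds when the left square of (\ref{eqdlf}) is cartesian. Applying Lemma \ref{lmdlf}.2 then gives that $\tilde f\colon Q^{(E\cdot Y)}\to P^{(D\cdot X)}$ is flat. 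To upgrade flat to smooth, I would examine the explicit local description from the proof of Lemma \ref{lmdl}: writing $\widetilde A=A[T_1,\dots,T_m]/(t_0T_i-t_i)$, the map $\tilde f$ is flat and, away from $\widetilde D$, restricts to $Q\sm E\to P\sm D$ which is smooth by hypothesis; along $\widetilde D$ one uses the identification (\ref{eqPD}) and the commutative square (\ref{eqXYDE}) to see that $\widetilde E\to\widetilde D$ is a smooth morphism of vector bundles, since the surjection $T_YQ\to T_XP\times_X Y$ of the proof of Lemma \ref{lmdlf}.2 is now a smooth surjection of vector bundles and $E_Y\to D_X$ is smooth. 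Smoothness on the complement and along the divisor, together with flatness, then yields smoothness everywhere.

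For part 2, the strategy is to verify the cartesian property (\ref{eqdlfp}) by testing against the universal property of dilatations recorded after Definition \ref{dfdl}. Since $X_1\to P_1$ is a regular immersion and $X_1$ is flat over $S$, while $P_3=P_1\times_S P_2$ and $X_3=X_1\times_S X_2$, I would first form the fiber product $Z=P_1^{(D_1\cdot X_1)}\times_S P_2^{(D_2\cdot X_2)}$ and exhibit a canonical morphism $P_3^{(D_3\cdot X_3)}\to Z$, then a morphism in the reverse direction, and check they are mutually inverse. The forward map comes from the two projections $P_3^{(D_3\cdot X_3)}\to P_i^{(D_i\cdot X_i)}$, each obtained from the universal property: the pull-back of $\mathcal I_{D_i}$ to $P_3^{(D_3\cdot X_3)}$ is invertible and contains the pull-back of $\mathcal I_{X_i}$, because $D_3,X_3$ are the appropriate products and the blow-up center behaves well under the cartesian square (\ref{eqdlfp2}).

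The main obstacle, and the step I would spend the most care on, is the reverse map for part 2: producing a morphism $Z\to P_3^{(D_3\cdot X_3)}$ via the universal property of the target dilatation. This requires showing that on $Z$ the ideal $\mathcal I_{D_3}\mathcal O_Z$ is invertible and contains $\mathcal I_{X_3}\mathcal O_Z$. Here I expect to use Lemma \ref{lmdlf}.1 in the form: because $X_1\to P_1$ is regular and $X_1/S$ is flat, the formation of $P_1^{(D_1\cdot X_1)}$ commutes with the flat base change $P_2\to S$, so that $P_1^{(D_1\cdot X_1)}\times_S P_2=(P_1\times_S P_2)^{(D_3\cdot X_1\times_S P_2)}$; combining this with a second dilatation step in the $P_2$-direction and the iterated-dilatation isomorphism $\widetilde P\to\widetilde P_1^{(\widetilde D_2\cdot X)}$ from the discussion after Lemma \ref{lmdl} should factor $P_3^{(D_3\cdot X_3)}$ as successive dilatations whose fiber product over $S$ is exactly $Z$. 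Verifying that these two factorizations agree, and that the flatness hypotheses of Lemma \ref{lmdlf}.1 are met at each stage, is the delicate bookkeeping that carries the proof.
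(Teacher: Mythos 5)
In part 1 your verification of the hypotheses of Lemma \ref{lmdlf}.2 contains a genuine gap. That lemma requires $Y\to Q\times_PX$ to be a regular immersion, and you justify this by asserting $Y=Q\times_PX$, i.e.\ by treating the right-hand square of (\ref{eqdlf}) as cartesian. But the conditions attached to (\ref{eqdlf}) only make the \emph{left} square (the one involving $E$ and $D$) cartesian; nothing identifies $Y$ with the fiber product $X\times_PQ$, and in the paper's applications of this corollary it is false — for instance in the proof of Lemma \ref{lmPnD}.3 one takes $Y=X$ embedded diagonally in $Q=X^{n+1}$ and $X$ embedded diagonally in $P=X^{m+1}$ with $Q\to P$ a projection, and then $Q\times_PX$ is strictly larger than $Y$. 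The correct argument, which is the one the paper uses, is: $Y\to X$ is smooth by hypothesis and $Q\times_PX\to X$ is smooth as a base change of $Q\to P$, so $Y\to Q\times_PX$ is an immersion between schemes smooth over $X$ and hence a regular immersion. Once this is repaired, the rest of your part 1 (flatness from Lemma \ref{lmdlf}.2, smoothness away from the divisor, smoothness of $\widetilde E\to\widetilde D$ via the diagram (\ref{eqXYDE}), and the conclusion from flatness plus smooth fibers) coincides with the paper's proof.

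For part 2 your route differs genuinely from the paper's. The paper localizes to the affine case and computes: flatness of $P_1^{(D_1\cdot X_1)}$ over $S$ (Lemma \ref{lmdlf}.2, which is where the regularity of $X_1\to P_1$ and the flatness of $X_1/S$ enter) makes $A_1[\frac{I_1}t]\otimes_AA_2[\frac{I_2}t]\to A_1[\frac{I_1}t]\otimes_AA_2[\frac 1t]$ injective, and the image is then identified with $A_3[\frac{I_3}t]$. Your plan — producing mutually inverse morphisms between $P_3^{(D_3\cdot X_3)}$ and $P_1^{(D_1\cdot X_1)}\times_SP_2^{(D_2\cdot X_2)}$ via the universal property of dilatations — can be made to work and is conceptually clean, but as written it leans on an iterated-dilatation identity for an \emph{intersection of centers}, $X_3=(X_1\times_SP_2)\cap(P_1\times_SX_2)$, which the paper never states or proves (the iterated statement after Lemma \ref{lmdl} splits the divisor, not the center); supplying it amounts to redoing the paper's affine computation. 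Note also a misattribution of hypotheses: the commutation $P_1^{(D_1\cdot X_1)}\times_SP_2=(P_1\times_SP_2)^{(D_3\cdot X_1\times_SP_2)}$ is pure Lemma \ref{lmdlf}.1 (cartesian diagram plus flatness of $P_3\to P_1$) and uses neither the regularity of $X_1\to P_1$ nor the flatness of $X_1/S$; those hypotheses are needed to make $P_1^{(D_1\cdot X_1)}$ flat over $S$, which is precisely what legitimizes the second base change (or, equivalently, what makes the ideal of $D$ stay invertible on the fiber product so that the reverse morphism exists). The paper's computation buys brevity; your argument, once the missing identity and the flatness bookkeeping are in place, buys a coordinate-free proof.
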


\begin{proof}
1. Since $Y\to X$
and $Q\to P$ are smooth,
the immersion
$Y\to Q\times_PX$
is a regular immersion.
Hence, by Lemma \ref{lmdlf}.2,
the morphism $\widetilde Q\to \widetilde P$
is flat.
Further since 
$\tilde f^*\widetilde D
=\widetilde E$,
it suffices to show
that $\widetilde E\to \widetilde D$
is smooth.
Since
$E_Y\to D_X$
is smooth,
the right vertical arrow in
(\ref{eqXYDE}) is smooth.

2.
Since the assertion is local,
we may assume that
$S={\rm Spec}\ A$ and
$P_i={\rm Spec}\ A_i$
are affine,
$D$ is defined by a non-zero divisor
$t\in A$
and the closed subschemes
$X_i\subset P_i$
are defined by ideals $I_i\subset A_i$.
By Lemma \ref{lmdlf}.2,
the dilatation
$\widetilde P_1
=P_1^{(D_1\cdot X_1)}$ is flat over
$S=S^{(D\cdot S)}$.
Therefore, the injection
$A_2[\frac {I_2}t]
\to
A_2[\frac 1t]$
induces
an injection
$A_1[\frac {I_1}t]
\otimes_AA_2[\frac {I_2}t]
\to
A_1[\frac {I_1}t]
\otimes_AA_2[\frac 1t]
=
A_1
\otimes_AA_2[\frac 1t]$.
Hence  the isomorphism
$A_1\otimes_AA_2\to A_3$
induces an isomorphism
$A_1[\frac {I_1}t]
\otimes_AA_2[\frac {I_2}t]
\to
A_3[\frac {I_1A_3+I_2A_3}t]
=A_3[\frac {I_3}t]
\subset
A_3[\frac 1t]$.
\end{proof}

We give a construction similar to the 
deformation to normal cone in Example \ref{exdnc}.1.
Let $X$ be a scheme,
$D$ be a Cartier divisor and $m\geqq  1$ be
an integer.
We consider the $0$-section
$X\to {\mathbf A}^1_X$
as a Cartier divisor.
Let it be denoted by $Z$
and consider the closed immersions
$$\begin{CD}
{\mathbf A}^1_D
@>>>
{\mathbf A}^1_X
@<<<
mZ.\end{CD}$$
Then, 
we define an open subscheme
$\widetilde X^{(mD)}
\subset {{\mathbf A}^1_X}^{
({\mathbf A}^1_D\cdot mZ)}$
of the dilatation by further removing
the proper transform of
the zero-section $Z$.
If $m=1$,
we recover the construction
in Example \ref{exdnc}.2.

Let $D_1,\ldots,D_h$ be
Cartier divisors of $X$
and $M=m_1D_1+\cdots+m_hD_h$
be a formal linear combination with
integral coefficients
$m_1\geqq  1,\ldots,m_h\geqq  1$.
Then, we define
\begin{equation}
f\colon \widetilde X^{(M)}\to X
\label{eqdfXM}
\end{equation}
to be the fibered product
of $\widetilde X^{(m_iD_i)}$
over $X$ for $i=1,\ldots,h$.

In the terminology of log product
\cite[Proposition 4.2.1]{KSI},
the construction of $\widetilde X^{(M)}$
is described as follows.
We regard $X$ as a log scheme
by the log structure
defined by the Cartier divisors
$D_1,\ldots,D_h$.
Let $P$ and $Q$ denote the
monoids ${\mathbf N}^h$
and let $M\colon Q\to P$
be the multiplication by $m_i$
on the $i$-th component.
The frame
$X\to [Q]$ defined by the Cartier divisors
$D_1,\ldots,D_h$
and the canonical frame
${\mathbf A}^h_{\mathbf Z}=
{\mathbf S}[P]=
{\rm Spec}\ {\mathbf Z}[P]\to [P]$
induce a frame
${\mathbf A}^h_X\to [P+Q]$.
Then $\widetilde X^{(M)}$
is the log product
${\mathbf A}^h_X\times_{[P+Q]}[P]$
with respect to the surjection
${\rm id}_P+M\colon P+Q\to P$
constructed in 
\cite[Proposition 4.2.1]{KSI}.

Let $U=X\sm D$ be the complement
of the union $D=D_1\cup\cdots\cup D_h$.
Then, the inverse image 
$f^{-1}(U)$ is a trivial ${\mathbf G}_m^h$-torsor over $U$.
The action of ${\mathbf G}_m^h$ on
$f^{-1}(U)$ is uniquely extended to
an action of ${\mathbf G}_m^h$ on
$\widetilde X^{(M)}$ over $X$
by the universality of dilatation.

\begin{lm}\label{lmXM}
Let $X$ be a smooth scheme
over a perfect field $k$,
$D$ be a divisor of $X$
with simple normal crossings
and $M=m_1D_1+\cdots+m_hD_h$
be a linear combination with
integral coefficients
$m_1\geqq  1,\ldots,m_h\geqq  1$
of the irreducible components
$D_1,\ldots,D_h$.

Then, $\widetilde X^{(M)}$
is smooth over $k$
and the canonical morphism
$f\colon \widetilde X^{(M)}\to X$
is flat.
Let $\widetilde D_i^{(M)}$
be the inverse image of $Z$
by the composition 
$\widetilde X^{(M)}
\to \widetilde X^{(m_iD_i)}
\to X\times_k{\mathbf A}^1_k$.
Then, the sum
$\widetilde D^{(M)}=
\widetilde D_1^{(M)}+\cdots+
\widetilde D_h^{(M)}$ is 
a divisor of 
$\widetilde X^{(M)}$ with simple
normal crossings.
For $i=1,\ldots,h$,
we have $f^*D_i=m_i
\widetilde D_i^{(M)}$.
If $M=D$,
the canonical morphism
$\widetilde X^{(M)}\to X$ is smooth.
\end{lm}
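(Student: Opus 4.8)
The plan is to reduce all four assertions---smoothness of $\widetilde X^{(M)}$ over $k$, flatness of $f$, the simple normal crossing property of $\widetilde D^{(M)}$, and the identity $f^*D_i=m_i\widetilde D_i^{(M)}$---to a single explicit local computation. All of them are \'etale local on $X$, and the formation of $\widetilde X^{(M)}$ commutes with \'etale base change, since the dilatation commutes with flat base change by Lemma \ref{lmdlf}.1 and $\widetilde X^{(M)}$ is assembled from the factors $\widetilde X^{(m_iD_i)}$ by fibered products over $X$. Hence I may assume $X={\rm Spec}\,A$ carries a coordinate system $x_1,\dots,x_d$, so that $\Omega^1_{X/k}$ is free with basis $dx_1,\dots,dx_d$, with $D_i=\{x_i=0\}$ for $1\le i\le h$.

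First I would compute one factor $\widetilde X^{(m_iD_i)}$. Writing $s$ for the coordinate on $\mathbf{A}^1$, the affine description of the dilatation gives $\mathbf{A}^1_X{}^{(\mathbf{A}^1_{D_i}\cdot m_iZ)}={\rm Spec}\,A[s,u_i]/(x_iu_i-s^{m_i})$ with $u_i=s^{m_i}/x_i$. The key point is to see what removing the proper transform of the zero section $Z=\{s=0\}$ does: this proper transform is the closed subscheme $\{s=u_i=0\}$, and on the locus $\{s\neq0\}$ the element $u_i$ is invertible because there $x_iu_i=s^{m_i}$ is a unit; thus $\{s\neq0\}\subseteq\{u_i\neq0\}$ and deleting $\{s=u_i=0\}$ coincides with inverting $u_i$. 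Therefore $\widetilde X^{(m_iD_i)}={\rm Spec}\,A[s,u_i^{\pm1}]/(x_iu_i-s^{m_i})$, and forming the fibered product over $X$ yields
\begin{equation*}
\widetilde X^{(M)}={\rm Spec}\,A[s_1,u_1^{\pm1},\dots,s_h,u_h^{\pm1}]/(x_1u_1-s_1^{m_1},\dots,x_hu_h-s_h^{m_h}).
\end{equation*}

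From this presentation the remaining assertions follow directly. For smoothness over $k$ I would apply the Jacobian criterion: the differential of $x_iu_i-s_i^{m_i}$ contains the term $u_i\,dx_i$ with $u_i$ a unit, and since $dx_1,\dots,dx_h$ are part of a basis of $\Omega^1_{X/k}$, the $h$ equations cut out a smooth complete intersection inside the smooth $k$-scheme ${\rm Spec}\,A[s_i,u_i^{\pm1}]$; equivalently one eliminates $x_i=s_i^{m_i}u_i^{-1}$ and identifies $\widetilde X^{(M)}$ with an open subscheme of affine space. For flatness I note that, via the monic relations $s_i^{m_i}=x_iu_i$, the ring above is free over $A[u_1^{\pm1},\dots,u_h^{\pm1}]$ on the monomials $s_1^{e_1}\cdots s_h^{e_h}$ with $0\le e_i<m_i$, and the latter is free over $A$; hence $f$ is flat (alternatively one invokes Lemma \ref{lmdlf}.2 and Corollary \ref{cordlf}). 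Setting $\widetilde D_i^{(M)}=\{s_i=0\}$, these are $h$ of the coordinate divisors of the affine-space chart and meet transversally, so $\widetilde D^{(M)}=\sum_i\widetilde D_i^{(M)}$ has simple normal crossings; and since $f^*x_i=s_i^{m_i}u_i^{-1}$ with $u_i$ a unit, the divisor $f^*D_i$ equals $m_i\{s_i=0\}=m_i\widetilde D_i^{(M)}$. When $M=D$ all $m_i=1$, so $s_i=x_iu_i$ and $\widetilde X^{(D)}\cong{\rm Spec}\,A[u_1^{\pm1},\dots,u_h^{\pm1}]$ with $f$ the evident projection, which is smooth; this also makes the trivial $\mathbf{G}_m^h$-torsor structure noted before the lemma manifest.

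The step I expect to be the main obstacle is precisely the identification of ``removing the proper transform of $Z$'' with inverting $u_i$, because this is what forces smoothness. For $m_i\geq2$ the ambient dilatation ${\rm Spec}\,A[s,u_i]/(x_iu_i-s^{m_i})$ is genuinely singular (for instance the quadric cone $x_iu_i=s^2$ when $m_i=2$), so Corollary \ref{cordl} cannot be applied directly: the non-reducedness of $m_iZ$ breaks the transversality hypothesis there, and smoothness really relies on deleting $Z$, which excises exactly the singular locus. I would therefore take care to verify that the deleted proper transform is $\{s=u_i=0\}$ and that $\{s\neq0\}\subseteq\{u_i\neq0\}$, so that the deletion reduces to the clean operation of inverting $u_i$.
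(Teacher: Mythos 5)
Your proof is correct and takes essentially the same route as the paper's: the paper likewise reduces to a local model (in fact to $X={\mathbf A}^1_k$, $h=1$, via a smooth morphism $X\to{\mathbf A}^h_k$ adapted to $D$) and writes the same presentation ${\rm Spec}\ k[T,S,U^{\pm 1}]/(T-US^m)$, from which it declares the assertions to follow, handling $M=D$ by Example \ref{exdnc}.2. Your write-up simply supplies the details the paper leaves implicit --- the identification of removing the proper transform of $Z$ with inverting $u_i$, the Jacobian criterion for smoothness, and the freeness argument for flatness.
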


\begin{proof}
Since the question is local on $X$,
we may assume that
there is a smooth morphism
$X\to {\mathbf A}_k^h$
such that $D_i$ is the inverse image
of the $i$-th coordinate hyperplane for
$i=1,\ldots,h$.
Further we may assume that
$X= {\mathbf A}_k^h$
and $h=1$ as
$\widetilde X^{(M)}$ is then defined
as the product.
In this case, 
$\widetilde X^{(M)}
= \widetilde {\mathbf A}_k^{1\ (mZ)}\to 
X= {\mathbf A}_k^1=
{\rm Spec}\ k[T]$
is given by
${\rm Spec}\ k[T,S,U^{\pm 1}]/(T-US^m)$
and the assertion follows.

If $M=D$,
the assertion
follows from Example \ref{exdnc}.2.
\end{proof}

We study the functoriality
of the construction of $\widetilde X^{(M)}$.
Let $f\colon Y\to X$ be a morphism
of schemes,
$E_1,\ldots,E_k$ be
Cartier divisors of $Y$
and $N=n_1E_1+\cdots+n_kE_k$
be a linear combination with
integral coefficients
$n_1\geqq  1,\ldots,n_k\geqq  1$.
Assume for each $i=1,\ldots,h$
that the pull-back
$f^*D_i=\sum_{j=1}^ke_{ij}E_j$
for integers $e_{ij}$ 
and that
$l_{ij}=e_{ij}n_j/m_i$ 
is an integer for every $j=1,\ldots,k$.
Let 
$(T_i)$ and $(S_j)$ 
be the coordinates
of
${\mathbf A}^h_X$ and
${\mathbf A}^k_Y$ respectively
and we define a morphism
$\tilde f\colon {\mathbf A}^k_Y\to 
{\mathbf A}^h_X$
lifting $f\colon Y\to X$
by sending $T_i$
to $S_1^{l_{i1}}\cdots S_k^{l_{ik}}$.
Then, 
by the universality of dilatations,
the morphism
$\tilde f\colon {\mathbf A}^k_Y\to 
{\mathbf A}^h_X$
is uniquely lifted to
\begin{equation}
\tilde f\colon  
\widetilde Y^{(N)}
\to \widetilde X^{(M)}.
\label{eqXMYN}
\end{equation}
If $h=k$, if each $E_i$
is the pull-back of $D_i$
and if $n_i=m_i$ for each $i=1,\ldots,h$,
then the diagram
\begin{equation}
\begin{CD}
\widetilde Y^{(N)}
@>>> \widetilde X^{(M)}\\
@VV{\qquad\ \Box }V@VVV\\
Y@>>>X
\end{CD}
\label{eqXMYNc}
\end{equation}
is cartesian.

We consider the case where
$f\colon Y\to X$
is the identity of $X$.
Let $M'=m'_1D_1+\cdots+m'_hD_h$
be another linear combination with
integral coefficients
$m'_1\geqq  1,\ldots,m'_h\geqq  1$
divisible by $M$ in the sense that
$l_i=m'_i/m_i$ is an integer for each $i=1,\ldots,h$.
Then, we have a canonical morphism
\begin{equation}
\widetilde X^{(M')}\to
\widetilde X^{(M)}
\label{eqXMM'}
\end{equation}
 over $X$.
It is compatible with 
the actions of 
${\mathbf G}_{m,X}^h$
with respect to the morphism
${\mathbf G}_{m,X}^h\to
{\mathbf G}_{m,X}^h$
defined by $l_i$-th power
on $i$-th component.

\subsection{Extensions of a vector bundle
in characteristic $p>0$}\label{ssExt}

We study extensions
of a vector bundle
by a finite \'etale group scheme
on a scheme of characteristic $p>0$.

\begin{lm}\label{lmExt}
Let $E$ be a vector bundle
over a scheme $S$
of characteristic $p>0$ and
let 
$1\to G\to \widetilde E\to E\to 1$
be an extension of $E$ by
an \'etale group scheme $G$ over $S$.
Assume that for every point $s$ of $S$,
the fiber $\widetilde E_s$ is connected.
Then, $\widetilde E$ and consequently
$G$ are commutative and killed by $p$.
\end{lm}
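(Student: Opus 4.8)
The plan is to reduce both conclusions to a single rigidity principle. Since $G\to S$ is étale and separated, its unit section $\epsilon\colon S\to G$ is at once an open and a closed immersion: a section of a separated morphism is a closed immersion, and a section of an étale morphism is an open immersion. Consequently, for any $S$-scheme $T$ and any $S$-morphism $\psi\colon T\to G$, the preimage $\psi^{-1}(\epsilon(S))$ is open and closed in $T$. If $T\to S$ has connected fibres and $\psi$ carries one point of each fibre into $\epsilon(S)$, then on every fibre this open-and-closed locus is nonempty, hence all of the connected fibre; as $\epsilon(S)$ meets each fibre $G_s$ only in the single rational point $e_s$, this forces $\psi=\epsilon$. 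I will apply this with $T=\widetilde E$ and with $T=\widetilde E\times_S\widetilde E$, so I first record that both $\widetilde E_s$ and $\widetilde E_s\times_{\kappa(s)}\widetilde E_s$ are connected: the fibre $\widetilde E_s$ is connected by hypothesis and carries the rational identity point $e_s$, hence is geometrically connected, and therefore its self-product over $\kappa(s)$ is connected as well.

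For commutativity I would consider the commutator morphism $c\colon\widetilde E\times_S\widetilde E\to\widetilde E$. Because $E$ is commutative, the image of $c$ under $\widetilde E\to E$ is trivial, so $c$ factors through $G=\ker(\widetilde E\to E)$, giving an $S$-morphism $c\colon\widetilde E\times_S\widetilde E\to G$. It sends the point $(e,e)$ of each fibre to the unit, and the fibre $\widetilde E_s\times_{\kappa(s)}\widetilde E_s$ is connected by the remark above. The rigidity principle then yields $c=\epsilon$, that is, $\widetilde E$ is commutative; in particular $G$, being a subgroup, is central.

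For annihilation by $p$ I would use that $S$ has characteristic $p$, so the vector bundle $E$, locally a power of ${\mathbf G}_a$, is killed by $p$. Since $\widetilde E$ is now commutative, multiplication by $p$ is a group homomorphism $[p]\colon\widetilde E\to\widetilde E$, and its composition with the homomorphism $\widetilde E\to E$ equals $[p]_E=0$; hence $[p]$ factors through $G$, giving an $S$-morphism $\psi=[p]\colon\widetilde E\to G$. As $\psi$ sends the identity of each fibre to the unit and $\widetilde E_s$ is connected, the rigidity principle gives $\psi=\epsilon$, i.e. $[p]=0$ on $\widetilde E$. Thus $\widetilde E$, and consequently its subgroup $G$, is killed by $p$.

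The main obstacle I anticipate is the bookkeeping of connectedness rather than any deep input: the rigidity argument for commutativity is run on the self-product $\widetilde E\times_S\widetilde E$, whose fibres need not be connected from the bare hypothesis that $\widetilde E_s$ is connected. Passing through geometric connectedness—legitimate precisely because the connected fibre $\widetilde E_s$ carries the rational identity point—is what makes the product argument go through, and verifying that the unit section of $G$ is open and closed (which uses that $G\to S$ is separated, as it is for the finite étale group schemes considered here) is the other small point to check.
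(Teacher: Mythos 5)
Your proof is correct and takes essentially the same route as the paper's: both factor the commutator morphism and the multiplication-by-$p$ morphism through the \'etale kernel $G$, and then use connectedness of the fibres to force these morphisms into the unit section. You additionally supply two details the paper leaves implicit---the clopen rigidity of the unit section and the geometric connectedness of $\widetilde E_s$ (via its rational identity point), which justifies connectedness of the fibres of $\widetilde E\times_S\widetilde E$---and your extra separatedness assumption on $G\to S$ is harmless, since the unit point of each fibre $G_s$ is automatically open and closed for an \'etale scheme over a field, so the rigidity argument can be run fibrewise without it.
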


\begin{proof}
Since $E$ is commutative,
the morphism
$\widetilde E\times_S \widetilde E\to
\widetilde E$ defined by
sending $(x,y)$ to the commutator
$[x,y]$ induces
a morphism of schemes
to the kernel 
$G$ of $\widetilde E\to E$.
Since the fiber of
$\widetilde E \times_S \widetilde E$ is
connected for every point of $S$,
the image is in the identity section $S\subset G$.
It defines a morphism
$\widetilde E \times_S \widetilde E\to S$
of schemes
since the identity section is
an open subscheme of $G$ \'etale over $S$.
Hence $\widetilde E$
is commutative.

Similarly,
the morphism
$p\colon \widetilde E\to
\widetilde E$
defined by sending
$x$ to $px$ induces
a morphism of schemes
to the kernel $G$
of $\widetilde E\to E$
and to a morphism of schemes
to $S$.
Hence $\widetilde E$
is killed by $p$.
\end{proof}

Let $E$ be a vector bundle
over a scheme $S$
of characteristic $p>0$ and
let $G$ be a finite \'etale commutative
group scheme over $S$,
killed by $p$.
Let $E^\vee={\rm Hom}_S(E,{\mathbf A}^1)$
be the dual vector bundle of $E$
and $G^\vee={\rm Hom}_S(G,{\mathbf F}_p)$
be the dual finite \'etale scheme of $G$.
For commutative
group schemes $A$ and $B$ over $S$,
let ${\rm Mor}_S(A,B)$ and
${\rm Ext}_S(A,B)$
denote the abelian group of 
morphisms of group schemes
and that of extensions of group schemes
respectively.

Let $F\colon {\mathbf A}^1
\to {\mathbf A}^1$ denote
the Frobenius morphism
defined by sending the coordinate $t$ to $t^p$.
For
$E={\mathbf A}^1$
and $G={\mathbf F}_p$,
the Artin-Schreier sequence
\begin{equation}
\begin{CD}
0@>>> {\mathbf F}_p
@>>> {\mathbf A}^1
@>{F-1}>>
{\mathbf A}^1
@>>>0
\end{CD}
\label{eqas}
\end{equation}
define an element
$[AS]\in {\rm Ext}_S({\mathbf A}^1,
{\mathbf F}_p)$.
The pull-back and the push-forward
of $[AS]$ define a canonical morphism
\begin{equation}
{\rm Mor}_S(G^\vee,E^\vee)
\to {\rm Ext}_S(E,G)
\label{eqAS}
\end{equation}
of abelian groups by \'etale descent
since a vector bundle $E$
is locally isomorphic to
a direct sum of ${\mathbf A}^1$
and a finite \'etale group
scheme $G$ of
${\mathbf F}_p$-vector spaces
is \'etale locally isomorphic to
a direct sum of ${\mathbf F}_p$.

For an integer $n\geqq  1$,
let $S_n$ denote the scheme $S$
regarded as a scheme over $S$
by the $n$-times iteration of 
the absolute Frobenius $S\to S$.
Since the \'etale site remains the same
by a radicial surjective morphism,
the pull-back map
${\rm Ext}_S(E,G)
\to {\rm Ext}_{S_n}(E\times_SS_n,
G\times_SS_n)$ is an isomorphism.
Hence,
(\ref{eqAS}) induces a morphism
\begin{equation}
\varinjlim_n
{\rm Mor}_{S_n}(G^\vee,E^\vee)
\to {\rm Ext}_S(E,G).
\label{eqASn}
\end{equation}

\begin{pr}\label{prExt}
Let $E$ be a vector bundle
over a scheme $S$
of characteristic $p>0$ and
let $G$ be a finite \'etale commutative
group scheme over $S$,
killed by $p$.
The morphism
{\rm (\ref{eqASn})}
of abelian groups
is an isomorphism
if $S$ is %reduced and 
quasi-compact.
\end{pr}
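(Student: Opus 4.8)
The plan is to pass from the global abelian groups to sheaves on the small \'etale site of $S$ and to reduce the isomorphism to a computation at strictly henselian local rings. Write $\mathcal{H}om(E,G)$ and $\mathcal{E}xt^1(E,G)$ for the \'etale sheaves of homomorphisms and of extension classes, and set $\mathcal{M}_n=\mathcal{H}om_{S_n}(G^\vee,E^\vee)$. Since $E$ has connected fibres and $G$ is \'etale, every homomorphism $E\to G$ over a strictly local base is trivial, so $\mathcal{H}om(E,G)=0$; consequently the local--global spectral sequence $H^p(S,\mathcal{E}xt^q(E,G))\Rightarrow \mathrm{Ext}^{p+q}_S(E,G)$ gives $\mathrm{Ext}_S(E,G)\cong H^0(S,\mathcal{E}xt^1(E,G))$, the differentials into and out of the relevant term landing in $H^\bullet(S,\mathcal{H}om(E,G))=0$. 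The construction {\rm(\ref{eqAS})} is functorial, hence globalizes to a morphism of sheaves $\varinjlim_n\mathcal{M}_n\to \mathcal{E}xt^1(E,G)$. I would first prove this to be an isomorphism of sheaves, and only afterwards take global sections.

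Being an isomorphism of sheaves may be checked on stalks, i.e.\ after replacing $S$ by a strictly henselian local ring $R$ of characteristic $p$, where higher \'etale cohomology vanishes so that the stalk of $\mathcal{E}xt^1$ is $\mathrm{Ext}^1_R$. There $E$ is a trivial bundle $\mathbf{A}^r$ and, $G$ being finite \'etale, $G\cong \mathbf{F}_p^s$; by biadditivity of both sides in $E$ and $G$ the claim reduces to $E=\mathbf{A}^1$, $G=\mathbf{F}_p$. In this case {\rm(\ref{eqAS})} sends $a\in\mathbf{A}^1(R)=\mathrm{Mor}_R(\mathbf{F}_p,\mathbf{A}^1)$ to the pull-back $a^*[\mathrm{AS}]$ of the Artin--Schreier class {\rm(\ref{eqas})}, that is, to the image of the homothety $a\cdot\mathrm{id}$ under the connecting map $\partial\colon \mathrm{Hom}_R(\mathbf{A}^1,\mathbf{A}^1)\to \mathrm{Ext}^1_R(\mathbf{A}^1,\mathbf{F}_p)$ of the long exact sequence obtained by applying $\mathrm{Hom}(\mathbf{A}^1,-)$ to {\rm(\ref{eqas})}. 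The point of the colimit over the Frobenius twists $R_n$ is that it upgrades the homotheties to the whole ring of additive endomorphisms $\mathrm{Hom}_R(\mathbf{A}^1,\mathbf{A}^1)=\{\sum_i c_ix^{p^i}\}$: a scalar over $R_n$ corresponds to the $x^{p^n}$-component over $R$. Thus one identifies $\varinjlim_n\mathrm{Mor}_{R_n}(\mathbf{F}_p,\mathbf{A}^1)$ with $\mathrm{Hom}_R(\mathbf{A}^1,\mathbf{A}^1)$ (up to perfection of $R$) and the map {\rm(\ref{eqASn})} with $\partial$, whose cokernel is governed by the recursion $a_0=-c_0$, $a_j=c_{j-1}^p-c_j$ and hence by successive extraction of $p$-th roots.

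Finally I would descend from sheaves to global sections. Since the sheaf morphism $\varinjlim_n\mathcal{M}_n\to\mathcal{E}xt^1(E,G)$ is an isomorphism and $\mathrm{Mor}_{S_n}(G^\vee,E^\vee)=H^0(S,\mathcal{M}_n)$, it remains to compare $\varinjlim_n H^0(S,\mathcal{M}_n)$ with $H^0(S,\varinjlim_n\mathcal{M}_n)$. This is precisely where the hypothesis enters: on a quasi-compact (quasi-separated) scheme global sections commute with filtered colimits, so the comparison map is bijective and the proposition follows; without quasi-compactness an extension class could require unboundedly large $n$ over the pieces of a cover.

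The hard part will be the base case of the second paragraph. Exactness of the Artin--Schreier sequence identifies $\partial$ only with the cokernel of $F-1$ acting by post-composition on $\mathrm{Hom}_R(\mathbf{A}^1,\mathbf{A}^1)$; to conclude that $\partial$ is \emph{onto} all of $\mathrm{Ext}^1_R(\mathbf{A}^1,\mathbf{F}_p)$ one must control the next term $\mathrm{Ext}^1_R(\mathbf{A}^1,\mathbf{A}^1)$, which is nonzero in the \'etale topology (it contains the Witt-vector class $[W_2]$), and show that $F-1$ acts injectively on it. Verifying this injectivity --- equivalently, that the Frobenius acts without nonzero fixed classes on $\mathrm{Ext}^1_R(\mathbf{A}^1,\mathbf{A}^1)$ --- together with keeping the two Frobenius actions (the one defining $R_n$ and the one entering $\partial$) aligned, is the delicate step around which the whole argument turns.
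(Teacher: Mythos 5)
Your reduction framework is sound and, in its skeleton, parallel to the paper's: the vanishing of $\mathcal{H}om(E,G)$, the reduction to $E=\mathbf{A}^1$, $G=\mathbf{F}_p$ over a local base, the identification of the image of the connecting map with $A[F]/(F-1)A[F]$ (equivalently with the perfection of $A$, which gives injectivity), and the use of quasi-compactness to commute sections with the filtered colimit over Frobenius twists. But the proof is not complete: surjectivity, which is the actual content of the proposition, is never established. You reduce it to the assertion that $(F-1)_*$ acts injectively on $\mathrm{Ext}^1_R(\mathbf{A}^1,\mathbf{A}^1)$ and then stop, explicitly labelling that assertion ``the delicate step around which the whole argument turns.'' That assertion is true, but it is not a formality: since a $\mathbf{G}_a$-torsor over $\mathbf{A}^1_R$ is trivial, $\mathrm{Ext}^1_R(\mathbf{A}^1,\mathbf{A}^1)$ is the group of symmetric polynomial $2$-cocycles modulo coboundaries, and to control the Frobenius pushforward on it you need Lazard's symmetric $2$-cocycle lemma (or an equivalent structure result), which exhibits the classes of $C_{p^i}(x,y)^{p^j}$, where $C_{p^i}(x,y)=\frac 1p\bigl((x+y)^{p^i}-x^{p^i}-y^{p^i}\bigr)$, as a basis; Frobenius pushforward then shifts this basis in $j$ and an induction on the lowest twist gives injectivity. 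Without this input (or a substitute), your argument proves injectivity of {\rm(\ref{eqASn})} but not surjectivity; a proof that ends by naming its hardest step has a genuine gap exactly where the proposition lives.

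It is worth seeing how the paper sidesteps $\mathrm{Ext}^1(\mathbf{G}_a,\mathbf{G}_a)$ altogether. After passing to the perfection of $A$ (harmless, since a radicial surjection does not change the \'etale site), it embeds $\mathrm{Ext}_A(\mathbf{A}^1,\mathbf{F}_p)$ into $H^1(\mathbf{A}^1_A,\mathbf{F}_p)$ by forgetting the group structure of the extension down to a torsor class, and detects extension classes inside $H^1$ by primitivity under $+^*-\mathrm{pr}_1^*-\mathrm{pr}_2^*$. Artin--Schreier theory presents $H^1(\mathbf{A}^1_A,\mathbf{F}_p)$ and $H^1(\mathbf{A}^2_A,\mathbf{F}_p)$ as $A[T]/(F-1)A[T]$ and $A[T_1,T_2]/(F-1)A[T_1,T_2]$, and surjectivity becomes the exactness of the explicit sequence {\rm(\ref{eqAS3})}, verified by the elementary fact that $(T_1+T_2)^n-T_1^n-T_2^n=\sum_{i=1}^{n-1}\binom ni T_1^{n-i}T_2^i$ vanishes, for $p\nmid n$, only when $n=1$. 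So either supply a proof of the injectivity of $(F-1)_*$ on $\mathrm{Ext}^1_R(\mathbf{A}^1,\mathbf{A}^1)$ via the cocycle lemma, or replace your ``hard part'' with this $H^1$-primitivity computation; as written, the proposal does not yet prove the proposition.
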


The following proof 
is a modification of
that in the case $S={\rm Spec}\ k$
for a perfect field $k$
in
\cite[Lemme 3]{GC}.

\begin{proof}
Since a morphism
$E\to G$ 
of group schemes
from a vector bundle
to a finite \'etale group scheme
is trivial,
the presheaf
$U\mapsto {\rm Ext}_U(E,G)$
is an \'etale sheaf on $S$.
Since $S$ is assumed quasi-compact,
we may assume $S
={\rm Spec}\ A$ is affine,
$E={\mathbf A}^n$ and $G$ is constant.
Further, we may assume 
$E={\mathbf A}^1$ and 
$G={\mathbf F}_p$.

We identify the ring
${\rm End}_A({\mathbf A}^1)$
with the non-commutative ring
$A[F]=\bigoplus_nAF^n$ defined by the relations
$F\cdot a=a^p F$
for $a\in A$.
Then, the boundary map
for the Artin-Schreier sequence (\ref{eqas})
induces an injection
\begin{equation}
A[F]/(F-1)A[F]
\to 
{\rm Ext}_A({\mathbf A}^1,
{\mathbf F}_p).
\label{eqasin}
\end{equation}
By the isomorphism
defined by the inductive
system 
$$\begin{CD}
A@>{a\mapsto aF^{n+m}}>>
A[F]/(F-1)A[F]\\
@A{F^m}AA@|\\
A@>{a\mapsto aF^{n}}>>
A[F]/(F-1)A[F]
\end{CD}$$
of morphisms of abelian groups,
we identify 
$A[F]/(F-1)A[F]$
with the additive group
of the perfection
$A^{p^{-\infty}}
=\varinjlim_{F^n}A$.

For
$S={\rm Spec}\ A$,
$E={\mathbf A}^1$ and 
$G={\mathbf F}_p$,
the abelian group
${\rm Mor}_S(G^\vee,E^\vee)=
{\rm Mor}_A({\mathbf F}_p,
{\mathbf A}^1)$
is identified with
the additive group $A$
and the transition map
$A={\rm Mor}_{S_n}(G^\vee,E^\vee)
\to
A={\rm Mor}_{S_{n+1}}(G^\vee,E^\vee)$
is the absolute Frobenius.
Hence the direct limit
$\varinjlim_n
{\rm Mor}_{S_n}(G^\vee,E^\vee)$
is identified with
the additive group of
the perfection
$A^{p^{-\infty}}$
and to
$A[F]/(F-1)$.
Thus, the morphism
(\ref{eqASn}) is an injection
by (\ref{eqasin}).

We show the surjectivity.
By replacing $A$ by
the perfection
$A^{p^{-\infty}}$,
we may assume that
the absolute Frobenius
$A\to A$ is a bijection.
We consider a commutative diagram
\begin{equation}
\begin{CD}
{\rm Ext}_A({\mathbf A}^1,
{\mathbf F}_p)
@>>>
H^1({\mathbf A}^1_A,
{\mathbf F}_p)
@>{+^*-{\rm pr}_1^*-
{\rm pr}_2^*}>>
H^1({\mathbf A}^2_A,
{\mathbf F}_p)\\
@AAA @AAA @AAA\\
A[F]@>>>
A[T]@>{+^*-{\rm pr}_1^*-
{\rm pr}_2^*}>>
A[T_1,T_2]
\end{CD}
\label{eqAS2}
\end{equation}
where
the left vertical arrow
is induced by {\rm (\ref{eqasin})}
and $+,{\rm pr}_1,
{\rm pr}_2\colon
{\mathbf A}^2_A\to
{\mathbf A}^1_A$
denote the addition
and the projections.
The composition of the upper line is $0$
since ${\rm Ext}$ is an additive functor.
The middle and the right
vertical arrows
are the surjections
defined by the pull-back
of the Artin-Schreier covering
(\ref{eqas}).
The lower left horizontal
arrow is the left $A$-linear
map sending $F^n$ to $T^{p^n}$
for $n\geqq  0$.

Since the constant \'etale covering
${\mathbf A}^1_A\times
{\mathbf F}_p$ of
${\mathbf A}^1_A$
has a unique structure
of extension of
${\mathbf A}^1_A$ by
${\mathbf F}_p$,
the upper left horizontal arrow
${\rm Ext}_A({\mathbf A}^1,
{\mathbf F}_p)
\to
H^1({\mathbf A}^1_A,
{\mathbf F}_p)$
is an injection.
We regard
$A[T]$ and $A[T_1,T_2]$
as left $A[F]$-modules
by the left multiplication of $A$
and the action of $F$
defined as the absolute Frobenius.
Then, the lower horizontal arrows
are $A[F]$-linear and
the middle and the right vertical arrows
induce isomorphisms
from the quotients
\begin{equation}
\begin{CD}
A[F]/(F-1)A[F]
\to
A[T]/(F-1)A[T]@>{+^*-{\rm pr}_1^*-
{\rm pr}_2^*}>>
A[T_1,T_2]/(F-1)A[T_1,T_2]
\end{CD}
\label{eqAS3}
\end{equation}
Thus, by an elementary diagram chasing,
the exactness of the sequence (\ref{eqAS3})
implies the surjectivity of (\ref{eqasin}).

We show that the sequence (\ref{eqAS3}) is exact.
As $A[F]$-modules,
we have direct sum decompositions
$A[T]=A\oplus \bigoplus_{p\nmid n}
A[F]\cdot T^n$ and
$A[T_1,T_2]=A\oplus \bigoplus_{p\nmid (n_1,n_2)}
A[F]\cdot T_1^{n_1}T_2^{n_2}$
where the second factors are free 
left $A[F]$-modules.
Since the absolute Frobenius on $A$
is assumed to be a bijection,
the inclusion
induces an isomorphism $A\to A[F]/(F-1)A[F]$.
Thus, the sequence (\ref{eqAS3}) is
isomorphic to
\begin{equation}
\begin{CD}
A
@>{1\mapsto T}>>
A/(F-1)\oplus
\bigoplus_{p\nmid n}
A\cdot T^n
@>{+^*-{\rm pr}_1^*-
{\rm pr}_2^*}>>
A/(F-1)\oplus
\bigoplus_{p\nmid (n_1,n_2)}
A\cdot T_1^{n_1}T_2^{n_2}.
\end{CD}
\label{eqAS4}
\end{equation}
The second map
$+^*-{\rm pr}_1^*
-{\rm pr}_2^*$
sends
$T^n$ to
$$(T_1+T_2)^n-T_1^n-T_2^n
=
\sum_{i=1}^{n-1}
\binom ni T_1^{n-i}T_2^i
$$
for an integer $p\nmid n$.
Since this is $0$ if and only if $n=1$,
the assertion follows.
\end{proof}

The author thanks an anonymous referee
for suggesting an alternative proof of the
surjectivity of (\ref{eqasin})
using the injectivity of the upper left
horizontal arrow in (\ref{eqAS2}),
the long exact sequences
deduced from (\ref{eqas}) 
and the fact that ${\rm Ext}({\mathbf A}^1,{\mathbf A}^1)$ 
is isomorphic to the group of symmetric 2-cocycles.

\begin{lm}\label{lmconn}
Let $S$ be a scheme over
${\mathbf F}_p$,
$E$ be a vector bundle
over $S$ and
$G$ be a finite \'etale group scheme
of ${\mathbf F}_p$-vector
spaces over $S$.
For a morphism $G^\vee \to E^\vee$
of the duals
and for the corresponding extension
$0\to G\to \widetilde E\overset \pi\to E\to 0$,
the following conditions are
equivalent:

{\rm (1)}
For every point $s$ of $S$,
the fiber
$\widetilde E_s$ is connected.

{\rm (2)}
For every geometric point $\bar s$ of $S$,
the geometric fiber
$G^\vee_{\bar s}\to E^\vee_{\bar s}$
is an injection.

{\rm (3)}
$G^\vee\to E^\vee$
is a closed immersion.
\end{lm}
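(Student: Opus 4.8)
The plan is to reduce all three conditions to geometric fibres over $S$ and then to an Artin--Schreier computation. I would first treat (2) $\Leftrightarrow$ (3). The morphism $\phi\colon G^\vee\to E^\vee$ is an $S$-morphism from a finite \'etale $S$-scheme to the separated smooth $S$-scheme $E^\vee$, hence is finite and unramified. A finite unramified morphism is proper, so it is a closed immersion precisely when it is a monomorphism, i.e.\ when it is universally injective; and universal injectivity of an $S$-morphism may be checked on the geometric fibres over $S$. This is exactly the content of condition (2), so (2) $\Leftrightarrow$ (3).

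Next I would reduce condition (1) to geometric points. For each point $s$ the fibre $\widetilde E_s$ is a commutative group scheme locally of finite type over $\kappa(s)$, an extension of the vector group $E_s$ by the finite \'etale group $G_s$. Since the identity component of a group scheme over a field is geometrically connected, $\widetilde E_s$ is connected if and only if it coincides with its identity component, if and only if $\widetilde E_{\bar s}$ is connected for a geometric point $\bar s$ above $s$. Thus (1) is equivalent to the connectedness of every geometric fibre $\widetilde E_{\bar s}$, and the task becomes to compare this with the injectivity of $\phi_{\bar s}\colon G^\vee_{\bar s}\to E^\vee_{\bar s}$.

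Over a geometric point $\bar s$, writing $V=E_{\bar s}$ and $W=G_{\bar s}$, the construction of Subsection \ref{ssExt} presents $\pi\colon\widetilde E_{\bar s}\to V$ as the $W$-torsor obtained from the Artin--Schreier sequence (\ref{eqas}) by pull-back and push-forward; concretely it is the fibre product over $V$ of the Artin--Schreier covers attached to the linear forms $\phi_{\bar s}(w)$, $w\in W^\vee$. Such a torsor, with monodromy $\rho\colon\pi_1(V)\to W$, has its set of connected components identified with $W/\mathrm{im}\,\rho$, so it is connected if and only if $\rho$ is surjective; as $W$ is a finite $\mathbf F_p$-vector space this is equivalent to injectivity of the dual $\rho^\vee\colon W^\vee\to H^1(V,\mathbf F_p)$. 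By construction $\rho^\vee$ is the composite of $\phi_{\bar s}$ with the map $V^\vee\to H^1(V,\mathbf F_p)$ sending a linear form $\ell$ to its Artin--Schreier class, and this map is injective because a nonzero linear form is never of the shape $g^p-g$ (compare degrees). Hence $\rho^\vee$ is injective exactly when $\phi_{\bar s}$ is, which gives (1) $\Leftrightarrow$ (2).

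The main difficulty lies in the third paragraph. One has to identify the abstract extension of Subsection \ref{ssExt} with the explicit Artin--Schreier torsor of the forms $\phi_{\bar s}(w)$ --- here the Frobenius twists appearing in (\ref{eqASn}) cause no trouble, the absolute Frobenius of $\bar s$ being an automorphism --- and then translate connectedness of a $W$-torsor correctly into surjectivity of its monodromy and, by $\mathbf F_p$-linear duality, into injectivity of $\rho^\vee$. The one genuinely arithmetic ingredient is the injectivity of $V^\vee\to H^1(V,\mathbf F_p)$, and it is this fact that ultimately forces the equivalence.
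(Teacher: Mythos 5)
Your proof is correct and follows essentially the same route as the paper: reduce to an algebraically closed residue field, test connectedness character by character via Artin--Schreier theory (the decisive input in both arguments being that a nonzero linear form on a vector group is never of the form $g^p-g$, so its Artin--Schreier class in $H^1(E,{\mathbf F}_p)$ is nonzero), and deduce (2)$\Leftrightarrow$(3) from the fact that a proper, unramified, universally injective morphism is a closed immersion, which is exactly the paper's appeal to EGA IV 18.12.6. The remaining differences are cosmetic: the paper packages (1)$\Leftrightarrow$(2) through the pushout quotients $\widetilde E/{\rm Ker}\,\chi$ and a section of $G\to\pi_0(\widetilde E)$ instead of your monodromy-surjectivity formulation, and your parenthetical description of $\widetilde E_{\bar s}$ as the fibre product of the covers attached to all $w\in W^\vee$ should say a basis of $W^\vee$ (as stated it would be a torsor under ${\mathbf F}_p^{W^\vee}$), a slip that does not affect the step you actually use, namely that $\chi\circ\rho$ is the Artin--Schreier class of $\phi_{\bar s}(\chi)$ for each character $\chi$.
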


\begin{proof}
(1)$\Leftrightarrow$(2)
We may assume $S={\rm Spec}\ k$
for a field $k$. Further, we may
assume $k$ is algebraically closed
since a group scheme over a field
is connected if and only if
its geometric fiber is connected.

Since the composition
$G\to 
\widetilde E
\to
\pi_0(\widetilde E)$
is a surjection of ${\mathbf F}_p$-vector spaces,
it has a section.
Hence, the restriction of
$G^\vee \to E^\vee$
to 
$\pi_0(\widetilde E)^\vee$
regarded as a subgroup
is trivial.
Hence, if
$G^\vee\to E^\vee$ is injective,
we have
$\pi_0(\widetilde E)=0$
and $\widetilde E$ is connected.

Let $\chi\colon G\to {\mathbf F}_p$
be a non-trivial character
and $f\colon E\to {\mathbf A}^1$
be the image of $\chi$
by $G^\vee\to E^\vee$.
Then, the quotient
$\widetilde E/{\rm Ker}\ \chi$
is the extension defined by
the commutative diagram
\begin{equation}
\begin{CD}
0@>>>{\mathbf F}_p
@>>>
\widetilde E/{\rm Ker}\ \chi
@>>> E@>>>0\\
@.@|@VVV @VVfV@.\\
0@>>>{\mathbf F}_p
@>>>
{\mathbf A}^1
@>{F-1}>> {\mathbf A}^1@>>>0.
\end{CD}
\label{eqchi}
\end{equation}
Hence $f\neq 0$ if and only if $\widetilde E/{\rm Ker}\ \chi$
is connected.
If $\widetilde E$ is connected,
$\widetilde E/{\rm Ker}\ \chi$
is connected for every $\chi\in G^\vee$
and $G^\vee\to E^\vee$ is injective.

(2)$\Leftrightarrow$(3)
Since 
$G^\vee$
is finite \'etale,
the map 
$G^\vee\to E^\vee$
is proper and unramified.
The condition (2)
is equivalent to
that 
$G^\vee\to E^\vee$
is radicial by \cite[Proposition (3.7.5)]{EGA1}.
Hence it follows from 
\cite[Corollaire (18.12.6) c)$\Rightarrow$ a)]{EGA4-4}.
\end{proof}

We study \'etale sheaves on $E$
such that the restrictions
on the geometric fibers of $\widetilde E$
are constant.
First, we consider a more general setting.
Let $G$ be a finite \'etale commutative
group scheme
over a scheme $S$
and $n\geqq  1$ be an integer
annihilating $G$.
Set $\Lambda={\mathbf Z}[\frac 1n,\zeta_n]$.
The dual $G^\vee=
{\cal H}om(G,{\mathbf Z}/
n{\mathbf Z})$ of $G$
is defined as a finite \'etale commutative
group scheme
over a scheme $S$.

Let $X$ be a scheme over $S$
and $\pi\colon E\to X$ be a $G$-torsor over $X$.
We will define a locally constant sheaf
${\cal L}_E$ of free $\Lambda$-modules
of rank 1 on the scheme 
$X\times_SG^\vee$ as follows.
The push-forward
$\pi_*\Lambda$
is a locally constant sheaf on $X$ of
free $\Lambda[G]$-modules
of rank 1.
On $G^\vee$,
the tautological character
$G\to {\mathbf Z}/
n{\mathbf Z}$
induces a character
$G\to \Lambda^\times$
by $1\mapsto \zeta_n$
and hence defines
a morphism
$\Lambda[G]\to \Lambda$
of $\Lambda$-algebras.
We define ${\cal L}_E$
as ${\rm pr}_1^*\pi_*\Lambda
\otimes_
{\Lambda[G]}\Lambda$.

\begin{lm}\label{lmGG}
Let $S$ be a scheme and
$n\geqq  1$ be an integer.
Let $G$ be a finite \'etale group
scheme of ${\mathbf Z}/n{\mathbf Z}$-modules
over $S$
and $G^\vee
={\cal H}om(G,
{\mathbf Z}/n{\mathbf Z})$ be the dual
finite \'etale group scheme.
Let $\Lambda$ be
the ring ${\mathbf Z}[\frac 1n,
\zeta_n]$ and
${\cal A}$ be
an \'etale sheaf of $\Lambda$-algebras
on $S$.
Let $X$ be a scheme over $S$
and $\pi\colon E\to X$ 
be a $G$-torsor over $X$.
Let ${\cal L}_E$ denote
the locally constant sheaf
of $\Lambda$-modules
of rank $1$ on $X\times_SG^\vee$
defined above.

Let ${\cal M}$ be an \'etale sheaf of
${\cal A}$-modules on $X$
and $\iota\in \Gamma(E, {\cal M})$
be a section defining an isomorphism of
${\cal A}$-modules
${\cal A}\to {\cal M}$ on $E$.
Let $t\colon S\to E$
be a section and
assume that the restriction map
$t^*\colon \Gamma(E,{\cal A})
\to 
\Gamma(S,{\cal A})$ is an isomorphism.

Then, there exists an idempotent
$e_{\cal M}\in \Gamma(X\times_S
G^\vee,{\cal A})$
and an isomorphism
\begin{equation}
{\rm pr}_{1*}
((e_{\cal M}\cdot {\cal A})\otimes_{\Lambda} 
{\cal L}_E)
\to {\cal M}
\label{eqAM}
\end{equation}
of ${\cal A}$-modules
on $X$ where ${\rm pr}_1
\colon X\times_SG^\vee\to X$
denotes the projection.
\end{lm}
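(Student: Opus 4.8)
The plan is to realize (\ref{eqAM}) as a Fourier decomposition of the $G$-torsor $\pi\colon E\to X$ along the dual $G^\vee$, in which the single character cut out by the trivialization $\iota$ is recorded by the idempotent $e_{\cal M}$. Since $\pi$ is finite \'etale and a $G$-torsor, descent gives ${\cal M}=(\pi_*\pi^*{\cal M})^G$, and the projection formula identifies $\pi_*\pi^*{\cal M}={\cal M}\otimes_\Lambda\pi_*\Lambda$ with $G$ acting through the deck action on the locally free $\Lambda[G]$-module $\pi_*\Lambda$ of rank $1$. The trivialization $\iota$ gives an ${\cal A}$-linear isomorphism $\pi_*\pi^*{\cal M}\to\pi_*{\cal A}_E={\cal A}\otimes_\Lambda\pi_*\Lambda$, where ${\cal A}_E$ denotes the pull-back of ${\cal A}$ to $E$, and I would use this to transport the descent datum into an eigenspace condition on ${\cal A}\otimes_\Lambda\pi_*\Lambda$.

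First I would construct the character attached to $\iota$. For a point $g$ of $G$ acting on $E$ over $X$ one has $\pi\circ g=\pi$, so $g^*\iota$ is again a trivialization of $\pi^*{\cal M}$ and $g^*\iota=\psi(g)\cdot\iota$ for a unique unit $\psi(g)\in\Gamma(E,{\cal A}^\times)$. Here the hypothesis on $t$ enters essentially: since ${\cal A}_E$ is pulled back from $S$ and $t^*\colon\Gamma(E,{\cal A})\to\Gamma(S,{\cal A})$ is an isomorphism, the section of the structure map $E\to S$ forces $\Gamma(E,{\cal A})$ to equal the pull-back of $\Gamma(S,{\cal A})$, so every global section of ${\cal A}$ on $E$ is $g$-invariant. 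This makes $\psi\colon G\to\Gamma(S,{\cal A}^\times)$ a homomorphism rather than a mere cocycle; as $G$ is killed by $n$ its values are $n$-th roots of unity, so $\psi$ lands in $\mu_n({\cal A})$ and defines an ${\cal A}$-valued point $\psi_{\cal M}$ of $G^\vee$. The idempotent $e_{\cal M}\in\Gamma(X\times_SG^\vee,{\cal A})$ is then the indicator of the open and closed locus of $X\times_SG^\vee$ where the tautological character agrees with the pull-back of $\psi_{\cal M}$; this locus is open and closed because $G^\vee$ is finite \'etale, and explicitly $e_{\cal M}$ is the image of $\frac1n\sum_{g}\psi_{\cal M}(g)^{-1}g\in{\cal A}[G]$ under the tautological character on $G^\vee$.

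Next I would invoke Cartier duality over $\Lambda$. Because $n$ is invertible and $\zeta_n\in\Lambda$, the character map sends ${\cal A}[G]$ isomorphically to $\mathrm{pr}_{1*}\mathrm{pr}_1^*{\cal A}$ on $X$, where $\mathrm{pr}_1\colon X\times_SG^\vee\to X$ is the projection, and this is the source of the family of orthogonal idempotents $e_\chi$. Under this identification the sheaf ${\cal L}_E=\mathrm{pr}_1^*(\pi_*\Lambda)\otimes_{\Lambda[G]}\Lambda$ is the universal rank-one eigenpiece, and since $\mathrm{pr}_1$ is finite \'etale with $\sum_\chi e_\chi=1$ one gets $\mathrm{pr}_{1*}({\cal A}\otimes_\Lambda{\cal L}_E)=\pi_*{\cal A}_E={\cal A}\otimes_\Lambda\pi_*\Lambda$, the whole group-algebra module. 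Multiplying by $e_{\cal M}$ and pushing forward therefore extracts exactly the $\psi_{\cal M}$-eigenspace of ${\cal A}\otimes_\Lambda\pi_*\Lambda$ for the deck action.

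Finally I would match the two sides. Transporting the descent datum of the second step through $\iota$, the invariants $(\pi_*\pi^*{\cal M})^G={\cal M}$ correspond to the $\psi_{\cal M}$-eigenspace of ${\cal A}\otimes_\Lambda\pi_*\Lambda$, which by the third step is $\mathrm{pr}_{1*}((e_{\cal M}\cdot{\cal A})\otimes_\Lambda{\cal L}_E)$; this yields (\ref{eqAM}), and as the construction uses only $\iota$ and the canonical descent the isomorphism glues over $X$. The main obstacle is the well-definedness of $\psi_{\cal M}$ as a genuine character with constant root-of-unity values: this is precisely where one must use the hypothesis on $t$ to force the units $\psi(g)$ to descend from $S$, and where the inversion bookkeeping between the tautological character and $\psi_{\cal M}$ must be fixed so that $e_{\cal M}$ selects the correct eigenspace. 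The remaining verifications, namely exactness of $\mathrm{pr}_{1*}$, the projection formula, and $\sum_\chi e_\chi=1$, are routine given that $n$ is invertible and $\zeta_n\in\Lambda$.
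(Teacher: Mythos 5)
Your proposal is correct and follows essentially the same route as the paper's own proof: the hypothesis on $t$ is used in the same way to produce the character $\psi$ (the paper's $\alpha\colon G\to \Gamma(S,{\cal A}^\times)$ with $g(\iota)=\alpha(g)\cdot\iota$), the idempotent $e_{\cal M}$ is given by the same character-averaging formula $\frac1{|G|}\sum_g\chi^{-1}(g)\alpha(g)$ spread over the components of $X\times_SG^\vee$, the isomorphism (\ref{eqAM}) comes from matching eigenpieces of $\pi_*{\cal A}_E$ with those of ${\cal M}$ through $\iota$, and the non-constant case is handled by patching. The only blemishes are cosmetic: your averaging factor should be $1/|G|$ rather than $1/n$ (these differ when $G$ is not cyclic, though your ``indicator of the locus where the tautological character agrees with $\psi_{\cal M}$'' description is correct as stated), and the inversion bookkeeping you flag is likewise left implicit in the paper.
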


\begin{proof}
First, we assume $G$ and hence
$G^\vee$ are constant.
Let ${\cal A}$ also denote its pull-backs
abusively.
Since the restriction map
$\Gamma(E,{\cal A})
\to 
\Gamma(S,{\cal A})$ is assumed
an isomorphism,
the action of $G$ on
$\Gamma(E,{\cal M})$
induced by that on $E$
defines a character
$\alpha\colon
G\to \Gamma(S,{\cal A}^\times)$
satisfying
$g(\iota)=\alpha(g)\cdot \iota$.
The idempotents
$e_\chi=\frac1{|G|}\sum_g\chi^{-1}(g)\alpha(g)$
for characters $\chi\colon G\to
\Lambda^\times$
satisfy
$g(e_\chi\cdot \iota)
=\chi(g)e_\chi\cdot \iota$
and $\sum_\chi e_\chi=1$.
The isomorphism
$\iota\colon
\pi^*{\cal A}\to\pi^*{\cal M}$
induces an isomorphism
$e_\chi{\cal A}\otimes
{\cal L}_\chi\to
e_\chi{\cal M}$ for each $\chi$.
They define an idempotent
$e_{\cal M}=(e_\chi)\in \Gamma(X\times_SG^\vee,{\cal A})
=\bigoplus_\chi \Gamma(X,{\cal A})$
and 
an isomorphism
${\rm pr}_{1*}
((e_{\cal M}\cdot {\cal A})\otimes
{\cal L}_E)
=\bigoplus_\chi
e_\chi{\cal A}\otimes
{\cal L}_\chi\to
\bigoplus_\chi
e_\chi
{\cal M}={\cal M}$.

In general, we obtain an idempotent $e_{\cal M}$
and an isomorphism (\ref{eqAM}) by patching.
\end{proof}

\begin{cor}\label{corGG}
Let $S$ be a scheme over ${\mathbf F}_p$
and
let $0\to G\to \widetilde E\to E\to 0$
be the extension of
a vector bundle $E$ over $S$
by a finite \'etale group
scheme $G$
of ${\mathbf F}_p$-vector spaces
over $S$
corresponding to a morphism $G^\vee
\to E^\vee$ on the duals.
Let $\Lambda$ be
the ring ${\mathbf Z}[\frac 1p,
\zeta_p]$
 and
${\cal A}$ be
an \'etale sheaf of $\Lambda$-algebras
on $S$.

{\rm 1.}
The locally constant sheaf
${\cal L}_{\widetilde E}$ of 
free $\Lambda$-modules
of rank $1$
on $E\times_SG^\vee$
defined above
is the pull-back of 
the locally constant sheaf of 
free $\Lambda$-modules
of rank $1$
on $E\times_SE^\vee$
defined by the Artin-Schreier
equation $t^p-t=\langle f,x\rangle$
by the map $E\times_SG^\vee\to
E\times_SE^\vee$.

{\rm 2.}
Let ${\cal M}$ be an \'etale sheaf of
${\cal A}$-modules on $E$
and $\iota\in \Gamma(\widetilde E, {\cal M})$
be a section defining an isomorphism of
${\cal A}$-modules
${\cal A}\to {\cal M}$ on $\widetilde E$.
Assume that the geometric fibers of 
$\widetilde E \to S$
are connected.

Then, there exists an idempotent
$e_{\cal M}\in \Gamma(E\times_SG^\vee,{\cal A})$
and $\iota$ induces an isomorphism
$${\rm pr}_{1*}
((e_{\cal M}\cdot {\cal A})\otimes_{\Lambda}
{\cal L}_{\widetilde E})
\to {\cal M}$$
of ${\cal A}$-modules
on $E$ where ${\rm pr}_1
\colon E\times_SG^\vee\to E$
denotes the projection.
\end{cor}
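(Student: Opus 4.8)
The plan is to deduce both assertions from the general Lemma \ref{lmGG}, applied to the $G$-torsor $\widetilde E\to E$ over the base $E$, after first making the local system ${\cal L}_{\widetilde E}$ explicit in assertion 1. Write $\phi\colon G^\vee\to E^\vee$ for the given morphism defining the extension.

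For assertion 1, both ${\cal L}_{\widetilde E}$ on $E\times_SG^\vee$ and the pull-back of the Artin--Schreier local system by ${\rm id}_E\times\phi$ are locally constant of rank $1$, so it suffices to produce a canonical isomorphism between them. The key point is that, by the very construction of the extension in Section \ref{ssExt} through the morphism (\ref{eqAS}), $\widetilde E$ comes from the Artin--Schreier sequence (\ref{eqas}). Concretely, for a character $\chi\in G^\vee$ with image $\phi(\chi)\in E^\vee$, the push-out ${\mathbf F}_p$-torsor $\widetilde E/{\rm Ker}\,\chi\to E$ is, by the defining diagram (\ref{eqchi}) already used in Lemma \ref{lmconn}, the Artin--Schreier covering $t^p-t=\langle\phi(\chi),x\rangle$. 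Since the restriction of ${\cal L}_{\widetilde E}$ to $E\times\{\chi\}$ is the sheaf ${\cal L}_\chi$ attached to this torsor via $1\mapsto\zeta_p$, it coincides with the restriction of the Artin--Schreier local system. As these comparisons come from a single identification of $G^\vee$-families of torsors over $E\times_SG^\vee$, they glue to the asserted isomorphism; alternatively one reduces to the universal case $E={\mathbf A}^1$, $G={\mathbf F}_p$ and patches by \'etale descent.

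For assertion 2, I would apply Lemma \ref{lmGG} with $X=E$, taking the $G$-torsor to be the projection $\widetilde E\to E$, which is a torsor because $\widetilde E$ is an extension of the commutative group scheme $E$ by $G$, and keeping ${\cal M}$ and the trivializing section $\iota\in\Gamma(\widetilde E,{\cal M})$ as given. For the section $t\colon S\to\widetilde E$ demanded by the lemma I would take the zero section $e$ of the group scheme $\widetilde E$ over $S$, lying over the zero section of $E$. The only hypothesis of Lemma \ref{lmGG} remaining to verify is that $e^*\colon\Gamma(\widetilde E,{\cal A})\to\Gamma(S,{\cal A})$ is an isomorphism, and this is precisely where the connectedness assumption is used: since ${\cal A}$ is pulled back from $S$ and the geometric fibers of $\widetilde E\to S$ are connected, the adjunction ${\cal A}\to f_*f^*{\cal A}$ is an isomorphism, whence $\Gamma(\widetilde E,{\cal A})=\Gamma(S,{\cal A})$ and the zero section furnishes the inverse. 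Lemma \ref{lmGG} then produces the idempotent $e_{\cal M}\in\Gamma(E\times_SG^\vee,{\cal A})$ and the isomorphism (\ref{eqAM}), which is exactly the assertion, with assertion 1 identifying the local system ${\cal L}_{\widetilde E}$ that appears.

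The step I expect to be the main obstacle is the passage from the fiberwise comparison to a global one in assertion 1: verifying that the identifications of $\widetilde E/{\rm Ker}\,\chi$ with Artin--Schreier coverings are compatible as $\chi$ ranges over $G^\vee$, so that they assemble into a morphism of local systems over all of $E\times_SG^\vee$ and not merely over the individual points of $G^\vee$. This is resolved by following the construction (\ref{eqAS}) functorially in the family, or by the reduction to the universal case together with \'etale descent along a splitting of $G$.
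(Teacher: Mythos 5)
Your proposal is correct and takes essentially the same route as the paper: assertion 1 is read off from the construction of the extension via the Artin--Schreier sequence (the paper simply declares it ``clear from the definition''), and assertion 2 is obtained exactly as you do, by applying Lemma \ref{lmGG} to the $G$-torsor $\widetilde E\to E$, with the connectedness of the geometric fibers of $\widetilde E\to S$ guaranteeing that the restriction $\Gamma(\widetilde E,{\cal A})\to\Gamma(S,{\cal A})$ is an isomorphism. Your elaborations --- the fiberwise identification through the pushout diagram (\ref{eqchi}) and gluing for assertion 1, and the explicit choice of the unit section as the section $t$ required by Lemma \ref{lmGG} --- merely spell out what the paper leaves implicit.
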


\begin{proof}
The assertion 1 is clear from
the definition of
the extension $0\to G\to \widetilde E
\to E\to 0$.
The assumption that the geometric fibers of 
$\widetilde E \to S$
are connected implies 
that the restriction
$\Gamma(\widetilde E,{\cal A})
\to \Gamma(S,{\cal A})$ is an isomorphism.
Hence, the assertion 2 is a special case
of Lemma \ref{lmGG}.
\end{proof}

%\newpage
\section{Ramification}

In this section,
$k$ denotes a perfect field
of characteristic $p>0$
and 
$X$ denotes a smooth separated scheme over $k$.
Let $D$ be a divisor of $X$ 
with simple normal crossings 
and $U=X\sm D$ be the complement.
Let $D_1,\ldots,D_h$ be
the irreducible components of $D$
and $R=r_1D_1+\cdots+r_hD_h$
be a linear combination
with rational coefficients
$r_i\geqq  1$ for every $i=1,\ldots, h$.
Let $M=m_1D_1+\cdots+m_hD_h$
be a linear combination
with integral coefficients
$m_i\geqq  1$ such that
$m_ir_i$ is an integer for every $i=1,\ldots, h$.
Let $Z\subset D$ be the
union of the irreducible components $D_i$
such that $r_i>1$.

\subsection{Construction of dilatations}\label{ssPn}

We define oversimplicial schemes 
\begin{equation}
\begin{CD}
T_\bullet^{(R,M)}
@>{\subset}>>
P_\bullet^{(R,M)}
@<{\supset}<<
U^{\bullet+1}\times_k{\mathbf G}_m^h
=
(U\times_kU)_U^{\times\bullet}
\times_k{\mathbf G}_m^h
\end{CD}
\label{eqTPU}
\end{equation}
in Lemma \ref{lmPRM} below
and study their structures.
The fibered multi-products
$(U\times_kU)_U^{\times\bullet}$
(\ref{eqPSxn})
is taken with respect to the
first and the second projections
${\rm pr}_1,
{\rm pr}_2\colon 
U\times_kU\to U$.
The superscript $h$ denotes
the number of irreducible components of $D$.
If $R$ has integral coefficients,
we also define
\begin{equation}
\begin{CD}
T_\bullet^{(R)}
@>{\subset}>>
P_\bullet^{(R)}
@<{\supset}<<
U^{\bullet+1}=
(U\times_kU)_U^{\times\bullet}
\end{CD}
\label{eqtPR0}
\end{equation}
directly
without introducing
an auxiliary divisor $M$
in Lemma \ref{lmPR}.

To define (\ref{eqTPU}),
we construct a commutative
diagram of oversimplicial schemes 
\begin{equation}
\begin{CD}
T_\bullet^{(R,M)}
@>{\subset}>>
D_\bullet^{(R,M)}
@>{\subset}>>
P_\bullet^{(R,M)}
@<{\supset}<<
U^{\bullet+1}\times_k{\mathbf G}_m^h\\
@.@VVV @VVV @|\\
@.D_\bullet^{(D,M)}
@>{\subset}>>
P_\bullet^{(D,M)}
@<{\supset}<<
U^{\bullet+1}\times_k{\mathbf G}_m^h\\
@.@VVV @VVV @VVV\\
@.D_\bullet^{(D)}
@>{\subset}>>
P_\bullet^{(D)}
@<{\supset}<<
U^{\bullet+1}\\
@.@. @VVV @|\\
@.&X^{\bullet+1}=&
P_\bullet
@<{\supset}<<
U^{\bullet+1}\\
\end{CD}
\label{eqtPR}
\end{equation}
in Lemmas \ref{lmPnD}, \ref{lmPDM}, \ref{lmPRM} below.
The horizontal arrows in the right column
are
open immersions of the
complements of the images of
the closed immersions on the left.
The second line from below is
a preliminary construction to make the pull-back of
$D$ by projections independent of the projection.
The third line from below is
also a preliminary one to avoid introducing
singularity even in the case where 
the coefficients $r_i$ of $R$ may not be integers.

Let $n\geqq  0$ be an integer.
We consider $X$ as a closed subscheme
of $X^{n+1}=P_n$ by the diagonal immersion.
For an integer $0\leqq j\leqq n$,
let ${\rm pr}_j\colon
X^{n+1}\to X$ denote the projection
and 
we consider closed subschemes
\begin{equation}
\begin{CD}
{\rm pr}_j^*D@>>> X^{n+1}@<<< X.
\end{CD}
\label{eqPnD}
\end{equation}
The dilatation $P_{n,j}^{(D)}
=X^{n+1\ ({\rm pr}_j^*D\cdot X)}$
is an open subscheme of 
the blow-up of $X^{n+1}$
at $D\subset X$.
We define
$P_n^{(D)}$ to be the intersection of
$P_{n,j}^{(D)}$ for $j=0,\ldots,n$.

\begin{lm}\label{lmPnD}
{\rm 1.}
The inverse image 
$D_n^{(D)}$ of $D\subset X$
by the canonical map
$P_n^{(D)}\to X^{n+1}$
is the same as that by the
composition
$P_n^{(D)}\to X^{n+1}\to X$
with $n+1$ projections.
The complement
$P_n^{(D)}\sm D_n^{(D)}$
is $U^{n+1}$.

{\rm 2.}
The schemes $P_n^{(D)}$ 
form an oversimplicial scheme $P_\bullet^{(D)}$.
The morphisms 
$P_n^{(D)}\to X^{n+1}$
define a morphism 
$P_\bullet^{(D)}\to P_\bullet=X^{\bullet+1}$
of oversimplicial schemes.

{\rm 3.}
For an injection $[0,m]\to [0,n]$,
the induced morphism
$P_n^{(D)}\to P_m^{(D)}$
is smooth.

{\rm 4.}
The scheme $P_n^{(D)}$ is smooth over $k$.
The inverse image $D_n^{(D)}$
is a divisor of $P_n^{(D)}$ with simple normal crossings.
\end{lm}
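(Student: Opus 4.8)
The plan is to deduce assertion 4 from Corollary \ref{cordl}, assertion 1 from a direct comparison of the Cartier divisors ${\rm pr}_j^*D$, assertion 2 from the universal property of dilatations, and to reserve an explicit local computation for assertion 3.

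First I would note that the center of each dilatation $P_{n,j}^{(D)}=(X^{n+1})^{({\rm pr}_j^*D\cdot X)}$ is ${\rm pr}_j^*D\cap X={\rm pr}_j^*D\times_{X^{n+1}}X$, whose ideal in $X^{n+1}$ is ${\cal I}_\Delta+({\rm pr}_j^*f)$ for a local equation $f$ of $D$; since ${\rm pr}_j^*f-{\rm pr}_{j'}^*f$ lies in the diagonal ideal ${\cal I}_\Delta$, this ideal is independent of $j$, so all $P_{n,j}^{(D)}$ are open subschemes of one blow-up $B$ of $X^{n+1}$ along the copy of $D$ in the diagonal $X$, obtained by deleting the proper transforms of the ${\rm pr}_j^*D$. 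As $D$ has simple normal crossings, each ${\rm pr}_j^*D$ is a normal crossings divisor of $X^{n+1}$ meeting the diagonal transversally, so Corollary \ref{cordl} shows that every $P_{n,j}^{(D)}$ is regular, hence smooth over the perfect field $k$, with normal crossings boundary $\widetilde D_j$. Since $P_n^{(D)}=\bigcap_jP_{n,j}^{(D)}$ is open in $B$, and hence open in each smooth $P_{n,j}^{(D)}$, it is smooth over $k$; this is the first half of assertion 4.

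For assertion 1 I would work with the local equations $t_j={\rm pr}_j^*f$. By the description $\widetilde A=A[\frac It]$ of Definition \ref{dfdl}, the ideal ${\cal I}_\Delta$ is contained in $(t_j)$ on $P_{n,j}^{(D)}$; as $t_{j'}-t_j\in{\cal I}_\Delta$ this gives $t_{j'}=t_j(1+g)$ on $P_{n,j}^{(D)}$ and, symmetrically, $t_j=t_{j'}(1+g')$ on $P_{n,j'}^{(D)}$. On the intersection $P_n^{(D)}$ the function $t_j$ is a non-zero-divisor, so $t_j\bigl((1+g)(1+g')-1\bigr)=0$ forces $1+g$ to be a unit, whence all ${\rm pr}_j^*D$ define one Cartier divisor $D_n^{(D)}$. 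Because $P_n^{(D)}\sm D_n^{(D)}$ sits inside each $P_{n,j}^{(D)}\sm\widetilde D_j\cong X^{n+1}\sm{\rm pr}_j^*D$, it maps isomorphically onto $\bigcap_j{\rm pr}_j^{-1}(U)=U^{n+1}$. Combined with the previous paragraph, $D_n^{(D)}=\widetilde D_j|_{P_n^{(D)}}$ has simple normal crossings, finishing assertion 4.

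Assertion 2 follows from the universal property of dilatations recalled after Definition \ref{dfdl}: for a map $\phi\colon[0,m]\to[0,n]$ the projection $X^{n+1}\to X^{m+1}$ pulls ${\rm pr}_i^*D$ back to ${\rm pr}_{\phi(i)}^*D$, which restricts on $P_n^{(D)}$ to the Cartier divisor $D_n^{(D)}$, and it sends the diagonal ideal of $X^{m+1}$ into that of $X^{n+1}$, hence into $(t_{\phi(i)})$ on $P_n^{(D)}$; the universal property therefore yields a unique lift $P_n^{(D)}\to P_{m,i}^{(D)}$ for each $i$, thus a unique $P_n^{(D)}\to P_m^{(D)}$, and uniqueness makes these compatible with composition and with the maps to $X^{\bullet+1}$. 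For assertion 3 I expect the real work: locally on $X$, étale over ${\mathbf A}^d_k$ with $D=\{x_1\cdots x_r=0\}$, and writing $x^{(j)}_l=x^{(0)}_l+t_0w_{l,j}$, one finds that $P_n^{(D)}$ is the open subscheme of ${\rm Spec}\,k[x^{(0)},w_{l,j}]$ on which every ratio $t_j/t_0$ is inverted, so that the morphism induced by an injection simply forgets the coordinate blocks $w_{\bullet,j}$ attached to the dropped factors and is a projection of localized affine spaces, hence smooth. The main obstacle is to justify this local model: one must check the transversality needed for Corollary \ref{cordl} and, more delicately, the compatibility of the whole construction with étale localization of $X$ — where the diagonal of $X^{n+1}$ is only open in the fibered product $X\times_{{\mathbf A}^d}\cdots\times_{{\mathbf A}^d}X$ and not the base change of the diagonal of ${\mathbf A}^{d(n+1)}_k$, so that Lemma \ref{lmdlf}.1 applies only after restricting to a neighborhood of the diagonal.
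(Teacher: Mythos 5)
Your handling of assertions 1, 2 and the smoothness part of 4 is sound and, for 1 and 2, essentially the paper's own argument: the key observation that all the $P_{n,j}^{(D)}$ are open subschemes of the single blow-up of $X^{n+1}$ along the diagonal copy of $D$, plus the universal property of dilatations. For assertion 4 you take a different route (Corollary \ref{cordl} applied to each $P_{n,j}^{(D)}$, rather than the paper's deduction from assertion 3 with $m=0$), and this works except for one wrinkle: Corollary \ref{cordl} as stated only yields a divisor with \emph{normal} crossings, while the lemma claims \emph{simple} normal crossings. To close that you must either appeal to the proof of the corollary (by the isomorphism (\ref{eqPD}) the components $\widetilde D\times_DD_i$ are regular and meet transversally), or do what the paper does: once assertion 3 is known, $D_n^{(D)}$ is the pullback of the simple normal crossings divisor $D$ under the smooth morphism $P_n^{(D)}\to X=P_0^{(D)}$, and smooth pullback preserves simple normal crossings.

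The genuine gap is assertion 3, which you leave as a sketch with unresolved obstacles that you yourself name (the étale-localization issue, the diagonal being merely open in $X\times_{{\mathbf A}^d}\cdots\times_{{\mathbf A}^d}X$), plus one you do not name: your chart is based at the index $0$, so "forgetting the blocks $w_{\bullet,j}$" describes the map only when the injection sends $0\mapsto 0$; otherwise a change of chart involving division by the unit $t_{f(0)}/t_0$ intervenes. No coordinate computation is needed, because Corollary \ref{cordlf}.1 was proved exactly for this purpose. For an injection $f\colon[0,m]\to[0,n]$, apply it to
$$\begin{CD}
{\rm pr}_{f(0)}^*D@>>> X^{n+1}@<<< X\\
@VVV @VVV @|\\
D\times_k X^m@>>> X^{m+1}@<<< X
\end{CD}$$
where the middle vertical arrow is the projection determined by $f$: the left square is cartesian, all vertical arrows are smooth, $E_Y\to D_X$ is the identity of $D$, and the diagonal $X\to X^{m+1}$ is a regular immersion. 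The corollary gives smoothness of $P_{n,f(0)}^{(D)}\to P_{m,0}^{(D)}$, and restricting to the open subschemes $P_n^{(D)}\subset P_{n,f(0)}^{(D)}$ and $P_m^{(D)}\subset P_{m,0}^{(D)}$ (the map lands there by your assertion 2) yields assertion 3. Your local model could in principle be completed---smoothness is local on the source, and by assertion 1 every point of $P_n^{(D)}\sm U^{n+1}$ lies over the diagonal, where the comparison with the ${\mathbf A}^d_k$-model holds after shrinking---but as written it is not a proof, and it redoes by hand what the dilatation formalism of Section \ref{ssdl} already provides.
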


\begin{proof}
1.
Since $P_{n,j}^{(D)}$
is the complement in the blow-up of $X^{n+1}$
at $D\subset X$
of the proper transform of
${\rm pr}_j^*D\subset X^{n+1}$,
it follows from the definition of 
$P_n^{(D)}$ as their intersection.

2.
It follows from the assertion 1
and the universality of dilatations.

3.
Let $f\colon [0,m]\to [0,n]$ be an injection.
By Corollary \ref{cordlf}.1 applied to
the commutative diagram
$$\begin{CD}
{\rm pr}_{f(0)}^*D
@>>>
X^{n+1}@<<<X\\
@VVV@VVV @|\\
D\times_k X^m@>>>
X^{m+1}@<<<X,
\end{CD}$$
the induced map
$P_n^{(D)}\to P_m^{(D)}$ is smooth.

4.
Taking $m=0$ in 3.,
we see that
the scheme $P_n^{(D)}$ is smooth over $k$
since $P_0^{(D)}=X$ is smooth over $k$.
The assertion on $D_n^{(D)}$ follows from this
and the assertion 1.
\end{proof}

Next, we define the schemes in the second
line in (\ref{eqtPR}).
Let $n\geqq  0$ be an integer.
For $i=1,\ldots,h$,
let $D_{n,i}^{(D)}\subset P_n^{(D)}$
be the pull-back of $D_i$.
By Lemma \ref{lmPnD},
the sum  $D_n^{(D)}=
D_{n,1}^{(D)}+\cdots+
D_{n,h}^{(D)}$ is a divisor with
simple normal crossings.
We define 
$P_n^{(D,M)}$
by applying the construction (\ref{eqdfXM})
to the linear combination
$m_1D_{n,1}^{(D)}+\cdots+
m_hD_{n,h}^{(D)}$.
For $i=1,\ldots, h$,
we define a divisor
$D_{n,i}^{(D,M)}
\subset P_n^{(D,M)}$ 
as the pull-back of the zero-section
by the $i$-th projection as in loc.\ cit.

\begin{lm}\label{lmPDM}
{\rm 1.}
The scheme
$P_n^{(D,M)}$ is smooth 
over $k$ and flat over $P_n^{(D)}$.
The union 
$D_n^{(D,M)}=
D_{n,1}^{(D,M)}\cup\cdots\cup D_{n,h}^{(D,M)}$ is
a divisor with simple normal crossings.
The pull-back of $D_{n,i}^{(D)}$ by
$P_n^{(D,M)}\to P_n^{(D)}$
is $m_iD_{n,i}^{(D,M)}$
for $i=1,\ldots, h$.
The complement
$P_n^{(D,M)}\sm
D_n^{(D,M)}$
is canonically isomorphic to
$U^{n+1}\times_k{\mathbf G}_m^h$.

{\rm 2.}
The schemes $P_n^{(D,M)}$ 
form an oversimplicial scheme $P_\bullet^{(D,M)}$.
The morphisms 
$P_n^{(D,M)}\to P_n^{(D)}$
define a morphism 
$P_\bullet^{(D,M)}\to P_\bullet^{(D)}$
of oversimplicial schemes.
If $M=D$, the morphism
$P_\bullet^{(D,M)}\to P_\bullet^{(D)}$
is smooth.

{\rm 3.}
The diagram
\begin{equation}
\begin{CD}
P_n^{(D,M)}@>>>P_n^{(D)}\\
@VV{\qquad\ \Box }V @VVV\\
P_m^{(D,M)}@>>>P_m^{(D)}
\end{CD}
\label{eqPnDc}
\end{equation}
is cartesian for every $[0,m]\to [0,n]$.
In particular
$P_n^{(D,M)}
\to P_m^{(D,M)}$
is smooth if $[0,m]\to [0,n]$
is an injection.
\end{lm}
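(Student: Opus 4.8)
The plan is to derive everything levelwise from Lemma \ref{lmXM} and from the functoriality of the construction $\widetilde X^{(M)}$ recorded in (\ref{eqXMYN}) and (\ref{eqXMYNc}). For assertion 1, fix $n$ and apply Lemma \ref{lmXM} to the smooth scheme $P_n^{(D)}$, to its simple normal crossings divisor $D_n^{(D)}=D_{n,1}^{(D)}+\cdots+D_{n,h}^{(D)}$ furnished by Lemma \ref{lmPnD}.4, and to the combination $\sum_i m_iD_{n,i}^{(D)}$. Since $P_n^{(D,M)}$ is by construction the scheme produced by Lemma \ref{lmXM} for these data, that lemma gives directly that $P_n^{(D,M)}$ is smooth over $k$, that $f\colon P_n^{(D,M)}\to P_n^{(D)}$ is flat, that $D_n^{(D,M)}=\sum_iD_{n,i}^{(D,M)}$ is a simple normal crossings divisor, and that $f^*D_{n,i}^{(D)}=m_iD_{n,i}^{(D,M)}$. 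For the complement, Lemma \ref{lmPnD}.1 identifies $P_n^{(D)}\sm D_n^{(D)}$ with $U^{n+1}$; since $f^*D_{n,i}^{(D)}=m_iD_{n,i}^{(D,M)}$ shows that $f^{-1}(D_n^{(D)})$ has support $D_n^{(D,M)}$, we get $f^{-1}(U^{n+1})=P_n^{(D,M)}\sm D_n^{(D,M)}$, and by the general fact that $f^{-1}$ of the open complement is a trivial $\mathbf G_m^h$-torsor this is canonically $U^{n+1}\times_k\mathbf G_m^h$.

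The engine for assertions 2 and 3 is that the divisors $D_{n,i}^{(D)}$ behave canonically under the oversimplicial maps. For a map $\phi\colon[0,m]\to[0,n]$ in $\widetilde\Delta$, the induced morphism $g\colon P_n^{(D)}\to P_m^{(D)}$ of Lemma \ref{lmPnD}.2 has the property that its composite with each projection $P_m^{(D)}\to X$ is the corresponding projection $P_n^{(D)}\to X$; hence the projection-independence in Lemma \ref{lmPnD}.1 yields $g^*D_{m,i}^{(D)}=D_{n,i}^{(D)}$ for every $i$. In the notation of (\ref{eqXMYN}) this says $e_{ij}=\delta_{ij}$ and $n_i=m_i$, so $l_{ij}=\delta_{ij}$ is an integer, and the universality of dilatations produces a unique lift $\tilde g\colon P_n^{(D,M)}\to P_m^{(D,M)}$ of $g$. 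Uniqueness of these lifts makes the assignment $\phi\mapsto\tilde g$ functorial, so the schemes $P_n^{(D,M)}$ form an oversimplicial scheme $P_\bullet^{(D,M)}$ and the flat maps $f$ constitute a morphism $P_\bullet^{(D,M)}\to P_\bullet^{(D)}$; this is assertion 2, and when $M=D$ the final sentence of Lemma \ref{lmXM} makes each $f$ smooth. Since moreover each $D_{n,i}^{(D)}$ is the pull-back of $D_{m,i}^{(D)}$ and $n_i=m_i$, the cartesian square (\ref{eqXMYNc}) specializes to (\ref{eqPnDc}), which is assertion 3.

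The in-particular clause is then formal: for an injection $[0,m]\to[0,n]$ the right vertical map in (\ref{eqPnDc}) is smooth by Lemma \ref{lmPnD}.3, and the left vertical map, being its base change along that cartesian square, is smooth as well. The step that genuinely requires care, and which I regard as the crux, is the canonicity identity $g^*D_{m,i}^{(D)}=D_{n,i}^{(D)}$ for all maps $\phi$, including the non-injective ones coming from collapses in $\widetilde\Delta$; only this makes the exponents $l_{ij}$ integral so that the functorial lift (\ref{eqXMYN}) is defined, and it simultaneously forces the comparison square to be cartesian rather than merely commutative. Everything else is a levelwise invocation of Lemma \ref{lmXM} together with a routine verification, via the universality of dilatations, that the lifts compose correctly, where I anticipate no difficulty beyond keeping the indexing straight.
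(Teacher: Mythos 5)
Your proof is correct and follows essentially the same route as the paper's: part 1 is a levelwise application of Lemma \ref{lmXM}, part 2 comes from the functoriality (\ref{eqXMYN}) via the universality of dilatations, and part 3 specializes the cartesian square (\ref{eqXMYNc}) and then uses Lemma \ref{lmPnD}.3 for the injection case. The only difference is that you make explicit the identity $g^*D_{m,i}^{(D)}=D_{n,i}^{(D)}$ (hence $l_{ij}=\delta_{ij}$), which the paper's terse proof leaves implicit; that is exactly the verification needed to apply (\ref{eqXMYN}) and (\ref{eqXMYNc}).
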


\begin{proof}
1.
It follows from Lemma \ref{lmXM}.

2.
It follows from assertion 1 and the 
functoriality (\ref{eqXMYN}).
The assertion in the case $M=D$ follows
from the last assertion in Lemma \ref{lmXM}.

3.
The cartesian diagram
(\ref{eqPnDc}) follows from the 
cartesian diagram (\ref{eqXMYNc}).
The smoothness for an injection follows
from the cartesian diagram
(\ref{eqPnDc}) and Lemma \ref{lmPnD}.3.
\end{proof}

We set
\begin{equation}
\widetilde X^{(M)}=P_0^{(D,M)}
\supset
\widetilde D^{(M)}=D_0^{(D,M)}
\label{eqXM}
\end{equation}
and define a subdivisor $\widetilde Z^{(M)}\subset
\widetilde D^{(M)}$
to be the union $\bigcup_{r_i>1}\widetilde D_i^{(M)}$.
By Lemma \ref{lmPDM},
the scheme
$\widetilde X^{(M)}$ is smooth over $k$
and its divisor
$\widetilde D^{(M)}$
has simple normal crossings.
Further, the morphism 
$\widetilde X^{(M)}\to X$ is flat.

Let $n\geqq  0$ be an integer.
We regard 
$\widetilde X^{(M)}=P_0^{(D,M)}$
as a closed subscheme of
$P_n^{(D,M)}$ by Lemma \ref{lmPDM}.2.
The linear combination
$\sum_{i=1}^hm_i(r_i-1)
D_{n,i}^{(D,M)}$
has integral coefficients
and defines a Cartier divisor of
$P_n^{(D,M)}$.
We consider closed immersions
$$\begin{CD}
\sum_{i=1}^hm_i(r_i-1)
D_{n,i}^{(D,M)}
@>>> P_n^{(D,M)}
@<<<
\widetilde X^{(M)}.
\end{CD}$$
We define
$P_n^{(R,M)}$ to be
the dilatation
defined by the immersions.
The canonical morphism
$P_n^{(R,M)}\to
P_n^{(D,M)}$
is an isomorphism
outside the inverse image of
$\widetilde Z^{(M)}$.
Define Cartier divisors of
$\widetilde X^{(M)}$ by 
\begin{equation}
\widetilde R^{(M)}
=r_1m_1\widetilde D_1^{(M)}
+\cdots+
r_hm_h\widetilde D_h^{(M)},
\qquad
\widetilde M^{(M)}
=m_1\widetilde D_1^{(M)}
+\cdots+
m_h\widetilde D_h^{(M)}.
\label{eqRM}
\end{equation}

\begin{lm}\label{lmPRM}
{\rm 1.}
The inverse images 
\begin{equation}
T_n^{(R,M)}
\subset D_n^{(R,M)}
\label{eqTnRM}
\end{equation}
of $\widetilde Z^{(M)}
\subset \widetilde D^{(M)}
\subset \widetilde X^{(M)}$
by the composition of
$P_n^{(R,M)}\to P_n^{(D,M)}$
with the $n+1$ projections
$P_n^{(D,M)}
\to \widetilde X^{(M)}$
do not depend on the projection.
The complement
$P_n^{(R,M)}\sm D_n^{(R,M)}$
is
$U^{n+1}\times_k{\mathbf G}_m^h$.

{\rm 2.}
The schemes $P_n^{(R,M)}$ 
form an oversimplicial scheme 
$P_\bullet^{(R,M)}$.
The morphisms 
$P_n^{(R,M)}\to P_n^{(D,M)}$
define a morphism 
$P_\bullet^{(R,M)}\to P_\bullet^{(D,M)}$
of oversimplicial schemes.

{\rm 3.}
For an injection $[0,m]\to [0,n]$,
the induced morphism
$P_n^{(R,M)}\to P_m^{(R,M)}$
is smooth.

{\rm 4.}
The scheme $P_n^{(R,M)}$
is smooth over $k$.
The inverse images $T_n^{(R,M)}
\subset D_n^{(R,M)}$
are divisors with simple normal crossings.
\end{lm}

\begin{proof}
The proof is similar to that of
Lemma \ref{lmPnD}.

1.
It follows from the construction.

2. 
It follows from the assertion 1
and the universality of dilatations.

3.
Let $f\colon [0,m]\to [0,n]$ be an injection.
By Corollary \ref{cordlf}.1 applied to
the commutative diagram
$$\begin{CD}
\sum_{i=1}^hm_i(r_i-1)
D_{n,i}^{(D,M)}
@>>>
P_n^{(D,M)}@<<<
\widetilde X^{(M)}
\\
@VVV@VVV @|\\
\sum_{i=1}^hm_i(r_i-1)
D_{m,i}^{(D,M)}
@>>>
P_m^{(D,M)}@<<<
\widetilde X^{(M)}
\end{CD}$$
and by Lemma \ref{lmPDM}.3,
the induced morphism
$P_n^{(R,M)}\to P_m^{(R,M)}$
is smooth.

4.
Taking $m=0$ in 3.,
we see that
the scheme $P_n^{(R,M)}$ is smooth over $k$
and that
the inverse images $T_n^{(R,M)}
\subset D_n^{(R,M)}$
are divisors with simple normal crossings
since $P_0^{(R,M)}=P_0^{(D,M)}=
\widetilde X^{(M)}$ is smooth over $k$
and that $\widetilde Z^{(M)}
\subset \widetilde D^{(M)}
\subset \widetilde X^{(M)}$
are divisors with simple normal crossings.
\end{proof}

In {\rm (\ref{eqtPR})},
the actions of
${\mathbf G}_m^h$
on the right terms
$U^{\bullet+1}\times_k{\mathbf G}_m^h$ 
on the top and the second lines
defined by the multiplication of
${\mathbf G}_m^h$
on the second factors ${\mathbf G}_m^h$
induce ${\mathbf G}_m^h$-actions
on the middle terms
$P_\bullet^{(D,M)}$ and
on $P_\bullet^{(R,M)}$.
They induce ${\mathbf G}_m^h$-actions on
$D_{i,\bullet}^{(D,M)}$,
$D_{i,\bullet}^{(R,M)}$
and on
$T_{i,\bullet}^{(R,M)}$.
The left horizontal arrows
are closed immersions of Cartier divisors
and the right horizontal arrows
are the open immersions
of the complements of the union.
The right squares 
of the diagram {\rm (\ref{eqtPR})} are cartesian.
The pull-backs of
the divisor $D_n^{(D)}\subset P_n^{(D)}$
are $M_n^{(D,M)}=\sum_{i=1}^h
m_iD_{n,i}^{(D,M)}
\subset P_n^{(D,M)}$
and $M_n^{(R,M)}=\sum_{i=1}^h
m_iD_{n,i}^{(R,M)}
\subset P_n^{(R,M)}$ respectively.

If $R=r_1D_1+\cdots+r_hD_h$ 
has integral coefficients,
we define
$P_n^{(R)}$ directly
as the dilatation
$(P_n^{(D)})^{(\sum_i(r_i-1)D_{n,i}^{(D)}\cdot X)}$
without introducing
an auxiliary divisor $M$.
The canonical map
$P_n^{(R)}\to P_n^{(D)}$ is
an isomorphism
outside the inverse image
of $Z=\bigcup_{r_i>1}D_i$.

\begin{lm}\label{lmPR}
Assume that $R$ has integral coefficients.

{\rm 1.}
The schemes $P_n^{(R)}$ 
form an oversimplicial scheme 
$P_\bullet^{(R)}$.
The morphisms 
$P_n^{(R)}\to P_n^{(D)}$
define a morphism 
$P_\bullet^{(R)}\to P_\bullet^{(D)}$
of oversimplicial schemes.

{\rm 2.}
For an injection
$[0,m]\to [0,n]$,
the morphism
$P_n^{(R)}\to P_m^{(R)}$
is smooth.

{\rm 3.}
The scheme $P_n^{(R)}$
is smooth over $k$.
The inverse images $T_n^{(R)}
\subset D_n^{(R)}$
of $Z\subset D\subset X$
by the composition
$P_n^{(R)}\to P_n^{(D)}\to X$
with $n+1$ projections
are the same 
and are divisors with simple normal crossings.
The complement
$P_n^{(R)}\sm D_n^{(R)}$
is $U^{n+1}$.

{\rm 4.}
For a divisor $M=m_1D_1+\cdots m_hD_h$
with integral coefficients $m_i\geqq  1$,
the diagram
\begin{equation}
\begin{CD}
P_\bullet^{(D,M)}@<<<
P_\bullet^{(R,M)}\\
@VV{\qquad\ \Box }V@VVV\\
P_\bullet^{(D)}@<<<
P_\bullet^{(R)}
\end{CD}
\label{eqPRRM}
\end{equation}
is cartesian
and the vertical arrows are flat.
If $M=D$, the vertical arrows are smooth.
\end{lm}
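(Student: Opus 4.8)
The plan is to prove assertions 1--3 in close parallel with the proof of Lemma~\ref{lmPRM}, treating $P_n^{(R)}$ as the dilatation $(P_n^{(D)})^{(E_n\cdot X)}$ with $E_n=\sum_i(r_i-1)D_{n,i}^{(D)}$ and $X$ the diagonal, and to derive assertion~4 from the base-change property of dilatations, Lemma~\ref{lmdlf}.1.

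For assertion~1 I would lift each structure morphism $P_n^{(D)}\to P_m^{(D)}$ to a morphism $P_n^{(R)}\to P_m^{(R)}$ by the universality of dilatations. The pull-back of $D_{m,i}^{(D)}$ along any structure map equals $D_{n,i}^{(D)}$ by the projection-independence in Lemma~\ref{lmPnD}.1, so ${\cal I}_{E_m}$ becomes invertible on $P_n^{(R)}$; and since the preimage of the diagonal of $P_m^{(D)}$ contains the diagonal of $P_n^{(D)}$, the center ideal pulls back into ${\cal I}_{E_n}{\cal O}_{P_n^{(R)}}$. Uniqueness in the universal property then yields functoriality, hence the oversimplicial structure $P_\bullet^{(R)}$ and the morphism $P_\bullet^{(R)}\to P_\bullet^{(D)}$. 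For assertion~2 I would apply Corollary~\ref{cordlf}.1 to the square attached to an injection $[0,m]\to[0,n]$, with $P_n^{(D)}\to P_m^{(D)}$ smooth by Lemma~\ref{lmPnD}.3; here the centers are the diagonal with the identity map, and the traces $E\cap X$ on the diagonal are $\sum_i(r_i-1)D_i$ in both degrees, so $E_Y\to D_X$ is an isomorphism and in particular smooth. Since moreover the diagonal $X\to P_m^{(D)}$ is a regular immersion of smooth $k$-schemes, Corollary~\ref{cordlf}.1 gives smoothness of $P_n^{(R)}\to P_m^{(R)}$.

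For assertion~3, taking $m=0$ (so that $P_0^{(R)}=X$, the dilatation being trivial since its center is everything) shows $P_n^{(R)}$ is smooth over $k$, and the $j$-th projection $P_n^{(R)}\to X$ is the structure map of the injection $[0]\to[0,n]$, $0\mapsto j$, hence smooth. The divisors $T_n^{(R)}\subset D_n^{(R)}$ are the pull-backs of $T_n^{(D)}\subset D_n^{(D)}$, which are projection-independent by Lemma~\ref{lmPnD}.1, so the same holds for $T_n^{(R)}\subset D_n^{(R)}$; being pull-backs of the simple normal crossings divisors $Z\subset D$ along the smooth projection, they are simple normal crossings, and the complement of $D_n^{(R)}$ is $U^{n+1}$ because $P_n^{(R)}\to P_n^{(D)}$ is an isomorphism away from $Z$ while $P_n^{(D)}\sm D_n^{(D)}=U^{n+1}$ by Lemma~\ref{lmPnD}.1. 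For assertion~4 I would check that the functoriality diagram~(\ref{eqdlf}) with $P=P_n^{(D)}$, $Q=P_n^{(D,M)}$, divisors $\sum_i(r_i-1)D_{n,i}^{(D)}$ and $\sum_i m_i(r_i-1)D_{n,i}^{(D,M)}$, and centers the diagonal $X$ and $\widetilde X^{(M)}$, is cartesian in both squares: the left square because $f^*D_{n,i}^{(D)}=m_iD_{n,i}^{(D,M)}$ by Lemma~\ref{lmPDM}.1, and the right square because the base-change property~(\ref{eqXMYNc}) of the construction $\widetilde{(\,\cdot\,)}^{(M)}$, applied to the diagonal $X\to P_n^{(D)}$ which restricts $\sum_i m_iD_{n,i}^{(D)}$ to $\sum_i m_iD_i$ with multiplicity one, gives $\widetilde X^{(M)}=X\times_{P_n^{(D)}}P_n^{(D,M)}$. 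As $f=P_n^{(D,M)}\to P_n^{(D)}$ is flat by Lemma~\ref{lmPDM}.1, Lemma~\ref{lmdlf}.1 yields the cartesian square~(\ref{eqPPQ}) in each degree with flat vertical arrows, and compatibility with the structure maps makes~(\ref{eqPRRM}) cartesian as a diagram of oversimplicial schemes; when $M=D$ the left vertical arrow is smooth by Lemma~\ref{lmPDM}.2, so its base change, the right vertical arrow, is smooth as well.

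The main obstacle is the identification of the center square in assertion~4 as cartesian, that is, recognizing that the diagonal pull-back of the $\widetilde{(\,\cdot\,)}^{(M)}$-construction is compatible with base change via~(\ref{eqXMYNc}); once this and the flatness of $f$ are in hand, Lemma~\ref{lmdlf}.1 does the rest, and assertions~1--3 are a routine transcription of the arguments for Lemmas~\ref{lmPnD} and~\ref{lmPRM}.
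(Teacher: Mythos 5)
Your proposal is correct and follows essentially the same route as the paper: assertions 1--3 are proved by transcribing the arguments of Lemmas \ref{lmPnD} and \ref{lmPRM} (universality of dilatations for functoriality, Corollary \ref{cordlf}.1 for smoothness, and the case $m=0$ for assertion 3), which is exactly what the paper indicates, and assertion 4 uses the identical cartesian diagram of centers and divisors over $P_n^{(D,M)}\to P_n^{(D)}$ together with Lemma \ref{lmdlf}.1. Your treatment of the $M=D$ smoothness via Lemma \ref{lmPDM}.2 and base change is equivalent to the paper's appeal to the smoothness of $\widetilde X^{(M)}\to X$.
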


\begin{proof}
1.-3.
Similar to Lemma \ref{lmPRM}.

4.
For integers $n\geqq  0$,
the diagram
$$\begin{CD}
\sum_{i=1}m_i(r_i-1)D_{n,i}^{(D,M)}
@>>>
P_n^{(D,M)}@<<<
\widetilde X^{(M)}\\
@VV{\qquad\qquad\quad \Box }V@VV{\quad\quad \Box }V@VVV\\
\sum_{i=1}(r_i-1)D_{n,i}^{(D)}@>>>
P_n^{(D)}@<<<X
\end{CD}$$
is cartesian
and the vertical arrows are flat
by Lemma \ref{lmPDM}.
Hence, it suffices to apply
Lemma \ref{lmdlf}.1.
If $M=D$,
the morphism
$\widetilde X^{(M)}\to X$ is smooth.
\end{proof}

We show that the oversimplicial
schemes in the diagram (\ref{eqtPR})
are multiplicative.

\begin{lm}\label{lmprod}
{\rm 1.}
The 
oversimplicial schemes
$P_\bullet,P_\bullet^{(D)},
P_\bullet^{(D,M)}$
and 
$P_\bullet^{(R,M)}$
are multiplicative.
If $R$ has integral coefficients,
$P_\bullet^{(R)}$
is also multiplicative.

{\rm 2.}
The oversimplicial schemes
$D_{\bullet}^{(D)},
D_{\bullet}^{(D,M)},
D_{\bullet}^{(R,M)}$ and
$T_{\bullet}^{(R,M)}$
are strictly multiplicative.
If $R$ has integral coefficients,
$D_\bullet^{(R)}$ and
$T_\bullet^{(R)}$
are also strictly multiplicative.
\end{lm}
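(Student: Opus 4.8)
The plan is to establish the five statements of assertion~1 in order of increasing complexity, propagating multiplicativity of a base object to constructions built from it by fibre products and dilatations, and then to deduce assertion~2. First, $P_\bullet=X^{\bullet+1}$ is multiplicative: this is the first example listed at the end of Section~\ref{sssimp}, applied to $X\to\mathrm{Spec}\ k$, because an additive cocartesian diagram (\ref{eqadd}) with $n=m+l$ produces the canonical isomorphism $X^{n+1}\to X^{m+1}\times_XX^{l+1}$ along the shared coordinate $m$. For $P_\bullet^{(D)}$ I would verify the cartesian square (\ref{eqmn}) directly from the universal property of dilatations. Writing $X^{n+1}=X^{m+1}\times_XX^{l+1}$, the ideal $I_n$ of the diagonal in $X^{n+1}$ is the sum of the pullbacks of the diagonal ideals $I_m,I_l$. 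A morphism $T\to P_m^{(D)}\times_XP_l^{(D)}$ is a morphism $T\to X^{n+1}$ for which $\mathrm{pr}_j^*D\cdot\mathcal O_T$ is invertible for all $j$ and contains $I_m\mathcal O_T$ for $0\le j\le m$ and $I_l\mathcal O_T$ for $m\le j\le n$; but by Lemma~\ref{lmPnD}.1 all the invertible ideals $\mathrm{pr}_j^*D\cdot\mathcal O_T$ coincide, so each contains $I_m\mathcal O_T+I_l\mathcal O_T=I_n\mathcal O_T$, which is exactly the condition defining $T\to P_n^{(D)}$. Hence $P_n^{(D)}\to P_m^{(D)}\times_{P_0^{(D)}}P_l^{(D)}$ is an isomorphism.

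For $P_\bullet^{(D,M)}$ I would use that the square (\ref{eqPnDc}) is cartesian for every $[0,m]\to[0,n]$ by Lemma~\ref{lmPDM}.3; taking the vertex maps $[0]\to[0,m]$ identifies $P_m^{(D,M)}$ with $P_m^{(D)}\times_X\widetilde X^{(M)}$ compatibly with the structure maps to $P_0^{(D,M)}=\widetilde X^{(M)}$. Combining this with the multiplicativity of $P_\bullet^{(D)}$, the elementary identity $(A\times_XW)\times_W(B\times_XW)=(A\times_XB)\times_XW$ gives $P_m^{(D,M)}\times_{\widetilde X^{(M)}}P_l^{(D,M)}\cong(P_m^{(D)}\times_XP_l^{(D)})\times_X\widetilde X^{(M)}=P_n^{(D)}\times_X\widetilde X^{(M)}=P_n^{(D,M)}$. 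For $P_\bullet^{(R,M)}$ and, when $R$ is integral, $P_\bullet^{(R)}$, I would commute the defining dilatation with the product by Corollary~\ref{cordlf}.2, taking $S=\widetilde X^{(M)}$ (resp. $S=X$), $P_1=P_m^{(D,M)}$, $P_2=P_l^{(D,M)}$, $P_3=P_n^{(D,M)}=P_1\times_SP_2$, center the diagonal, and divisor $\sum_im_i(r_i-1)\widetilde D_i^{(M)}$ (resp. $\sum_i(r_i-1)D_i$) pulled back from $S$; the flatness and regular-immersion hypotheses follow from Lemmas~\ref{lmPnD} and~\ref{lmPDM}, and the resulting diagram (\ref{eqdlfp}) is precisely the cartesian square (\ref{eqmn}) for $P^{(R,M)}$ (resp. $P^{(R)}$). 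For $P^{(R)}$ one may alternatively invoke the cartesian diagram (\ref{eqPRRM}) of Lemma~\ref{lmPR}.4 together with Lemma~\ref{lmdlf}.1.

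For assertion~2 I would first observe that each divisor object is the inverse image of a fixed Cartier divisor on $X$ or on $\widetilde X^{(M)}$, independent of the projection by Lemmas~\ref{lmPnD}.1 and~\ref{lmPRM}.1. Since such inverse images commute with the flat fibre products used above, the cartesian square (\ref{eqmn}) for the divisor objects follows from the one for the ambient $P_\bullet$; for example $D_n^{(D)}=\{(p,p')\in P_n^{(D)}:t(p)=s(p')\in D\}=D_m^{(D)}\times_DD_l^{(D)}$. Strict multiplicativity then amounts to the equality of the two maps $s,t$ from the degree-$1$ to the degree-$0$ term, and this is the conceptual heart of the construction: the degree-$1$ boundary is supported over the diagonal. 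Indeed, by the vector bundle description (\ref{eqPD}) of Lemma~\ref{lmdl}.1 the inverse image of the divisor in the dilatation is a bundle over the intersection of the center with the divisor, hence maps into the diagonal copy of $P_0$ inside $P_1$, on which the two projections $P_1\to P_0$ coincide; therefore $s=t$ on the boundary.

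The one genuinely non-formal step is the multiplicativity of $P_\bullet^{(D)}$: since $P_n^{(D)}$ is an intersection of dilatations along a common center but with respect to the several divisors $\mathrm{pr}_j^*D$, Corollary~\ref{cordlf}.2, which is tailored to a single divisor pulled back from the base, does not apply verbatim, and one must instead match universal properties. The two facts that make this work are $I_n=I_m+I_l$ and the coincidence of all $\mathrm{pr}_j^*D$ on $P^{(D)}$ furnished by Lemma~\ref{lmPnD}.1. Once $P_\bullet^{(D)}$ is handled, every remaining multiplicativity assertion propagates formally by commuting flat base change and dilatation with the product, and the strict multiplicativity of the divisor objects reduces to the elementary fact that the boundary lies over the diagonal.
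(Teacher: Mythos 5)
Your proposal is correct wherever the paper's own proof is, and for most of assertion 1 it follows the same route: multiplicativity of $P_\bullet^{(D,M)}$ from the cartesian squares (\ref{eqPnDc}) of Lemma \ref{lmPDM}.3, and of $P_\bullet^{(R,M)}$ and $P_\bullet^{(R)}$ from Corollary \ref{cordlf}.2 with exactly the right inputs (the dilatation divisor is pulled back from $P_0^{(D,M)}=\widetilde X^{(M)}$, resp.\ from $X$; the center is the degeneracy section; flatness and regularity come from Lemmas \ref{lmPnD} and \ref{lmPDM}). The genuine divergence is $P_\bullet^{(D)}$, which the paper settles by a bare citation of Corollary \ref{cordlf}.2. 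Your objection is accurate: $P_n^{(D)}$ is defined using all $n+1$ divisors ${\rm pr}_j^*D$, of which only the shared coordinate's is pulled back from the base $X$ of the product decomposition, so a literal application of the corollary yields only $P_{n,c}^{(D)}\cong P_{m,i}^{(D)}\times_XP_{l,j}^{(D)}$ for the single-divisor dilatations, and one must still check that the removed proper transforms correspond. Your universal-property argument (via $I_n=I_m+I_l$ and Lemma \ref{lmPnD}.1) supplies precisely this missing step; two small points should be made explicit: the dilatation is \emph{final} among schemes over $X^{n+1}$ satisfying the ideal conditions rather than representing that functor (a morphism $T\to P_m^{(D)}$ does not itself force the conditions on $T$), and invertibility of the pulled-back ideals on $P_m^{(D)}\times_XP_l^{(D)}$ uses flatness of $P_l^{(D)}\to X$ (Lemma \ref{lmPnD}.3). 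With these caveats, your part 1 is, if anything, more complete than the paper's.

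In assertion 2 your multiplicativity argument is the paper's (base change of (\ref{eqmn}) along $D_0\to P_0$). For strictness, however, be precise about which dilatation your principle ``the boundary is exceptional, hence lies over the center, where $s=t$'' refers to. It is sound for $D_\bullet^{(D)}$, $D_\bullet^{(R)}$, $T_\bullet^{(R)}$, and for $T_\bullet^{(R,M)}$ — in the last case the relevant dilatation is $P_1^{(R,M)}\to P_1^{(D,M)}$, whose center \emph{is} the degeneracy copy of $P_0^{(D,M)}$, and this is the only case of strictness used later (Corollary \ref{cormain}). It does not, as stated, cover $D_\bullet^{(D,M)}$ or the components of $D_\bullet^{(R,M)}$ with $r_i=1$: the dilatation defining $P_1^{(D,M)}$ has center the thickened zero section inside ${\mathbf A}^h_{P_1^{(D)}}$, not the degeneracy copy of $P_0^{(D,M)}$, so your argument only shows the boundary maps to the diagonal of $X\times X$, which does not control the ${\mathbf G}_m^h$-component of the two augmentations to $\widetilde D^{(M)}$. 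Concretely, in local coordinates $x=U_1S_1^{m_1}$, $x'=x(1+w)$ on $P_1^{(D,M)}$, the two augmentations send the unit coordinate $U$ of $\widetilde D^{(M)}$ to $U_1$ and to $U_1(1+w)$, which differ on $\{S_1=0\}$; they agree only after the $(R)$-dilatation forces $w$ to be divisible by $S_1^{m_1(r_1-1)}$ with $r_1>1$. The paper's own one-line justification (projection-independence of the augmentations) passes over the same point, so this is not a gap relative to the paper's proof; but to make your write-up self-contained you should either restrict the geometric strictness argument to the $M$-free objects and those supported over $Z$, or give a separate argument for the remaining cases.
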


\begin{proof}
1.
Since $P_\bullet=X^{\bullet+1}$ is multiplicative,
the oversimplicial scheme
$P_\bullet^{(D)}$
is also multiplicative
by Corollary \ref{cordlf}.2.
By Lemma \ref{lmPDM}.3,
$P_\bullet^{(D,M)}$
is multiplicative.
Further by Corollary \ref{cordlf}.2.
$P_\bullet^{(R,M)}$
and 
$P_\bullet^{(R)}$
in the case $R$ has integral coefficients
are also multiplicative.

2.
Since $P_\bullet^{(D)},
P_\bullet^{(D,M)},
P_\bullet^{(R,M)}$
are multiplicative
and since
the diagrams
$$
\begin{CD}
D_m^{(D)}
@>>>
P_m^{(D)}
\\
@VV{\quad\quad \Box }V@VVV\\
D_n^{(D)}
@>>>
P_n^{(D)},
\end{CD}\quad
\begin{CD}
D_m^{(D,M)}
@>>>
P_m^{(D,M)}
\\
@VV{\quad\quad \Box }V@VVV\\
D_n^{(D,M)}
@>>>
P_n^{(D,M)},
\end{CD}\quad
\begin{CD}
D_m^{(R,M)}
@>>>
P_m^{(R,M)}
\\
@VV{\quad\quad \Box }V@VVV\\
D_n^{(R,M)}
@>>>
P_n^{(R,M)},
\end{CD}\quad
\begin{CD}
T_m^{(R,M)}
@>>>
P_m^{(R,M)}
\\
@VV{\quad\quad \Box }V@VVV\\
T_n^{(R,M)}
@>>>
P_n^{(R,M)}
\end{CD}
$$ are cartesian
for $[0,n]\to [0,m]$,
the oversimplicial schemes
$D_\bullet^{(D)},
D_\bullet^{(D,M)},
D_\bullet^{(R,M)}$
and
$T_\bullet^{(R,M)}$
are multiplicative.
Since the morphisms
$D_n^{(D)}\to 
D_0^{(D)}=D,
D_n^{(D,M)} \to
D_0^{(D,M)}=\widetilde D^{(M)},
D_n^{(R,M)} \to
D_0^{(R,M)}=\widetilde D^{(M)}$
and
$T_n^{(R,M)} \to
T_0^{(R,M)}=\widetilde Z^{(M)}$
are independent of
$[0]\to [0,n]$,
the multiplicative
oversimplicial schemes
$D_\bullet^{(D)},
D_\bullet^{(D,M)},
D_\bullet^{(R,M)}$ and
$T_\bullet^{(R,M)}$
are strictly multiplicative.
Similarly, $D_\bullet^{(R)}$ and
$T_\bullet^{(R)}$
are strictly multiplicative
in the case $R$ has integral coefficients.
\end{proof}

\subsection{Properties of the dilatations}

In order to study further properties
of $P_\bullet^{(R,M)}$
including the functoriality,
we give an alternative
description.
We regard
$\widetilde X^{(M)}$
as a closed subscheme of
$\widetilde X^{(M)}\times_k X^n$ 
by the section induced by 
the canonical map 
$\widetilde X^{(M)}\to X$
and the identity of
$\widetilde X^{(M)}$.
The pull-back $\widetilde R^{(M)}
=\sum_{i=1}^hm_ir_i\widetilde D_i^{(M)}$
of $R$ by $\widetilde X^{(M)}\to X$
is a Cartier divisor with integral coefficients.

\begin{lm}\label{lmPRMb}
{\rm 1.}
Let the notation be as above.
We regard
$\widetilde X^{(M)}$
as a closed subscheme of
$\widetilde X^{(M)}\times_k X^n$
embedded as the graph of
the product 
$\widetilde X^{(M)}\to X^n$
of the canonical map
and consider 
the closed immersions
\begin{equation}\begin{CD}
{\rm pr}_0^*\widetilde R^{(M)}
@>>> 
\widetilde X^{(M)}\times_k X^n
@<<< \widetilde X^{(M)}.
\end{CD}
\label{eq2.4}
\end{equation}
Let $\widetilde P^{(R)}\to
\widetilde X^{(M)}\times_k X^n$
denote the dilatation 
defined by the immersions
{\rm (\ref{eq2.4})}.
Then the canonical map
$P_n^{(R,M)}
\to
P_0^{(R,M)}\times_k X^n=
\widetilde X^{(M)}\times_k X^n$
is uniquely lifted to an open immersion
$P_n^{(R,M)}
\to \widetilde P^{(R)}$.
The image is
the complement of
the proper transforms of
the pull-backs of $D$
by the compositions 
$\widetilde X^{(M)}\times_k X^n
\to X^n\to X$with 
the $n$ projections.

{\rm 2.}
Assume further
that $R$ has integral coefficients
and consider 
the closed immersions
\begin{equation}\begin{CD}
{\rm pr}_0^*R
@>>> 
X^{n+1}@<<< X
\end{CD}
\end{equation}
Let $P\to X^{n+1}$
denote the dilatation 
defined by the immersions.
Then the canonical map
$P_n^{(R)}
\to
X^{n+1}$
is uniquely lifted to an open immersion
$P_n^{(R)}\to P$.
The image is
the complement of
the proper transforms of
the pull-backs of $D$
by the $n$ projections
$X^{n+1}\to X$
different from the $0$-th one.
\end{lm}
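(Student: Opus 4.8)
The plan is to construct the comparison morphism by the universality of dilatations recalled after Example~\ref{exdnc}, and then to verify that it is an open immersion with the asserted image by an explicit local computation. All the schemes and maps in sight are built from $X$ by fibered products, blow-ups and dilatations, which commute with smooth base change on $X$; since being an open immersion and the description of the image are both local on $X$, I would reduce, as in the proof of Lemma~\ref{lmXM}, to the local model $X={\rm Spec}\ k[t]$ with $h=1$ and $D=\{t=0\}$, the general $h$ being the straightforward several-variable elaboration of the same computation. Write $t_0,\dots,t_n$ for the coordinates of $X^{n+1}$ and $s,u$ for those of $\widetilde X^{(M)}={\rm Spec}\ k[s,u^{\pm 1}]$, so that the canonical map $\widetilde X^{(M)}\to X$ is $t=us^m$, the divisor $\widetilde D^{(M)}$ is $\{s=0\}$, and ${\rm pr}_0^*\widetilde R^{(M)}=(s^{mr})$ (recall $mr\in{\mathbf Z}$).

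To build the morphism, recall that $P_n^{(R,M)}$ is by definition the dilatation of $P_n^{(D,M)}$ along the Cartier divisor $\sum_i m_i(r_i-1)D_{n,i}^{(D,M)}$, here equal to $(s^{m(r-1)})$, and along the diagonal $\widetilde X^{(M)}=P_0^{(D,M)}$, whose ideal in $P_n^{(D,M)}={\rm Spec}\ k[s,u^{\pm 1},w_1,\dots,w_n]$ (localized where each $1+w_j$ is invertible, with $t_0=us^m$ and $t_j=t_0(1+w_j)$) is $(w_1,\dots,w_n)$. Hence on $P_n^{(R,M)}$ one has $w_j=v'_js^{m(r-1)}$ for new coordinates $v'_j$, and therefore $t_j-t_0=t_0w_j=uv'_js^{mr}$. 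The graph of $\widetilde X^{(M)}\to X^n$ inside $\widetilde X^{(M)}\times_k X^n$ has ideal $(t_1-t_0,\dots,t_n-t_0)$, and the computation just made shows that this ideal pulls back into $(s^{mr})={\rm pr}_0^*\widetilde R^{(M)}$ on $P_n^{(R,M)}$. As the latter is invertible there, the universality of the dilatation $\widetilde P^{(R)}$ furnishes the unique lift $P_n^{(R,M)}\to\widetilde P^{(R)}$ over $\widetilde X^{(M)}\times_k X^n$.

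For the identification, set $v_j=(t_j-t_0)/s^{mr}$, so that $\widetilde P^{(R)}={\rm Spec}\ k[s,u^{\pm 1},v_1,\dots,v_n]$ with $t_j=us^m+v_js^{mr}$, while $P_n^{(R,M)}={\rm Spec}\ k[s,u^{\pm 1},v'_1,\dots,v'_n]$ localized where each $1+v'_js^{m(r-1)}$ is invertible, with $t_j=us^m+uv'_js^{mr}$. Comparing the two expressions for $t_j$ identifies the lift with $v_j\mapsto uv'_j$; since $u$ is a unit, this realizes $P_n^{(R,M)}$ as the localization of $\widetilde P^{(R)}$ inverting the elements $1+u^{-1}v_js^{m(r-1)}$, hence as an open subscheme. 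Finally $t_j=s^m(u+v_js^{m(r-1)})$ shows that $\{1+u^{-1}v_js^{m(r-1)}=0\}$ is exactly the proper transform of the pull-back ${\rm pr}_j^*D=(t_j)$, so that the image is the complement of these $n$ proper transforms, as asserted. Assertion~2 is the special case in which no auxiliary $M$ is introduced: then $\widetilde X^{(M)}=X$, the unit $u$ is absent, $s=t_0$, and the identical computation yields $v_j=v'_j$ and identifies $P_n^{(R)}$ with $P$ minus the proper transforms of ${\rm pr}_j^*D$ for $j=1,\dots,n$.

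The one genuinely delicate point is the bookkeeping of multiplicities and of the torsor coordinate: one must keep the three divisors $\widetilde M^{(M)}$, $\sum_i m_i(r_i-1)D_{n,i}^{(D,M)}$ and $\widetilde R^{(M)}$, carrying the exponents $m$, $m(r-1)$ and $mr$ on $s$, correctly aligned through the key relation $m+m(r-1)=mr$, and, when $M\neq D$, account for the unit $u$ in $t_0=us^m$, which is precisely what turns the coordinate $v'_j$ on $P_n^{(R,M)}$ into $v_j=uv'_j$ on $\widetilde P^{(R)}$. Once the dictionary $t_j-t_0=uv'_js^{mr}$ is in place, the verification that the comparison map is a localization and that the deleted locus is the union of the proper transforms is routine, and the canonicity of the map supplied by the universality of dilatations ensures that these local identifications glue to the asserted global open immersion.
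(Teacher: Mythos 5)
Your coordinate computation is essentially correct, and it unwinds the abstract definitions faithfully: the dictionary $t_j-t_0=uv'_js^{mr}$, the identification of $P_n^{(R,M)}$ with the localization of $\widetilde P^{(R)}$ at $\prod_j(1+u^{-1}v_js^{m(r-1)})$, and the identification of the deleted locus with the proper transforms of the ${\rm pr}_j^*D$ are all right, including the multiplicity bookkeeping $m+m(r-1)=mr$ and the unit $u$. The gap is in the reduction to the local model, which is asserted but would not work as stated. The schemes being compared live over $\widetilde X^{(M)}\times_kX^n$ and $X^{n+1}$, not over $X$, and opens of the form $X'^{n+1}$ for $X'\subset X$ open do \emph{not} cover $X^{n+1}$; so the assertion is not ``local on $X$'' in the sense you invoke. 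Moreover, the constructions do not commute with smooth base change in the way you need: for a chart $X\to{\mathbf A}^h$ (or $X\to{\mathbf A}^d$, \'etale) adapted to $D$, the diagonal $X\subset X^{n+1}$ is \emph{not} the pull-back of the diagonal of $({\mathbf A}^h)^{n+1}$ --- the pull-back is the $(n+1)$-fold fiber product $X\times_{{\mathbf A}^h}\cdots\times_{{\mathbf A}^h}X$ --- and Lemma \ref{lmdlf}.1 requires the center of the dilatation to pull back. Hence there is no base change reducing you to $X={\rm Spec}\ k[t]$; in particular the dimension of $X$ cannot be reduced to $1$, only $h$ and the choice of \'etale coordinates can be normalized.

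The argument can be repaired, but this needs a step you omit: the image of $\widetilde P^{(R)}$ in $\widetilde X^{(M)}\times_kX^n$ is contained in the union of $(\widetilde X^{(M)}\sm\widetilde D^{(M)})\times_kX^n$, where the dilatation is an isomorphism and $P_n^{(R,M)}$ restricts to $U^{n+1}\times{\mathbf G}_m^h$ by Lemma \ref{lmPRM}.1 (so the claim is immediate there), and of the center, whose points have all $X$-coordinates equal to a point of $D$. Only near the center do product neighborhoods $X'^{n+1}$ exist, and there the computation must be run with \'etale coordinates $x_1,\ldots,x_d$ on $X'$, using that the diagonal ideal of $X'^{n+1}$ is generated by the $x_i^{(j)}-x_i^{(0)}$ on a neighborhood of the diagonal (not globally), with the extra smooth directions contributing further $v$-coordinates. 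For comparison, the paper's proof avoids coordinates entirely: it first treats $R=D$, where $\widetilde P^{(D)}$ is the flat base change of the dilatation $P_{n,0}^{(D)}$ along $\widetilde X^{(M)}\times_kX^n\to X^{n+1}$ (Lemma \ref{lmdlf}.1, using that the graph is the pull-back of the diagonal under this flat map), and then deduces the general case from transitivity of dilatations, decomposing ${\rm pr}_0^*\widetilde R^{(M)}$ as ${\rm pr}_0^*\widetilde M^{(M)}$ plus the residual divisor $p^*(\widetilde R^{(M)}-\widetilde M^{(M)})$, exactly parallel to the definition of $P_n^{(R,M)}$ as a dilatation of $P_n^{(D,M)}$; this functorial route is what makes the local-to-global issues disappear.
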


\begin{proof}
1.
First, we consider the case where
$R=D$.
Let $P_{n,0}^{(D)}\to X^{n+1}$ denote
the dilatation defined by the immersions
(\ref{eqPnD}) for $j=0$.
Then, since $\widetilde X^{(M)}\to X$
is flat, we have a cartesian diagram
\begin{equation}
\begin{CD}
P_{n,0}^{(D)}@<<< \widetilde P^{(D)}\\
@VV{\quad\qquad \Box }V @VVV\\
X^{n+1}@<<<\widetilde X^{(M)}\times_k X^n.
\end{CD}
\label{eqPMD0}
\end{equation}
The open subscheme $P_n^{(D)}
\subset P_{n,0}^{(D)}$
is obtained by removing
the proper transforms of
the pull-backs of $D$
by the $n$ projections
$X^{n+1} \to X$
different from the $0$-th projection.
Since $P_n^{(D,M)}$
is defined by the cartesian diagram
(\ref{eqPMD0}) with 
$P_{n,0}^{(D)}$ replaced by
$P_n^{(D)}$,
the assertion in this case $R=D$
follows.

We show the general case.
Let $p\colon \widetilde P^{(D)}
\to \widetilde X^{(M)}$
denote the projection
and regard
$\widetilde X^{(M)}$
as a closed subscheme of
$\widetilde P^{(D)}$
by the section lifting
$\widetilde X^{(M)}
\to
\widetilde X^{(M)}\times_kX^n$.
Then, the dilatation $\widetilde P^{(R)}$
is canonically identified with the
dilatation defined by the immersions
$$\begin{CD}
p^*(\widetilde R^{(M)}\sm
\widetilde M^{(M)})
@>>>
\widetilde P^{(D)}
@<<< 
\widetilde X^{(M)}.
\end{CD}$$
Thus the assertion for $R$
follows from that for $D$.

2. 
Similar to and easier than that of 1.
\end{proof}

We study the functoriality of the 
diagram (\ref{eqtPR}).
Let $f\colon X'\to X$ be a morphism of smooth
schemes over $k$
such that the inverse image
$U'=f^{-1}(U)$ is the complement
of a divisor $D'$ of $X'$ 
with simple normal crossings.
Let $D'_1,\ldots,D'_{h'}$
be the irreducible components
of $D'$ and set $f^*D_i
=\sum_{j=1}^{h'}e_{ij}D'_j$ for $i=1,\ldots,h$.

We set $R'=f^*(R)=
\sum_{i=1}^hr_i
\sum_{j=1}^{h'}e_{ij}D'_j=
\sum_{j=1}^{h'}r'_jD'_j$.
Let $M'=\sum_{j=1}^{h'}
m'_jD'_j$ be a divisor
with integral coefficients $m'_j\geqq  1$.
Then, the diagram
(\ref{eqtPR}) is defined for
$X',D',R'$ and $M'$.
We denote the schemes constructed for
$X',D',R'$ and $M'$
by putting $'$ as
$P^{\prime (R',M')}_n,
T^{\prime (R',M')}_n$ etc.

We assume that
$l_{ij}=e_{ij}m'_j/m_i$ 
is an integer for every $i=1,\ldots,h$
and $j=1,\ldots,h'$. 
Then, by the cartesian diagram (\ref{eqXMYNc}),
a canonical morphism
$\widetilde X^{\prime (M')}\to
\widetilde X^{(M)}$ is defined.
Since $m'_jr'_j=
\sum_{i=1}^hl_{ij}m_ir_i$ 
is an integer for every $j=1,\ldots,h'$,
further by Lemma \ref{lmPRMb}
and  the functoriality of dilatation,
a canonical morphism
\begin{equation}
P^{\prime (R',M')}_\bullet
\to
P^{(R,M)}_\bullet
\label{eqfunR}
\end{equation} 
of oversimplicial schemes is defined.
If $R$ and hence $R'$
have integral coefficients,
a canonical morphism
$P^{\prime (R')}_\bullet
\to
P^{(R)}_\bullet$ is defined similarly.

We study the 
schemes in (\ref{eqtPR}) more in detail.
We compute some normal bundles.
For a line bundle $L$
on a scheme $S$,
let $L^\times$
denote the complement
$L\sm S$ of the 0-section.

\begin{lm}\label{lmTR}
We have canonical isomorphisms
\begin{align}
T_XP_n
&\to TX^{n+1}/\Delta TX,
\label{eqNP}
\\
T_XP_n^{(D)}
&\to (TX^{n+1}/\Delta TX)(-D),
\label{eqNPD}
\\
T_{\widetilde X^{(M)}}P_n^{(D,M)}
&\to ((TX^{n+1}/\Delta TX)\times_X
\widetilde X^{(M)})(-\widetilde M^{(M)}),
\label{eqND}
\\
T_{\widetilde X^{(M)}}P_n^{(R,M)}
&\to ((TX^{n+1}/\Delta TX)\times_X
\widetilde X^{(M)})(-\widetilde R^{(M)})
\label{eqNR}
\end{align}
where $\Delta$ denotes the image
by the diagonal morphism.
If $R$ has integral coefficients,
we have a canonical isomorphism
\begin{equation}
T_XP_n^{(R)}
\to (TX^{n+1}/\Delta TX)(-R).
\label{eqNR1}
\end{equation}
\end{lm}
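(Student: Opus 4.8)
The plan is to establish the five isomorphisms in turn, deducing each from the dilatation normal-bundle formula of Lemma \ref{lmdl}.2 together with the previously treated case. The isomorphism (\ref{eqNP}) is the standard computation of the normal bundle of a diagonal: since $X$ is smooth, the diagonal $X\to X^{n+1}=P_n$ is a regular immersion, the restriction of the tangent bundle $TX^{n+1}$ to the diagonal is $(TX)^{n+1}=TX^{n+1}$, and the tangent map of the diagonal is the diagonal $TX\to TX^{n+1}$, so the normal bundle is the cokernel $TX^{n+1}/\Delta TX$.

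For (\ref{eqNPD}), I would first note that each $P_{n,j}^{(D)}=X^{n+1\,({\rm pr}_j^*D\cdot X)}$ is an open subscheme of the blow-up of $X^{n+1}$ along the diagonal copy of $D$, which is the common intersection ${\rm pr}_j^*D\cap X=D$ for every $j$. Hence $P_n^{(D)}=\bigcap_jP_{n,j}^{(D)}$ is open in that same blow-up and contains the diagonal, so $T_XP_n^{(D)}=T_XP_{n,0}^{(D)}$, an open immersion not affecting the normal bundle. Since ${\rm pr}_0^*D\cap X=D$ is a Cartier divisor of $X$, Lemma \ref{lmdl}.2 gives $T_XP_{n,0}^{(D)}\to(T_XP_n)(-D)$, and with (\ref{eqNP}) this is (\ref{eqNPD}). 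The point worth emphasizing is that dividing the whole conormal ideal of the diagonal by the single local equation of ${\rm pr}_0^*D$ twists every normal direction uniformly, producing the $(-D)$ twist of the full bundle of rank $n\dim X$ rather than twisting a single factor.

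For (\ref{eqND}) I would apply the cartesian square of Lemma \ref{lmPDM}.3 with $m=0$, which identifies $P_n^{(D,M)}=P_n^{(D)}\times_X\widetilde X^{(M)}$ and exhibits the section $\widetilde X^{(M)}=P_0^{(D,M)}\hookrightarrow P_n^{(D,M)}$ as the flat base change along $f\colon\widetilde X^{(M)}\to X$ of the diagonal $X\hookrightarrow P_n^{(D)}$. A regular immersion remains regular under flat base change and its normal bundle pulls back, so $T_{\widetilde X^{(M)}}P_n^{(D,M)}=(T_XP_n^{(D)})\times_X\widetilde X^{(M)}$. It then remains to identify the twist: by Lemma \ref{lmXM} one has $f^*D_i=m_i\widetilde D_i^{(M)}$, so $f^*D=\sum_im_i\widetilde D_i^{(M)}=\widetilde M^{(M)}$, and the pullback of $(-D)$ becomes $(-\widetilde M^{(M)})$, yielding (\ref{eqND}).

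Finally, (\ref{eqNR}) and (\ref{eqNR1}) follow from Lemma \ref{lmdl}.2 applied to the dilatations defining the two schemes. The scheme $P_n^{(R,M)}$ is the dilatation of $P_n^{(D,M)}$ along the Cartier divisor $\sum_im_i(r_i-1)D_{n,i}^{(D,M)}$ and the subscheme $\widetilde X^{(M)}$, the latter being a regular immersion by the flat base change above; since $D_{n,i}^{(D,M)}$ restricts on $\widetilde X^{(M)}=P_0^{(D,M)}$ to $\widetilde D_i^{(M)}=D_{0,i}^{(D,M)}$ (cf.\ the strict multiplicativity of Lemma \ref{lmprod}), this divisor restricts to $\sum_im_i(r_i-1)\widetilde D_i^{(M)}=\widetilde R^{(M)}-\widetilde M^{(M)}$, an effective Cartier divisor, so Lemma \ref{lmdl}.2 twists (\ref{eqND}) by $-(\widetilde R^{(M)}-\widetilde M^{(M)})$ and the two $\widetilde M^{(M)}$ contributions cancel to leave $(-\widetilde R^{(M)})$. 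Likewise $P_n^{(R)}$ is the dilatation of $P_n^{(D)}$ along $\sum_i(r_i-1)D_{n,i}^{(D)}$ and the diagonal $X$, whose restriction to $X$ is $R-D$, so Lemma \ref{lmdl}.2 twists (\ref{eqNPD}) by $-(R-D)$, cancelling $-D$ to leave $(-R)$. I expect the main obstacle to be (\ref{eqNPD}), where one must verify that the various $P_{n,j}^{(D)}$ share the single blow-up center $D$ so that $P_n^{(D)}$ is open and retains the diagonal, and that one projection already twists the entire normal bundle uniformly; the bookkeeping identity $f^*D=\widetilde M^{(M)}$ in (\ref{eqND}) is the other delicate point, as it converts the reduced twist downstairs into the weighted twist upstairs.
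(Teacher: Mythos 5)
Your proof is correct. For the first three isomorphisms it coincides with the paper's argument: (\ref{eqNP}) is the standard diagonal computation (the paper phrases it dually, via the conormal exact sequence), (\ref{eqNPD}) rests on exactly the same observation that $P_n^{(D)}$ is an open subscheme of the one-projection dilatation $P_{n,0}^{(D)}$ containing the diagonal, followed by (\ref{eqnb}), and (\ref{eqND}) is the same flat base-change argument along the cartesian square of Lemma \ref{lmPDM}.3 together with $f^*D=\widetilde M^{(M)}$. Where you genuinely diverge is in (\ref{eqNR}) and (\ref{eqNR1}). The paper invokes Lemma \ref{lmPRMb}: it re-describes $P_n^{(R,M)}$ as an open subscheme of the single dilatation $\widetilde P^{(R)}$ of $\widetilde X^{(M)}\times_k X^n$ with respect to ${\rm pr}_0^*\widetilde R^{(M)}$ and $\widetilde X^{(M)}$, so one application of (\ref{eqnb}) produces the twist by $-\widetilde R^{(M)}$ in one shot, the untwisted normal bundle being identified with the pull-back of $TX^{n+1}/\Delta TX$ by flatness of $\widetilde X^{(M)}\to X$. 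You instead work straight from the definition of $P_n^{(R,M)}$ as the dilatation of $P_n^{(D,M)}$ along $\sum_i m_i(r_i-1)D_{n,i}^{(D,M)}$ and $\widetilde X^{(M)}$, apply Lemma \ref{lmdl}.2 a second time, and let the twists add: $-\widetilde M^{(M)}-(\widetilde R^{(M)}-\widetilde M^{(M)})=-\widetilde R^{(M)}$; similarly for (\ref{eqNR1}) starting from (\ref{eqNPD}). This composition is legitimate: at the second step $\widetilde X^{(M)}\to P_n^{(D,M)}$ is a regular immersion (a closed immersion of smooth $k$-schemes), the restricted divisor $\sum_i m_i(r_i-1)\widetilde D_i^{(M)}$ is Cartier on $\widetilde X^{(M)}$, and the oversimplicial immersion $\widetilde X^{(M)}=P_0^{(R,M)}\to P_n^{(R,M)}$ agrees with the universal lift furnished by the dilatation, since both agree on the dense open subscheme $U\times_k{\mathbf G}_m^h$ and the schemes involved are separated and reduced. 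What your route buys is independence from Lemma \ref{lmPRMb}: only (\ref{eqnb}) and the defining constructions are used. What the paper's route buys is a single twist with no cancellation bookkeeping, exploiting an alternative description it needs anyway, e.g.\ for the functoriality (\ref{eqfunR}).
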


\begin{proof}
The diagonal map
$X\to X^{n+1}$
defines an exact sequence
$$0\to {\cal N}_{X/X^{n+1}}
\to
\Omega^1_{X^{n+1}/k}
\otimes_{{\cal O}_{X^{n+1}}}
{\cal O}_X\to
\Omega^1_{X/k}\to 0.$$
Since
$\Omega^1_{X^{n+1}/k}
\otimes_{{\cal O}_{X^{n+1}}}
{\cal O}_X
=
\Omega_{X/k}^{1\oplus n+1}$,
we obtain an isomorphism
(\ref{eqNP}).

In the notation of
proof of Lemma \ref{lmPRMb},
the scheme $P^{(D)}_n$
is an open subscheme of
the dilatation $P^{(D)}_{n,0}
\to X^{n+1}$.
Hence, the isomorphism
(\ref{eqnb})
gives an isomorphism
$T_XP_n^{(D)}
\to
T_XP_n\otimes
(T_DX)^{\otimes -1}$.
Thus (\ref{eqNPD}) follows from (\ref{eqNP}).

The diagram 
$$\begin{CD}
\widetilde X^{(M)}@>>>P_n^{(D,M)}\\
@VV{\quad\quad \Box }V@VVV\\
X@>>> P_n^{(D)}
\end{CD}$$
is cartesian and the vertical arrows are flat by Lemma \ref{lmPDM}.
Since the pull-back of $D$ to
$\widetilde X^{(M)}$ is $\widetilde M^{(M)}$
by Lemma \ref{lmPDM},
(\ref{eqNPD}) implies (\ref{eqND}).

In the notation of
proof of Lemma \ref{lmPRMb},
the scheme $P^{(R,M)}_n$
is an open subscheme of
the dilatation $\widetilde P^{(R)}
\to \widetilde X^{(M)}\times_kX^n$.
Hence, the isomorphism
(\ref{eqnb})
gives an isomorphism
$T_XP_n^{(R,M)}
\to
(T_{\widetilde X^{(M)}}
(\widetilde X^{(M)}\times_kX^n))
\otimes
(T_{\widetilde R^{(M)}}\widetilde X^{(M)})^{\otimes -1}$.
Since $\widetilde X^{(M)}\to X$
is flat, the normal bundle
$T_{\widetilde X^{(M)}}
(\widetilde X^{(M)}\times_kX^n)$
is the pull-back of
$T_XX^{n+1}=T_XP_n$
and is canonically isomorphic
to $TX^{n+1}/\Delta TX\times_X
\widetilde X^{(M)}$.
Thus (\ref{eqNR}) follows.
The isomorphism (\ref{eqNR1})
is proved similarly and more easily.
\end{proof}

For a subset $I\subset \{1,\ldots,h\}$,
we set $D_I=
\bigcap_{i\in I}D_i$
and $D_I^\circ
=D_I\sm
\bigcup_{i\notin I}
D_{I\amalg\{i\}}$.
Similarly, set
$\widetilde D_I^{(M)}
=
\bigcap_{i\in I}
\widetilde D_i^{(M)}$
and
$\widetilde D_I^{(M)\circ}
=\widetilde D_I^{(M)}
\sm
\bigcup_{i\notin I}
\widetilde D_{I\amalg\{i\}}^{(M)}$.

\begin{lm}\label{lmTR2}
{\rm 1.}
The normal bundle
$T_{\widetilde D_i^{(M)}}
\widetilde X^{(M)}$
is a trivial line bundle over
$\widetilde D_i^{(M)}$.
The action of
${\mathbf G}_m^h$ is the diagonal
action of that on the base
$\widetilde D_i^{(M)}$
and the multiplication
by the $i$-th component.

{\rm 2.}
For a subset $I\subset \{1,\ldots,h\}$,
the open subscheme
$\widetilde D_I^{(M)\circ}
\subset \widetilde D_I^{(M)}$
is a ${\mathbf G}_m^h$-torsor
over $D_I^\circ$ canonically isomorphic
to $\prod_{i\in I}(T_{D_i}X^\times\times_{D_i}D_I^\circ)
\times_{D_I^\circ} \prod_{i\notin I} {\mathbf G}_{m,D_I^\circ}$.
The ${\mathbf G}_m^h$-action
on the latter is by multiplication 
componentwise.

{\rm 3.}
We have a canonical isomorphism
\begin{equation}
D_n^{(D,M)}
\to
\widetilde D^{(M)}\times_D
D_n^{(D)}
\label{eqTnD}
\end{equation}
over $\widetilde D^{(M)}$
compatible with the
${\mathbf G}_m^h$-action 
defined on the base
$\widetilde D^{(M)}$ on the right hand side.

{\rm 4.}
For the scheme $T_n^{(R,M)}$
{\rm (\ref{eqTnRM})},
we have a canonical isomorphism
\begin{equation}
T_n^{(R,M)}
\to
((TX^{n+1}/\Delta TX)\times_X
\widetilde Z^{(M)})
(-\widetilde R^{(M)})
\label{eqTnR}
\end{equation}
over $\widetilde Z^{(M)}$.
The action of ${\mathbf G}_m^h$
is compatible with 
the diagonal action
on the base scheme $\widetilde Z^{(M)}$
and multiplication of
the product of
$m_ir_i$-th powers of $i$-th components
on the fiber.

If $R$ has integral coefficients,
we have a canonical isomorphism
\begin{equation}
T_n^{(R)}
\to
((TX^{n+1}/\Delta TX)(-R)\times_XZ.
\label{eqTnR0}
\end{equation}
\end{lm}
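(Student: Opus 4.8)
\medskip
The plan is to reduce all four assertions to the explicit local model of Lemma \ref{lmXM} and to the normal bundle computations of Lemma \ref{lmTR}, the ${\mathbf G}_m^h$-equivariance being checked by a weight count. \'Etale-locally on $X$ I may assume $X={\mathbf A}^h_k$ with $D_i=\{t_i=0\}$; then by the proof of Lemma \ref{lmXM} one has $\widetilde X^{(M)}={\rm Spec}\ k[S_1,\dots,S_h,U_1^{\pm1},\dots,U_h^{\pm1}]$ with $f^*t_i=U_iS_i^{m_i}$ and $\widetilde D_i^{(M)}=\{S_i=0\}$, and the torus acts by $S_i\mapsto\mu_iS_i$, $U_i\mapsto\mu_i^{-m_i}U_i$, since $S_i$ is the coordinate of the trivial ${\mathbf G}_m^h$-torsor on the complement $U\times_k{\mathbf G}_m^h$ while $U_i=f^*t_i\cdot S_i^{-m_i}$ is invariant up to the $i$-th factor. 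For assertion 1 the conormal sheaf of $\widetilde D_i^{(M)}=\{S_i=0\}$ is then freely generated by the class of $S_i$, which is of weight one for the $i$-th factor and invariant under the others; this gives the triviality together with the stated action, the global triviality and the independence of the local choices being routine, exactly as in the closing remarks of the proof of Lemma \ref{lmdl}.

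For assertion 2, over $D_I^\circ$ the scheme $\widetilde D_I^{(M)\circ}$ has coordinate ring $k[\{S_j^{\pm1}\}_{j\notin I},\{U_i^{\pm1}\}_{i}]$ with $t_j=U_jS_j^{m_j}$ for $j\notin I$, so it is a ${\mathbf G}_m^h$-torsor over $D_I^\circ$ whose $j$-th factor for $j\notin I$ is the trivial torsor ${\mathbf G}_{m,D_I^\circ}$ with coordinate $S_j$ and whose $i$-th factor for $i\in I$ is the ${\mathbf G}_m$-torsor with coordinate $U_i$. The crucial step is the canonical identification of this last factor with $T_{D_i}X^\times\times_{D_i}D_I^\circ$: since $f^*t_i$ vanishes to order exactly $m_i$ along $\widetilde D_i^{(M)}$ and $f^*L(D_i)|_{\widetilde D_i^{(M)}}=N_i^{\otimes m_i}$ for $N_i=T_{\widetilde D_i^{(M)}}\widetilde X^{(M)}$, its leading coefficient is the nowhere-vanishing function $U_i$, and as $T_{D_i}X$ is a line bundle the pair $(U_i,\partial_{t_i})$ with $\partial_{t_i}$ dual to $dt_i$ yields the normal vector $U_i\,\partial_{t_i}$; the substitution $t_i\mapsto gt_i$, under which $(U_i,\partial_{t_i})\mapsto(f^*g\cdot U_i,\,g^{-1}\partial_{t_i})$, shows this vector is independent of the defining equation, hence globally defined. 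The equivariance then reduces to a weight count in which the factor $m_i$ enters through $f^*D_i=m_i\widetilde D_i^{(M)}$.

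For assertion 3, the cartesian square (\ref{eqPnDc}) of Lemma \ref{lmPDM} taken with $m=0$ identifies $P_n^{(D,M)}$ with $P_n^{(D)}\times_X\widetilde X^{(M)}$, the torus acting through the second factor; restricting this identification to the boundary divisors lying over $D$ gives the isomorphism (\ref{eqTnD}) with the asserted equivariance. For assertion 4, I view $P_n^{(R,M)}$ as the dilatation of $P_n^{(D,M)}$ with center $\widetilde X^{(M)}$ and divisor $\sum_i m_i(r_i-1)D_{n,i}^{(D,M)}$, whose restriction to the center is supported on $\widetilde Z^{(M)}$ and satisfies $\widetilde M^{(M)}+\sum_i m_i(r_i-1)\widetilde D_i^{(M)}=\widetilde R^{(M)}$. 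By the isomorphisms (\ref{eqPD}) and (\ref{eqnb}) of Lemma \ref{lmdl}, the inverse image $T_n^{(R,M)}$ of $\widetilde Z^{(M)}$ is the normal bundle $T_{\widetilde X^{(M)}}P_n^{(R,M)}$ restricted over $\widetilde Z^{(M)}$, which by (\ref{eqNR}) equals $((TX^{n+1}/\Delta TX)\times_X\widetilde Z^{(M)})(-\widetilde R^{(M)})$; since the torus acts trivially on $TX^{n+1}/\Delta TX$ and, by assertion 1, through the product of the $m_ir_i$-th powers of the $i$-th components on the twist by $-\widetilde R^{(M)}$, this is (\ref{eqTnR}). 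The integral case (\ref{eqTnR0}) is identical, with center $X$, divisor $\sum_i(r_i-1)D_{n,i}^{(D)}$, and (\ref{eqNR1}) in place of (\ref{eqNR}).

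I expect the one genuinely delicate point to be assertion 2: exhibiting the canonical, coordinate-free isomorphism of the factors for $i\in I$ with the punctured normal bundles $T_{D_i}X^\times$ and checking ${\mathbf G}_m^h$-equivariance with the correct weights. For $m_i>1$ this identification is not induced by the differential of $f$, which vanishes on normal bundles along $\widetilde D_i^{(M)}$, but by the order-$m_i$ leading term of $f^*t_i$, so one must verify its compatibility both with changes of the defining equation of $D_i$ and with the torus action; once this is in place, assertions 1, 3 and 4 follow by the local model together with the previously established normal bundle formulas.
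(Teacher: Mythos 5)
Your proposal is correct and takes essentially the same approach as the paper: the paper dismisses assertions 1 and 2 as clear from the construction of $\widetilde X^{(M)}$ (which you flesh out via the local model of Lemma \ref{lmXM}, including the genuinely useful verification that the identification $U_i\mapsto U_i\partial_{t_i}$ with $T_{D_i}X^\times$ is independent of the defining equation), proves 3 by the cartesian diagram (\ref{eqPnDc}), and proves 4 by the dilatation normal-bundle formula (\ref{eqPD}) exactly as in the proof of (\ref{eqNR}). Your only deviation is to apply (\ref{eqPD}) and (\ref{eqnb}) to the defining dilatation of $P_n^{(R,M)}$ over $P_n^{(D,M)}$ and then cite (\ref{eqNR}), instead of routing through the description of Lemma \ref{lmPRMb} as the paper does; since the paper's proof of (\ref{eqNR}) itself goes through Lemma \ref{lmPRMb}, this is the same computation factored differently.
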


\begin{proof}
1.\ and 2.
Clear from the construction of
$\widetilde X^{(M)}$.

3. 
It follows from
the cartesian diagram
(\ref{eqPnDc}).

4.
Similarly as in the proof of
(\ref{eqNR}),
the isomorphism (\ref{eqPD})
gives a canonical isomorphism
\begin{equation}
T_n^{(R,M)}
\to
(T_{\widetilde X^{(M)}}
(\widetilde X^{(M)}\times X^n))
\times_{\widetilde X^{(M)}}
\widetilde Z^{(M)})
\otimes
(T_{\widetilde R^{(M)}}
\widetilde X^{(M)})^{\otimes -1}.
\label{eqTRn}
\end{equation}
Since the normal bundle
$T_{\widetilde X^{(M)}}
(\widetilde X^{(M)}\times X^n)$
is 
$(TX^{n+1}/\Delta TX)
\times_X\widetilde X^{(M)}$,
we obtain an isomorphism (\ref{eqTnR}).
The compatibility on
the ${\mathbf G}_m^h$-actions
follows from the assertion 1.

If $R$ has integral coefficients,
an isomorphism (\ref{eqTnR0})
is defined similarly and more easily.
\end{proof}

\begin{cor}\label{corTR}
The strictly multiplicative smooth oversimplicial schemes
$T_\bullet^{(R,M)}$
is an oversimplicial scheme
associated to the vector bundle
(Example {\rm \ref{eggpd}})
$$(TX\times_X
\widetilde Z^{(M)})
(-\widetilde R^{(M)})$$
over $\widetilde Z^{(M)}$.

If $R$ has integral coefficients,
$T_\bullet^{(R)}$
is an oversimplicial scheme
associated to the vector bundle
$TX(-R)\times_XZ$
over $Z$.
\end{cor}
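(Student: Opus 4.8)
The plan is to combine the equivalence of categories of Proposition \ref{prgpd}.2 with the normal bundle computation of Lemma \ref{lmTR2}.4. By Lemma \ref{lmprod}.2 the oversimplicial scheme $T_\bullet^{(R,M)}$ is strictly multiplicative, so by Proposition \ref{prgpd}.2 it is associated to a group $T_1^{(R,M)}$ over $T_0^{(R,M)}$; conversely the vector bundle $E=(TX\times_X\widetilde Z^{(M)})(-\widetilde R^{(M)})$, viewed with its additive group structure over $\widetilde Z^{(M)}$, is the group attached to the cokernel oversimplicial scheme recalled at the end of Subsection \ref{sssimp}. Hence it suffices to produce an isomorphism of groups between $T_1^{(R,M)}$ over $T_0^{(R,M)}$ and $E$ over $\widetilde Z^{(M)}$.

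Setting $n=0$ in the isomorphism (\ref{eqTnR}) gives $T_0^{(R,M)}=\widetilde Z^{(M)}$, since $TX^1/\Delta TX=0$; setting $n=1$ gives a canonical isomorphism $T_1^{(R,M)}\to E$, since $(v_0,v_1)\mapsto v_1-v_0$ identifies $TX^2/\Delta TX$ with $TX$. As $T_\bullet^{(R,M)}$ is strictly multiplicative the two maps $s,t\colon T_1^{(R,M)}\to T_0^{(R,M)}$ coincide, matching the equality $s=t$ for the group $E$. It remains to check that the group law corresponds to vector addition. For this I would use that the isomorphisms (\ref{eqTnR}), being induced by the canonical identification $\Omega^1_{X^{n+1}/k}\otimes{\cal O}_X=\Omega_{X/k}^{1\oplus n+1}$ of Lemma \ref{lmTR}, twisted by the pullback to $\widetilde Z^{(M)}$ of the line bundle $L(-\widetilde R^{(M)})$, are functorial for the oversimplicial structure: a map $\phi\colon[0,m]\to[0,n]$ induces on $TX^{n+1}/\Delta TX$ the morphism $(v_0,\ldots,v_n)\mapsto(v_{\phi(0)},\ldots,v_{\phi(m)})$.

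Granting this functoriality, the multiplicativity isomorphism $T_2^{(R,M)}\to T_1^{(R,M)}\times_{T_0^{(R,M)}}T_1^{(R,M)}$ attached to the additive cocartesian square preceding Proposition \ref{prgpd} sends the class of $(v_0,v_1,v_2)$ to $(v_1-v_0,\,v_2-v_1)$, while the face map $[0,1]\to[0,2]$, $0\mapsto0$, $1\mapsto2$, sends it to $v_2-v_0$; therefore $\mu(v_1-v_0,\,v_2-v_1)=v_2-v_0=(v_1-v_0)+(v_2-v_1)$, which is precisely addition in $E$. By the remark following Lemma \ref{lmass} the group, and hence the oversimplicial scheme, is determined by the data in degrees $\leqq 2$, so $T_\bullet^{(R,M)}$ is associated to $E$. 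The case of integral $R$ is identical, with (\ref{eqTnR0}) and $T_\bullet^{(R)}$ replacing (\ref{eqTnR}) and $T_\bullet^{(R,M)}$. I expect the main obstacle to be exactly the verification of this functoriality: one must confirm that the twist by $L(-\widetilde R^{(M)})$, together with the passage from $X^{n+1}$ to the dilatations recorded in the proof of Lemma \ref{lmTR2}, is compatible with all face maps, after which the difference-cocycle identity $v_2-v_0=(v_1-v_0)+(v_2-v_1)$ closes the argument.
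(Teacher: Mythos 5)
Your proposal is correct and follows essentially the same route as the paper: the paper's proof likewise combines Proposition \ref{prgpd} with the isomorphisms of Lemma \ref{lmTR2} to identify $T_1^{(R,M)}$ over $T_0^{(R,M)}=\widetilde Z^{(M)}$ with the vector bundle, and then asserts that the group law is addition. The only difference is that the paper dismisses the verification of the group law with ``it is easily checked,'' whereas you carry it out explicitly via the functoriality of (\ref{eqTnR}) and the difference-cocycle identity $v_2-v_0=(v_1-v_0)+(v_2-v_1)$ — which is exactly the check the paper has in mind.
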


\begin{proof}
By Proposition \ref{prgpd}
and Lemma \ref{lmTR2},
the strictly multiplicative
oversimplicial scheme
$T_\bullet^{(R,M)}$
is associated to a group scheme
$T_1^{(R,M)}=
((TX^2/\Delta TX)
\times_X
\widetilde Z^{(M)})
(-\widetilde R^{(M)})
=
(TX\times_X
\widetilde Z^{(M)})
(-\widetilde R^{(M)})$
over
$T_0^{(R,M)}=\widetilde Z^{(M)}$.
It is easily checked that
the group structure of
$T_1^{(R,M)}$
is defined by the 
addition on the vector bundle
$(TX\times_X
\widetilde Z^{(M)})
(-\widetilde R^{(M)})$.

In the case $R$ has integral coefficients,
the assertion on $T_\bullet^{(R)}$
is proved in the same way.
\end{proof}

The oversimplicial schemes
$P_\bullet^{(D,M)}$ and
$P_\bullet^{(R,M)}$ depend on
$M$ as follows.

\begin{lm}\label{lmPDq}
{\rm 1.}
If $M=D$, the scheme
$P^{(D,M)}_n$ 
is canonically isomorphic
to the ${\mathbf G}_m^h$-torsor
$\prod_{i=1}^hL(D^{(D)}_{n,i})^\times$ over $P^{(D)}_n$.

{\rm 2.}
Let $m_i'=l_im_i\geqq  1$ be integers
for $i=1,\ldots,h$
and set $M=m_1D_1+\cdots+m_hD_h$
and $M'=m'_1D_1+\cdots+m'_hD_h$.
Then, the diagram
\begin{equation}
\begin{CD}
P^{(R,M')}_n@>>>{\mathbf A}^h\\
@VV{\quad\quad \Box }V @VV{(t_i)\mapsto (t_i^{l_i})}V\\
P^{(R,M)}_n@>>>{\mathbf A}^h
\end{CD}
\label{eqdncP}
\end{equation}
is cartesian 
and the vertical arrows are
finite flat.
The vertical arrows are
compatible with the ${\mathbf G}_m^h$-actions
and the morphism
${\mathbf G}_m^h\to {\mathbf G}_m^h$
sending $(t_i)$ to $(t_i^{l_i})$.

If $R=D$,
the isomorphisms
{\rm (\ref{eqTnD})} for
$M$ and $M'$ make
a commutative diagram
\begin{equation}
\begin{CD}
D^{(D,M')}_n@>>>
\widetilde D^{(M')}\times_DD^{(D)}_n
\\
@VVV @VVV\\
D^{(D,M)}_n@>>>
\widetilde D^{(M)}\times_DD^{(D)}_n.
\end{CD}
\label{eqdncT}
\end{equation}
The isomorphisms
{\rm (\ref{eqTnR})} for
$M$ and $M'$ make
a commutative diagram
\begin{equation}
\begin{CD}
T^{(R,M')}_n@>>>
((TX^{n+1}/\Delta TX)\times_X
\widetilde Z^{(M')})
(-\widetilde R^{(M')})\\
@VVV @VVV\\
T^{(R,M)}_n@>>>
((TX^{n+1}/\Delta TX)\times_X
\widetilde Z^{(M)})
(-\widetilde R^{(M)}).
\end{CD}
\label{eqdncTR}
\end{equation}
\end{lm}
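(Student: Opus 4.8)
The plan is to handle the two assertions separately, deducing everything from the explicit local model for $\widetilde X^{(mD)}$ in the proof of Lemma \ref{lmXM} together with the functorialities (\ref{eqXMYN}), (\ref{eqXMM'}) and (\ref{eqfunR}) already recorded. Throughout, $\phi$ denotes the canonical morphism $\widetilde X^{(M')}\to\widetilde X^{(M)}$ of (\ref{eqXMM'}) and, more generally, the morphism $P^{(R,M')}_n\to P^{(R,M)}_n$ it induces via (\ref{eqfunR}) with $f={\rm id}_X$.

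For assertion 1 I would unwind the definition of $P^{(D,M)}_n$: it is obtained by applying the construction of Lemma \ref{lmXM} to the divisor $m_1D^{(D)}_{n,1}+\cdots+m_hD^{(D)}_{n,h}$ on $P^{(D)}_n$, hence by definition is the fibered product over $P^{(D)}_n$ of the schemes attached to the single divisors $m_iD^{(D)}_{n,i}$. When $M=D$ every $m_i=1$, so each factor is the $m=1$ case; by Example \ref{exdnc}.2 the underlying dilatation is the line bundle $L(D^{(D)}_{n,i})$, and the local model ${\rm Spec}\ k[S,U^{\pm1}]$ with $T=US$ shows that the proper transform of the zero-section that is removed is exactly the zero-section of this line bundle. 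Thus each factor is $L(D^{(D)}_{n,i})^\times$, so $P^{(D,D)}_n\cong\prod_{i=1}^hL(D^{(D)}_{n,i})^\times$ over $P^{(D)}_n$, and the ${\mathbf G}_m^h$-action is the fibrewise scaling, as claimed.

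For the cartesian square (\ref{eqdncP}) I would first identify the horizontal maps to ${\mathbf A}^h$ as the coordinate maps $(S_1,\dots,S_h)$ whose $i$-th factor cuts out the divisor $D^{(R,M)}_{n,i}$; in the local model these are the coordinates $S_i$ on $\widetilde X^{(M)}$. The morphism $\phi$ is given locally by $S_i\mapsto (S_i')^{l_i}$, so on $\widetilde X^{(M')}\to\widetilde X^{(M)}$ the square over the $l_i$-th power map is cartesian by the computation $k[S,U^{\pm1}]\otimes_{k[S],\,S\mapsto S'^{l}}k[S']\cong k[S',U^{\pm1}]$. Since the formation of $P^{(R,M)}_n$ out of $\widetilde X^{(M)}$ commutes with this base change through the functoriality (\ref{eqfunR}), the diagram (\ref{eqdncP}) is cartesian; alternatively one checks it directly in the local model. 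Finite flatness of the vertical arrows then follows by base change from the finite flat map $(t_i)\mapsto(t_i^{l_i})$ on ${\mathbf A}^h$, and compatibility with the ${\mathbf G}_m^h$-actions is exactly the assertion recorded after (\ref{eqXMM'}).

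Finally, the two commutative squares (\ref{eqdncT}) and (\ref{eqdncTR}) amount to naturality in $M$ of the isomorphisms (\ref{eqTnD}) and (\ref{eqTnR}). For (\ref{eqdncT}) this is immediate, since both sides are read off the cartesian square (\ref{eqPnDc}) of Lemma \ref{lmPDM}, which is functorial under $\phi$. For (\ref{eqdncTR}) I would trace the derivation of (\ref{eqTnR}) from the normal-bundle isomorphism (\ref{eqPD}): the latter is functorial for the flat morphism $\phi$, the reduced preimage of $\widetilde Z^{(M)}$ under $\phi$ is $\widetilde Z^{(M')}$, and the only remaining point is to match the twisting line bundles. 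Here the key computation is that $\phi^*\widetilde D^{(M)}_i=l_i\widetilde D^{(M')}_i$, whence, using $m_i'r_i=l_im_ir_i$, one gets $\phi^*\widetilde R^{(M)}=\widetilde R^{(M')}$ and so $\phi^*L(-\widetilde R^{(M)})\cong L(-\widetilde R^{(M')})$; the right vertical arrow of (\ref{eqdncTR}) is then the resulting pullback map and the square commutes. I expect this twist-matching to be the only genuinely delicate step, the remaining verifications being routine consequences of the local models and the functorialities already established.
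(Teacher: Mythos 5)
Your proposal is correct and follows essentially the same route as the paper's own proof: part 1 via the line-bundle description of Example \ref{exdnc}.2 (i.e.\ the last assertion of Lemma \ref{lmXM}), and part 2 by first checking the case $n=0$, $R=D$ directly from the construction of $\widetilde X^{(M)}$ and $\widetilde X^{(M')}$ and then propagating it to general $n$ and $R$ through the description of Lemma \ref{lmPRMb} and the functoriality (\ref{eqfunR}). Your explicit twist-matching $\phi^*\widetilde D_i^{(M)}=l_i\widetilde D_i^{(M')}$, hence $\phi^*\widetilde R^{(M)}=\widetilde R^{(M')}$, merely fills in what the paper compresses into ``the rest follows.''
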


\begin{proof}
1.
It follows from the last assertion in Lemma \ref{lmXM}.

2.
For $n=0$ and $R=D$,
the cartesian diagram
(\ref{eqdncP}) follows from the construction of
$\widetilde X^{(M)}
=P_0^{(D,M)}$
and
$\widetilde X^{(M')}
=P_0^{(D,M')}$.
The rest follows from this
and Lemma \ref{lmPRMb}.
\end{proof}

\subsection{Ramification of Galois covering
and an additive structure}

We keep the notation fixed at the beginning of the section.
Let $G$ be a finite group and
$V\to U=X\sm D$ be a $G$-torsor.
The quotients $V^{n+1}/\Delta G$
of the $(n+1)$-fold fibered products
$V^{n+1}$ over $k$
by the diagonal action of $G$
for integers $n\geqq  0$
define a finite \'etale morphism 
$V^{\bullet+1}/\Delta G
\to U^{\bullet+1}$
of oversimplicial schemes.

\begin{lm}\label{lmDV}
The morphism
$V^{\bullet+1}/\Delta G
\to U^{\bullet+1}$
is a multiplicative morphism
of oversimplicial schemes.
\end{lm}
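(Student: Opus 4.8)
The plan is to reduce the statement to Lemma \ref{lmgrpd}. Write $f_\bullet\colon Q_\bullet=V^{\bullet+1}/\Delta G\to P_\bullet=U^{\bullet+1}$ for the morphism in question. Since $V\to U$ is a $G$-torsor we have $Q_0=V^1/\Delta G=V/G=U=P_0$, and the component $f_0\colon Q_0\to P_0$ is the identity, hence injective; moreover $P_\bullet=U^{\bullet+1}$ is the multiplicative oversimplicial scheme attached to $U\to{\rm Spec}\ k$ in the examples above. We want to prove that $f_\bullet$ is multiplicative, which is condition (2) of Lemma \ref{lmgrpd}, so by the equivalence it suffices to prove condition (1), namely that the oversimplicial scheme $Q_\bullet=V^{\bullet+1}/\Delta G$ is itself multiplicative.

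To this end, fix an additive cocartesian diagram (\ref{eqadd}), so that $n=l+m$, the map $[0,l]\to[0,n]$ is the inclusion, $[0,m]\to[0,n]$ is $i\mapsto l+i$, and the two maps $[0]\to[0,l]$ and $[0]\to[0,m]$ single out the common index $l$. I must show that the diagram (\ref{eqmn}) with $P$ replaced by $Q$ is cartesian, that is, that the canonical morphism
\[
V^{n+1}/\Delta G\longrightarrow
(V^{m+1}/\Delta G)\times_{V/G}(V^{l+1}/\Delta G),
\]
sending the class of $(v_0,\ldots,v_n)$ to the pair formed by the class of $(v_l,\ldots,v_n)$ and the class of $(v_0,\ldots,v_l)$, is an isomorphism. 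It is well defined because both components restrict to the class of $v_l$ in $V/G=U$.

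Both sides are finite \'etale over $U^{n+1}=U^{m+1}\times_UU^{l+1}$, so the comparison morphism is \'etale over $U^{n+1}$, and a morphism between schemes finite \'etale over a base that induces a bijection on every geometric fiber is an isomorphism. Hence I only need bijectivity on geometric fibers. Fixing geometric points $\bar u_0,\ldots,\bar u_n$ of $U$, each fiber $V_{\bar u_i}$ is a $G$-torsor, so $G$ acts simply transitively on it. For surjectivity, given classes of $(w_l,\ldots,w_n)$ and $(v_0,\ldots,v_l)$ whose images $w_l$ and $v_l$ lie in one $G$-orbit, translate the first tuple by the unique $g\in G$ with $g\,w_l=v_l$ to arrange $w_l=v_l$; then $(v_0,\ldots,v_l,w_{l+1},\ldots,w_n)$ is a preimage. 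For injectivity, if $(v_0,\ldots,v_n)$ and $(v'_0,\ldots,v'_n)$ have the same image, there are $g,h\in G$ with $v'_i=g\,v_i$ for $0\le i\le l$ and $v'_i=h\,v_i$ for $l\le i\le n$; comparing at $i=l$ gives $g\,v_l=h\,v_l$, so $g=h$, and the two tuples lie in a single $\Delta G$-orbit.

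The one step that needs genuine care is this last verification, and its crux is the freeness of the $G$-action on the torsor $V$: it is precisely freeness that forces the translating elements $g$ and $h$ to coincide at the shared index $l$, which pins down the gluing and yields injectivity. Everything else---the existence and well-definedness of the comparison map, the finite \'etale descent, and the reduction through Lemma \ref{lmgrpd}---is formal, so this is the main (and essentially the only) obstacle.
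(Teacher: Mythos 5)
Your proof is correct, but it takes a genuinely different route from the paper's. The paper proves the morphism condition (condition (2) of Lemma \ref{lmgrpd}) directly: it identifies the two fibered products in the diagram (\ref{eqmnPQ}) as quotients of $V^{n+1}$, namely $V^{m+1}/\Delta G\times_{U^{m+1}}U^{n+1}\cong V^{n+1}/(\Delta G\times G^{l+1})$ and $V^{l+1}/\Delta G\times_{U^{l+1}}U^{n+1}\cong V^{n+1}/(G^{m+1}\times \Delta G)$, and then concludes from the single group-theoretic identity $(\Delta G\times G^{l+1})\cap(G^{m+1}\times\Delta G)=\Delta G$ in $G^{n+1}$; neither Lemma \ref{lmgrpd} nor any passage to geometric fibers appears. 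You instead use Lemma \ref{lmgrpd} as a pivot --- legitimately, since $f_0\colon V/G\to U$ is an isomorphism and $U^{\bullet+1}$ is multiplicative by the example following Proposition \ref{prgpd} --- to trade the morphism condition (2) for the object condition (1), and then verify that $V^{n+1}/\Delta G\to(V^{m+1}/\Delta G)\times_{V/G}(V^{l+1}/\Delta G)$ is an isomorphism by descent: both sides are finite \'etale over $U^{n+1}$, and bijectivity on geometric fibers is checked by translating representatives by the unique element of $G$ matching them at the shared index. The two arguments rest on the same underlying fact --- your appeal to freeness of the $G$-action is exactly what the paper's intersection identity encodes, since an element diagonal on both overlapping blocks of coordinates must be globally diagonal --- but they package it differently: the paper's quotient identifications are canonical and scheme-theoretic, so the cartesian property falls out in essentially one line, while your version is more elementary and self-contained at the fiber level, at the cost of invoking the standard fact that a fiberwise bijective morphism between finite \'etale schemes over a base is an isomorphism, together with the extra reduction through Lemma \ref{lmgrpd}.
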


\begin{proof}
It suffices to show that
an additive cocartesian diagram
{\rm (\ref{eqadd})}
defines a cartesian diagram
$$\begin{CD}
V^{m+1}/\Delta G
\times_{U^{m+1}}U^{n+1}
@<<<
V^{n+1}/\Delta G\\
@VV{\quad\qquad\qquad\quad \Box }V @VVV\\
U^{n+1}
@<<<
V^{l+1}/\Delta G
\times_{U^{l+1}}U^{n+1}.
\end{CD}$$
The fibered product
$V^{m+1}/\Delta G
\times_{U^{m+1}}U^{n+1}$
is canonically identified
with the quotient
of $V^{n+1}$
by $\Delta G\times G^{l+1}$.
Similarly
$V^{l+1}/\Delta G
\times_{U^{l+1}}U^{n+1}$
is canonically identified
with the quotient
of $V^{n+1}$
by 
$G^{m+1}\times \Delta G$.
Hence it follows from
$(\Delta G\times G^{l+1})
\cap (G^{m+1}\times \Delta G)
=\Delta G$
in $G^{n+1}$.
\end{proof}

We consider the morphisms
$$\begin{CD}
P_\bullet^{(R,M)} @<<<
U^{\bullet+1}\times{\mathbf G}_m^h
@<<<
V^{\bullet+1}/\Delta G\times{\mathbf G}_m^h
\end{CD}
$$
of oversimplicial schemes over $k$.
For an integer $n\geqq  0$,
let $Q_n^{(R,M)}$ denote the normalization of
$P_n^{(R,M)}$ in the finite \'etale covering
$V^{n+1}/\Delta G\times{\mathbf G}_m^h\to
U^{n+1}\times{\mathbf G}_m^h$.
Then, by Lemma \ref{lmkey}.1,
the schemes $Q_n^{(R,M)}$ for integers $n\geqq  0$
form an oversimplicial scheme 
$Q_\bullet^{(R,M)}$ over $k$
and we obtain a cartesian diagram
\begin{equation}
\begin{CD}
Q_\bullet^{(R,M)} @<<<
V^{\bullet+1}/\Delta G\times{\mathbf G}_m^h
\\
@VV{\quad\quad\quad \Box }V @VVV\\
P_\bullet^{(R,M)} @<<<
U^{\bullet+1}\times{\mathbf G}_m^h
\end{CD}
\label{eqPQUV}
\end{equation}
of oversimplicial schemes.
For $n\geqq 0 $,
the canonical map
$Q_0^{(R,M)}=\widetilde X^{(M)}\to Q_n^{(R,M)}$
is a lifting of the lifting
$P_0^{(R,M)}=\widetilde X^{(M)}\to P_n^{(R,M)}$of the diagonal map
$X\to X^{n+1}$.

\begin{df}\label{dfbR}
Let $V\to U=X\sm D$ be a $G$-torsor 
for a finite group $G$.
Let $R=r_1D_1+\cdots+r_hD_h$
and $M=m_1D_1+\cdots+m_hD_h$
be linear combinations with
rational coefficients $r_i\geqq  1$
and 
integral coefficients $m_i\geqq  1$
such that $m_ir_i$ is an integer
for every irreducible components
$D_1,\ldots,D_h$ of $D$.
We say that the ramification
of $V$ over $U$ along $D$
(resp.\ at a point $x$ of $D$)
is {\rm bounded by} $R+$,
if the finite morphism
$$Q_1^{(R,M)}\to P_1^{(R,M)}$$
is \'etale on a neighborhood of the
image of
$Q_0^{(R,M)}=\widetilde X^{(M)}\to 
Q_1^{(R,M)}$
(resp.\ of the image 
by $Q_0^{(R,M)}=\widetilde X^{(M)}\to 
Q_1^{(R,M)}$
of the inverse image of $x$ 
by $\widetilde X^{(M)} \to X$).
\end{df}

Definition \ref{dfbR} is a non-logarithmic
variant of \cite[Definition 7.3]{Tohoku}.
We show that the condition in Definition \ref{dfbR}
is independent of the choice of a divisor $M$.
We also show that
if the ramification is bounded by $R+$
then
if the ramification is bounded by $R'+$
for $R'\geqq R$.

\begin{lm}\label{lmbR}
Let $M'=m'_1D_1+\cdots+m'_hD_h$
be linear combinations with
integral coefficients $m_i\geqq  1$
such that $m'_i=l_im_i$ is divisible by $m_i$
for every $1,\ldots,h$
and
let $R'=r'_1D_1+\cdots+r'_hD_h$
be linear combinations with
rational coefficients $r'_i\geqq  r_i$
such that $m'_ir'_i$ is an integer
for every $1,\ldots,h$.

{\rm 1.}
The cartesian diagram
$$\begin{CD}
U^{\bullet+1}\times{\mathbf G}_m^h
@<<<
V^{\bullet+1}/\Delta G\times{\mathbf G}_m^h\\
@VV{\quad\quad\qquad \Box }V @VVV\\
U^{\bullet+1}\times{\mathbf G}_m^h
@<<<
V^{\bullet+1}/\Delta G\times{\mathbf G}_m^h
\end{CD}$$
of oversimplicial schemes over $k$
where the vertical arrows are
defined by the map
on the second factor ${\mathbf G}_m^h$
sending $(t_i)$ to $(t_i^{l_i})$
is extended to
a commutative diagram
\begin{equation}
\begin{CD}
P_\bullet^{(R',M')}
@<<<
Q_\bullet^{(R',M')}\\
@VVV @VVV\\
P_\bullet^{(R,M)}@<<<
Q_\bullet^{(R,M)}
\end{CD}
\label{eqQqq'}
\end{equation}
of oversimplicial schemes over $k$.

{\rm 2.}
Let $x$ be a point of
$X^{(M)}=Q_0^{(R,M)}$ and $x'$ be a point of
$X^{(M')}=Q_0^{(R',M')}$
above $x$ by the map defined in {\rm 1.}
If $Q_1^{(R,M)}\to P_1^{(R,M)}$
is \'etale on the image
of $x$ by
$Q_0^{(R,M)}\to Q_1^{(R,M)}$,
then $Q_1^{(R',M')}\to P_1^{(R',M')}$
is \'etale on the image
of $x'$ by
$Q_0^{(R',M')}\to Q_1^{(R',M')}$.
If $R'=R$, conversely if
then $Q_1^{(R,M')}\to P_1^{(R,M')}$
is \'etale on the image
of $x'$ by
$Q_0^{(R,M')}\to Q_1^{(R,M')}$
then
$Q_1^{(R,M)}\to P_1^{(R,M)}$
is \'etale on the image
of $x$ by
$Q_0^{(R,M)}\to Q_1^{(R,M)}$.
\end{lm}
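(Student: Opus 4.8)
The plan is to reduce both assertions to the finite flat comparison morphism $P_\bullet^{(R,M')}\to P_\bullet^{(R,M)}$ furnished by the cartesian square (\ref{eqdncP}) of Lemma \ref{lmPDq}.2, and to treat the two implications in Part 2 separately: the passage from $M$ to $M'$ is a base change argument, while the passage from $M'$ to $M$ is a descent governed exactly by Lemma \ref{lmet}.

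For Part 1, the vertical maps in the first diagram are the $(t_i)\mapsto (t_i^{l_i})$ maps on the factor $\mathbf{G}_m^h$, and they are compatible with the finite flat morphisms $P_\bullet^{(R,M')}\to P_\bullet^{(R,M)}$ of (\ref{eqfunR}) applied with $f=\mathrm{id}_X$, since the covering $V^{\bullet+1}/\Delta G$ is pulled back from $U^{\bullet+1}$. As $Q_n^{(R,M')}$ and $Q_n^{(R,M)}$ are by definition the normalizations of $P_n^{(R,M')}$ and $P_n^{(R,M)}$ in the corresponding \'etale coverings, Lemma \ref{lmnsf}.1 extends the morphism on coverings uniquely to a morphism $Q_n^{(R,M')}\to Q_n^{(R,M)}$; uniqueness of this extension makes the maps commute with every structure morphism, so they assemble into the morphism of oversimplicial schemes fitting in (\ref{eqQqq'}).

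For the implication $M\Rightarrow M'$ in Part 2, suppose $Q_1^{(R,M)}\to P_1^{(R,M)}$ is \'etale on an open neighborhood $W$ of the image of $x$. Let $W'\subset P_1^{(R,M')}$ be its preimage under the finite flat map $P_1^{(R,M')}\to P_1^{(R,M)}$. Then the base change $Q_1^{(R,M)}\times_{P_1^{(R,M)}}P_1^{(R,M')}$ is \'etale over $W'$, hence regular and so normal, since $P_1^{(R,M')}$ is smooth over $k$ by Lemma \ref{lmPRM}.4; moreover its restriction over $U^2\times\mathbf{G}_m^h$ is the covering $V^2/\Delta G\times\mathbf{G}_m^h$ of the $M'$-model. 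It is therefore the normalization of $W'$ in that covering and coincides with $Q_1^{(R,M')}$ over $W'$, whence $Q_1^{(R,M')}\to P_1^{(R,M')}$ is \'etale at the image of $x'$. The only point needing care is that the generic part is dense in this base change, which holds because $W'\to W$ is finite flat surjective, so every component dominates a component on which the covering is already dense.

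The reverse implication $M'\Rightarrow M$ is the crux, and I would deduce it from Lemma \ref{lmet} applied to $P_1^{(R,M')}\to P_1^{(R,M)}$. Taking $X=P_1^{(R,M)}$, $X'=P_1^{(R,M')}$, with $U=U^2\times\mathbf{G}_m^h$ and $U'$ likewise (the map $U'\to U$ being $(t_i)\mapsto(t_i^{l_i})$) and $V,V'$ the respective coverings, the resulting diagram is cartesian by (\ref{eqdncP}), the vertical arrows are finite flat hence quasi-finite and component-wise dominant, and the normalizations are $Y=Q_1^{(R,M)}$ and $Y'=Q_1^{(R,M')}$. For the closed subschemes I would take $Z,Z'$ to be the diagonal sections $\widetilde X^{(M)}=P_0^{(R,M)}\hookrightarrow P_1^{(R,M)}$ and $\widetilde X^{(M')}=P_0^{(R,M')}\hookrightarrow P_1^{(R,M')}$, with lifts $s,s'$ the structure maps $Q_0\to Q_1$. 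Condition (1) holds since $\widetilde X^{(M')}\to\widetilde X^{(M)}$ is surjective by Part 1; condition (2) holds because on $U^2\times\mathbf{G}_m^h$ both $Z$ and $Z'$ are the diagonals, compatibly with the base change; condition (3) holds because $\widetilde X^{(M)}$ is smooth over $k$ and $Z\cap U$ is the complement of the divisor $\widetilde D^{(M)}$, hence dense. The argument in the proof of Lemma \ref{lmet} is pointwise: given the point $x'$ above $x$ and \'etaleness of $Y'\to X'$ at $s'(x')$, after strict localization one checks that $Y'\to X'$ is an isomorphism and descends this to $Y\to X$ at $s(x)$.

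I expect the main obstacle to be precisely this descent step: verifying that the pointwise argument of Lemma \ref{lmet} legitimately transfers \'etaleness at the single point $s'(x')$ to $s(x)$ even though $P_1^{(R,M')}\to P_1^{(R,M)}$ is itself ramified along the boundary, and that the diagonal section is an admissible choice of $Z$ with the required density. The $\mathbf{G}_m^h$-equivariance of the whole diagram (\ref{eqtPR}) shows that the \'etale loci are invariant, which helps reconcile the pointwise statement with the neighborhood formulation of Lemma \ref{lmet}.
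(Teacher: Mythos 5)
Your proposal is correct and follows essentially the same route as the paper's proof: Part 1 via functoriality of normalization (Lemma \ref{lmnsf}.1), the direction from $M$ to $M'$ by base change of the \'etale locus along the finite flat map $P_1^{(R,M')}\to P_1^{(R,M)}$ (this is exactly Lemma \ref{lmnsf}.2, which the paper cites and you re-derive inline), and the converse by Lemma \ref{lmet} applied to that map with $Z,Z'$ the sections $\widetilde X^{(M)}\to P_1^{(R,M)}$ and $\widetilde X^{(M')}\to P_1^{(R,M')}$ lifted to the normalizations. The ``obstacle'' you flag at the end is not one: the proof of Lemma \ref{lmet} is already pointwise (it strictly localizes at $\bar x$ and $\bar x'$, and \'etaleness at the closed point of a component of the localized $Y'$ forces \'etaleness of that whole component), so it transfers \'etaleness at the single image point exactly as needed.
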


\begin{proof}
1.
It follows from
the functoriality of normalizations,
Lemma \ref{lmnsf}.1.

2.
The commutative diagram (\ref{eqQqq'})
defines a commutative diagram
$$\begin{CD}
\widetilde X^{(M')}=\ &
P_0^{(R',M')}@>>>
P_1^{(R',M')}\\
&@VVV@VVV\\
\widetilde X^{(M)}=\ &
P_0^{(R,M)}@>>>
P_1^{(R,M)}
\end{CD}$$
and its lifting
$$\begin{CD}
\widetilde X^{(M')}=\ &
Q_0^{(R',M')}@>>>
Q_1^{(R',M')}\\
&@VVV@VVV\\
\widetilde X^{(M)}=\ &
Q_0^{(R,M)}@>>>
Q_1^{(R,M)}
\end{CD}$$
If  $Q_1^{(R,M)}\to P_1^{(R,M)}$
is \'etale on the image of $x$,
then
$Q_1^{(R',M')}\to P_1^{(R',M')}$
is \'etale on the image of $x'$ 
by Lemma \ref{lmnsf}.2.

Assume $R'=R$.
Then, the diagrams above
satisfy the conditions (1)--(3) of Lemma \ref{lmet}.
Hence, if $Q_1^{(R,M')}\to P_1^{(R,M')}$
is \'etale on the image of $x'$,
then
$Q_1^{(R,M)}\to P_1^{(R,M)}$
is \'etale on the image of $x$
by Lemma \ref{lmet}.
\end{proof}

\begin{thm}\label{thmmain}
Let $X$ be a smooth separated scheme
over a perfect field $k$ and
$D$ be a divisor with normal crossings.
Let $V\to U$ be a $G$-torsor
on the complement $U=X\setminus D$
for a finite group $G$.
Let $R=r_1D_1+\cdots+r_hD_h$
and $M=m_1D_1+\cdots+m_hD_h$
be linear combinations with
rational coefficients $r_i\geqq  1$
and 
integral coefficients $m_i\geqq  1$
such that $m_ir_i$ is an integer
for every irreducible components
$D_1,\ldots,D_h$ of $D$.
We consider the commutative
diagram
$$
\begin{CD}
Q_\bullet^{(R,M)} @<<<
V^{\bullet+1}/\Delta G\times{\mathbf G}_m^h
\\
@VVV @VVV\\
P_\bullet^{(R,M)} @<<<
U^{\bullet+1}\times{\mathbf G}_m^h
\end{CD}
\leqno{(\ref{eqPQUV})}
$$
of oversimplicial schemes over $k$.

Assume that the ramification
of $V$ over $U$ along $D$
is bounded by $R+$.
For each integer $n\geqq  0$,
let $W_n^{(R,M)}$
be the largest open subscheme of
$Q_n^{(R,M)}$ \'etale over $P_n^{(R,M)}$.
Then 
$W_n^{(R,M)}$
form an oversimplicial
open subscheme 
$W_\bullet^{(R,M)}$
of
$Q_\bullet^{(R,M)}$.
Further,
the morphism
$W_\bullet^{(R,M)}
\to
P_\bullet^{(R,M)}$ is multiplicative
and 
the oversimplicial scheme 
$W_\bullet^{(R,M)}$ is multiplicative.
\end{thm}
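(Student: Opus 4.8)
The plan is to derive the theorem as an application of Lemma \ref{lmkey}.2 to the étale morphism
$$V_\bullet=V^{\bullet+1}/\Delta G\times{\mathbf G}_m^h\longrightarrow X_\bullet=P_\bullet^{(R,M)},$$
whose normalizations are the $Y_n=Q_n^{(R,M)}$. First I would check the standing hypotheses. The morphism factors as the finite étale covering $V^{\bullet+1}/\Delta G\times{\mathbf G}_m^h\to U^{\bullet+1}\times{\mathbf G}_m^h$ followed by the open immersion $U^{\bullet+1}\times{\mathbf G}_m^h\subset P_\bullet^{(R,M)}$ of the complement, so it is étale, separated and of finite type; the schemes $P_n^{(R,M)}$ are smooth over $k$ by Lemma \ref{lmPRM}.4 and the schemes $V^{n+1}/\Delta G\times{\mathbf G}_m^h$ are smooth since $V^{n+1}\to V^{n+1}/\Delta G$ is a $G$-torsor, so both are oversimplicial normal schemes. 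For the last hypothesis, that $V_\bullet\to P_\bullet^{(R,M)}$ is multiplicative, I would first observe that $V^{\bullet+1}/\Delta G$ is multiplicative: this follows from Lemma \ref{lmgrpd} applied to $V^{\bullet+1}/\Delta G\to U^{\bullet+1}$, since $U^{\bullet+1}$ is multiplicative, the degree $0$ map is the identity of $U=V/G$, and the morphism is multiplicative by Lemma \ref{lmDV}. Multiplicativity is preserved by the product with the constant ${\mathbf G}_m^h$, so $V_\bullet$ is multiplicative; then Lemma \ref{lmgrpd} applied to $V_\bullet\to P_\bullet^{(R,M)}$, using that $P_\bullet^{(R,M)}$ is multiplicative by Lemma \ref{lmprod}.1 and that the degree $0$ map is the open immersion $U\times{\mathbf G}_m^h\hookrightarrow\widetilde X^{(M)}$ (hence injective), shows that this morphism is multiplicative.

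It then remains to verify the two conditions in Lemma \ref{lmkey}.2. Condition (1), that $P_n^{(R,M)}\to P_m^{(R,M)}$ is smooth for every injection $[0,m]\to[0,n]$, is exactly Lemma \ref{lmPRM}.3. Condition (2) is where the ramification hypothesis enters. Since $Q_0^{(R,M)}=P_0^{(R,M)}=\widetilde X^{(M)}$ and the map $Q_0^{(R,M)}\to P_0^{(R,M)}$ is the identity, we have $W_0^{(R,M)}=Q_0^{(R,M)}=\widetilde X^{(M)}$; the structure map $V_0\to V_1$ associated to the map $[0,1]\to[0]$ is the diagonal $U\times{\mathbf G}_m^h\to V^2/\Delta G\times{\mathbf G}_m^h$, and it extends to the canonical lift $Q_0^{(R,M)}\to Q_1^{(R,M)}$. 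By Definition \ref{dfbR}, the assumption that the ramification of $V$ is bounded by $R+$ says precisely that $Q_1^{(R,M)}\to P_1^{(R,M)}$ is étale on a neighborhood of the image of this lift, that is, that the image lies in the maximal étale locus $W_1^{(R,M)}$. Hence $V_0\to V_1$ extends to $W_0^{(R,M)}\to W_1^{(R,M)}$ and condition (2) holds.

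Lemma \ref{lmkey}.2 then yields that the $W_n^{(R,M)}$ form an oversimplicial open subscheme $W_\bullet^{(R,M)}$ of $Q_\bullet^{(R,M)}$ and that $W_\bullet^{(R,M)}\to P_\bullet^{(R,M)}$ is multiplicative. Finally, multiplicativity of $W_\bullet^{(R,M)}$ itself follows once more from Lemma \ref{lmgrpd}: the target $P_\bullet^{(R,M)}$ is multiplicative, the degree $0$ map $W_0^{(R,M)}=\widetilde X^{(M)}\to P_0^{(R,M)}=\widetilde X^{(M)}$ is the identity and hence injective, and the morphism $W_\bullet^{(R,M)}\to P_\bullet^{(R,M)}$ is multiplicative, so the source is multiplicative.

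The genuine geometric content — normality and smoothness of the transition maps of $P_\bullet^{(R,M)}$, and the realization of the normalizations $Q_\bullet^{(R,M)}$ as an oversimplicial scheme fitting into the cartesian diagram (\ref{eqPQUV}) — is already packaged in Lemmas \ref{lmPRM}, \ref{lmprod} and in the construction preceding Definition \ref{dfbR}, while the inductive heart of the argument is carried out once and for all in Lemma \ref{lmkey}. I therefore expect the only delicate points to be bookkeeping ones: confirming that $V_\bullet\to P_\bullet^{(R,M)}$ is a multiplicative \emph{morphism} (and not merely that its source and target are multiplicative), and matching condition (2) of Lemma \ref{lmkey} exactly with the étaleness near the image of $Q_0^{(R,M)}$ that Definition \ref{dfbR} supplies.
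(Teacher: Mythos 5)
Your proposal is correct and follows essentially the same route as the paper: it invokes Lemma \ref{lmkey}.2 for the morphism $V^{\bullet+1}/\Delta G\times{\mathbf G}_m^h\to P_\bullet^{(R,M)}$, verifies condition (1) by Lemma \ref{lmPRM}.3 and condition (2) by matching Definition \ref{dfbR} with the canonical lift $Q_0^{(R,M)}\to Q_1^{(R,M)}$, and then deduces multiplicativity of $W_\bullet^{(R,M)}$ from Lemma \ref{lmgrpd} (2)$\Rightarrow$(1). The only difference is that you spell out, via a second application of Lemma \ref{lmgrpd} together with Lemma \ref{lmDV}, the multiplicativity of the morphism $V_\bullet\to P_\bullet^{(R,M)}$, which the paper asserts without detail; this is a harmless (and welcome) elaboration, not a different argument.
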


\begin{proof}
We apply Lemma \ref{lmkey}.2
to the \'etale and multiplicative
morphism of oversimplicial schemes
$V^{\bullet+1}/\Delta G
\times_k{\mathbf G}_m^h
\to P_\bullet^{(R,M)}$.
By Lemma \ref{lmPRM}.3,
the assumption (1) in Lemma \ref{lmkey}.2
that
$P_n^{(R,M)}\to P_m^{(R,M)}$
are smooth for injections
$[0,m]\to [0,n]$ is satisfied.

Since $V/G\to U$ is an isomorphism,
the morphism
$Q_0^{(R,M)}\to P_0^{(R,M)}=
\widetilde X^{(M)}$
is an isomorphism.
The assumption
that the ramification is
bounded by $R+$
means that the assumption (2) in Lemma \ref{lmkey}.2
that the morphism
$V/G \times{\mathbf G}_m^h
\to (V\times V)/\Delta G\times
{\mathbf G}_m^h$
is extended to
$\widetilde X^{(M)}=W_0^{(R,M)}\to W_1^{(R,M)}$
is also satisfied.

Thus by 
Lemma \ref{lmkey}.2,
$W_n^{(R,M)}$
form an oversimplicial
open subscheme 
$W_\bullet^{(R,M)}$
of
$Q_\bullet^{(R,M)}$
and the morphism
$W_\bullet^{(R,M)}
\to
P_\bullet^{(R,M)}$ is multiplicative.
Since
$P_\bullet^{(R,M)}$ is multiplicative
by Lemma \ref{lmprod}.1,
the oversimplicial scheme 
$W_\bullet^{(R,M)}$ is multiplicative
by Lemma \ref{lmgrpd}
(2)$\Rightarrow$(1).
\end{proof}

\begin{cor}\label{cormain}
Define an oversimplicial
scheme 
$E_\bullet^{(R,M)}$
by the cartesian diagram
$$\begin{CD}
E_\bullet^{(R,M)}
@>>>W_\bullet^{(R,M)}\\
@VV{\quad\quad \Box }V @VVV\\
T_\bullet^{(R,M)}
@>>>P_\bullet^{(R,M)}.
\end{CD}$$
Then, the oversimplicial
scheme 
$E_\bullet^{(R,M)}$
is strictly multiplicative
and the left vertical arrow
$E_\bullet^{(R,M)}\to
T_\bullet^{(R,M)}$
of oversimplicial schemes
is \'etale and multiplicative.

The multiplicative oversimplicial scheme
$E_\bullet^{(R,M)}$
is associated to 
a smooth group scheme
$E_1^{(R,M)}$
over
$E_0^{(R,M)}=\widetilde Z^{(M)}$.
Further, the multiplicative morphism
$E_\bullet^{(R,M)}\to
T_\bullet^{(R,M)}$
is associated
to an \'etale morphism
\begin{equation}
E_1^{(R,M)}=E^{(R,M)}\to 
T_1^{(R,M)}=T^{(R,M)}
\label{eqisog}
\end{equation}
of smooth group schemes over
$\widetilde Z^{(M)}$.
\end{cor}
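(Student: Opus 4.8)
The plan is to obtain all the assertions from the base-change formalism of oversimplicial schemes together with the equivalence of Proposition \ref{prgpd}.2, taking the multiplicativity of $W_\bullet^{(R,M)}\to P_\bullet^{(R,M)}$ proved in Theorem \ref{thmmain} as the input. First I would settle the degree-zero situation. Since $V\to U$ is a $G$-torsor, $V/\Delta G=U$ and the covering defining $Q_0^{(R,M)}$ is the identity of $U\times{\mathbf G}_m^h$; hence $Q_0^{(R,M)}\to P_0^{(R,M)}=\widetilde X^{(M)}$ is an isomorphism and $W_0^{(R,M)}=P_0^{(R,M)}$. Consequently
$$E_0^{(R,M)}=T_0^{(R,M)}\times_{P_0^{(R,M)}}W_0^{(R,M)}=\widetilde Z^{(M)}=T_0^{(R,M)},$$
so the structure map $E_0^{(R,M)}\to T_0^{(R,M)}$ is an isomorphism, in particular injective. Étaleness of $E_\bullet^{(R,M)}\to T_\bullet^{(R,M)}$ is then immediate, because for each $n$ the morphism $E_n^{(R,M)}\to T_n^{(R,M)}$ is the base change along $T_n^{(R,M)}\to P_n^{(R,M)}$ of the étale morphism $W_n^{(R,M)}\to P_n^{(R,M)}$.

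The crux of the argument, and the step I expect to require the most bookkeeping, is the multiplicativity of the morphism $E_\bullet^{(R,M)}\to T_\bullet^{(R,M)}$, which I would derive by base changing the multiplicativity of $W_\bullet^{(R,M)}\to P_\bullet^{(R,M)}$ along $T_\bullet^{(R,M)}\to P_\bullet^{(R,M)}$. Dropping the superscript $(R,M)$, for an additive cocartesian diagram (\ref{eqadd}) Theorem \ref{thmmain} gives that $W_n\to(W_m\times_{P_m}P_n)\times_{P_n}(W_l\times_{P_l}P_n)$ is an isomorphism. Applying $(-)\times_{P_n}T_n$, using the elementary identity $A\times_{P_n}B\times_{P_n}T_n=(A\times_{P_n}T_n)\times_{T_n}(B\times_{P_n}T_n)$, and identifying $(W_m\times_{P_m}P_n)\times_{P_n}T_n=W_m\times_{P_m}T_n=E_m\times_{T_m}T_n$ --- the last equality because $T_\bullet\to P_\bullet$ is a morphism of oversimplicial schemes, so the two composites $T_n\to T_m\to P_m$ and $T_n\to P_n\to P_m$ coincide --- I obtain
$$E_n=W_n\times_{P_n}T_n\cong(E_m\times_{T_m}T_n)\times_{T_n}(E_l\times_{T_l}T_n),$$
which is exactly the cartesian condition (\ref{eqmnPQ}) expressing that $E_\bullet^{(R,M)}\to T_\bullet^{(R,M)}$ is multiplicative. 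The care needed here is entirely in keeping track of the nested fibered products and the relevant structure maps.

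Given this, the multiplicativity of $E_\bullet^{(R,M)}$ itself follows from Lemma \ref{lmgrpd} (2)$\Rightarrow$(1), applied to the morphism $E_\bullet^{(R,M)}\to T_\bullet^{(R,M)}$: the target $T_\bullet^{(R,M)}$ is multiplicative by Lemma \ref{lmprod} and $E_0^{(R,M)}\to T_0^{(R,M)}$ is injective. Strictness is then formal: since $T_\bullet^{(R,M)}$ is strictly multiplicative its two structure maps $T_1^{(R,M)}\to T_0^{(R,M)}$ agree, and composing the two structure maps $E_1^{(R,M)}\to E_0^{(R,M)}$ with the isomorphism $E_0^{(R,M)}\to T_0^{(R,M)}$ gives the same morphism, whence the two maps on $E_\bullet^{(R,M)}$ agree. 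Finally I would feed the strictly multiplicative objects $E_\bullet^{(R,M)}$ and $T_\bullet^{(R,M)}$ and the multiplicative morphism between them into the equivalence of Proposition \ref{prgpd}.2 between strictly multiplicative oversimplicial objects and groups; this produces the group scheme $E_1^{(R,M)}$ over $E_0^{(R,M)}=\widetilde Z^{(M)}$ and presents $E_\bullet^{(R,M)}\to T_\bullet^{(R,M)}$ as coming from a homomorphism (\ref{eqisog}) of group schemes. Smoothness of $E_1^{(R,M)}$ over $\widetilde Z^{(M)}$ and étaleness of (\ref{eqisog}) follow at once, since $T_1^{(R,M)}=T^{(R,M)}$ is the vector bundle of Corollary \ref{corTR}, hence smooth over $\widetilde Z^{(M)}$, while $E_1^{(R,M)}\to T_1^{(R,M)}$ is étale by the first paragraph.
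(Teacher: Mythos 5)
Your proposal is correct and follows essentially the same route as the paper's proof: base-change the \'etale multiplicative morphism $W_\bullet^{(R,M)}\to P_\bullet^{(R,M)}$ of Theorem \ref{thmmain} along $T_\bullet^{(R,M)}\to P_\bullet^{(R,M)}$, inherit strictness from $T_\bullet^{(R,M)}$ (Lemma \ref{lmprod}.2), and apply the equivalence of Proposition \ref{prgpd}.2 to produce the \'etale homomorphism of smooth group schemes. The paper leaves the base-change verifications and the multiplicativity of $E_\bullet^{(R,M)}$ implicit where you spell them out (via the fibered-product identities and Lemma \ref{lmgrpd} (2)$\Rightarrow$(1)), but this is elaboration, not a different argument.
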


\begin{proof}
Since the morphism
$W_\bullet^{(R,M)}
\to P_\bullet^{(R,M)}$
is an \'etale multiplicative morphism
of multiplicative oversimplicial schemes
by
Theorem \ref{thmmain},
its base change
$E_\bullet^{(R,M)}\to
T_\bullet^{(R,M)}$ is
also an \'etale multiplicative morphism
of multiplicative oversimplicial smooth schemes.
Since
$T_\bullet^{(R,M)}$ is strictly multiplicative
by Lemma \ref{lmprod}.2,
$E_\bullet^{(R,M)}$
is also strictly multiplicative.
Hence
by Proposition \ref{prgpd}.2,
the oversimplicial scheme
$E_\bullet^{(R,M)}$
is associated to
a smooth group scheme 
$E_1^{(R,M)}$ over $E_0^{(R,M)}=
\widetilde Z^{(M)}$
and 
the morphism $E_\bullet^{(R,M)}\to
T_\bullet^{(R,M)}$ is
associated to
an \'etale morphism 
$E_1^{(R,M)}\to T_1^{(R,M)}$ of
group schemes over $\widetilde Z^{(M)}$.
\end{proof}

\begin{pr}\label{prmain}
{\rm 1.}
There exists a unique open
subgroup scheme
$$E^{(R,M)0}
\subset E^{(R,M)}$$
such that for every point of
$x\in \widetilde Z^{(M)}$,
the fiber
$E^{(R,M)0}\times_{\widetilde Z^{(M)}}x$
is the connected component of
$E^{(R,M)}\times_{\widetilde Z^{(M)}}x$
containing the unit section.
The group scheme
$E^{(R,M)0}$
over $\widetilde Z^{(M)}$
is commutative and killed by $p$.
The restriction of {\rm (\ref{eqisog})}
\begin{equation}
E^{(R,M)0}
\to 
T^{(R,M)}
=
(TX\times_X\widetilde Z^{(M)})
(-{\widetilde R^{(M)}})
\label{eqisog0}
\end{equation}
is an \'etale and surjective 
morphism of commutative group schemes.
The kernel of {\rm (\ref{eqisog0})}
is an \'etale commutative group scheme
annihilated by $p$.

{\rm 2.}
Let $M'$ be as in Lemma {\rm \ref{lmbR}}.
Then the diagram 
\begin{equation}
\begin{CD}
E^{(R,M')0}
@>>> 
T^{(R,M')}
@>>>\widetilde Z^{(M')}
\\
@VV{\quad\quad\quad \Box }V@VV{\quad\quad \Box }V@VVV\\
E^{(R,M)0}
@>>> 
T^{(R,M)}
@>>>\widetilde Z^{(M)}
\end{CD}
\label{eqTqq'}
\end{equation}
is cartesian.
The morphism
$E^{(R,M)0}
\to T^{(R,M)}$
is finite if and only if
$E^{(R,M')0}
\to T^{(R,M')}$ is finite.
\end{pr}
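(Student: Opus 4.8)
For assertion 1, the plan is to realize $E^{(R,M)0}$ as the fiberwise identity component of the smooth separated group scheme $E^{(R,M)}$ of finite type over $\widetilde Z^{(M)}$ produced by Corollary \ref{cormain}; its existence and uniqueness as an open subgroup scheme is the standard fact for smooth group schemes of finite type. The restriction (\ref{eqisog0}) of the \'etale morphism (\ref{eqisog}) is again \'etale, and I would verify its surjectivity fiberwise: over a geometric point $x$ of $\widetilde Z^{(M)}$ the induced map $E^{(R,M)0}\times_{\widetilde Z^{(M)}}x\to T^{(R,M)}\times_{\widetilde Z^{(M)}}x$ is an \'etale homomorphism from a connected group scheme to the vector group, so its image is an open subgroup and hence all of the connected target; fiberwise surjectivity together with the openness of \'etale morphisms gives the surjectivity of (\ref{eqisog0}). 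Its kernel $G$ is the pull-back of the zero section along the \'etale morphism (\ref{eqisog0}), hence is \'etale over $\widetilde Z^{(M)}$. Since the geometric fibers of $E^{(R,M)0}$ are connected, the extension $0\to G\to E^{(R,M)0}\to T^{(R,M)}\to 0$ of the vector bundle $T^{(R,M)}$ by the \'etale group scheme $G$ satisfies the hypothesis of Lemma \ref{lmExt}, which shows that $E^{(R,M)0}$ and $G$ are commutative and killed by $p$.

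For assertion 2, I would first treat the right square of (\ref{eqTqq'}). By the cartesian diagram (\ref{eqdncP}) the morphism $\widetilde X^{(M')}\to\widetilde X^{(M)}$ is given on the $i$-th coordinate by $t_i\mapsto t_i^{l_i}$, so it pulls $\widetilde D_i^{(M)}$ back to $l_i\widetilde D_i^{(M')}$ and therefore $\widetilde R^{(M)}=\sum_i r_im_i\widetilde D_i^{(M)}$ back to $\sum_i r_im_il_i\widetilde D_i^{(M')}=\widetilde R^{(M')}$. Combined with the description of $T^{(R,M)}$ in Corollary \ref{corTR} and the commutative diagram (\ref{eqdncTR}) of Lemma \ref{lmPDq}, this yields a canonical isomorphism $T^{(R,M')}\to T^{(R,M)}\times_{\widetilde Z^{(M)}}\widetilde Z^{(M')}$, so the right square is cartesian.

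The core of assertion 2 is the identification $E^{(R,M')}=E^{(R,M)}\times_{\widetilde Z^{(M)}}\widetilde Z^{(M')}$ before passing to identity components. Since $E^{(R,M)}_\bullet=W^{(R,M)}_\bullet\times_{P^{(R,M)}_\bullet}T^{(R,M)}_\bullet$ and both the $P$-square of (\ref{eqdncP}) and the resulting $T$-square are cartesian, this reduces to comparing the \'etale loci $W^{(R,M')}_\bullet$ and $W^{(R,M)}_\bullet\times_{P^{(R,M)}_\bullet}P^{(R,M')}_\bullet$ over $T^{(R,M')}_\bullet$. The inclusion $W^{(R,M)}_\bullet\times_{P^{(R,M)}_\bullet}P^{(R,M')}_\bullet\subset W^{(R,M')}_\bullet$ follows from Lemma \ref{lmnsf}.2 applied to the cartesian diagram (\ref{eqQqq'}), the covering $V^{\bullet+1}/\Delta G\times{\mathbf G}_m^h$ being the composite of a finite \'etale morphism and an open immersion. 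The reverse inclusion over $T^{(R,M')}_\bullet$ is the main obstacle: it requires descending \'etaleness along the finite flat, but not \'etale, surjection $P^{(R,M')}_\bullet\to P^{(R,M)}_\bullet$, which I would deduce from Lemma \ref{lmet} as in the proof of Lemma \ref{lmbR}.2, reducing to the unit section by the multiplicative structure of Theorem \ref{thmmain} and then invoking that pointwise comparison.

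Granting this identification, the left square of (\ref{eqTqq'}) is cartesian: a smooth connected group scheme over a field is geometrically connected, since the unit is a rational point, so the formation of the fiberwise identity component commutes with the base change $\widetilde Z^{(M')}\to\widetilde Z^{(M)}$ and hence $E^{(R,M')0}=E^{(R,M)0}\times_{\widetilde Z^{(M)}}\widetilde Z^{(M')}=E^{(R,M)0}\times_{T^{(R,M)}}T^{(R,M')}$. Finally, the morphism $T^{(R,M')}\to T^{(R,M)}$ is finite, flat and surjective, being the base change of the finite flat surjective morphism $\widetilde Z^{(M')}\to\widetilde Z^{(M)}$ of (\ref{eqdncP}); as finiteness is fppf-local on the base and the left square is cartesian, $E^{(R,M)0}\to T^{(R,M)}$ is finite if and only if $E^{(R,M')0}\to T^{(R,M')}$ is finite.
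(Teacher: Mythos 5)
Your assertion 1 and the right square of (\ref{eqTqq'}) are handled correctly and essentially as in the paper: identity component of the smooth group scheme from Corollary \ref{cormain}, Lemma \ref{lmExt} for commutativity and $p$-torsion, connectedness of the fibres of $T^{(R,M)}$ for surjectivity, and Lemma \ref{lmPDq}.2 for the right square. The genuine gap is in the left square. You reduce it to the identification of the \emph{full} group schemes $E^{(R,M')}\cong E^{(R,M)}\times_{\widetilde Z^{(M)}}\widetilde Z^{(M')}$, and propose to obtain the hard inclusion by Lemma \ref{lmet}, ``as in Lemma \ref{lmbR}.2, reducing to the unit section by the multiplicative structure.'' That step would fail. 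Condition (3) of Lemma \ref{lmet} demands that $Z\cap U$ be dense in $Z$; in Lemma \ref{lmbR}.2 this holds because there $Z$ is the section $\widetilde X^{(M)}\subset P_1^{(R,M)}$, which meets the open part $U\times_kU\times_k{\mathbf G}_m^h$ densely. But a point of $E^{(R,M')}$ off the unit section lies over the boundary $T_1^{(R,M')}$, which is disjoint from the open part, so any closed subscheme through the image of such a point to which you might apply Lemma \ref{lmet} violates (3). Nor can you translate such a point to the unit section: the groupoid composition exists only on the \'etale loci $W_\bullet^{(R,M)}$ --- that is precisely the content of Theorem \ref{thmmain} --- while on $Q_\bullet^{(R,M)}$ there is no multiplication, and the group law of $E^{(R,M')}$ gives no information about \'etaleness of $Q_1^{(R,M)}\to P_1^{(R,M)}$ at the image point, which is exactly what is in question. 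So the full identification remains unproved; it is moreover strictly stronger than what the proposition asserts, and the paper never establishes it either.

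What you actually need follows from the inclusion you already have. Lemma \ref{lmnsf}.2 applied to (\ref{eqQqq'}) gives an open immersion $E^{(R,M)}\times_{\widetilde Z^{(M)}}\widetilde Z^{(M')}\to E^{(R,M')}$ of \'etale group schemes over $T^{(R,M')}$ (rewrite the source as $E^{(R,M)}\times_{T^{(R,M)}}T^{(R,M')}$ using the cartesian right square). Restrict it to fibrewise identity components: the source $E^{(R,M)0}\times_{\widetilde Z^{(M)}}\widetilde Z^{(M')}$ has connected fibres (a connected group scheme over a field is geometrically connected because of its rational unit point), so it maps into $E^{(R,M')0}$, and its image in each fibre is an open, hence closed, subgroup scheme of the connected group $E^{(R,M')0}_{x'}$, hence the whole fibre; a surjective open immersion is an isomorphism, so the left square of (\ref{eqTqq'}) is cartesian. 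This is in substance the paper's own argument: it produces an open immersion between $E^{(R,M')}$ and the pull-back of $E^{(R,M)}$, and deduces cartesianness only after passing to identity components, where connectedness forces the open immersion to be an isomorphism. Your concluding fppf-descent argument for the finiteness equivalence is then fine (the paper instead descends along the finite surjective $\widetilde Z^{(M')}\to\widetilde Z^{(M)}$; both work), but as written it inherits the gap above.
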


\begin{proof}
1.
Since $E^{(R,M)}$
is a smooth group scheme,
the open
subgroup scheme
$E^{(R,M)0}$ exists by \cite[Theorem 3.10 (i)
$\Rightarrow$(iv)]{SGA3}.
By Lemma \ref{lmExt},
$E^{(R,M)0}$  is commutative
and killed by $p$.
Consequently,
the kernel
${\rm Ker}(E^{(R,M)0}\to T^{(R,M)})$
of an \'etale morphism
is an \'etale commutative group scheme
annihilated by $p$.
Since
$E^{(R,M)0}$ is an open subscheme of
$E^{(R,M)}$,
the morphism (\ref{eqisog0}) is \'etale.
Since the fibers of
the vector bundle
$T^{(R,M)}$ are connected,
it is surjective.
The description of
$T^{(R,M)}$ is given in (\ref{eqTnR}).

2.
The commutative diagram (\ref{eqQqq'})
defines a commutative diagram
\begin{equation}
\begin{CD}
E^{(R,M')}
@>>> 
T^{(R,M')}@>>>\widetilde Z^{(M')}
\\
@VVV@VVV@VVV\\
E^{(R,M)}
@>>> 
T^{(R,M)}@>>>\widetilde Z^{(M)}
\end{CD}
\label{eqTqq'0}
\end{equation}
without $0$.
The right square is cartesian
by Lemma \ref{lmPDq}.2
and hence the left vertical arrow
induces an open immersion
$E^{(R,M')}
\to E^{(R,M)}\times
_{\widetilde D^{(M)}}\widetilde D^{(M')}$.
Thus, the left square in (\ref{eqTqq'}) is 
also cartesian.

Since (\ref{eqTqq'}) is cartesian,
the finiteness of the morphism
$E^{(R,M)0}
\to
T^{(R,M)}$
implies the finiteness of 
$E^{(R,M')0}
\to
T^{(R,M')}$.
Since $\widetilde Z^{(M')}
\to \widetilde Z^{(M)}$ is finite surjective,
the converse holds.
\end{proof}

If $R$ has integral coefficients,
we can work with $P_\bullet^{(R)}$
without introducing an auxiliary divisor $M$,
which is much easier.
In fact, in this case we may take $M=D$
and then the vertical arrows
in the cartesian diagram
(\ref{eqPRRM}) are smooth.

\begin{df}\label{dfnd}
We say that the ramification of
$V$ over $U$
is {\em non-degenerate} along $D$ at multiplicity $R$
if the \'etale surjective morphism
{\rm (\ref{eqisog0})} is {\rm finite}. 
\end{df}

Definition \ref{dfnd} is
a non-logarithmic variant of \cite[Definition 2.27]{mu}.
Contrary to \cite[Theorem 4.1]{Kato} for the logarithmic version,
Yatagawa shows that
there exist rank 1 sheaves on surfaces
such that the ramification along the
boundary are {\em not} non-degenerate
even after any successive blow-up.
She actually has a complete list of such cases.

Proposition \ref{prmain}.2 implies that 
Definition \ref{dfnd} is independent
of the choice of $M$
such that $m_ir_i$ is an integer for every $i=1,\ldots,h$.
If $D$ is smooth and if
$r=r_1>1$,
the right vertical arrow 
$\widetilde Z^{(M')}\to \widetilde Z^{(M)}$
of (\ref{eqTqq'})
is an isomorphism by Lemma \ref{lmPDq}.2.\
and $\widetilde Z^{(M)}$ is canonically identified
with $T_DX^\times$ by Lemma \ref{lmPDq}.1.
We will study the relation of Definition \ref{dfnd}
with pull-back in Proposition \ref{prfun}.

\begin{ex}\label{egAS}
{\rm (cf.\ \cite[Proposition 11.8]{aml})
Let $X$ be the affine plane ${\mathbf A}^2_k=
{\rm Spec}\ k[x,y]$
and $U$ be the complement of
the smooth divisor $D$ defined by $x=0$.
Let $r> 1$ be an integer and set $R=rD$.
Then, the dilatation $P_1^{(R)}
={\rm Spec}\ k[x,y,x',y']
\left[\dfrac{x'-x}{x^r},\dfrac{y'-y}{x^r},\dfrac x{x'}\right]$
is isomorphic to
${\rm Spec}\ k[x,y,u,v]
\left[\dfrac1{1+ux^{r-1}}\right]$
by $u\mapsto \dfrac{x'-x}{x^r},
v\mapsto\dfrac{y'-y}{x^r}$.
Set $G= {\mathbf F}_p$.

{\rm 1.}
Let $n\geqq  1$ be an integer prime to $p$
and $V$ be the $G$-torsor
over $U$ defined by the Artin-Schreier equation
$t^p-t=\dfrac1{x^n}$.
Then, 
$V\times V/\Delta G$
is the $G$-torsor
over $U\times U$ defined by the Artin-Schreier equation
$t^p-t=\dfrac1{x^{\prime n}}-\dfrac1{x^n}
=\dfrac1{x^n}
\left(\dfrac 1{(1+ux^{r-1})^n}-1\right)$.
For $r=n+1$,
the right hand side is a regular function
on $P_1^{(R)}$ and hence
the ramification of $V$ over $U$
along $D$ is bounded by $R+$.
Further,  the right hand side
is congruent to
$-nu$ modulo $x$.
Hence, 
$E^{(R)}\to T^{(R)}$
is the $G$-torsor defined by
the Artin-Schreier equation
$t^p-t=-nu$
and the ramification of $V$ over $U$
along $D$ is {\it non-degenerate}.
By identifying $T^{(R)}$
with $TX(-R)\times_XD$,
the linear form $-nu$
is identified with the differential form
$-n \dfrac{dx}{x^{n+1}}=d\dfrac 1{x^n}$.

{\rm 2.}
Let $n\geqq  1$ be an integer divisible $p$
and $V$ be the $G$-torsor
over $U$ defined by the Artin-Schreier equation
$t^p-t=\dfrac y{x^n}$.
Then, 
$V\times V/\Delta G$
is the $G$-torsor
over $U\times U$ defined by the Artin-Schreier equation
\begin{equation}t^p-t=\dfrac{y'}{x^{\prime n}}-\dfrac y{x^n}
=\dfrac{y'}{x^n}
\left(\dfrac 1{(1+ux^{r-1})^n}-1\right)
+\dfrac {vx^r}{x^n}.
\label{eqASp}
\end{equation}
For $r=n$,
the right hand side is a regular function
on $P_1^{(R)}$ and hence
the ramification of $V$ over $U$
along $D$ is bounded by $R+$.

We put $n=n'n''$ where $n'$ is 
the prime-to-$p$ part of $n$
and $n''$ is a power of $p$.
Then, 
we have 
$(1+ux^{n-1})^n
\equiv 1+n'u^{n''}x^{(n-1)n''}
\bmod x^{2(n-1)n''}$.
Since $n\leqq (n-1)n''$
and the equality is equivalent to
$n=n''=2$,
the right hand side of (\ref{eqASp})
is congruent to
$v$ modulo $x$ if $n\neq 2$
and to
$yu^2+v$ modulo $x$ if $n=2$.
Hence, 
$E^{(R)}\to T^{(R)}$
is the $G$-torsor defined by
the Artin-Schreier equation
$t^p-t=v$ if $n\neq 2$
and $t^p-t=\sqrt y u+v$ 
defined over the radicial covering of
degree $2$
of $D$ if $n=2$.
Consequently
the ramification of $V$ over $U$
along $D$ is {\it non-degenerate}.
By identifying $T^{(R)}$
with $TX(-R)\times_XD$,
the linear form $v$
is identified with the differential form
$\dfrac{dy}{x^n}=d\dfrac y{x^n}$.}
\end{ex}

\subsection{Ramification and the cotangent bundle}

We keep the notation
$V\to U=X\sm D$ over $k$
in the previous subsection.
We relate ramification
to the cotangent bundle
using the finite \'etale surjective morphism
{\rm (\ref{eqisog1})}, 
assuming that the ramification 
is non-degenerate along $D$.

Let $M$ and $R$ be
as in the first paragraph of Section 2
and
assume that the ramification of
$V\to U$ along $D$
is bounded by $R+$ 
(Definition \ref{dfbR}) and
is non-degenerate at the multiplicity $R$
(Definition \ref{dfnd}) .
We fix $M$ 
and write the finite \'etale morphism (\ref{eqisog0}) 
of smooth group schemes over
$\widetilde Z=\widetilde Z^{(M)}$ as
\begin{equation}
E^{(R)0}
\to 
T^{(R)}=(TX\times_X\widetilde Z)
(-{\widetilde R})
\label{eqisog1}
\end{equation}
by abuse of notation.

The kernel $\widetilde G^{(R)}$
of the finite \'etale morphism
(\ref{eqisog1})
is a commutative finite
\'etale group scheme killed
by $p$.
By Proposition \ref{prExt}
and Lemma \ref{lmconn},
the extension
\begin{equation}
\begin{CD}
0@>>>\widetilde G^{(R)}
@>>>
E^{(R)0}
@>>>
T^{(R)}
@>>> 0
\end{CD}
\label{eqGET}
\end{equation}
defines a closed immersion
\begin{equation}
\widetilde G^{(R)\vee}
\to T^{(R)\vee}
\label{eqcf}
\end{equation}
of a finite \'etale
${\mathbf F}_p$-vector space scheme
to the dual vector bundle
defined over a finite radicial
covering 
$F^n\colon \widetilde Z^{(p^{-n})}
\to \widetilde Z$.
Here and in the following,
for a scheme over $S$ over $k$,
we define a scheme
$S^{(p^{-n})}$ over $k$
and a radicial covering
$F^n\colon S^{(p^{-n})}\to S$
by the diagram
$$\begin{CD}
S^{(p^{-n})}@>>> S@>{F_S^n}>>S\\
@VV{\quad\quad \Box }V @VVV @VVV\\
k@>>{F_k^{-n}}> k@>>{F_k^n}>k
\end{CD}$$
as follows.
The right square is
the usual commutative diagram
with the $n$-th powers of
the absolute Frobeniuses $F_k$ and $F_S$.
The left square is the base change
by the {\it inverse} of $F_k^n$.
The composition of
the lower line is the identity
and that of the upper line defines
$F^n\colon S^{(p^{-n})}\to S$.

\begin{df}\label{dfcf}
Assume that the ramification of
$V$ over $U$ along $D$
is bounded by $R+$.
We call the injection
{\rm (\ref{eqcf})}
of commutative group schemes
defined over 
$F^n\colon \widetilde Z^{(p^{-n})}
\to \widetilde Z$ for a sufficiently large integer $n$
the {\rm characteristic form}
of $V$ over $U$
at multiplicity $R$ and write
\begin{equation}
{\rm Char}_R(V/U)\colon
\widetilde G^{(R)\vee}
\to T^{(R)\vee}
=
(T^*X\times_X\widetilde Z)
({\widetilde R}).
\label{eqdfcf}
\end{equation}
\end{df}

The logarithmic variant of
the characteristic form
for an abelian covering is
defined and studied by Kato
in \cite{rank1} and \cite{Kato}
and called the refined Swan conductor.
If $M'$ is a multiple of $M$,
the characteristic form on $\widetilde Z^{(M')}$
is the pull-back of that on $\widetilde Z^{(M)}$.
For the Artin-Schreier covering $V\to U$
in Example \ref{egAS}.1,
the characteristic form is the morphism
sending the generator $1$ of $\widetilde G^{(R)\vee}
={\mathbf F}_p$ to the differential form
$-ndx\otimes x^{-(n+1)}$.
In the case $p=n=2$ in Example \ref{egAS}.2,
it is the morphism 
sending $1$ to $(\sqrt y dx+dy)\otimes x^{-2}$ defined
on the radicial covering $D^{(2^{-1})}\to D$.

We study the functoriality of
characteristic form.
Let $X'$ be another smooth scheme
over $k$
and $f\colon X'\to X$ be
a morphism over $k$.
Assume that $f^{-1}(U)$ is 
the complement $U'=X'\sm D'$
of a divisor $D'\subset X'$
with normal crossings.
Let $D'_1,\ldots,D'_{h'}$ be the
irreducible components of $D'$
and set $f^*D_i
=\sum_{j=1}^{h'}e_{ij}D'_j$ for $i=1,\ldots,h$.

We set $R'=f^*(R)=
\sum_{i=1}^hr_i
\sum_{j=1}^{h'}e_{ij}D'_j=
\sum_{j=1}^{h'}r'_jD'_j$.
Let $M'=\sum_{j=1}^{h'}
m'_jD'_j$ be a divisor
with integral coefficients $m'_j\geqq  1$.
We assume that
$l_{ij}=e_{ij}m'_j/m_i$ 
is an integer for every $i=1,\ldots,h$
and $j=1,\ldots,h'$. 
Then a canonical morphism 
$P^{\prime (R',M')}_\bullet
\to
P^{(R,M)}_\bullet$
(\ref{eqfunR}) is defined. 
We also fix $M'$ and 
we write $\widetilde Z'=\widetilde Z^{\prime (M')}$ etc.\ by abuse of notation.

\begin{df}\label{dfnc}
Assume that the ramification of
$V$ over $U$ along $D$
is bounded by $R+$
and is non-degenerate at multiplicity $R$.
Let $f\colon X'\to X$
be a morphism of smooth schemes
over $k$
such that  $f^{-1}(U)$ is 
the complement $U'=X'\sm D'$
of a divisor $D'\subset X'$
with simple normal crossings.

We say that 
$f\colon X'\to X$ is {\rm non-characteristic} 
with respect to the ramification of
$V\to U$ along $D$ at multiplicity $R$,
if the composition
\begin{equation}
\begin{CD}
\widetilde G^{(R)\vee}\times_{\widetilde Z}
\widetilde Z'
@>>> 
T^{(R)\vee}\times_{\widetilde Z}
\widetilde Z'
@>>>
T^{\prime(R')\vee}\\
@.\|@.\| \\
@.
(T^*X\times_X\widetilde Z)(\widetilde R)
\times_{\widetilde Z}
\widetilde Z'
@.
(T^*X'\times_{X'}\widetilde Z')(\widetilde R')
\end{CD}
\label{eqncf}
\end{equation}
defined over $F^n\colon 
\widetilde Z^{\prime (p^{-n})}
\to \widetilde Z'$
is {\rm injective}.
\end{df}

Proposition \ref{prfun}.2 (2)$\Rightarrow$(1)
below implies that
a smooth morphism $X'\to X$ is non-characteristic
since the canonical morphism $T^{\prime (R')}\to 
T^{(R)}\times_{\widetilde Z}\widetilde Z'$
is a surjection of
vector bundles in that case.

\begin{lm}\label{lmnc}
Assume that the ramification of
$V\to U$ along $D$ is 
bounded by $R+$ and
is non-degenerate
at multiplicity $R$
and let $f\colon X'\to X$ be
a morphism of smooth schemes over $k$
as above.
Then, the following  conditions are equivalent:

{\rm (1)}
$f\colon X'\to X$ is non-characteristic
with respect to the ramification of
$V\to U$ along $D$ at multiplicity $R$.

{\rm (2)}
For every point
$x$ of $\widetilde Z'$,
the fiber
$(E^{(R)0}
\times_{T^{(R)}}T^{(R')})
\times_{\widetilde Z'}x$
is a connected group scheme.
\end{lm}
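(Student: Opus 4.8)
The plan is to recognize the scheme $E^{(R)0}\times_{T^{(R)}}T^{(R')}$ appearing in condition (2) as the pull-back of the extension (\ref{eqGET}) along a linear map of vector bundles, and then to read off the equivalence from Lemma \ref{lmconn}. Write $\phi\colon T^{(R')}\to T^{(R)}\times_{\widetilde Z}\widetilde Z'$ for the homomorphism of vector bundles over $\widetilde Z'$ dual to the second arrow $T^{(R)\vee}\times_{\widetilde Z}\widetilde Z'\to T^{(R')\vee}$ of (\ref{eqncf}); concretely it is the twist by $\widetilde R$ and $\widetilde R'$ of the differential $TX'\to TX\times_XX'$ of $f$. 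Since $E^{(R)0}\to T^{(R)}$ is a homomorphism of commutative group schemes over $\widetilde Z$ with kernel $\widetilde G^{(R)}$, its base change to $\widetilde Z'$, pulled back along $\phi$, sits in an exact sequence
$$0\to \widetilde G^{(R)}\times_{\widetilde Z}\widetilde Z'\to E^{(R)0}\times_{T^{(R)}}T^{(R')}\to T^{(R')}\to 0.$$
First I would verify this identification and that the fibre product is again an extension of the vector bundle $T^{(R')}$ by a finite \'etale group scheme of ${\mathbf F}_p$-vector spaces, which holds because $E^{(R)0}$ is commutative and killed by $p$ by Proposition \ref{prmain}.1 and these properties are stable under pull-back.

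Next I would identify the map on duals attached to this pulled-back extension by the correspondence of Section \ref{ssExt}. By the naturality of the construction (\ref{eqAS}) and the isomorphism (\ref{eqASn}) in the vector bundle variable, pulling an extension of $T^{(R)}$ by $\widetilde G^{(R)}$ back along $\phi$ corresponds to post-composing its dual map with $\phi^\vee$, the second arrow of (\ref{eqncf}). Since the extension (\ref{eqGET}) corresponds to ${\rm Char}_R(V/U)\colon \widetilde G^{(R)\vee}\to T^{(R)\vee}$ by Definition \ref{dfcf}, the pulled-back extension corresponds exactly to the composition
$$\widetilde G^{(R)\vee}\times_{\widetilde Z}\widetilde Z'\to T^{(R)\vee}\times_{\widetilde Z}\widetilde Z'\to T^{(R')\vee}$$
of (\ref{eqncf}), defined over the radicial covering $F^n\colon \widetilde Z^{\prime(p^{-n})}\to \widetilde Z'$.

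Finally I would apply Lemma \ref{lmconn} to the extension displayed above, over the base $\widetilde Z'$ (equivalently its radicial covering, which changes neither the geometric fibres nor the \'etale site). Condition (2) of the present lemma is precisely condition (1) of Lemma \ref{lmconn}, connectedness of every fibre of $E^{(R)0}\times_{T^{(R)}}T^{(R')}$; and condition (1), injectivity of (\ref{eqncf}), is precisely conditions (2)--(3) of Lemma \ref{lmconn}, since for the homomorphism (\ref{eqncf}) of finite \'etale group schemes injectivity, injectivity on geometric fibres, and being a closed immersion coincide. The equivalence (1)$\Leftrightarrow$(2) is then immediate from Lemma \ref{lmconn}. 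The one genuinely substantive point, and the place I expect the work to lie, is the first step: checking that the abusive fibre product $E^{(R)0}\times_{T^{(R)}}T^{(R')}$ really is the $\phi$-pull-back of (\ref{eqGET}) and that its associated dual map is the composition (\ref{eqncf}) rather than some Frobenius twist of it, so that the hypotheses of Lemma \ref{lmconn} apply verbatim; the remainder is a direct citation.
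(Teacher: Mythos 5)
Your proposal is correct and takes essentially the same approach as the paper's proof: both identify $E^{(R)0}\times_{T^{(R)}}T^{(R')}$ with the pull-back along $T^{(R')}\to T^{(R)}\times_{\widetilde Z}\widetilde Z'$ of the base change of the extension (\ref{eqGET}), observe that under the correspondence of Section \ref{ssExt} this pulled-back extension has as dual map exactly the composition (\ref{eqncf}), and then conclude by Lemma \ref{lmconn}. The paper compresses into one sentence the identifications you spell out (naturality of the Artin-Schreier correspondence in the vector-bundle variable, and insensitivity of geometric fibres to the radicial covering), but the substance is identical.
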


\begin{proof}
By the definition of characteristic form,
the composition
$\widetilde G^{(R)\vee}\times_{\widetilde Z}
\widetilde Z'
\to 
T^{(R)\vee}\times_{\widetilde Z}
\widetilde Z'
\to 
T^{(R')\vee}$
corresponds to
the extension
$E^{(R)0}
\times_{T^(R)}T^{(R')}$ of
$T^{(R')}$ by
$G^{(R)}\times_{\widetilde Z}
\widetilde Z'$
obtained as the pull-back 
by $T^{(R')}
\to T^{(R)}\times_{\widetilde Z}
\widetilde Z'$
of the base change 
of (\ref{eqGET})
by $\widetilde Z'\to 
\widetilde Z$.
Thus, the injectivity of
the composition
$\widetilde G^{(R)\vee}\times_{\widetilde Z}
\widetilde Z'
\to 
T^{(R')\vee}$
is equivalent to the condition (2)
by Lemma \ref{lmconn}.
\end{proof}

We define a finite \'etale $G$-torsor
$V'\to U'$
by the cartesian diagram 
$$\begin{CD}
X'@<<< U'@<<< V'\\
@VfV{\quad\quad \Box }V @VV{\quad\quad \Box }V @VVV\\
X@<<< U@<<< V.
\end{CD}$$
Recall that $R'=f^*(R)$.
Then, by the functoriality of
dilatations (\ref{eqfunR}), we obtain a cartesian diagram
\begin{equation}
\begin{CD}
P_\bullet^{\prime (R',M')}@<<< 
U^{\prime \bullet+1}\times{\mathbf G}_m^{h'}@<<< 
V^{\prime \bullet+1}/\Delta G\times
{\mathbf G}_m^{h'}\\
@VV{\quad\quad\quad \Box }V 
@VV{f^{\bullet+1}\times (\prod_js_j^{l_{ij}}) \quad \Box }V 
@VVV\\
P_\bullet^{(R,M)}@<<< 
U^{\bullet+1}\times{\mathbf G}_m^h@<<< 
V^{\bullet+1}/\Delta G\times{\mathbf G}_m^h.
\end{CD}
\label{eqXX'R}
\end{equation}

\begin{pr}\label{prfun}
Assume that the ramification of
$V\to U$ along $D$ is 
bounded by $R+$.
Let $f\colon X'\to X$ be
a morphism of smooth schemes over $k$
and $M$ and $M'$ be as above.

{\rm 1.}
The ramification of $V'$ over $U'$
along $D'$ is bounded by $R'+$
and, for the \'etale oversimplicial scheme
$W^{(R,M)}_\bullet$ over
$P^{(R,M)}_\bullet$
defined in Theorem {\rm \ref{thmmain}},
we have an open immersion
\begin{equation}
\begin{CD}
W^{(R,M)}_\bullet
\times_
{P^{(R,M)}_\bullet}
P^{\prime (R',M')}_\bullet
@>>>
W^{\prime (R',M')}_\bullet
\end{CD}
\label{eqfun}
\end{equation}
of multiplicative 
oversimplicial schemes
\'etale over
$P^{\prime (R',M')}_\bullet$.

{\rm 2.}
Further assume that the ramification of
$V$ over $U$
is non-degenerate along $D$ at multiplicity $R$.
Then, the following conditions are equivalent.

{\rm (1)}
$f\colon
X'\to X$ 
is non-characteristic with respect to the
ramification of $V\to U$ along $D$ at multiplicity $R$.

{\rm (2)}
The open immersion {\rm (\ref{eqfun})}
induces an isomorphism
\begin{equation}
\begin{CD}
E^{\prime (R')0}
@>>>
E^{(R)0}\times_{T^{(R)}}T^{\prime (R')}.
\end{CD}
\label{eqfunE}
\end{equation}

{\rm (3)}
The ramification of
$V'$ over $U'$ along $D'$
is non-degenerate at multiplicity $R'$
and there exists an isomorphism
$\widetilde G^{(R)\vee}\times_{\widetilde Z}
\widetilde Z'\to
\widetilde G^{\prime(R')\vee}$
that makes
a commutative diagram
\begin{equation}
\begin{CD}
\widetilde G^{\prime(R')\vee}
@>{{\rm Char}_{R'}(V'/U')}>>
T^{\prime (R')\vee}
\\
@AAA@AAA\\
\widetilde G^{(R)\vee}\times_{\widetilde Z}
\widetilde Z'
@>{{\rm Char}_R(V/U)}>>
T^{(R)\vee}\times_{\widetilde Z}
\widetilde Z'.
\end{CD}
\label{eqncch}
\end{equation}
\end{pr}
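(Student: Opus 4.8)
The plan is to obtain assertion 1 from the functoriality of the \'etale part of normalizations, and assertion 2 from an analysis of identity components via Lemma \ref{lmnc} together with the extension--characteristic form dictionary. For assertion 1, I would apply Lemma \ref{lmnsf}.2 to the map $g_n\colon Q_n^{\prime(R',M')}\to Q_n^{(R,M)}$ on normalizations lying over $P_n^{\prime(R',M')}\to P_n^{(R,M)}$. By (\ref{eqXX'R}) the covering of $P_n^{\prime(R',M')}$ is the pull-back of that of $P_n^{(R,M)}$, and the latter factors as the finite \'etale map $V^{n+1}/\Delta G\times{\mathbf G}_m^h\to U^{n+1}\times{\mathbf G}_m^h$ followed by the open immersion of this complement (Lemma \ref{lmPRM}.1); hence the second alternative in the hypothesis of Lemma \ref{lmnsf}.2 holds and $g_n^{-1}(W_n^{(R,M)})=W_n^{(R,M)}\times_{P_n^{(R,M)}}P_n^{\prime(R',M')}\subset W_n^{\prime(R',M')}$, which is the open immersion (\ref{eqfun}). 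Chasing the compatible diagonal sections, the image of $\widetilde X^{\prime(M')}=Q_0^{\prime(R',M')}$ in $Q_1^{\prime(R',M')}$ is carried by $g_1$ into $W_1^{(R,M)}$ by the boundedness of $V$ by $R+$, hence lies in $g_1^{-1}(W_1^{(R,M)})\subset W_1^{\prime(R',M')}$; this shows $V'$ is bounded by $R'+$. The left-hand side of (\ref{eqfun}) is the base change of the multiplicative morphism of Theorem \ref{thmmain}, so it is multiplicative over the multiplicative scheme $P_\bullet^{\prime(R',M')}$ by Lemma \ref{lmgrpd}, while the right-hand side is multiplicative by Theorem \ref{thmmain} applied to $V'$.

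For assertion 2 I would restrict (\ref{eqfun}) over $T_\bullet^{\prime(R',M')}$, which maps to $T_\bullet^{(R,M)}$ through (\ref{eqfunR}). By Corollary \ref{cormain} and Proposition \ref{prgpd} this yields an open immersion of group schemes
\begin{equation*}
E^{(R)}\times_{T^{(R)}}T^{\prime(R')}\to E^{\prime(R')}
\end{equation*}
that is a homomorphism, whose image is an open subgroup scheme $H$. Since an open subgroup scheme contains the identity component fibrewise, $E^{\prime(R')0}$ is identified with the identity component of $E^{(R)}\times_{T^{(R)}}T^{\prime(R')}$, which is contained in the open and closed subgroup $E^{(R)0}\times_{T^{(R)}}T^{\prime(R')}$, with equality precisely when the latter has connected fibres over $\widetilde Z'$.

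By Lemma \ref{lmnc} the connectedness of these fibres is exactly condition (1), so (1) is equivalent to the statement that $E^{(R)0}\times_{T^{(R)}}T^{\prime(R')}$ is the identity component of $E^{(R)}\times_{T^{(R)}}T^{\prime(R')}$, that is, to the map (\ref{eqfunE}) $E^{\prime(R')0}\to E^{(R)0}\times_{T^{(R)}}T^{\prime(R')}$ being an isomorphism. This gives the equivalence (1)$\Leftrightarrow$(2). For (2)$\Leftrightarrow$(3) I would use that $E^{(R)0}\times_{T^{(R)}}T^{\prime(R')}$ is finite \'etale over $T^{\prime(R')}$, being the base change of the finite morphism (\ref{eqisog1}), with kernel $\widetilde G^{(R)}\times_{\widetilde Z}\widetilde Z'$. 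Thus under (2) the morphism $E^{\prime(R')0}\to T^{\prime(R')}$ is finite, so $V'$ is non-degenerate at $R'$ with $\widetilde G^{\prime(R')}=\widetilde G^{(R)}\times_{\widetilde Z}\widetilde Z'$, and the square (\ref{eqncch}) records the functoriality of the closed immersion (\ref{eqcf}) under base change. Conversely, under (3) both $E^{\prime(R')0}$ and $E^{(R)0}\times_{T^{(R)}}T^{\prime(R')}$ are finite \'etale extensions of $T^{\prime(R')}$ with the same characteristic form, hence define the same extension class by Proposition \ref{prExt} and Lemma \ref{lmconn}, which is (2).

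The main obstacle is the bookkeeping in the middle step: the subgroup $E^{(R)0}\times_{T^{(R)}}T^{\prime(R')}$ may have disconnected fibres, and the content of the non-characteristic condition is precisely that its fibres are connected, so that it coincides with $E^{\prime(R')0}$. Lemma \ref{lmnc} isolates this translation into a connectedness statement, after which the identity-component argument and the dictionary of Proposition \ref{prExt} make the equivalences formal; the remaining care is to check that the open immersion (\ref{eqfun}) indeed restricts over $T_\bullet^{\prime(R',M')}$ to a homomorphism of group schemes, which is guaranteed by Corollary \ref{cormain} and Proposition \ref{prgpd}.
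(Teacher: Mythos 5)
Your argument follows the paper's own proof essentially step for step. Part 1 is the paper's one-line proof --- apply Lemma \ref{lmnsf}.2 to the cartesian diagram (\ref{eqXX'R}) --- with the verifications (the pull-back structure of the coverings, the chase of diagonal sections giving boundedness by $R'+$, multiplicativity via Theorem \ref{thmmain} and Lemma \ref{lmgrpd}) written out correctly. In Part 2, your chain --- restrict (\ref{eqfun}) over $T_\bullet^{\prime(R',M')}$ to get an open immersion of group schemes, identify $E^{\prime(R')0}$ with the fibrewise identity component of $E^{(R)}\times_{T^{(R)}}T^{\prime(R')}$, use Lemma \ref{lmnc} to translate condition (1) into connectedness of the fibres of $E^{(R)0}\times_{T^{(R)}}T^{\prime(R')}$, and use the extension/characteristic-form dictionary for (2)$\Leftrightarrow$(3) --- is exactly the paper's.

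The one step that does not establish what it claims is your implication (3)$\Rightarrow$(2). From matching characteristic forms you conclude, via Proposition \ref{prExt} and Lemma \ref{lmconn}, that $E^{\prime(R')0}$ and $E^{(R)0}\times_{T^{(R)}}T^{\prime(R')}$ ``define the same extension class, which is (2)''. But condition (2) is not that these two extensions are abstractly isomorphic; it is that the specific map (\ref{eqfunE}) is an isomorphism, and an abstract isomorphism of extensions says nothing by itself about a given morphism between them. The gap closes with facts you already have on the table: under (3) the source of (\ref{eqfunE}) is finite over $T^{\prime(R')}$ (non-degeneracy of $V'$ at $R'$), the target is finite over $T^{\prime(R')}$ (base change of (\ref{eqisog1}), which is finite by non-degeneracy of $V$), and the two kernels have the same order since their duals are identified by the isomorphism in (\ref{eqncch}). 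Hence (\ref{eqfunE}) is an open immersion that is also finite, so its image is open and closed, and equality of degrees over $T^{\prime(R')}$ forces surjectivity. Alternatively, one can argue as the paper implicitly does: the transpose of the map on kernels induced by (\ref{eqfunE}) also makes (\ref{eqncch}) commute, and since ${\rm Char}_{R'}(V'/U')$ is a closed immersion, any isomorphism making (\ref{eqncch}) commute must coincide with that transpose; the map on kernels is therefore an isomorphism, and then (\ref{eqfunE}) is an isomorphism of extensions.
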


\begin{proof}
1.
It suffices to apply
the functoriality
Lemma \ref{lmnsf}.2 to 
the diagram (\ref{eqXX'R}).

2.
The open immersion (\ref{eqfun})
induces an open immersion
$E^{(R,M)}
\times_
{T^{(R,M)}}T^{\prime (R',M')}
\to
E^{\prime (R',M')}$
and hence an open immersion
(\ref{eqfunE}).

By Lemma \ref{lmnc},
the condition (1) is equivalent
to that,
for every point $x'$ of
$\widetilde Z'$
the fiber
$(E^{(R)0}\times_{T^{(R)}}T^{\prime (R')})
\times_{\widetilde Z'}x'$
is connected.
Hence, by the uniqueness of $E^{\prime (R')0}$
in Proposition \ref{prmain}.1,
the condition (1) is equivalent to
the condition (2).

An isomorphism
$\widetilde G^{(R)\vee}\times_{\widetilde Z}
\widetilde Z'\to
\widetilde G^{\prime(R')\vee}$
making the diagram (\ref{eqncch})
is equivalent to an isomorphism
$$
\begin{CD}
0@>>>
\widetilde G^{\prime(R')}
@>>>
E^{\prime (R')0}
@>>>
T^{\prime (R')}
@>>>0\\
@.@VVV@VV{\rm(\ref{eqfunE})}V@|@.
\\
0@>>>
\widetilde G^{(R)}
\times_{\widetilde Z}
\widetilde Z'
@>>>
E^{(R)0}
\times_
{T^{(R)}}T^{\prime (R')}
@>>>
T^{\prime (R')}
@>>>0
\end{CD}$$
of extensions
over $\widetilde Z'$.
Hence the condition (3)
is equivalent to that
the \'etale morphism
$E^{\prime(R')0} \to T^{\prime(R')}$
is finite and that
(\ref{eqfunE}) is an isomorphism.
By the assumption that
the ramification of
$V$ over $U$
is non-degenerate along $D$ at multiplicity $R$,
the \'etale morphism
$E^{(R)0} \to T^{(R)}$
is finite
and the target of
(\ref{eqfunE}) 
is finite over $T^{\prime(R')}$.
Thus, the condition (3)
is also equivalent to 
the condition (2).
\end{proof}

\begin{cor}\label{corfun}
Assume that the ramification of
$V\to U$ along $D$ is 
bounded by $R+$ and
is non-degenerate
at multiplicity $R$
as above.
Let $X''\to X'\to X$ be
morphisms of smooth schemes
over $k$ such
that the inverse images of $U$ are 
the complement of divisors 
of $X'$ and of $X''$ with
simple normal crossings.
Then, the following conditions
are equivalent.

{\rm (1)}
The composition 
$X''\to X$ is non-characteristic
with respect to the ramification of
$V$ over $U$ along $D$
at multiplicity $R$.

{\rm (2)}
$X'\to X$ is non-characteristic
with respect to the ramification of
$V$ over $U$ along $D$
at multiplicity $R$
on a neighborhood of the image
of $X''\to X'$
and 
$X''\to X'$ is non-characteristic
with respect to the ramification of
$V'$ over $U'$ along $D'$
at multiplicity $R'$.
\end{cor}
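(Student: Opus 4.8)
The plan is to reduce the statement to the characteristic forms and the injectivity condition of Definition \ref{dfnc}, using the functoriality of the (twisted) cotangent pullback together with Proposition \ref{prfun}.2. Write $g\colon X'\to X$ and $h\colon X''\to X'$, so the composition is $g\circ h$, and set $R'=g^*R$, $R''=h^*R'=(g\circ h)^*R$. Let $\phi={\rm Char}_R(V/U)\colon \widetilde G^{(R)\vee}\to T^{(R)\vee}$ be the characteristic form, and denote by
$$g^*\colon T^{(R)\vee}\times_{\widetilde Z}\widetilde Z'\to T^{\prime (R')\vee},\qquad h^*\colon T^{\prime(R')\vee}\times_{\widetilde Z'}\widetilde Z''\to T^{\prime\prime(R'')\vee}$$
the twisted cotangent pullback maps appearing in (\ref{eqncf}); these are canonical and, by Proposition \ref{prmain}.2, independent of the auxiliary divisors. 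The chain rule for the pullback of differential forms, twisted by $R'=g^*R$ and $R''=h^*R'$, furnishes the transitivity identity $(g\circ h)^*=h^*\circ(g^*\times{\rm id})$ as maps $T^{(R)\vee}\times_{\widetilde Z}\widetilde Z''\to T^{\prime\prime(R'')\vee}$. In this language $g$ (resp. $g\circ h$) is non-characteristic exactly when $g^*\circ(\phi\times{\rm id})$ (resp. $(g\circ h)^*\circ(\phi\times{\rm id})$) is injective.

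For (2)$\Rightarrow$(1) I would assume $g$ non-characteristic on a neighborhood of the image of $h$. Then Proposition \ref{prfun}.2 (the equivalence with (3), i.e. diagram (\ref{eqncch})) shows $V'/U'$ is non-degenerate at $R'$ there and identifies $\phi'={\rm Char}_{R'}(V'/U')$ with $g^*\circ(\phi\times{\rm id})$ under $\widetilde G^{\prime(R')\vee}\cong\widetilde G^{(R)\vee}\times_{\widetilde Z}\widetilde Z'$. Substituting into the non-characteristic condition for $h$ and using transitivity,
$$h^*\circ(\phi'\times{\rm id})=h^*\circ(g^*\times{\rm id})\circ(\phi\times{\rm id})=(g\circ h)^*\circ(\phi\times{\rm id}),$$
so the injectivity of the left-hand side, which is the hypothesis on $h$, is exactly the non-characteristic condition for $g\circ h$, yielding (1).

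The substantive direction is (1)$\Rightarrow$(2), and the crux is to recover that $g$ is non-characteristic near the image of $h$. I would use that a composite $h^*\circ\zeta$ being injective forces the first map $\zeta=(g^*\times{\rm id})\circ(\phi\times{\rm id})$ to be injective on geometric points; since $\zeta$ is the base change along $\widetilde Z''\to\widetilde Z'$ of the map $g^*\circ(\phi\times{\rm id})$ controlling the non-characteristic condition for $g$, this gives injectivity of the latter over every point in the image of $\widetilde Z''\to\widetilde Z'$. The main obstacle is then the \emph{openness} of the locus in $\widetilde Z'$ over which $g^*\circ(\phi\times{\rm id})$ is a closed immersion. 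This holds because $\widetilde G^{(R)\vee}$ is finite \'etale over $\widetilde Z$ (as $V/U$ is non-degenerate at $R$, the kernel in Proposition \ref{prmain}.1 is finite \'etale): working \'etale-locally where the source splits into sheets $\sigma_i$, the non-injective locus $\bigcup_{i\neq j}\{\sigma_i=\sigma_j\}$ is a finite union of closed equalizer subschemes in the separated target $T^{\prime(R')\vee}$, hence closed, so the injective locus is open.

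Thus injectivity over the image of $\widetilde Z''\to\widetilde Z'$ spreads out to an open neighborhood $W'$, over which $g$ is non-characteristic. Over $W'$ the hypotheses of Proposition \ref{prfun}.2 are met, so again $\phi'=g^*\circ(\phi\times{\rm id})$ and $V'/U'$ is non-degenerate at $R'$; the same computation then reads the hypothesis that $(g\circ h)^*\circ(\phi\times{\rm id})$ is injective as the non-characteristic condition for $h$ with respect to $V'/U'$ at $R'$. Together with $g$ non-characteristic on $W'$ this is exactly (2). I expect the only genuinely technical point to be the openness step, with everything else reducing to the transitivity identity and Proposition \ref{prfun}.2.
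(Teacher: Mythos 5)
Your proposal is correct, and it reaches the statement by the route dual to the paper's. The paper stays entirely on the side of the extensions: it forms the chain of open immersions (\ref{eqfun2}), namely $E^{\prime\prime(R'')0}\to E^{\prime(R')0}\times_{T^{\prime(R')}}T^{\prime\prime(R'')}\to E^{(R)0}\times_{T^{(R)}}T^{\prime\prime(R'')}$, invokes Proposition \ref{prfun}.2 in the form (1)$\Leftrightarrow$(2) to translate condition (1) into ``the composite is an isomorphism'', and then splits this formally: a composite of open immersions is an isomorphism if and only if the first one is an isomorphism and the second one --- the base change of (\ref{eqfunE}) for $X'\to X$ --- is an isomorphism over the image of $X''\to X'$. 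You instead work with the characteristic forms, i.e.\ with Proposition \ref{prfun}.2 in the form (1)$\Leftrightarrow$(3), so condition (1) becomes injectivity of the composite $h^*\circ(g^*\times{\rm id})\circ(\phi\times{\rm id})$, and your transitivity identity $(g\circ h)^*=h^*\circ(g^*\times{\rm id})$ plays exactly the role of the factorization (\ref{eqfun2}). The trade-off shows in the direction (1)$\Rightarrow$(2): in the paper's formulation the spreading-out is essentially absorbed by the open-immersion formalism (the complement of the image of (\ref{eqfunE}) is closed in a scheme finite over $T^{\prime(R')}$, by non-degeneracy), whereas on the form side nothing is automatic and you must prove openness of the injectivity locus --- which you do correctly, by the equalizer argument using that $\widetilde G^{(R)\vee}$ is finite \'etale over $\widetilde Z$ (again non-degeneracy). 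This makes explicit a point the paper's one-line phrase ``isomorphism on the image of $X''\to X'$'' leaves implicit. One fine point that both you and the paper gloss over: your open set $W'$ lives in $\widetilde Z'$, while condition (2) asks for a neighborhood of the image of $X''$ inside $X'$; to pass between the two, note that the injectivity locus is stable under the ${\mathbf G}_m^{h'}$-action and that the fibers of $\widetilde Z'\to Z'$ over each stratum are single orbits, so the complement of $W'$ has closed image in $Z'$ disjoint from the image of $X''$. With that remark added, your argument is complete and at least as rigorous as the paper's.
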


\begin{proof}
We consider 
the open immersions 
\begin{equation}
\begin{CD}
E^{\prime\prime (R'')0}
@>>>
E^{\prime(R')0}
\times_
{T^{\prime(R')}}
T^{\prime \prime(R'')}
@>>>
E^{(R)0}
\times_
{T^{(R)}}T^{\prime \prime(R'')}
\end{CD}
\label{eqfun2}
\end{equation}
over $\widetilde Z''$
in (\ref{eqfunE})
for $X''\to X'\to X$.
By Proposition \ref{prfun}.2,
the condition (1) is
equivalent to
that the composition of (\ref{eqfun2})
is an isomorphism.
Since the morphisms in (\ref{eqfun2})
are open immersions,
it is equivalent to
that the first map is an isomorphism
and that
 (\ref{eqfunE})
is an isomorphism
on the image of $X''\to X'$.
Hence the assertion follows.
\end{proof}

We study the restrictions to curves.
Let $\Sigma\subset TX$
denote the union of the images of
the hyperplane bundles defined as the zero locus
of the image by the injection
${\rm Char}_R(V/U)\colon
\widetilde G^{(R)\vee}
\to T^{(R)\vee}$
of non-zero sections
defined over $F^n\colon \widetilde Z^{(p^{-n})}
\to \widetilde Z$.

\begin{lm}\label{lmcut}
Assume that the ramification of
$V\to U$ along $D$ is 
bounded by $R+$ and
is non-degenerate
at multiplicity $R$
as above.
Let $C$ be a smooth curve
and
$f\colon C\to X$ be a morphism over $k$
such that the pull-back $f^*D$ is a divisor of $C$.
Then, for $x\in f^*(D)$,
the following conditions are equivalent.

{\rm (1)}
$f\colon C\to X$
is non-characteristic 
with respect to the ramification of
$V$ over $U$ along $D$
at multiplicity $R$
on a neighborhood of $x$.

{\rm (2)}
The image of the morphism
$f_*\colon T_xC\to T_{f(x)}X$
on the tangent space
is not contained in
$\Sigma$ defined above.
\end{lm}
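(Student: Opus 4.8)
The plan is to unwind the definition of non\nobreakdash-characteristic morphism in the special case $X'=C$ and to recognize the resulting linear condition as the one cutting out $\Sigma$. By Definition \ref{dfnc} applied to $f\colon C\to X$, condition (1) holds on a neighborhood of $x$ if and only if the composition (\ref{eqncf})
$$
\widetilde G^{(R)\vee}\times_{\widetilde Z}\widetilde Z'
\longrightarrow
T^{(R)\vee}\times_{\widetilde Z}\widetilde Z'
\longrightarrow
T^{(R')\vee}
$$
is injective on the fibers over the geometric points of $\widetilde Z'$ lying above $x$. Since the source is finite \'etale and the target is a vector bundle, injectivity is the same as being a closed immersion, and by Lemma \ref{lmconn} (2)$\Leftrightarrow$(3) this is an open condition tested on geometric fibers. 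Thus it suffices to argue fiber by fiber over the points of $\widetilde Z'$ above $x$, and the open nature matches the phrase \emph{on a neighborhood of} $x$ in condition (1).

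First I would identify the second arrow. As $R'=f^*R$, the map $T^{(R)\vee}\times_{\widetilde Z}\widetilde Z'\to T^{(R')\vee}$ is the one induced by the cotangent pull\nobreakdash-back $f^*T^*X\to T^*C$, that is, the transpose of the differential $f_*\colon TC\to f^*TX$, twisted compatibly by $\widetilde R$ and $\widetilde R'$. Over a geometric point $z'$ of $\widetilde Z'$ above $x$, with image $x_0=f(x)\in Z$, this arrow becomes the transpose $df^*_x\colon T^*_{x_0}X\to T^*_xC$. For a nonzero $v$ in the fiber of $\widetilde G^{(R)\vee}$, write $\xi={\rm Char}_R(V/U)(v)\in T^*_{x_0}X$ (the twist by $L(\widetilde R)$ does not affect vanishing). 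Then $df^*_x(\xi)$ vanishes if and only if $\xi$ kills $f_*(T_xC)$, i.e.\ if and only if $f_*(T_xC)$ is contained in the hyperplane $\ker\xi\subset T_{x_0}X$.

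The crucial point, which I also expect to be the main obstacle, is to match the geometric points $z'$ of $\widetilde Z'$ above $x$ with the full fiber of $\widetilde Z$ over $x_0$ used to build $\Sigma$. By Lemma \ref{lmTR2}.2 this fiber is a single ${\mathbf G}_m^h$\nobreakdash-torsor, and the characteristic form is ${\mathbf G}_m^h$\nobreakdash-equivariant. Since $\widetilde G^{(R)\vee}$ is finite \'etale, each of its sections is invariant under ${\mathbf G}_m^h$, while on $T^{(R)\vee}=(T^*X\times_X\widetilde Z)(\widetilde R)$ the action is trivial on the factor $T^*X$ and only rescales the twisting line bundle $L(\widetilde R)$. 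Hence, for each fixed $v$, the covector $\xi$ is merely rescaled as $z$ moves in the fiber, so its kernel hyperplane $H_v\subset T_{x_0}X$ is independent of $z$. Therefore $\Sigma\cap T_{x_0}X=\bigcup_{v\neq 0}H_v$ is a finite union of hyperplanes that is detected by any single point of the fiber, in particular by the image of the $z'$ above $x$, even though that image need not fill out the whole fiber.

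Combining these steps, the composition fails to be injective over $x$ exactly when some nonzero $v$ yields $df^*_x(\xi)=0$, i.e.\ when $f_*(T_xC)$ lies in some $H_v$, i.e.\ when $f_*(T_xC)\subset\Sigma$. Negating, condition (1) is equivalent to $f_*(T_xC)\not\subset\Sigma$, which is condition (2). The degenerate case $f_*=0$ is consistent: then $df^*_x=0$ kills every $v$, the composition is non\nobreakdash-injective, and $f_*(T_xC)=0\subset\Sigma$. Beyond this bookkeeping, the only genuine verification is that the equivariant trivialization of $L(\widetilde R)$ rescales $\xi$ without rotating it, which is precisely where Lemma \ref{lmTR2} and the finite \'etale nature of $\widetilde G^{(R)\vee}$ are used.
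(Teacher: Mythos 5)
Your proof is correct, and it is essentially the paper's argument: the paper disposes of this lemma with the single line ``Clear from the definition of non-characteristicity,'' and your write-up is exactly that unwinding of Definition \ref{dfnc} in the case $X'=C$, carried out carefully. The one step you make explicit beyond the paper's one-liner --- that the union of kernel hyperplanes in $T_{f(x)}X$ is the same at every point of the ${\mathbf G}_m^h$-orbit forming the fiber of $\widetilde Z$ over $f(x)$, by equivariance of the characteristic form --- is precisely what the paper already relies on when it factors the characteristic form through the quotient in (\ref{eqpcf}) to define $\Sigma$, so there is no divergence in method.
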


\begin{proof}
We apply Proposition \ref{prfun}.2
to $C\to X$.
Since the right vertical arrow in
(\ref{eqncch}) is the dual of
$f_*\colon T_xC\to T_{f(x)}X$,
the condition (2) is equivalent
to that in Proposition \ref{prfun}.2.
\end{proof}

Assume that $D$ is irreducible
and that a finite covering $V\to U$
is wildly ramified along $D$.
Then, it follows immediately from Lemma
\ref{lmcut} that there exists
a curve $C$ in $X$ meeting
$D$ transversely at a closed point
$x$ such that the pull-back of $V\to U$
to $C$ is wildly ramified at $x$.

\begin{cor}\label{corfuncut}
Assume that the ramification of
$V\to U$ along $D$ is 
bounded by $R+$ and
is non-degenerate
at multiplicity $R$
as above.
Let 
$f\colon X'\to X$ be a morphism
of smooth schemes such
that the inverse image $U'$ of $U$ is
the complement of a divisor 
$D'$ of $X'$ with
simple normal crossings.
Then, 
the following conditions are equivalent.

{\rm (1)}
$f\colon X'\to X$
is non-characteristic
with respect to the ramification of
$V$ over $U$ along $D$
at multiplicity $R$.

{\rm (2)}
For every closed point
$x'$ of $D'$,
there exists a smooth curve $C$ defined on 
a neighborhood of $x'$ in $X'$
meeting each component of $D'$ transversally at $x'$
such that the composition 
$C'\to X'\to X$ is non-characteristic
with respect to the ramification of
$V$ over $U$ along $D$
at multiplicity $R$.
\end{cor}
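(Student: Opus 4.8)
The plan is to reduce the equivalence to two already available tools: the transitivity of non-characteristicity in Corollary~\ref{corfun} and the curve criterion of Lemma~\ref{lmcut} applied to morphisms landing in $X'$. The structural device is Corollary~\ref{corfun} applied with the third scheme taken to be a curve: for a smooth curve $C$ passing through a point of $D'$ and meeting $D'$ properly there, the composite $C\to X'\to X$ is non-characteristic if and only if $f$ is non-characteristic on a neighborhood of the image of $C\to X'$ \emph{and} $C\to X'$ is non-characteristic with respect to $V'\to U'$ at multiplicity $R'$. I will use one half of this equivalence for each implication. For the applications of Lemma~\ref{lmcut} to $X'$ I record that, once $f$ is known to be non-characteristic, the ramification of $V'\to U'$ along $D'$ is non-degenerate at $R'$ by Proposition~\ref{prfun}.2, so that the characteristic set $\Sigma'\subset TX'$ of $V'\to U'$ is defined and Lemma~\ref{lmcut} computes non-characteristicity of curves into $X'$ in terms of $\Sigma'$.

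For the implication (2)$\Rightarrow$(1) I show that the characteristic locus of $f$ is empty. Let $B\subset\widetilde Z'$ be the locus on which the injection (\ref{eqncf}) of Definition~\ref{dfnc} fails; it is closed, being \'etale-locally the finite union of the loci where a nonzero section of $\widetilde G^{(R)\vee}\times_{\widetilde Z}\widetilde Z'$ maps to the zero section of $T^{(R')\vee}$, and $f$ is non-characteristic precisely when $B=\emptyset$. Suppose $B\neq\emptyset$. Since $\widetilde Z'$ is of finite type over the field $k$, hence Jacobson, $B$ contains a closed point, whose image $x'\in D'$ is again a closed point. By hypothesis~(2) there is a smooth curve $C$ through $x'$, transversal to $D'$ there, with $C\to X'\to X$ non-characteristic; Corollary~\ref{corfun} then forces $f$ to be non-characteristic on a neighborhood of $x'$, i.e.\ $B$ avoids the preimage of that neighborhood. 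This contradicts the chosen point of $B$ lying over $x'$, so $B=\emptyset$ and (1) holds. Note that this direction needs no genericity and works over any $k$.

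For the implication (1)$\Rightarrow$(2) I fix a closed point $x'$ of $D'$ and construct the curve. Since $f$ is non-characteristic everywhere, Corollary~\ref{corfun} reduces the problem to producing a smooth curve $C$ through $x'$, meeting each component of $D'$ transversally there, with $C\to X'$ non-characteristic with respect to $V'\to U'$ at $R'$; the composite $C\to X$ is then automatically non-characteristic. By Lemma~\ref{lmcut} the condition on $C\to X'$ says exactly that the tangent line $T_{x'}C\subset T_{x'}X'$ is not contained in $\Sigma'$. Over the fibre at $x'$ the set $\Sigma'$ is a finite union of hyperplanes, namely the kernels of the finitely many characteristic covectors of $V'$, the ${\mathbf G}_m$-rescaling along the fibres of $\widetilde Z'\to Z'$ preserving these kernels. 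Choosing a tangent direction outside this finite union together with the hyperplanes $T_{x'}D'_j$, and realizing it by a smooth curve through $x'$, yields the desired $C$.

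The main obstacle is precisely this existence step: selecting one tangent direction at $x'$ that is transversal to $D'$ and avoids every characteristic hyperplane of $\Sigma'$. Over an infinite field this is immediate, since a vector space is never a finite union of proper subspaces; over a finite residue field $\kappa(x')$ the finitely many rational directions may all be characteristic, and one must allow $C$ to meet $x'$ in a closed point whose residue field is a suitable finite extension of $\kappa(x')$ in order to gain enough tangent directions (this changes nothing, as non-characteristicity is a fibrewise, geometric condition). The remaining work is bookkeeping: verifying that $\Sigma'$ records exactly the kernels of the image of ${\rm Char}_{R'}(V'/U')$, so that Lemma~\ref{lmcut} translates non-characteristicity of $C\to X'$ into the avoidance statement, and that the base change along $\widetilde Z'\to\widetilde Z$ appearing in Definition~\ref{dfnc} matches the fibrewise covector data used above.
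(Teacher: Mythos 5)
Your proposal is correct and follows essentially the same route as the paper's own proof: Proposition \ref{prfun}.2 plus Lemma \ref{lmcut} plus Corollary \ref{corfun} (2)$\Rightarrow$(1) for the implication (1)$\Rightarrow$(2), and Corollary \ref{corfun} (1)$\Rightarrow$(2) for the converse. The only difference is that you make explicit two points the paper leaves implicit --- the closedness of the locus where (\ref{eqncf}) fails in the direction (2)$\Rightarrow$(1), and the choice of a tangent direction avoiding the finitely many characteristic hyperplanes in (1)$\Rightarrow$(2), together with the finite-residue-field caveat --- which is added care rather than a change of method.
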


\begin{proof}
(1)$\Rightarrow$(2):
By Proposition \ref{prfun}.2,
the ramification of the
pull-back $V'=V\times_UU'$
along $D'$ is non-degenerate
at multiplicity $R'$.
By Lemma \ref{lmcut} (2)$\Rightarrow$(1), 
for every closed point
$x'$ of $D'$,
there exists a smooth curve $C$ defined on 
a neighborhood of $x'$ in $X'$
meeting  each component of $D'$ transversally at $x'$
such that the composition 
$C'\to X'$ is non-characteristic
with respect to $V'$ over $U'$.
By Corollary \ref{corfun} (2)$\Rightarrow$(1),
the composition
$C'\to X$ is non-characteristic
with respect to $V$ over $U$.

(2)$\Rightarrow$(1):
It follows from 
Corollary \ref{corfun}  (1)$\Rightarrow$(2).
\end{proof}

\subsection{Ramification groups}\label{sslf}

We briefly recall from \cite{AS1},
\cite{AS2} and \cite{mu} the definition 
and basic properties of
the filtration by ramification groups
of the absolute Galois group
of a local field with not necessarily perfect
residue field.
Let $K$ be a complete discrete 
valuation field
and ${\cal O}_K$
be the valuation ring.
We fix a separable closure $\bar K$
of $K$. Its residue field $\bar F$
is an algebraic closure of 
the residue field $F$.
Let $L$ be a finite \'etale algebra
over $K$
and let $T$ denote the 
normalization of 
$S={\rm Spec}\ {\cal O}_K$
in $L$.
We consider a cartesian diagram
\begin{equation}
\begin{CD}
Q@<<< T\\
@VV{\quad\ \Box }V @VVV\\
P@<<< S
\end{CD}\label{eqPQST}
\end{equation}
of schemes over $S$
satisfying the condition:
\begin{itemize}
\item[(P)]
The horizontal
arrows are closed immersions,
the vertical arrows
are finite flat
and $P$ and $Q$
are smooth over $S$.
\end{itemize}

Let $r>0$ be a rational number.
Let $K'$ be a finite separable extension of $K$
of ramification index $e$
contained in $\bar K$ such that
$er$ is an integer.
Let $S'$ denote the
normalization of $S$ in $K'$
and $F'$ be the residue field of $K'$.
Let $D\subset P$
and $D'=P\times_S{\rm Spec}\ F'
\subset P_{S'}=P\times_SS'$
denote the closed fibers
and 
define the dilatation
$P_{S'}^{(er)}$ to be 
$P_{S'}^{(erD'\cdot S')}$.
Let $Q_{S'}^{(er)}$
denote the normalization
of the base change
$Q\times_PP_{S'}^{(er)}$.
As a consequence of
Epp's theorem, 
there exists a finite separable
extension $K'$ such 
that the geometric closed fiber
$Q_{S'}^{(er)}\times_{S'}{\rm Spec}\ \bar F$
of the normalization is reduced.
Further the finite morphism
\begin{equation}
Q_{\bar F}^{(r)}
=
Q_{S'}^{(er)}
\times_{S'}{\rm Spec}\ \bar F
\to
P_{\bar F}^{(r)}
=
P_{S'}^{(er)}
\times_{S'}{\rm Spec}\ \bar F
\label{eqPQr}
\end{equation}
on the geometric closed fibers
is independent of $K'$.

\begin{df}{\rm (\cite[Definition 1.8]{mu})}\label{dfbddL}
Let $K$ be a complete discrete valuation
field, $L$ be a finite \'etale $K$-algebra
and $r>0$ be a rational number.
Then, we say that the ramification of $L$ over $K$
is  {\rm bounded by } $r+$
(resp.\ $r$)
if there exist a 
cartesian diagram {\rm (\ref{eqPQST})}
satisfying the condition {\rm (P)}
above and a finite separable
extension $K'$ over $K$ such that
$Q_{\bar F}^{(r)}
\to P_{\bar F}^{(r)}$ {\rm (\ref{eqPQr})} is 
a finite \'etale covering
(resp.\ 
a totally decomposed 
finite \'etale covering).
\end{df}

We recall main results
from \cite{AS1} and \cite{AS2}.
The condition in Definition \ref{dfbddL}
holds for one cartesian
diagram {\rm (\ref{eqPQST})}
satisfying the condition {\rm (P)}
for an extension $K'$
if and only if it holds
for every such diagram
for every finite separable extension over $K'$.
The full subcategory of
the category of finite \'etale
$K$-algebras consisting
of those such that the ramification
is bounded by $r$ (resp.\ by $r+$)
form a Galois subcategory
and its fundamental group
is the quotient 
$G_K/G_K^r$
(resp.\ $G_K/G_K^{r+}$)
by the ramification group
$G_K^r$
(resp.\ $G_K^{r+}=\overline{
\bigcup_{s>r}G_K^s}$)
of the absolute Galois group
$G_K={\rm Gal}(\bar K/K)$.
The subgroup
$G_K^1$ is the inertia subgroup
$I_K={\rm Gal}(K^{ur}/K)$
and
$G_K^{1+}$ is its $p$-Sylow subgroup $P_K$.

The diagram
(\ref{eqPQST})
induces a commutative diagram
\begin{equation}
\begin{CD}
Q_{S'}^{(er)}
@<<<
\bar T_{S'}\\
@VVV@VVV\\
P_{S'}^{(er)}
@<<<S'
\end{CD}
\label{eqPQS'}
\end{equation}
where the normalization
$\bar T_{S'}$
of $T\times_SS'$
is the disjoint union of
copies of $S'$ indexed by
embeddings
$L\to \bar K$ over $K$
if $K'$ contains the Galois closure of $L$
over $K$.
We call the closed point of
$P^{(r)}_{\bar F}$ defined by
the bottom horizontal arrow
of  (\ref{eqPQS'}) the origin
and let it be denoted by $0$.
Then, if the ramification of
$L$ is bounded by $r+$,
the diagram (\ref{eqPQS'}) induces
a cartesian diagram
\begin{equation}
\begin{CD}
Q^{(r)}_{\bar F}
@<<<
{\rm Mor}_K(L,\bar K)\\
@VV{\quad\quad\ \Box }V@VVV\\
P^{(r)}_{\bar F}
@<<<0.%={\rm Spec}\ \bar F.
\end{CD}
\label{eqPQFbar}
\end{equation}

Assume $L$ is a Galois extension of
$K$ such that the ramification
is bounded by $r+$.
We fix an embedding $L\to \bar K$
and identify the Galois group
$G={\rm Gal}(L/K)$ with
the set ${\rm Mor}_K(L,\bar K)$
of embeddings.
Let $G^{(r)}\subset G$
denote the ramification subgroup
defined as the image of
$G^r_K\subset G_K$ by
the surjection $G_K\to G={\rm Gal}(L/K)$.
Then, $G^{(r)}\subset G$
is identified with the intersection
$Q^{(r)0}_{\bar F}\cap G$
with the connected component
$Q^{(r)0}_{\bar F}$
containing the image of
the fixed embedding
$1\in {\rm Gal}(L/K)=
{\rm Mor}_K(L,\bar K)$.
Further $Q^{(r)0}_{\bar F}$
is a $G^{(r)}$-torsor
over $P^{(r)}_{\bar F}$ compatible
with its canonical action on the
subset $G^{(r)}$.
Thus, we obtain a canonical surjection
\begin{equation}\pi_1(P_{\bar F}^{(r)},0)
\to G^{(r)}.
\label{eqpiGr}
\end{equation}

\begin{pr}\label{prequiv}
Let $X$ be a smooth scheme over 
a perfect field $k$ of characteristic $p>0$
and $D$ be a smooth irreducible divisor of $X$.
Let $\xi$ be the generic point 
of $D$ and
$K$ be the fraction field
of the completion $\hat {\cal O}_{X,\xi}$
of the discrete valuation ring.
Let $G$ be a finite group and
$V\to U$ be a $G$-torsor.
Then for the finite \'etale $K$-algebra
$L=\Gamma(V\times_UK,{\cal O})$
and a rational number $r> 1$,
the following conditions are equivalent:

{\rm (1)}
The ramification of $V$ over $U$ at $\xi$
is bounded by $R+=rD+$.

{\rm (2)}
The ramification of $L$ over $K$
is bounded by $r+$.
\end{pr}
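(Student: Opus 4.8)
The plan is to localize everything at $\xi$ and to identify the self-product dilatation underlying Definition~\ref{dfbR} with the Abbes--Saito dilatation of Definition~\ref{dfbddL}, so that the equivalence reduces to comparing two \'etaleness conditions for one and the same normalized covering. Since $D$ is smooth and irreducible we have $h=1$, $R=rD$ and $M=mD$ with $mr\in{\mathbf Z}$. By Lemma~\ref{lmbR} condition~(1) is independent of the auxiliary integer $m$, and by the recalled properties of the ramification filtration condition~(2) is independent of the frame~(\ref{eqPQST}) and of the extension $K'$; I will use this freedom to pass, on both sides, to a common sufficiently large extension.

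First I would manufacture an Abbes--Saito frame out of the groupoid construction. Put $S={\rm Spec}\,{\mathcal O}_K$ with ${\mathcal O}_K=\hat{\mathcal O}_{X,\xi}$. By Lemma~\ref{lmPRMb}.1 the scheme $P_1^{(R,M)}$ is an open subscheme of the dilatation of $\widetilde X^{(M)}\times_kX$ along the graph of $\widetilde X^{(M)}\to X$ with respect to ${\rm pr}_0^*\widetilde R^{(M)}$, the removed part consisting of proper transforms of pull-backs of $D$ by the second projection; as $D$ is smooth these avoid the fibre over $\xi$ and are irrelevant there. Choosing a section of the ${\mathbf G}_m$-torsor $\widetilde X^{(M)}\to X$ --- equivalently an $m$-th root $\pi^{1/m}$ of a uniformizer $\pi$ of ${\mathcal O}_K$ --- identifies the localization of $\widetilde X^{(M)}$ at $\xi$ with the normalization $S'$ of $S$ in $K'=K(\pi^{1/m})$, of ramification index $e=m$, under which the integral divisor $\widetilde R^{(M)}=mr\,\widetilde D^{(M)}$ becomes $er\,D'$. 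Hence the localization of $P_1^{(R,M)}$ at $\xi$ is the Abbes--Saito dilatation $P_{S'}^{(er)}$ attached to the frame $P=X\times_kS'$ with its diagonal section, and the ${\mathbf G}_m$-equivariance of the entire diagram~(\ref{eqtPR}) shows that the choice of section is immaterial.

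Next I would identify the coverings. The finite \'etale covering $V^{2}/\Delta G\to U^{2}$, pulled back to the complement of the closed fibre of $P_{S'}^{(er)}$, is the covering through which the ramification of $L=\Gamma(V\times_UK,{\mathcal O})$ at $\xi$ is measured, so by the functoriality of normalization (Lemma~\ref{lmnsf}.1) the localization of $Q_1^{(R,M)}$ at $\xi$ agrees with the Abbes--Saito normalization $Q_{S'}^{(er)}$ of $Q\times_PP_{S'}^{(er)}$, the component through the diagonal section corresponding to $1\in{\rm Mor}_K(L,\bar K)$ in~(\ref{eqPQFbar}). Under these identifications condition~(1) asserts that $Q_{S'}^{(er)}\to P_{S'}^{(er)}$ is \'etale along the diagonal section over the closed point $\xi$, whereas condition~(2) asserts that the geometric closed fibre $Q_{\bar F}^{(r)}\to P_{\bar F}^{(r)}$ of~(\ref{eqPQr}) is a finite \'etale covering.

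The remaining step, matching these two \'etaleness statements, is where I expect the main obstacle, for two separate reasons. Condition~(2) requires \'etaleness over the \emph{whole} fibre $P_{\bar F}^{(r)}$, while condition~(1) only controls a neighbourhood of the section; in the direction (1)$\Rightarrow$(2) this gap is bridged by homogeneity, since once the ramification is bounded by $R+$, Theorem~\ref{thmmain} and Corollaries~\ref{cormain} and~\ref{corTR} make the relevant fibre a torsor under the additive group scheme of Corollary~\ref{corTR}, so the non-\'etale locus is closed and translation-stable, hence empty once it avoids the origin, and the reverse implication is formal. More seriously, the $m$-th root cover realizes only the extension of index $e=m$, whereas the reducedness of the geometric closed fibre built into Definition~\ref{dfbddL} rests on Epp's theorem and may force a strictly larger $K'$. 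This is reconciled precisely by the independence statements of the first paragraph: condition~(1) is insensitive to enlarging $m$ through the descent Lemma~\ref{lmet} (which underlies Lemma~\ref{lmbR}), and condition~(2) is insensitive to enlarging $K'$, so both may be tested after one common extension over which Epp's theorem applies and the geometric closed fibre is reduced; over that extension the identifications above together with Lemma~\ref{lmet} give \'etaleness at the origin in one regime if and only if in the other, yielding both implications.
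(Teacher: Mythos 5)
Your scheme-theoretic identification is correct and is essentially the paper's own bridge (diagram (\ref{eqXS'})): the closed subscheme of $\widetilde X^{(M)}\times_XS$ cut out by a choice of $\pi^{1/m}$ is a copy of $S'={\rm Spec}\ {\cal O}_{K'}$, and pulling the dilatation of Lemma \ref{lmPRMb}.1 back along it does give $P^{(er)}_{S'}$ (the paper keeps the whole ${\mathbf G}_m$-bundle $\widetilde S'$ and intermediate schemes $P^{(e,R)}_{S'}$, $Q^{(e,R)}_{S'}$ instead of choosing a root, which also avoids your slip that $\widetilde X^{(M)}\to X$ is a ${\mathbf G}_m$-torsor only for $m=1$). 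The proof nevertheless breaks at the choice $K'=K(\pi^{1/m})$: the schemes match but the \emph{coverings} do not. Restricting $Q_1^{(R,M)}$ along your copy of $S'$ gives the normalization in the covering of $U_{K'}$ obtained from $(V\times_kV)/\Delta G$ by base change along ${\rm Spec}\ K'\times_kU\to U\times_kU$, the first coordinate being the canonical $K'$-point; this is $(V_{K'}\times_kV)/\Delta G$ with $V_{K'}=V\times_U{\rm Spec}\ K'$. Definition \ref{dfbddL}, applied to the frame $Q=Y\times_kS\to P=X\times_kS$, concerns instead the constant covering $V\times_k{\rm Spec}\ K'$. These are forms of one another: they are isomorphic if $V_{K'}\to{\rm Spec}\ K'$ has a section, i.e.\ if $K'$ splits $L$, but not in general. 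This is exactly why the paper chooses $K'$ \emph{first}, requiring $L\subset K'$ (that hypothesis is what makes the upper square of (\ref{eqUK'}) cartesian and produces the marked sections $\bar T_{S'}$ of (\ref{eqPQS'})), and only afterwards sets $M=eD$ with $e=e_{K'/K}$. The same inversion of quantifiers defeats your Epp step: enlarging $m$ keeps you inside the family $K(\pi^{1/m'})$ of totally ramified extensions with trivial residue extension, whereas reducedness of the geometric closed fibre in (\ref{eqPQr}) in general requires $K'$ with (inseparable) residue extension --- the residue field $k(D)$ is imperfect once $\dim X\geqq 2$ --- and such $K(\pi^{1/m'})$ never contains $L$ except in special cases. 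The correct repair is the paper's: $\widetilde X^{(M)}\times_XS$ admits ${\cal O}_{K'}$-points for \emph{every} $K'$ with $m\mid e_{K'/K}$, so one takes an arbitrary $K'\supseteq L$ good for Epp and matches $M$ to it, not the other way round.

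A second genuine gap is your homogeneity argument for (1)$\Rightarrow$(2). Theorem \ref{thmmain} and Corollaries \ref{cormain}, \ref{corTR} produce a group structure only on the \'etale locus: $W^{(R,M)}_\bullet$ is multiplicative and $E^{(R,M)}\to T^{(R,M)}$ is an \'etale morphism of group schemes, but nothing acts on the normalization $Q_1^{(R,M)}$ itself, because multiplicativity of $Q_\bullet$ (that is, $Q_2\cong Q_1\times_{Q_0}Q_1$) is precisely what fails outside $W_\bullet$. Translation by a section of $E^{(R,M)}$ therefore maps $W_1$ into $W_1$ and gives no control over $Q_1\setminus W_1$; the image of the non-\'etale locus in the boundary is a closed, ${\mathbf G}_m$-stable subset that could a priori avoid the origin without being empty, so ``translation-stable, hence empty'' is unsupported. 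The paper does not prove this homogeneity on the $(R,M)$-side at all: it translates condition (2) into the statement that $Q^{(er)}_{S'}\to P^{(er)}_{S'}$ is \'etale on a neighborhood of the image of the closed point of $S'$, and the equivalence of this local statement with \'etaleness over the whole geometric closed fibre is part of the theory recalled from \cite{AS1} and \cite{AS2}. Your proposal must either invoke that same input or construct an action on all of $Q_1^{(R,M)}$, which it does not do.
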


\begin{proof}
Set $S={\rm Spec}\
{\cal O}_K$
and let $Y$ be the normalization
of $X$ in $V$.
Since $T={\rm Spec}\
{\cal O}_L
\to Y\times_XS$ is an isomorphism,
the cartesian diagram 
$$\begin{CD}
Q&\ =Y\times_kS@<<< T\\
@VV{\quad\quad\quad \Box }V&@VVV\\
P&\ =X\times_kS@<<< S
\end{CD}$$
satisfies the condition (P) on the diagram
(\ref{eqPQST}).
Let $K'$ be a finite separable extension
of $K$
of ramification index
$e_{K'/K}=e$ such that
$er$ is an integer,
that $K'$ contains $L$
as a subfield and that
the geometric closed fiber
of $Q^{(er)}_{S'}$ is reduced.

Set $M=eD$.
In order to link
Definitions \ref{dfbR} and  \ref{dfbddL},
we construct a diagram
\begin{equation}
\xymatrix{
Q_1^{(R,M)}\ar[d]
&
Q^{(e,R)}_{S'}
\ar[d]\ar[l]\ar[r]
&
Q^{(er)}_{S'}\ar[d]\\
P_1^{(R,M)}\ar[d]
&
P^{(e,R)}_{S'}
\ar[d]\ar[l]\ar[r]
&
P^{(er)}_{S'}\ar[ld]\\
X&S'\ar[l]
}
\label{eqXS'}
\end{equation}
such that
the base change
by $U\to X$ is
\begin{equation}
\xymatrix{
(V\times_k V)/\Delta G
\times_k{\mathbf G}_m
\ar[d]
&
V\times_k{\rm Spec}\ K'\times_k{\mathbf G}_m
\ar[d]\ar[l]\ar[r]
&
Y\times_kK'
\ar[d]\\
U\times_kU\times_k{\mathbf G}_m
\ar[d]_{{\rm pr}_2}
&
U\times_k{\rm Spec}\ K'\times_k{\mathbf G}_m
\ar[d]_{{\rm pr}_2}\ar[l]\ar[r]
&
X\times_kK'
\ar[ld]\\
U&{\rm Spec}\ K'\ar[l]
}
\label{eqUK'}
\end{equation}
as follows.
Set $P_{S'}=P\times_SS'=X\times_kS'$
and let $D'=P_{S'}\times_{S'}{\rm Spec}\ F'$
be the closed fiber.
The scheme $P_{S'}^{(e)}$ for $r=1$
is the dilatation
$P_{S'}^{(eD'\cdot S')}$
with respect to
the Cartier divisor $eD'$
and the transpose $S'\subset P_{S'}=
X\times_kS'$ of the graph
of the composition $S'\to S\to X$.
Define $P^{(D)}_{S'}
\subset P_{S'}^{(e)}=
P_{S'}^{(eD'\cdot S')}$
to be the complement of the proper transform of
the inverse image of $D$
by the projection $P_{S'}\to X$.
The canonical map
$P^{(D)}_{S'}\to S'$
is smooth and
we have $P_1^{(D)}\gets 
P_1^{(D)}\times_XS
\gets
P^{(D)}_{S'}$
by the functoriality of dilatation.
Let $D^{(D)}_{S'}
=P^{(D)}_{S'}\times_{S'}
{\rm Spec}\ F'$
denote the closed fiber.

Similarly to the definition
of $P_1^{(D,M)}$
and $P_1^{(R,M)}$,
we define
$P^{(e,D)}_{S'}$ and
$P^{(e,R)}_{S'}$ as follows.
We consider the dilatation
of $P^{(D)}_{S'}
\times{\mathbf A}^1$
with respect to 
$e$-times the 0-section
regarded as a Cartier divisor
and the closed subscheme
$D^{(D)}_{S'}
\times{\mathbf A}^1$.
We define 
$P^{(e,D)}_{S'}$ by further
removing the proper transform of
$D^{(D)}_{S'}
\times{\mathbf A}^1$.
The scheme
$P^{(e,D)}_{S'}$ is a ${\mathbf G}_m$-bundle
over $P^{(D)}_{S'}$ and
hence the canonical map
$P^{(e,D)}_{S'}\to S'$ is smooth.
Let $T^{(e,D)}_{F'}$ denote the closed fiber
and let $\widetilde S'=
S'\times_{P^{(D)}_{S'}}
P^{(e,D)}_{S'}$ be the ${\mathbf G}_m$-bundle
over $S'$.
Then, $\widetilde S'$
is an open subscheme of the dilatation
of $(S'\times {\mathbf A}^1)^{(e(0)\cdot 
(D'\times {\mathbf A}^1))}$
obtained by removing the proper
transform of
$D'\times {\mathbf A}^1$.
Regard it as a closed subscheme
of $P^{(e,D)}_{S'}$.

Let $P^{(e,R)}_{S'}$
be the dilatation of
$P^{(e,D)}_{S'}$
with respect to the
Cartier divisor
$e(r-1)T^{(e,D)}_{F'}$
and the closed subscheme
$\widetilde S'\subset
P^{(e,D)}_{S'}$.
By the functoriality of
the dilatation
and by the canonical
isomorphism $(P^{(e)}_{S'})^{(e(r-1)D'\cdot S')}
\to
P^{(er)}_{S'}$,
we obtain the lower part of the commutative diagram
(\ref{eqXS'}).

By the assumption that
$K'$ contains $L$ as a subfield,
we obtain the upper left square of
the diagram (\ref{eqUK'}) 
as a cartesian square.
By taking the normalizations
of the schemes on the middle line
of (\ref{eqXS'})
in those on the top line of (\ref{eqUK'}),
we complete the construction
of the commutative diagram
(\ref{eqXS'}).
The scheme
$P^{(e,R)}_{S'}$ is a ${\mathbf G}_m$-bundle
over the open subscheme
$P^{(er)}_{S'}
\times_{P^{(e)}_{S'}}
P^{(D)}_{S'}$ of $P^{(er)}_{S'}$ 
and hence the canonical map
$P^{(e,R)}_{S'}\to 
P^{(er)}_{S'}$ is smooth.
The closed immersion
$\widetilde S'\to P^{(e,D)}_{S'}$
is canonically lifted to a closed immersion
$\widetilde S'\to P^{(e,R)}_{S'}$.

We show that the conditions (1)
and (2) are equivalent.
We consider the diagram
\begin{equation}
\begin{CD}
\widetilde X^{(M)}=\ &
Q_0^{(R,M)}
@<<<
\widetilde S'
@>>>
S'\\
&@VVV@VVV@VVV\\
&Q_1^{(R,M)}
@<<<
Q^{(e,R)}_{S'}
@>>>
Q^{(er)}_{S'}\\
&@VVV@VVV@VVV\\
&P_1^{(R,M)}
@<<<
P^{(e,R)}_{S'}
@>>>
P^{(er)}_{S'}
\end{CD}
\label{eqPS'}
\end{equation}
where the lower half is the same as the upper half of
(\ref{eqXS'}) and the upper vertical arrows
are the canonical sections.
The condition (1) is equivalent
to that the lower left vertical arrow in (\ref{eqPS'})
is \'etale on a neighborhood
of the image of the inverse image of 
the generic point $\xi$ of $D$
by $\widetilde X^{(M)}\to X$.
This is equivalent to the same condition
for the base change of the left column of (\ref{eqPS'})
by $X\gets S$.
The condition (2) is equivalent
to the condition
that the lower right vertical arrow of (\ref{eqPS'})
is \'etale on a neighborhood
of the image of the closed point $\xi'$
of $S'$.
Since the bottom right horizontal arrow
$P^{(e,R)}_{S'}\to 
P^{(er)}_{S'}$ is smooth,
the right half of the diagram
(\ref{eqPS'}) is cartesian.
Hence 
the condition (2)
is equivalent to that
the lower middle vertical arrow
in (\ref{eqPS'})
is \'etale on a neighborhood
of the image 
of the inverse image in $\widetilde S'$ of $\xi'$.
Since the bottom horizontal
arrow in the diagram
$$\begin{CD}
Q_1^{(R,M)}
\times_XS
@<<<
Q_{S'}^{(e,R)}\\
@VVV@VVV\\
P_1^{(R,M)}
\times_XS
@<<<
P^{(e,R)}_{S'}
\end{CD}
$$
is finite, it suffices to apply
Lemma \ref{lmet} to this diagram.
\end{proof}

\begin{cor}\label{corGr}
Let $K$ be a complete discrete valuation
field of characteristic $p>0$.
Let $L$ be a finite Galois extension of $K$
and $G={\rm Gal}(L/K)$ be
the Galois group.
Let $r>1$ be a rational number
such that
the ramification of $L$ over $K$
is bounded by $r+$
and define an
$\bar F$-vector space $N^{(r)}
={\mathfrak m}_{\bar K}^r/
{\mathfrak m}_{\bar K}^{r+}$
of dimension $1$ by
${\mathfrak m}_{\bar K}^r
=\{a\in \bar K\mid
{\rm ord}_K a\geqq  r\}$,
${\mathfrak m}_{\bar K}^{r+}
=\{a\in \bar K\mid
{\rm ord}_K a> r\}$.

{\rm 1.}
The last graded piece
$G^{(r)}$ is an abelian group
killed by $p$.

{\rm 2.}
Assume that the residue field
$F$ is finitely generated over
a perfect subfield $k$.
Then, there is a canonical injection
$${\rm Hom}(G^{(r)},{\mathbf F}_p)
\to
{\rm Hom}_{\bar F}(N^r,
\Omega^1_{({\cal O}_K/{\mathfrak m}_K^2)/k}
\otimes_F \bar F).$$
\end{cor}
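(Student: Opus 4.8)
The plan is to reduce the statement to the geometric situation of the previous subsections and to read off both assertions from the additive structure of Corollary \ref{cormain} together with Proposition \ref{prmain}.1 and the characteristic form of Definition \ref{dfcf}. Since $F$ is finitely generated over the perfect field $k$, I would first choose a smooth separated scheme $X$ over $k$ with a smooth irreducible divisor $D$ whose generic point $\xi$ realizes $\hat{\cal O}_{X,\xi}$ with fraction field $K$ and residue field $F$; after shrinking $X$ to a neighborhood of $\xi$, the finite Galois extension $L/K$ spreads out to a $G$-torsor $V\to U=X\sm D$. By Proposition \ref{prequiv}, the hypothesis that the ramification of $L$ over $K$ is bounded by $r+$ is equivalent to the ramification of $V$ over $U$ at $\xi$ being bounded by $R+=rD+$, so the machinery of Theorem \ref{thmmain}, Corollary \ref{cormain} and Proposition \ref{prmain} is available along $\xi$.

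For the first assertion I would match the closed-fibre description of the ramification group with the kernel of the isogeny (\ref{eqisog0}). The diagram (\ref{eqXS'})--(\ref{eqUK'}) used in the proof of Proposition \ref{prequiv} identifies the geometric closed fibre $Q^{(r)0}_{\bar F}$, together with its torsor structure over $P^{(r)}_{\bar F}$, with the fibre of the connected component $E^{(R)0}$ and the \'etale isogeny $E^{(R)0}\to T^{(R)}$ of (\ref{eqisog1}) at the geometric point $\bar z$ of $\widetilde Z$ over $\xi$ determined by a uniformizer of $\bar K$. Under this identification the subgroup $G^{(r)}\subset G$ acting on the connected component corresponds to the fibre $\widetilde G^{(R)}_{\bar z}$ of the kernel of (\ref{eqisog1}). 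Since $\widetilde G^{(R)}$ is a commutative finite \'etale group scheme killed by $p$ by Proposition \ref{prmain}.1, the group $G^{(r)}$ is abelian and killed by $p$. For a general residue field $F$ one reduces to this case by a standard approximation argument, descending $L/K$ to a complete discrete valuation subfield with residue field finitely generated over $k$.

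For the second assertion I would dualize the identification above to get ${\rm Hom}(G^{(r)},{\mathbf F}_p)=\widetilde G^{(R)\vee}_{\bar z}$, and invoke the characteristic form (\ref{eqdfcf}), a closed immersion $\widetilde G^{(R)\vee}\to T^{(R)\vee}=(T^*X\times_X\widetilde Z)(\widetilde R)$ defined over the radicial covering $F^n\colon \widetilde Z^{(p^{-n})}\to \widetilde Z$. Taking the fibre at $\bar z$, and using that $\bar F$ is perfect to absorb this radicial covering, yields an injection ${\rm Hom}(G^{(r)},{\mathbf F}_p)\to T^{(R)\vee}_{\bar z}$. It then remains to identify the target: the cotangent fibre $T^*_\xi X\otimes_F\bar F$ is canonically identified with $\Omega^1_{({\cal O}_K/{\mathfrak m}_K^2)/k}\otimes_F\bar F$, while the twist by $\widetilde R=rD$ contributes the fibre of $L(rD)$, which a choice of uniformizer of $\bar K$ identifies with the dual line $(N^r)^\vee$. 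Hence $T^{(R)\vee}_{\bar z}={\rm Hom}_{\bar F}(N^r,\Omega^1_{({\cal O}_K/{\mathfrak m}_K^2)/k}\otimes_F\bar F)$ and the asserted injection follows.

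The step I expect to be the main obstacle is this last identification of the target: one must track the normal-direction twist so that the fibre of $T^*X(\widetilde R)$ at $\bar z$ matches ${\rm Hom}_{\bar F}(N^r,\Omega^1_{({\cal O}_K/{\mathfrak m}_K^2)/k}\otimes_F\bar F)$ canonically, and in particular verify that the non-logarithmic cotangent space $\Omega^1_{({\cal O}_K/{\mathfrak m}_K^2)/k}$, rather than the logarithmic one, is the correct geometric fibre. Comparable care is needed in the book-keeping of (\ref{eqXS'}) that pins down $G^{(r)}$ as $\widetilde G^{(R)}_{\bar z}$ through the chain of dilatations. Once these identifications are fixed, both assertions follow formally from Proposition \ref{prmain}.1 and Definition \ref{dfcf}.
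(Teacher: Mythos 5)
Your proposal is correct and follows essentially the same route as the paper: both geometrize via Proposition \ref{prequiv}, identify $G^{(r)}$ with the fibre of the kernel $\widetilde G^{(R)}$ of the isogeny and invoke Proposition \ref{prmain}.1 for part 1, and for part 2 take the stalk of the characteristic form (\ref{eqdfcf}) at the $\bar F$-point of $\widetilde Z$ determined by a choice of uniformizer. The identifications you single out as the main obstacles (pinning down $G^{(r)}$ as $\widetilde G^{(R)}_{\bar \xi}$ through the dilatation diagrams, and matching the twisted cotangent fibre with ${\rm Hom}_{\bar F}(N^r,\Omega^1_{({\cal O}_K/{\mathfrak m}_K^2)/k}\otimes_F \bar F)$) are precisely the steps the paper leaves implicit, so there is no gap relative to its level of detail.
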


\begin{proof}
1.
By a standard limit argument
as in the proof of \cite[Theorem 2.15]{AS2},
we may assume that the residue field
$F$ is finitely generated over
a perfect subfield $k$.
There exist a $G$-torsor
$V\to U=X\sm D$
and an isomorphism
$\hat {\cal O}_{X,\xi}\to {\cal O}_K$
as in Proposition \ref{prequiv}.
Then, by Proposition \ref{prequiv},
after replacing $X$ by a neighborhood of
$\xi$ if necessary,
the ramification of $V$ over $U$
along $D$ is bounded by $R+$.
Further, by the definition of
the ramification group recalled above,
we have an isomorphism
$\widetilde G^{(R)}_{\bar \xi}\to G^{(r)}$.
Hence the assertion
follows by Proposition \ref{prmain}.1.

2.
It suffices to take a uniformizer
and the stalk 
of the characteristic form (\ref{eqdfcf})
at the corresponding $\bar F$-point
of $\widetilde Z$.
\end{proof}

Let $X$ be a smooth scheme
over a perfect field $k$
and $D$ be a smooth irreducible
divisor of $X$.
Let $V\to U=X\sm D$ 
be a finite \'etale Galois covering
not tamely ramified along $D$.
Then by Proposition
\ref{prequiv},
after shrinking $D$,
there exists a rational number $r>1$
such that, for $R=rD$, 
the ramification
of $V$ over $U$ along $D$ is bounded by $R+$
and is non-degenerate at multiplicity $R$.
By Lemma \ref{lmcut},
for every closed point $x$,
there exists a curve $C\subset X$
meeting $D$ transversally
at $x$
such that the pull-back of
$V$ to $C$ is wildly ramified
at $x$.

\subsection{Logarithmic variant}\label{slog}

In \cite{mu} and \cite{Tohoku},
a logarithmic variant of the contents
of previous subsections is studied
by a different method.
We indicate how to treat
the logarithmic variant by the method
of this article and
how to translate the results there.

We keep the notation set at the beginning
of this section.
We consider a commutative
diagram of oversimplicial schemes 
\begin{equation}
\begin{CD}
T_{\bullet \log D}^{(R-D,M)}
@>{\subset}>>
P_{\bullet \log D}^{(R-D,M)}
@<{\supset}<<
U^{\bullet+1}\times_k{\mathbf G}_m^h\\
@VVV @VVV @|\\
T_{\bullet \log D}^{(M)}
@>{\subset}>>
P_{\bullet \log D}^{(M)}
@<{\supset}<<
U^{\bullet+1}\times_k{\mathbf G}_m^h\\
@VVV @VVV @VVV\\
D_{\bullet \log D}
@>{\subset}>>
P_{\bullet \log D}
@<{\supset}<<
U^{\bullet+1}\\
@. @VVV @|\\
&X^{\bullet+1}=&
P_\bullet
@<{\supset}<<
U^{\bullet+1}\\
\end{CD}
\label{eqtPRl}
\end{equation}
constructed as follows.
For integer $n\geqq  0$,
let $P'_{n\log D}$ be the blow-up
of $P_n=X^{n+1}$
at $D_1^{n+1},\ldots, D_h^{n+1}$ and
define $P_{n \log D}$
to be the complement of
the proper transforms of
the inverse image of $D$
by the $n+1$ projections.
By the universality of blow-up,
the schemes $P_{n \log D}$
form an oversimplicial scheme
$P_{\bullet \log D}$.
By replacing
$P_{\bullet}^{(D)}$
by $P_{\bullet \log D}$
in the construction of
(\ref{eqtPR}), we define
(\ref{eqtPRl}).
By the functoriality of
dilatation, we have canonical morphisms
$P_{\bullet \log D}\gets
P_{\bullet}^{(D)}$
and 
$P_{\bullet \log D}^{(R-D,M)}\gets
P_{\bullet}^{(R,M)}\gets
P_{\bullet \log D}^{(R,M)}$.

We define the condition that the logarithmic 
ramification of a $G$-torsor $V$
over $U$ is bounded by $(R-D)+$
as in Definition \ref{dfbR}
by replacing $P_1^{(R,M)}$
by $P_{1\log D}^{(R-D,M)}$.
If the ramification is bounded by $R+$,
the logarithmic ramification is bounded by $R+$.
If the logarithmic ramification is bounded by 
$(R-D)+$,
the ramification is bounded by $R+$.
The latter conditions are equivalent if
$X$ is a curve since the canonical map
$P_{\bullet \log D}\gets
P_{\bullet}^{(D)}$
is an isomorphism in this case.

We verify that the definition 
of bounding log ramification here
is equivalent to
\cite[Definition 7.3]{Tohoku}
as follows.
We take a smooth scheme
$X'$ over $k$ and a faithfully flat
morphism $f\colon X'\to X$ over $k$
such that the inverse image
$U'=f^{-1}(U)$ is the complement 
of a divisor $D'$ with simple normal crossings,
that $f\colon X'\to X$ 
is log smooth with respect to 
the log structures defined by $D=X\sm U$ and $D'$
and that $f^*R=R'$ has integral coefficient
as in \cite[Proposition 7.7]{Tohoku}.
As in the proof of Proposition \ref{prequiv},
we set $M=m_1D_1+\cdots+m_hD_h$ 
such that $f^*(m_1^{-1}D_1+\cdots+m_h^{-1}D_h)=D'$
and consider morphisms
\begin{equation}
\begin{CD}
P_{1\log D}^{(R,M)}
@<<<
P_{1\log D'}^{\prime (R',D')}
@>>>
P_{1\log D'}^{(R')}
\end{CD}
\label{eqlog}
\end{equation}
where $P_{1\log D'}^{(R')}$
is defined as $P_{1}^{(R')}$ without introducing
an auxiliary divisor $M'$.
The smoothness of the morphisms
(\ref{eqlog})
implies that the definition
using $P_{1\log D}^{(R,M)}$
is equivalent to the criterion
\cite[Proposition 7.7]{Tohoku}
using $P_{1\log D'}^{(R')}$.

If we assume a strong form of resolution
of singularities,
for a Galois covering $V\to U$,
we can conclude that there exists
$R$ such that the ramification
of $V\to U$ is bounded by $R+$.
More precisely, we consider the
following condition
on a smooth separated scheme $U$
of finite type over $k$:
\begin{itemize}
\item[(RS)]
If $X$ is a proper scheme over $k$
containing $U$ as a dense open subscheme,
there exist a proper smooth scheme
$X'$ over $k$ and
a morphism $f\colon X'\to X$
such that $U'=f^{-1}(U)\to U$
is an isomorphism and
$U'$ is the complement of a divisor
with simple normal crossings.
\end{itemize}

\begin{pr}\label{prNpS}
Let $U$ be a separated smooth scheme 
of finite type over $k$
satisfying the condition {\rm (RS)} above
and $V\to U$ be a finite \'etale Galois
covering.
Then, 
there exist a proper smooth scheme $X$ over $k$
containing $U$ as the complement of a divisor $D$
with simple normal crossings
and a linear combination
$R=r_1D_1+\cdots+r_hD_h$
of irreducible components of $D$
with rational coefficients
$r_i\geqq  1$
such that the ramification of $V$ over $U$
along $D$
is bounded by $R+$.
\end{pr}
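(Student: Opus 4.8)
The plan is to pass to a good compactification, bound the ramification at the generic point of each boundary component by reduction to the theory of Section \ref{sslf}, and then remove the remaining bad locus by resolution. First I would embed $U$ as a dense open subscheme of a proper scheme $\bar X$ over $k$ by Nagata's theorem, and then apply the condition (RS) to $\bar X$: this produces a proper smooth scheme $X$ over $k$ together with a modification inducing an isomorphism onto $U$ and such that $D=X\sm U$ is a divisor with simple normal crossings. Let $D_1,\ldots,D_h$ be the irreducible components of $D$; for each $i$ let $\xi_i$ be the generic point of $D_i$, let $K_i$ be the fraction field of the completion $\hat {\cal O}_{X,\xi_i}$, and let $L_i=\Gamma(V\times_U{\rm Spec}\ K_i,{\cal O})$ be the induced finite \'etale $K_i$-algebra.

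For each $i$ the ramification filtration $G_{K_i}^\bullet$ recalled in Section \ref{sslf} is separated and exhaustive, so since $L_i$ is finite over $K_i$ there is a smallest rational number $r_i\geqq 1$ such that $G_{K_i}^{r_i+}$ acts trivially on $L_i$, the largest slope (\cite{AS1}, \cite{AS2}); here $r_i=1$ precisely when $L_i$ is at most tamely ramified. Thus the ramification of $L_i$ over $K_i$ is bounded by $r_i+$, and by Proposition \ref{prequiv} (applied when $r_i>1$, the tame case being elementary) the ramification of $V$ over $U$ at $\xi_i$ is bounded by $r_iD_i+$. Put $R=r_1D_1+\cdots+r_hD_h$. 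Then, in the notation of Definition \ref{dfbR}, the finite morphism $Q_1^{(R,M)}\to P_1^{(R,M)}$ is \'etale along the image of the fibre over each $\xi_i$; as the \'etale locus is open, it is \'etale along the image of the fibre over a dense open subscheme of each $D_i$. Hence the ramification of $V$ over $U$ along $D$ is bounded by $R+$ away from a closed subset $Z\subset D$ with $\dim Z<\dim D$.

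It remains to absorb the bad locus $Z$, and this is where I would use the full force of (RS). The idea is to resolve the pair $(X,Z)$ by a further modification $X'\to X$ that is an isomorphism over $U$ and leaves the boundary a simple normal crossings divisor, and then to reassign the multiplicities: to each newly created exceptional component one attaches the local bound at its generic point given, exactly as above, by the finiteness of its ramification filtration together with Proposition \ref{prequiv}. Since the bound already holds over the dense open complement of $Z$ and is preserved under the pull-back (\ref{eqfunR}) by the functoriality of the dilatation construction, any remaining failure is again supported on a closed subset, to which the same step is applied. The main obstacle is termination: one must arrange, using the strong functorial form of resolution supplied by (RS), that the dimension of the bad locus strictly drops at each stage, so that Noetherian induction closes the argument, while simultaneously checking that the multiplicities assigned to the exceptional divisors stay finite and $\geqq 1$. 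Granting this, the final $X'$ and the resulting linear combination $R$ bound the ramification of $V$ over $U$ along the whole boundary, as required.
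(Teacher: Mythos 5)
Your first step (Nagata plus (RS) to get a smooth compactification with snc boundary, then the largest slope at each generic point $\xi_i$ via Proposition~\ref{prequiv} to get a bound away from a closed subset $Z\subset D$ of codimension $\geqq 2$ in $X$) is sound, but the second step contains a genuine gap, and it is exactly the hard part of the statement. Your ``Noetherian induction'' has no ambient space in which to run: each stage replaces $X$ by a new modification $X'\to X$, so the bad loci at successive stages live on different schemes and cannot be compared; nothing forces their dimension to drop. Indeed, at every stage the generic-point argument only ever gives a bad locus of codimension $\geqq 2$, which is the same bound you started with, so the induction does not terminate on the strength of anything you have said --- you acknowledge this (``the main obstacle is termination'') but then assume it away with ``granting this.'' There is also a secondary unproved point: when you ``reassign the multiplicities'' on exceptional components you need the bound to be monotone in $R$ (enlarging one coefficient must not destroy boundedness at crossing points and at points where the bound already held with the pulled-back divisor $f^*R$); this monotonicity of the non-logarithmic condition of Definition~\ref{dfbR} is nowhere established in the paper and is not obvious at points where several components meet.

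The paper avoids all of this by outsourcing the hard content: by \cite[Proposition 7.22]{Tohoku} (itself proved under the resolution hypothesis (RS), by a delicate argument that is essentially the missing termination step above, carried out for the logarithmic theory), there exist $X$ and $R'$ such that the \emph{logarithmic} ramification of $V$ over $U$ along $D$ is bounded by $R'+$; then, by the comparison stated in Section~\ref{slog} (log ramification bounded by $(R-D)+$ implies ramification bounded by $R+$), one simply sets $R=R'+D$. If you want a self-contained argument along your lines, you would in effect have to reprove that result of \cite{Tohoku}; otherwise the correct move is to reduce to it, as the paper does, rather than to restart the induction from scratch.
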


\begin{proof}
By \cite[Proposition 7.22]{Tohoku},
there exist $X$ and $R'$
such that the log ramification of $V$ over $U$
along $D$
is bounded by $R'+$.
It suffices to set $R=R'+D$.
\end{proof}

\section{Characteristic cycles}

We keep the notation in the previous section.
Namely, $k$ denotes a perfect field
of characteristic $p>0$
and 
$X$ denotes a smooth separated scheme over $k$.
Let $D$ be a divisor of $X$ 
with simple normal crossings 
and $U=X\sm D$ be the complement.
Let $D_1,\ldots,D_h$ be
the irreducible components of $D$
and $R=r_1D_1+\cdots+r_hD_h$
be a linear combination
with rational coefficients
$r_i\geqq  1$ for every $i=1,\ldots, h$.
Let $M=m_1D_1+\cdots+m_hD_h$
be a linear combination
with integral coefficients
$m_i\geqq  1$ such that
$m_ir_i$ is an integer for every $i=1,\ldots, h$.
Let $Z\subset D$ be the
union of irreducible components $D_i$
such that $r_i>1$.

\subsection{Definition of characteristic cycles}

Let $\Lambda$
be a local ring
over ${\mathbf Z}[\frac 1p,\zeta_p]$
and ${\cal F}$
be a locally constant sheaf
of free $\Lambda$-modules
of finite rank
on $U=X\sm D$.
Set $d=\dim X$.
We will define
the characteristic cycle
of ${\cal F}$
as a cycle of dimension $d$
of the cotangent bundle $T^*X$
under a certain non-degenerate hypothesis.

We prepare some terminology about
representations of
the absolute Galois group $G_K$
of a local field $K$ as in \S\ref{sslf}.
Let ${\mathbf V}$ be a continuous
representation of
the profinite group $G_K$ 
on a discrete free $\Lambda$-module
of finite rank.
For a rational number $r> 1$,
we say that ${\mathbf V}$ is {\it isoclinic of slope} $r$
if $G_K^{r+}$ acts trivially on ${\mathbf V}$
and if the $G_K^{r}$-fixed part is $0$.
For $r=1$,
we say ${\mathbf V}$ is isoclinic of slope $1$
if 
the wild inertia group $P_K=G_K^{1+}$
acts trivially on ${\mathbf V}$
or, equivalently, if it is tamely ramified.
Since $P_K$ is a pro-$p$ group
and is of order prime to $\ell$,
there exists a unique decomposition
${\mathbf V}=\bigoplus_{r\geqq  1}{\mathbf V}^{(r)}$,
called the {\it slope decomposition}
such that ${\mathbf V}^{(r)}$ is isoclinic of slope $r$.

First, we consider the isoclinic case.
In this case, 
the characteristic class
will be defined by (\ref{eqchar})
using the image of the locally constant function
(\ref{eqrank}) by the map (\ref{eqch2}).
Recall that a locally constant sheaf ${\cal F}$ 
on $U$ of free $\Lambda$-modules
of finite rank
is trivialized on a finite \'etale Galois
covering and 
corresponds to a continuous
representation of the fundamental
group of $U$ if it is connected.

\begin{df}\label{dfic}
Let $\Lambda$ be a local ring over
${\mathbf Z}[\frac 1p]$
and ${\cal F}$ be a locally constant sheaf
on $U$ of free $\Lambda$-modules
of finite rank.
Let $D_1,\ldots, D_h$ be 
the irreducible components
of the divisor $D$ with simple normal
crossings,
let $\xi_i$ be the generic point of $D_i$
and
$\bar \eta_i$ be the geometric
point of $U$ defined by
a separable closure $\bar K_i$
of the local field
$K_i={\rm Frac}\ \hat{\cal O}_{X,\xi_i}$
for $i=1,\ldots,h$.

{\rm 1.}
We say that ${\cal F}$ is {\rm isoclinic
of slope} $R=r_1D_1+\cdots+
r_hD_h$
if the representation
${\cal F}_{\bar \eta_i}$ of
the absolute Galois group $G_{K_i}$
is isoclinic of slope $r_i$
for every $i=1,\ldots,h$.
We say that
the ramification 
of an isoclinic sheaf ${\cal F}$ 
of slope $R$ is
{\em non-degenerate} along $D$ if
there is a finite \'etale Galois torsor
$V\to U$ such that
the pull-back of ${\cal F}$
on $V$ is constant
and that
the ramification of
$V$ over $U$ along $D$
is bounded by $R+$
and is non-degenerate along $D$ at
multiplicity $R$.

{\rm 2.}
We say that ${\cal F}$ is {\rm polyisoclinic}
if there exists a finite number
of divisors $R_j\geqq  D$
with rational coefficients
and a decomposition
${\cal F}=\bigoplus_j{\cal F}_j$
such that
each ${\cal F}_j$ is isoclinic
of slope $R_j$.
We call such a decomposition
an {\rm isoclinic decomposition}.

We say that
the ramification
of a polyisoclinic sheaf ${\cal F}$ is
{\rm non-degenerate} along $D$  if
there exists
an isoclinic decomposition
${\cal F}=\bigoplus_j{\cal F}_j$
such that
the ramification 
of each isoclinic sheaf ${\cal F}_j$ is
{\rm non-degenerate} along $D$
at multiplicity $R_j$.

{\rm 3.}
We say that
the ramification 
of ${\cal F}$ is
{\rm non-degenerate} along $D$  if
it is \'etale locally polyisoclinic
and the ramification is
{\rm non-degenerate} along $D$.

{\rm 4.}
We say that ${\cal F}$ is {\rm totally wildly ramified} along $D$,
if for every irreducible component $D_i$ of $D$,
the slopes of the representations
of the absolute Galois group of the local field $K_i$
are strictly greater than $1$.
\end{df}

A sheaf ${\cal F}\neq0$ is tamely ramified
along $D$ if and only if
${\cal F}$ is isoclinic of slope $D$.
The non-degenerate condition
roughly says that the ramification along $D$ is uniformly controlled
at the generic points of irreducible
components of $D$.
The following Lemma implies that
the condition is satisfied if we remove
a sufficiently large closed subscheme
of codimension $\geqq 2$.

\begin{lm}\label{lmic}
Any locally constant sheaf ${\cal F}$
on $U$
is polyisoclinic
on an \'etale neighborhood 
of a geometric point
above the generic point 
of each irreducible component of $D$.
\end{lm}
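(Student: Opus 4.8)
The plan is to fix an irreducible component $D_i$, to produce the decomposition from the slope decomposition of the $G_{K_i}$-representation ${\cal F}_{\bar\eta_i}$ at the generic point, and then to spread the defining idempotents out to an \'etale neighborhood. First I would shrink $X$ to the open complement of $\bigcup_{j\neq i}D_j$ together with the singular locus of $D$, so that near $\xi_i$ the divisor $D=D_i$ is smooth and irreducible; this does not change the germ at $\xi_i$. Write $K=K_i$, $\bar\eta=\bar\eta_i$ and $M={\cal F}_{\bar\eta}$, regarded as a representation of $G_K$ through $G_K\to\pi_1(U,\bar\eta)$. By the slope decomposition recalled above, $M=\bigoplus_{r\geqq 1}M^{(r)}$ with only finitely many nonzero summands, each $M^{(r)}$ being a $G_K$-subrepresentation isoclinic of slope $r$. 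Let $e_r\in{\rm End}_\Lambda(M)$ denote the associated projectors. Since the $M^{(r)}$ are $G_K$-stable, each $e_r$ commutes with the $G_K$-action, so $e_r\in{\rm End}_\Lambda(M)^{G_K}\subset{\rm End}_\Lambda(M)^{I}$, where $I=I_{\xi_i}$ is the inertia group.

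The key step is to recognize these inertia-invariant idempotents as sections of a locally constant sheaf over a genuine neighborhood. The sheaf ${\cal E}nd({\cal F})={\cal F}\otimes_\Lambda{\cal F}^\vee$ is locally constant on $U$ with stalk ${\rm End}_\Lambda(M)$ at $\bar\eta$. As $\xi_i$ has codimension $1$ and $X$ is regular, the strict henselization ${\cal O}^{\rm sh}_{X,\xi_i}$ is a strictly henselian discrete valuation ring with fraction field $K^{\rm sh}$, and $\pi_1({\rm Spec}\ K^{\rm sh},\bar\eta)=I$. Moreover ${\rm Spec}\ K^{\rm sh}=\varprojlim_{U'}U'$ is the cofiltered limit, with affine transition maps, of the traces $U'=X'\times_XU$ of connected \'etale neighborhoods $X'\to X$ of $\xi_i$. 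Since taking sections of a locally constant sheaf commutes with such limits, I obtain
\[
{\rm End}_\Lambda(M)^{I}=\Gamma({\rm Spec}\ K^{\rm sh},{\cal E}nd({\cal F}))=\varinjlim_{U'}\Gamma(U',{\cal E}nd({\cal F})).
\]
Hence each $e_r$ is the image of a section $\tilde e_r\in\Gamma(U',{\cal E}nd({\cal F}))$ over one common \'etale neighborhood $U'$; after shrinking $U'$ to be connected, the idempotent relations $\tilde e_r^2=\tilde e_r$, $\tilde e_r\tilde e_s=0$ for $r\neq s$ and $\sum_r\tilde e_r=1$, which hold in the stalk at $\bar\eta$, hold throughout $U'$ because ${\cal E}nd({\cal F})$ is locally constant.

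These idempotents split ${\cal F}|_{U'}=\bigoplus_r{\cal F}^{(r)}$ with ${\cal F}^{(r)}=\tilde e_r{\cal F}$, and by construction ${\cal F}^{(r)}_{\bar\eta}=M^{(r)}$ is isoclinic of slope $r$ at the unique generic point $\xi_i$ of $D\cap U'$. Taking $R_r=rD_i\geqq D$ then exhibits ${\cal F}|_{U'}$ as polyisoclinic, which is the assertion. I expect the main obstacle to be the spreading-out step: one must justify the identification of the inertia group with the fundamental group of the limit ${\rm Spec}\ K^{\rm sh}$ and the passage of sections through the cofiltered limit, whereas the existence of the slope decomposition and the $G_K$-equivariance of its idempotents are formal once recalled.
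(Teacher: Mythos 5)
Your proposal is correct and takes essentially the same approach as the paper: the paper's own proof consists only of the remark that the slope decomposition exists because the wild inertia $P_{K_i}$ is a pro-$p$ group, and then declares that the assertion follows. Your limit argument, spreading the $G_{K_i}$-equivariant idempotents out to sections of ${\cal E}nd({\cal F})$ over an \'etale neighborhood via ${\rm Spec}\ K^{\rm sh}=\varprojlim U'$, simply supplies the details that the paper leaves implicit.
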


\begin{proof}
Since the wild inertia subgroup
$P_{K_i}$ is a pro-$p$ group,
any representation
of $G_{K_i}$ admits a slope decomposition.
Since $G_{K_i}$ is the same
as the absolute Galois group
of the fraction field of the henselization
${\cal O}_{X,\xi_i}^h$ at the generic point
$\xi_i$ of $D_i$,
the assertion follows.
\end{proof}

Let $\Lambda$ be a local ring over
${\mathbf Z}[\frac 1p,\zeta_p]$
and ${\cal F}$ be a locally constant sheaf
on $U$ of free $\Lambda$-modules
of finite rank.
Let $V$ be a $G$-torsor over $U$
for a finite group $G$
such that the pull-back of ${\cal F}$
on $V$ is constant and
let ${\mathbf V}$ be a represention of $G$
on a free $\Lambda$-module
such that ${\cal F}$ is corresponding to ${\mathbf V}$.
Define a locally constant sheaf
${\cal H}$ on $U\times_kU$ by
$${\cal H}={\cal H}om({\rm pr}_2^*{\cal F},
{\rm pr}_1^*{\cal F}).$$
The locally constant sheaf ${\cal H}$
on $U\times_kU$
is trivialized by the $G\times G$-torsor
$V\times_kV$ over
$U\times_kU$ and corresponds
to the representation 
${\rm End}({\mathbf V})$ of $G\times G$.

We consider the diagram
\begin{equation}
\begin{CD}
E^{(R)0}\ &\subset
E^{(R)}@>{i_W}>>
W^{(R,M)}_1
@<{j_W}<<
V\times_kV/\Delta G
\times{\mathbf G}_m^h\\
@V{\pi}VV&@VVV @VVV\\
T^{(R)}
&@>{i_P}>>P_1^{(R,M)}@<{j_P}<< U\times 
U\times {\mathbf G}_m^h\\
@V{{\rm pr}_2}VV&@VVV @VVV\\
\widetilde Z&
@>i>>\widetilde X^{(M)}@<j<< 
U\times {\mathbf G}_m^h.
\end{CD}
\label{eqEWTP}
\end{equation}
By abuse of notation,
the pull-backs of ${\cal H}$ and ${\cal E}nd({\cal F})$
are also denoted by ${\cal H}$ and ${\cal E}nd({\cal F})$.
Since the middle vertical arrows are smooth,
this abuse of notation should not cause
problem by smooth base change theorem.

To describe the sheaf
$i_P^*j_{P*}{\cal H}$
on $T^{(R)}$,
we introduce some notation.
Let ${\cal L}_\psi$
be the locally constant sheaf of rank $1$
on $T^{(R)}\times_{\widetilde Z}T^{(R)\vee}$
defined by the Artin-Schreier covering
$t^p-t=\langle x,f\rangle$
and the injection $\psi\colon
{\mathbf F}_p
\to \Lambda^\times$
sending $1$ to $\zeta_p$.
Let
$\widetilde Z^{(p^{-n})}\to \widetilde Z$
be a radicial covering where
the characteristic form
$\widetilde G^{(R)\vee}\to T^{(R)\vee}$
is defined and let
${\cal L}={\cal L}_{\psi,\widetilde G^{(R)\vee}}$ be the pull-back by the characteristic form
on the base change of
$T^{(R)}\times_{\widetilde Z}\widetilde G^{(R)\vee}$
by
$\widetilde Z^{(p^{-n})}\to \widetilde Z$.

\begin{lm}\label{lmH}
Let ${\cal F}$ be a locally constant sheaf
on $U=X\sm D$
isoclinic of slope $R$ along $D$.
Let $V\to U$ be a $G$-torsor 
such that the pull-back of ${\cal F}$ on $V$
is constant
and that 
the ramification of $V$ over $U$
along $D$ is bounded by $R+$
and is non-degenerate along $D$ at multiplicity $R$.
Let ${\mathbf V}$ be a representation of $G$
corresponding to ${\cal F}$.

There exists an idempotent
$e_{\cal F}\in \Gamma(
T^{(R)}\times_{\widetilde Z}
\widetilde G^{(R)\vee},i_P^*j_{P*}{\cal E}nd({\cal F}))$
and canonical isomorphisms
\begin{align}
{\rm pr}_{1*}(
(e_{\cal F}\cdot i_P^*j_{P*}{\cal E}nd({\cal F}))
\otimes {\cal L})
&\to
i_P^*j_{P*}{\cal H},
\label{eqH}\\
{\rm pr}_{1*}(
(e_{\cal F}\cdot Ri_P^! j_{P*}{\cal E}nd({\cal F}))
\otimes {\cal L})
&\to
Ri_P^!j_{P*}{\cal H}
\nonumber
\end{align}
of $i_P^*j_{P*}{\cal E}nd({\cal F})$-modules
on $T^{(R)}$.
\end{lm}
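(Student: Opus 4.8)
The plan is to recognize the assertion as an instance of Corollary \ref{corGG} applied to the extension (\ref{eqGET}).

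First I would set $S=\widetilde Z$, working over the radicial covering $F^n\colon \widetilde Z^{(p^{-n})}\to \widetilde Z$ on which the characteristic form (\ref{eqdfcf}) is defined; since this morphism induces an equivalence of \'etale sites it changes nothing. I take the vector bundle to be $T^{(R)}$ and the extension to be $0\to \widetilde G^{(R)}\to E^{(R)0}\to T^{(R)}\to 0$ of (\ref{eqGET}). By the non-degeneracy hypothesis the \'etale morphism (\ref{eqisog1}) is finite, so $\widetilde G^{(R)}$ is a finite \'etale commutative group scheme killed by $p$, and by the construction of $E^{(R)0}$ in Proposition \ref{prmain}.1 the geometric fibers of $E^{(R)0}\to \widetilde Z$ are connected. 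I then put ${\cal A}=i^*j_*{\cal E}nd({\cal F})$, a sheaf of $\Lambda$-algebras on $\widetilde Z$ obtained from the bottom row of (\ref{eqEWTP}), and ${\cal M}=i^*j_*{\cal H}$ on $T^{(R)}$ obtained from the middle row, regarded as a module over the pullback of ${\cal A}$ through the projection ${\rm pr}_2\colon T^{(R)}\to \widetilde Z$ and pre-composition.

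The geometric input is the trivializing section. On $V\times_kV$ the sheaf ${\cal H}={\cal H}om({\rm pr}_2^*{\cal F},{\rm pr}_1^*{\cal F})$ is the constant sheaf ${\rm End}(M)$, and its identity section is invariant under the diagonal $\Delta G$, hence descends to a section of ${\cal H}$ over $V\times_kV/\Delta G\times {\mathbf G}_m^h$ trivializing it as a module under pre-composition. Since the two right-hand squares of (\ref{eqEWTP}) are cartesian with \'etale vertical arrows and $W_1^{(R,M)}$ is normal, \'etale base change identifies the pullback of ${\cal M}=i^*j_*{\cal H}$ to $E^{(R)0}$ with the sheaf computed along the top row; the identity section then extends across $W_1^{(R,M)}\supset E^{(R)}\supset E^{(R)0}$ and furnishes the section $\iota\in \Gamma(E^{(R)0},{\cal M})$ inducing an isomorphism ${\cal A}\to {\cal M}$ there. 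The connectedness of the fibers of $E^{(R)0}\to \widetilde Z$ guarantees that the restriction $\Gamma(E^{(R)0},{\cal A})\to \Gamma(\widetilde Z,{\cal A})$ is an isomorphism, which is precisely the hypothesis in the form packaged by Corollary \ref{corGG} for the underlying Lemma \ref{lmGG}.

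With these identifications Corollary \ref{corGG}.2 produces an idempotent $e_{\cal F}\in \Gamma(T^{(R)}\times_{\widetilde Z}\widetilde G^{(R)\vee},{\cal A})$ together with an isomorphism ${\rm pr}_{1*}((e_{\cal F}\cdot {\cal A})\otimes_\Lambda {\cal L}_{E^{(R)0}})\to {\cal M}$, and it remains to identify the sheaf ${\cal L}_{E^{(R)0}}$ with the sheaf ${\cal L}={\cal L}_{\psi,\widetilde G^{(R)\vee}}$ of the statement. This is exactly Corollary \ref{corGG}.1: ${\cal L}_{E^{(R)0}}$ is the pullback along the characteristic form $\widetilde G^{(R)\vee}\to T^{(R)\vee}$ of the Artin--Schreier sheaf $t^p-t=\langle x,f\rangle$ on $T^{(R)}\times_{\widetilde Z}T^{(R)\vee}$, which is the definition of ${\cal L}$. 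I expect the main difficulty to lie in the previous paragraph: making the \'etale base change along the cartesian squares of (\ref{eqEWTP}) precise and verifying that the identity section genuinely extends over all of $W_1^{(R,M)}$ so as to trivialize ${\cal M}$ compatibly with its ${\cal A}$-module structure. Granting this, the appeal to Corollary \ref{corGG} is formal.
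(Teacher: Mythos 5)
Your proposal is correct and follows essentially the same route as the paper's own proof: descend the identity of ${\rm End}(M)$ to a section of ${\cal H}$ on $V\times_kV/\Delta G$, extend it to a trivializing section of $i^*j_*{\cal H}$ over $E^{(R)0}$ via the diagram (\ref{eqEWTP}), and then apply Corollary \ref{corGG} to the extension $0\to \widetilde G^{(R)}\to E^{(R)0}\to T^{(R)}\to 0$ with ${\cal A}=i^*j_*{\cal E}nd({\cal F})$ and ${\cal M}=i^*j_*{\cal H}$. Your explicit justification of the restriction step by \'etale base change along $W_1^{(R,M)}\to P_1^{(R,M)}$, and your remark about the radicial covering on which the characteristic form is defined, only spell out what the paper leaves implicit.
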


\begin{proof}
Let 
$$\iota\in
\Gamma(V\times_kV/\Delta G,
{\cal H})$$ denote
the image of
the identity $1\in {\rm End}_G({\mathbf V})$
by the canonical map
\begin{equation}
{\rm End}_G({\mathbf V})
\to
 \Gamma(V\times_kV,
{\cal H}om({\rm pr}_2^*{\cal F},
{\rm pr}_1^*{\cal F}))^G
=
\Gamma(V\times_kV/\Delta G,
{\cal H}).
\label{eq1}
\end{equation}
The diagram (\ref{eqEWTP})
defines canonical maps
\begin{align}
\Gamma(V\times_kV/\Delta G,
{\cal H})
\to
\Gamma(V\times_kV/\Delta G
\times{\mathbf G}_m^h,
{\cal H})
& =
\Gamma(W^{(R,M)}_1,j_{W*}{\cal H})
\label{eqrank2}
\\
&
\to
\Gamma(E^{(R)0},i_W^*j_{W*}{\cal H}).
%=\Gamma(T^{(R)},\pi_*i^*j_*{\cal H})
\nonumber
\end{align}
Since $W_1^{(R,M)}\to X$ is smooth, 
the section $\iota$ defines an isomorphism
\begin{equation}
j_{W*}{\rm pr}_2^*{\cal F}
\to
j_{W*}{\rm pr}_1^*{\cal F}
\label{eqp1p2}
\end{equation}
 on $W_1^{(R,M)}$ and
hence its image in
$\Gamma(E^{(R)0},i_W^*j_{W*}{\cal H})$
defines an isomorphism
\begin{equation}
i_W^*j_{W*}{\cal E}nd({\cal F})\to
i_W^*j_{W*}{\cal H}
\label{eqisoi}
\end{equation} of
$i_W^*j_{W*}{\cal E}nd({\cal F})$-modules
on the extension $E^{(R)0}$
of the vector bundle $T^{(R)}$
by the finite \'etale group scheme
$\widetilde G^{(R)}$.
We obtain the first isomorphism in
(\ref{eq1}) by applying Corollary \ref{corGG}
to the extension
$0\to \widetilde G^{(R)}\to E^{(R)0}\to T^{(R)}\to 0$
and to the
${\cal A}=i_P^*j_{P*}{\cal E}nd({\cal F})$-module
${\cal M}=i_P^*j_{P*}{\cal H}$.

Since $W_1^{(R,M)}\to X$ is smooth, 
the canonical morphism
$$Ri_P^!j_{P*}{\cal E}nd({\cal F})
\otimes_{i_P^*j_{P*}{\cal E}nd({\cal F})}
i_P^*j_{P*}{\cal H}
\to
Ri_P^!j_{P*}{\cal H}$$ on $T^{(R)}$ is an isomorphism,
similarly as the isomorphism (\ref{eqisoi}).
Thus the first isomorphism in (\ref{eq1})
induces the second one.
\end{proof}

Let $\bar x\to \widetilde Z$ be a geometric point
and $\chi
\in \widetilde G^{(R)\vee}_{\bar x}$
be the geometric point corresponding
to a character $\widetilde G^{(R)}_{\bar x}\to \Lambda^\times$.
Then, the stalk $e_\chi
\in (j_*{\cal E}nd({\cal F}))_{\chi}
\subset 
{\rm End}({\mathbf V})$
of $e_{\cal F}$ at $\chi$
is the projector to the
$\chi$-part
${\mathbf V}_\chi
=\{a\in {\mathbf V}\mid \sigma a
=\chi(\sigma)a
\text{ for } \sigma \in \widetilde G^{(R)}_{\bar x}\}$.
In an $\ell$-adic setting,
we may replace the construction
in the proof of Lemma \ref{lmH}
by the Fourier transform
as in \cite{mu}.

Let ${\cal I}dem({\cal F})
\subset {\cal E}nd({\cal F})$
denote the subsheaf
consisting of idempotents.
The morphism ${\rm rank}\colon
{\cal I}dem({\cal F})
\to {\mathbf Z}$
induces a map
${\rm rank}\colon i^*j_*{\cal I}dem({\cal F})
\to {\mathbf Z}$
and
\begin{equation}
\Gamma(
T^{(R)}\times_{\widetilde Z}
\widetilde G^{(R)\vee},
i^*j_*{\cal I}dem({\cal F)})
\to
\Gamma(T^{(R)}\times_{\widetilde Z}
\widetilde G^{(R)\vee},
{\mathbf Z})
\overset\cong\gets
\Gamma(\widetilde G^{(R)\vee},
{\mathbf Z})
\label{eqrankv}
\end{equation}
where the second arrow is an isomorphism
since $T^{(R)}$ is a vector bundle
over $\widetilde Z$.
As the image of the idempotent $e_{\cal F}$
by the map (\ref{eqrankv}),
we define a locally constant function
\begin{equation}
{\rm rk}_{\cal F}\colon
\widetilde G^{(R)\vee}%\sm \widetilde D
\to {\mathbf Z}.
\label{eqrank}
\end{equation}
We define the characteristic cycle
of an isoclinic sheaf
with non-degenerate ramification
along the boundary
out of the locally constant function
${\rm rk}_{\cal F}$ in
(\ref{eqrank}).

\begin{lm}\label{lmGm}
Let $S$ be a scheme and
$T$ be a ${\mathbf G}_m^h$-torsor
over $S$.
Let $X$ be a scheme over $S$
with a ${\mathbf G}_m^h$-action and $\pi\colon 
X\to T$ be a finite \'etale morphism
over $S$.
Let $m_1\geqq  1,\ldots,m_h\geqq  1$
be integers and
assume that the morphism
$\pi\colon X\to T$ 
and the map
${\mathbf G}_m^h\to {\mathbf G}_m^h$
sending $(t_i)$ to $(t_i^{m_i})$
are compatible with
the ${\mathbf G}_m^h$-actions.

Then, the quotient
$X/{\mathbf G}_m^h\to
T/{\mathbf G}_m^h= S$
is finite \'etale.
\end{lm}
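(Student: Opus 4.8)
The plan is to factor the passage to the quotient through the kernel $\mu$ of the isogeny $\phi\colon {\mathbf G}_m^h\to{\mathbf G}_m^h$, $(t_i)\mapsto(t_i^{m_i})$, and then to descend along the torsor $T\to S$. Here $\mu=\ker\phi=\prod_{i=1}^h\mu_{m_i}$ is a finite flat group scheme, and since $\pi$ is equivariant through $\phi$ while $\phi$ is trivial on $\mu$, the subgroup $\mu$ acts on $X$ over $T$; that is, $\pi$ is $\mu$-invariant, because $\pi(\zeta\cdot x)=\phi(\zeta)\cdot\pi(x)=\pi(x)$ for $\zeta\in\mu$. As $\pi$ is affine and $\mu$ is finite flat, the quotient $X/\mu={\rm Spec}_T\,(\pi_*{\cal O}_X)^{\mu}$ exists and is finite over $T$.

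First I would prove that $X/\mu\to T$ is finite \'etale. The base being of characteristic $p>0$, write $\mu=\mu'\times\mu''$ with $\mu'=\prod_i\mu_{m'_i}$ the prime-to-$p$ part, which is finite \'etale, and $\mu''=\prod_i\mu_{p^{a_i}}$ the $p$-part, which is infinitesimal. The key point is that $\mu''$ acts trivially on $X$: the action is a homomorphism $\mu''\to{\underline{\rm Aut}}_T(X)$ of $T$-group schemes; the target is finite \'etale over $T$ because $X\to T$ is finite \'etale, and a homomorphism from a connected group scheme to an unramified one is trivial. Hence $X/\mu=X/\mu'$ is the quotient of a finite \'etale $T$-scheme by a finite \'etale group scheme, which is finite \'etale over $T$, as one checks \'etale locally on $T$ where both $X$ and $\mu'$ become constant.

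Next I would descend. Since ${\mathbf G}_m^h$ is commutative, its action on $X$ commutes with $\mu$ and descends to an action of ${\mathbf G}_m^h/\mu$ on $X/\mu$; under the isomorphism $\phi\colon{\mathbf G}_m^h/\mu\xrightarrow{\sim}{\mathbf G}_m^h$ this action now covers the torsor action on $T$ directly, the obstruction $\mu$ having been removed. Thus $X/\mu\to T$ is a finite \'etale morphism equivariant for the torsor action of ${\mathbf G}_m^h$ on $T$. As $T\to S$ is an fppf torsor we have $T\times_ST\cong{\mathbf G}_m^h\times T$, so this equivariance is precisely a descent datum for the fppf cover $T\to S$; by descent of finite \'etale schemes, $X/\mu\cong \overline Z\times_ST$ for a finite \'etale $\overline Z\to S$. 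Finally $X/{\mathbf G}_m^h=(X/\mu)/({\mathbf G}_m^h/\mu)=\overline Z\times_S(T/{\mathbf G}_m^h)=\overline Z$ is finite \'etale over $S$, as required.

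The main obstacle is the characteristic-$p$ input in the first step, namely that the infinitesimal part $\mu''$ of $\mu$ acts trivially on the finite \'etale cover $X\to T$: without the rigidity of \'etale covers the quotient by $\mu$ could fail to be \'etale when $p\mid m_i$. Once this is secured, the existence of the quotients and the descent along the torsor are formal.
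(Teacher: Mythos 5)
Your proposal takes a genuinely different route from the paper's, and over ${\mathbf F}_p$-schemes it is essentially correct. The paper argues directly: it reduces to $h=1$ by induction, trivializes the torsor Zariski-locally (so $T={\rm Spec}\ A[t^{\pm1}]$, $X={\rm Spec}\ B$), observes that the ${\mathbf G}_m$-action is a ${\mathbf Z}$-grading $B=\bigoplus_n B_n$ compatible with the grading of $A[t^{\pm1}]$ multiplied by $m$, identifies the quotient with ${\rm Spec}\ B_0$, notes that $B_0$ is a direct summand of $B$ and hence finite flat over $A$, and finally checks \'etaleness after reduction to an algebraically closed field by embedding $B_0$ into the \'etale algebra $B\otimes_{A[t^{\pm1}]}A$ (specializing $t\mapsto 1$). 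You instead isolate the group-theoretic content: the kernel $\mu=\prod_i\mu_{m_i}$ of the isogeny acts on $X$ over $T$; its infinitesimal part acts trivially by rigidity of \'etale covers (a homomorphism from a fiberwise connected flat group scheme to the finite \'etale group scheme ${\rm Aut}_T(X)$ is trivial, by the open-and-closed equalizer argument); the quotient by the \'etale part $\mu'$ is again finite \'etale over $T$, as one sees after an \'etale base change splitting everything; and the residual ${\mathbf G}_m^h/\mu\cong{\mathbf G}_m^h$-equivariance of $X/\mu\to T$ is exactly a descent datum along the fppf (in fact Zariski-locally trivial) torsor $T\to S$. This is more conceptual than the paper's computation and makes transparent where \'etaleness could fail and why it does not: the only dangerous part of $\mu$ is $\mu''$, which cannot act on an \'etale cover.

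There is, however, a gap in generality. The lemma is stated for an arbitrary scheme $S$, while your very first move assumes $S$ has characteristic $p>0$. The splitting $\mu=\mu'\times\mu''$ into an \'etale factor and an infinitesimal factor is special to ${\mathbf F}_p$-schemes; over a base of mixed characteristic (say ${\rm Spec}\ {\mathbf Z}_p$ with $p\mid m_i$) the group $\mu_{m_i}$ is finite flat but admits no such decomposition, and your argument does not apply as written. The paper's graded-algebra proof uses no hypothesis on residue characteristics and so covers the stated generality. Since the lemma is applied in the paper only to schemes over the perfect field $k$ of characteristic $p$, your proof suffices for every use made of it, but as a proof of the lemma as stated you would need either to restrict the hypothesis to ${\mathbf F}_p$-schemes or to replace the first step by a characteristic-free argument such as the paper's.
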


\begin{proof}
By induction on $h$,
we may assume $h=1$.
Since the assertion is local on
$S$, we may assume 
$S={\rm Spec}\ A$
is affine,
$T={\rm Spec}\ A[t^{\pm 1}]$
is the trivial ${\mathbf G}_m$-torsor
and $X={\rm Spec}\ B$
is affine.
Then, the ${\mathbf G}_m$-action on $X$
defines a grading $B=\bigoplus_{
n\in {\mathbf Z}}B_n$
on a finite \'etale $A[t^{\pm 1}]$-algebra $B$.
The grading on $B$ is 
compatible with the natural one on
$A[t^{\pm 1}]$ multiplied by $m=m_1$.

We show that
the quotient
$X/{\mathbf G}_m=
{\rm Spec}\ B_0$
is finite \'etale over $A$.
Since $B$ is finite \'etale over $A[t^{\pm 1}]$,
its direct summand $B_0$ is
finite flat over $A$.
To show it is \'etale,
we may assume $A$ is an algebraically
closed field.
Then,
$B_0$ is an $A$-subalgebra
of the \'etale $A$-algebra
$B\otimes_{A[t^{\pm1}]}A
=B_0\oplus\cdots\oplus B_{m-1}$
defined by $t\mapsto 1$,
it is also \'etale over $A$.
\end{proof}

Let $\widetilde G^{(R)\vee} \sm \widetilde Z$
denote the complement of
the $0$-section.
By Lemma \ref{lmGm},
the quotient
$$PG^{(R)\vee}=
(\widetilde G^{(R)\vee} \sm \widetilde Z)/{\mathbf G}_m^h$$
is a finite \'etale scheme over 
$Z=\widetilde Z/{\mathbf G}_m^h$.
Then, since the injection
$\widetilde G^{(R)\vee}\to T^{(R)\vee}
=(T^*X\times_X\widetilde Z)(\widetilde R)$
defined over a radicial covering $F^n\colon 
\widetilde Z^{(p^{-n})}\to \widetilde Z$
is compatible with the 
${\mathbf G}_m^h$-actions,
it  induces a morphism
\begin{equation}
PG^{(R)\vee}=
(\widetilde G^{(R)\vee} \sm \widetilde Z)/{\mathbf G}_m^h
\to 
(T^{(R)\vee} \sm \widetilde Z)/{\mathbf G}_m^h
={\mathbf P}(T^{(R)\vee})
\to {\mathbf P}(T^*X)
\label{eqpcf}
\end{equation}
on the quotients
defined over a radicial covering $F^n
\colon Z^{(p^{-n})}\to Z$.
Since the fibers of the map
$\widetilde G^{(R)\vee} \sm \widetilde Z
\to
PG^{(R)\vee}$ are connected,
the pull-back
\begin{equation}
\Gamma(PG^{(R)\vee}
,{\mathbf Z})
\to
\Gamma(
\widetilde G^{(R)\vee}
\sm \widetilde Z,{\mathbf Z})
\label{eqlcf}
\end{equation}
is an isomorphism.

Let ${\mathbf L}(1)$ denote the tautological line bundle
on the projective space bundle
${\mathbf P}(T^*X)$.
We consider the commutative diagram
\begin{equation}
\begin{CD}
{\mathbf L}(1)\times_{{\mathbf P}(T^*X)}
PG^{(R)\vee}@>>>
T^*X\\
@VVV @VVV\\
PG^{(R)\vee}@>>> X
\end{CD}
\label{eqLTX}
\end{equation}
where the top vertical arrow
is the composition of the first projection
with the blow-up ${\mathbf L}(1)\to T^*X$ at
the $0$-section.
Then, the pull-back by the flat map
${\mathbf L}(1)\times_{{\mathbf P}(T^*X)}
PG^{(R)\vee} \to PG^{(R)\vee}$
and the push-forward by the proper map
${\mathbf L}(1)\times_{{\mathbf P}(T^*X)}
PG^{(R)\vee} \to
T^*X$
define maps
\begin{equation}
\Gamma(PG^{(R)\vee},
{\mathbf Z})
\to
\Gamma({\mathbf L}(1)\times_{{\mathbf P}(T^*X)}
PG^{(R)\vee} ,
{\mathbf Z})
\to Z_d(T^*X)
\label{eqcycl}
\end{equation}
to the free abelian group of $d$-dimensional cycles
of $T^*X$. Let
\begin{equation}
L\colon
\Gamma(\widetilde G^{(R)\vee}\sm \widetilde Z,
{\mathbf Z})
\to Z_d(T^*X)\otimes_{\mathbf Z}{\mathbf Q}=
Z_d(T^*X)_{\mathbf Q}
\label{eqch2}
\end{equation}
denote the composition of
the inverse of (\ref{eqlcf}) and (\ref{eqcycl})
divided by the degree $p^{n(d-1)}$
of the radicial covering $F^n\colon Z^{(p^{-n})}\to Z$
on which 
$\widetilde G^{(R)\vee}\to
T^{(R)\vee}$ is defined.
It is independent of the integer $n\geqq  0$
such that (\ref{eqpcf}) is 
defined over $F^n\colon Z^{(p^{-n})}\to Z$.
The morphism
$L\colon
\Gamma(\widetilde G^{(R)\vee}\sm \widetilde Z,
{\mathbf Z})
\to
Z_d(T^*X)_{\mathbf Q}$
is the sum of the compositions
of the restriction maps with
\begin{equation}
L_i\colon
\Gamma(\widetilde G^{(R)\vee}
\times_{\widetilde Z}\widetilde D_i
\sm \widetilde Z_i,
{\mathbf Z})
\to Z_d(T^*X)_{\mathbf Q}
\label{eqch2i}
\end{equation}
for $i=1,\ldots,h$ such that
$r_i>1$.
Recall that for a set $I
\subset\{1,\ldots,h\}$
of indices,
$D_I=\bigcap_{i\in I}D_i$
denotes the intersection
and $T^*_{D_I}X
\subset T^*X\times_X{D_I}
\subset T^*X$ the conormal bundle.

\begin{df}\label{dfccy}
Let ${\cal F}$ be
a locally constant constructible sheaf
of free $\Lambda$-modules
on $U$.
Assume that the ramification of
${\cal F}$ is
non-degenerate along $D$.
We define the characteristic cycle
${\rm Char}({\cal F})$ of ${\cal F}$
as an effective $d$-cycle with rational coefficients
on the cotangent bundle $T^*X$
and 
the total dimension divisor
$DT({\cal F})$ of ${\cal F}$
as a multiple of $D$
with rational coefficient
as follows.

{\rm 1.}
If ${\cal F}$ is isoclinic of slope $R
=r_1D_1+\cdots+r_hD_h$,  
define ${\rm Char}({\cal F})$ by 
\begin{equation}
{\rm Char}({\cal F})
=
(-1)^d\cdot
\left({\rm rank}\ {\cal F}
\cdot 
\sum_{I:r_i=1\text{ for } i\in I}[T^*_{D_I}X]
+
\sum_{r_i>1}
r_i\cdot L_i({\rm rk}_{\cal F})
\right).
\label{eqchar}
\end{equation}
If ${\cal F}$ is polyisoclinic
and if ${\cal F}=\bigoplus_j{\cal F}_j$
is an isoclinic decomposition,
define ${\rm Char}({\cal F})$
by 
$${\rm Char}({\cal F})
=\sum_j
{\rm Char}({\cal F}_j).$$
In general,
define 
${\rm Char}({\cal F})$
by \'etale descent.

{\rm 2.}
If ${\cal F}$ is isoclinic of slope $R$,  
define $DT({\cal F})$ by 
\begin{equation}
DT({\cal F})
=\sum_ir_i\cdot {\rm rank}\ {\cal F}\cdot D_i
\label{eqDT}
\end{equation}
If ${\cal F}$ is polyisoclinic
and if ${\cal F}=\bigoplus_j{\cal F}_j$
is an isoclinic decomposition,
define ${\rm Char}({\cal F})$
by 
$$
DT({\cal F})=\sum_j
DT({\cal F}_j).$$
In general,
define 
$DT({\cal F})$
by \'etale descent.
\end{df}

In Definition \ref{dfccy}.1,
for an irreducible component $D_i$ with
$r_i>0$,
the fiber of the last term $L_i({\rm rk}_{\cal F})$
in (\ref{eqchar})
at the generic point $\xi_i$ of $D_i$ is
given by
$\sum_\chi n(\chi)\cdot \dfrac{[L(\chi)]}{[F(\chi):F_i]}$
in the following notation.
Let ${\mathbf V}$ be the representation of
the absolute Galois group $G_{K_i}$
defined by ${\cal F}$
and let ${\mathbf V}=\bigoplus_\chi \chi^{\oplus n(\chi)}$
be the decomposition by characters
of the graded quotient ${\rm Gr}^{r_i}G_{K_i}
=G_{K_i}^{r_i}/G_{K_i}^{r_i+}$.
The characteristic form (\ref{eqdfcf})
induces a morphism
$({\rm Gr}^{r_i}G_{K_i})^\vee
={\rm Hom}({\rm Gr}^{r_i}G_{K_i},{\mathbf F}_p)
\to 
(\Omega^1_{X/k}\otimes_{{\cal O}_X}
{\cal O}_{\widetilde Z}(\widetilde R))_{\zeta_i}
\otimes_{{\cal O}_{{\widetilde Z},\zeta_i}}
\kappa(\zeta_i)$
where $\zeta_i$ denote the generic point
of the fiber of $\widetilde Z\to X$ at $\xi_i$.
The construction of (\ref{eqch2}) implies
that, for each non-trivial character $\chi
({\rm Gr}^{r_i}G_{K_i})^\vee$,
its image defines a line $L(\chi)$ in the fiber
of the cotangent bundle $T^*X$
at $\xi_i$ defined over a
finite extension $F(\chi)$ of $F_i=\kappa(\xi_i)$.

We expect that ${\rm Char}({\cal F})$
has integral coefficients (Conjecture
\ref{cnHA}).
We will show that $DT({\cal F})$
has integral coefficients in
Proposition \ref{prHA}.

If ${\cal F}$ is tamely ramified along $D$,
we have $R=D$ and
the characteristic cycle
${\rm Char}({\cal F})$
is Lagrangian \cite[Definition A.1.2]{KS}.
However, it is not Lagrangian in general
(see Example \ref{exArt}.2 below).
In general,
the total dimension divisor
$DT({\cal F})$ is much less precise than
the characteristic cycle
${\rm Char}({\cal F})$.
However for curves,
they are equivalent.

\begin{ex}\label{exArt}
{\rm
1.
Assume $\dim X=1$
and $D=\{x\}$.
Recall that, for
a representation $V$ of $G_K$,
the sum
$\dim V+{\rm Sw}\ V$
of the rank and the Swan conductor
is called the total dimension
and denoted
$\dim {\rm tot} V$.
We have
\begin{align}
&{\rm Char}({\cal F})
=-\Bigl({\rm rank}\ {\cal F}\cdot
[T^*_XX]+\dim {\rm tot}_x({\cal F})
\cdot [T_D^*X]\Bigr),
\label{eqdt}
\\
&DT({\cal F})
=
\dim {\rm tot}_x({\cal F})
\cdot D.
\nonumber
\end{align}
The minus sign in
the formula for
${\rm Char}({\cal F})$
comes from the fact that a curve
has odd dimension.
In particular,
${\rm Char}({\cal F})$
and $DT({\cal F})$
have integral coefficients
by the classical theorem of
Hasse-Arf \cite[Th\'eor\`eme 1, Chapitre VI]{CL}.

2.
Let $X={\mathbf A}^2_k=
{\rm Spec}\ k[x,y]$
and let $U$ be the complement
of the divisor $D$ defined by $x=0$.
Then by Example \ref{egAS}.1,
for the locally constant sheaf ${\cal F}$
of rank $1$ defined by
the Artin-Schreier covering
$t^p-t=\dfrac 1{x^n}$ where
$p\nmid n$,
we have
${\rm Char}({\cal F})
=
[T^*_XX]+
(n+1)
\cdot [T_D^*X]$
since $T_D^*X$ is spanned by
$dx$.

Similarly by Example \ref{egAS}.2,
for ${\cal F}$
defined by
$t^p-t=\dfrac y{x^n}$ where
$p\mid n$,
we have
${\rm Char}({\cal F})
=
[T^*_XX]+
n
\cdot [L]$
where $L$ is the sub line bundle
of the restriction
$T^*X\times_XD$
generated by the section $dy$
unless $p=n=2$.
If $p=n=2$,
we have
${\rm Char}({\cal F})
=
[T^*_XX]+
F_*[L]$
where $L$ is the sub line bundle
of the pull-back 
$T^*X\times_XD^{(2^{-1})}$
by $F\colon D^{(2^{-1})}
\to D\to X$
generated by the section $
\sqrt y dx+dy$.
}
\end{ex}

\subsection{Pull-back and local acyclicity}

We define a condition
for a morphism $f\colon X'\to X$
of smooth schemes
to be non-characteristic 
with respect to
a locally constant sheaf
on the complement
of a divisor on $X$.

\begin{df}\label{dfncF}
Let $X$ and $X'$ be schemes
smooth of dimension $d$ and $d'$ over $k$
respectively
and $U\subset X$ 
and $U'\subset X'$ be the complements
of divisors $D\subset X$ 
and $D'\subset X'$
with simple normal crossings.
Let $f\colon X'\to X$ be a morphism
over $k$ such that
$f^{-1}(U)=U'$.

{\rm 1.}
Let $L=\sum_il_iL_i\in 
Z_d(T^*X)_{\mathbf Q}$ be
a conic cycle with
rational coefficients $l_i>0$.
We say that
$f\colon X'\to X$
is {\rm non-characteristic}
with respect to $L$
if the intersection of
the inverse image of
$L_i$ by the projection
$T^*X\times_XX'\to T^*X$
with
${\rm Ker} 
(T^*X\times_XX'\to T^*X')$
is contained in
the $0$-section of $T^*X\times_XX'$ for every $i$.

If $f\colon X'\to X$
is {\rm non-characteristic}
with respect to $L$,
we define 
$f^*(L)\in  
Z_{d'}(T^*X')_{\mathbf Q}$
to be the linear combination
$\sum_il_i[f^*L_i]$
of the images of
$[L_i\times_XX']
\in  
Z_{d'}(T^*X\times_XX')$.

{\rm 2.}
{\rm (cf.\ \cite[Definition 5.4.12]{KS})}
Let ${\cal F}$
be a locally constant constructible sheaf
of $\Lambda$-modules
on $U$
such that
the ramification of
${\cal F}$ is
non-degenerate along $D$.
We say
$f\colon X'\to X$
is {\rm non-characteristic}
with respect to ${\cal F}$
if 
$f\colon X'\to X$
is {\rm non-characteristic}
with respect to 
${\rm Char}({\cal F})\in 
Z_d(T^*X)_{\mathbf Q}$.
\end{df}

We prove a compatibility
of the characteristic cycle
with the pull-back 
by a non-characteristic morphism $f\colon X\to Y$.

\begin{pr}\label{prncF}
Let $X$ be a smooth scheme over
$k$
and ${\cal F}$ be a locally constant
sheaf of free $\Lambda$-modules
of finite rank
on the complement $U=X\sm D$
of a divisor $D$ with normal crossings.
Let $f\colon X'\to X$ be a 
morphism
of smooth schemes over $k$
such that $U'=f^{-1}(U)$ 
is the complement 
of a divisor $D'\subset X'$ with normal crossings.

Assume that either of the following
conditions is satisfied.

{\rm (s)}
${\cal F}$ is totally wildly ramified
along $D$ and
the ramification of ${\cal F}$
along $D$ is non-degenerate.

{\rm (t)}
${\cal F}$ is tamely ramified
along $D$.

\noindent
Then, if $f\colon X'\to X$ is
non-characteristic 
with respect to ${\cal F}$,
the ramification
of the pull-back $f_U^*{\cal F}$
is non-degenerate along $D'$
and 
\begin{equation}
(-1)^{\dim X'}{\rm Char}(f_U^*{\cal F})
=(-1)^{\dim X}f^*{\rm Char}({\cal F}),\quad
DT(f_U^*{\cal F})
=f^*DT({\cal F}).
\label{eqChf*}
\end{equation}
\end{pr}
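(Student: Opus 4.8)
The plan is to reduce to the isoclinic case and then treat the two hypotheses separately, importing the functoriality of the characteristic form from Proposition \ref{prfun}. First I would reduce to the case where $\mathcal{F}$ is isoclinic of slope $R$: both ${\rm Char}$ and $DT$ are defined by \'etale descent and by additivity over an isoclinic decomposition (Definition \ref{dfccy}), and the condition of Definition \ref{dfncF} is imposed on each irreducible component of the conic cycle ${\rm Char}({\cal F})$ separately, hence is compatible with direct sums and is \'etale-local. In this reduction I would also record that the slope of $f_V^*{\cal F}$ along $D'$ is exactly $R'=f^*(R)$: the bound $R'+$ is automatic from Proposition \ref{prfun}.1, and non-characteristicity forces the characteristic form to pull back to a nonzero form, so no slope can drop. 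Granting this, the equality $DT(f_V^*{\cal F})=f^*DT({\cal F})$ is purely formal: with $f^*D_i=\sum_j e_{ij}D'_j$ and $r'_j=\sum_i e_{ij}r_i$, both sides equal $\sum_j r'_j\cdot{\rm rank}\,{\cal F}\cdot D'_j$ since rank is preserved under pull-back. Finally, because the two equalities in (\ref{eqChf*}) move the signs $(-1)^{\dim}$ outside, the real content of the first equality is the identity of the effective positive cycles obtained by deleting the sign from (\ref{eqchar}).

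In the tame case (t) one has $R=D$, no wild part, and ${\rm Char}({\cal F})=(-1)^d\,{\rm rank}\,{\cal F}\cdot\sum_I[T^*_{D_I}X]$ with $I$ ranging over all strata. Non-characteristicity with respect to this cycle is exactly transversality of $f$ to the stratification of $X$ by the closed strata $D_I$. I would then verify, by a local computation on conormal bundles, that under such a transversal $f$ the sheaf $f_V^*{\cal F}$ is again tame, that each stratum $D'_{I'}$ of $D'$ lies over a unique $D_I$, and that $f^*[T^*_{D_I}X]=\sum_{I'}[T^*_{D'_{I'}}X']$ with the sum over the strata of $D'$ above $D_I$; preservation of rank then gives the positive-cycle equality.

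For the totally wild case (s) I would run everything through Proposition \ref{prfun}.2, the crux being to match the cycle-theoretic non-characteristic condition of Definition \ref{dfncF} with the characteristic-form condition of Definition \ref{dfnc}. By the construction (\ref{eqch2i}), the support of the wild part of ${\rm Char}({\cal F})$ in $T^*X$ is the union of the lines through the images, under ${\rm Char}_R(V/U)$, of the characters occurring in ${\cal F}$, while ${\rm Ker}(T^*X\times_X X'\to T^*X')$ is the conormal locus ${\rm Ann}(df(TX'))$. Thus a direction $\omega_\chi$ meets this kernel nontrivially precisely when $\omega_\chi$ lies in that annihilator, i.e.\ when its restriction along $f$ vanishes; so Definition \ref{dfncF} reads off as the injectivity of the composite (\ref{eqncf}), which is Definition \ref{dfnc} via Lemma \ref{lmnc}. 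With this identification, the equivalence of conditions (1), (2), (3) in Proposition \ref{prfun}.2 yields simultaneously that $f_V^*{\cal F}$ is non-degenerate at multiplicity $R'$ and the compatibility square (\ref{eqncch}) together with the isomorphism $\widetilde G^{(R)\vee}\times_{\widetilde Z}\widetilde Z'\to\widetilde G^{\prime(R')\vee}$ of the dual group schemes. It then remains to deduce the cycle equality from (\ref{eqncch}): I would transport the rank function ${\rm rk}_{\cal F}$ of (\ref{eqrank}) to ${\rm rk}_{f_V^*{\cal F}}$ through this isomorphism (using the construction of Lemma \ref{lmH} and Corollary \ref{corGG}) and check that the map $L_i$ of (\ref{eqch2i}) --- built from the projectivized characteristic form (\ref{eqpcf}), the tautological line bundle, and the diagram (\ref{eqLTX}) --- commutes with $f^*$, after reindexing the components $D_i$ by the $D'_j$ above them and reweighting $r_i\rightsquigarrow r'_j$.

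The hard part will be this last compatibility of the $L$-construction with pull-back. There are two distinct difficulties. First, the normalization by the degree $p^{n(d-1)}$ of the radicial covering $F^n\colon Z^{(p^{-n})}\to Z$ in (\ref{eqch2}) depends on $\dim Z$, so one must reconcile the factors $p^{n(d-1)}$ and $p^{n(d'-1)}$ with cycle pull-back along $f$ and re-check independence of $n$. Second, the twists $(\widetilde R)$ versus $(\widetilde R')$ and the ${\mathbf G}_m^h$- versus ${\mathbf G}_m^{h'}$-actions must be tracked through the ramification matrix $(e_{ij})$ so that projectivization and the tautological-line-bundle push-forward in (\ref{eqLTX}) are genuinely functorial. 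I regard the identification of the two non-characteristic conditions in the wild case --- which in particular prevents distinct supported directions from colliding after restriction, and is precisely what keeps $f_V^*{\cal F}$ non-degenerate --- as the conceptual crux, and the degree-and-twist bookkeeping above as the computational one.
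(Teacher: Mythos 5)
Your proposal is correct and follows essentially the same route as the paper: étale-local reduction to the isoclinic case, the tame case (t) by transversality of $f$ to the strata $D_I$ and pull-back of conormal bundles, and the wild case (s) by invoking Proposition \ref{prfun}.2 to get non-degeneracy at multiplicity $R'$ together with the pull-back compatibility (\ref{eqncch}) of characteristic forms. The points you single out as the "hard part" (identifying Definition \ref{dfncF} with Definition \ref{dfnc} via Lemma \ref{lmnc}, and the degree/twist bookkeeping in the $L$-construction) are exactly what the paper passes over with "the characteristic form ${\rm Char}_{R'}(V'/U')$ is the pull-back of ${\rm Char}_R(V/U)$, thus the assertion follows," so your write-up is, if anything, more explicit than the original.
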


\begin{proof}
(s)
Since the assertion is \'etale local,
we may assume that
${\cal F}$ is isoclinic of
slope $R$.
Let $V\to U$ be a finite
\'etale Galois covering trivializing
${\cal F}$ such that
the ramification along $D$
is bounded by $R+$ and
is non-degenerate at multiplicity $R$.
Then, by Proposition \ref{prfun}.2,
the ramification along $D'$
of the \'etale Galois covering 
$V'=V\times_UU'\to U'$
is bounded by $R'+$ and
is non-degenerate at multiplicity $R'$
for $R'=f^*R$.
Further, the characteristic form 
${\rm Char}_{R'}(V'/U')$
is the pull-back of
${\rm Char}_{R}(V/U)$.
Thus, 
the assertion follows.

(t)
Let $I$ be a subset of $\{1,\ldots,h\}$.
The conormal bundle
$T^*_{D_I}X$ is locally spanned by
$dt_i$ for $i\in I$ such that
$t_i$ defines $D_i$.
Then by the condition that
$X'\to X$ is non-characteristic
with respect to $T^*_{D_I}X$,
it follows that
the inverse images
$D'_i=f^*D_i$ are smooth and
meet transversally each other.
Hence, the pull-back
$f^*(\sum_{r_i=1}D_i)$
is 
$\sum_{r_i=1}D'_i$
and the assertion on $DT({\cal F})$ follows.
Further, if we put
$D'_I=\bigcap_{i\in I}D'_i$,
then the pull-back of
$[T^*_{D_I}X]$
is $[T^*_{D'_I}X']$
and the assertion on
${\rm Char}({\cal F})$ also follows.
\end{proof}

\begin{cor}\label{corncF}
Let $X$ be a smooth scheme over
$k$
and ${\cal F}$ be a locally constant
sheaf of free $\Lambda$-modules
of finite rank
on the complement $U=X\sm D$
of a divisor $D$
with simple normal crossings.
Assume that one of the conditions
{\rm (s)} and {\rm (t)}
in Proposition {\rm \ref{prncF}}
is satisfied.

{\rm 1.}
Let $C$ be a smooth curve,
$f\colon C\to
X$ be a morphism over $k$
and $x$ be an isolated
point of $f^{-1}(D)$.
Then,
we have
\begin{equation}
\dim {\rm tot}_xf^*{\cal F}
\leqq (DT({\cal F}),C)_x
\label{eqcv}
\end{equation}
where the right hand side
denotes the intersection number at $x$.

{\rm 2.}
Let $\Sigma\subset TX$
be the union of the images of the hyperplane
bundles defined as zero-locus
of non-vanishing sections
of the characteristic cycle
${\rm Char}({\cal F})$.
Let $C$ be a smooth curve
and
$f\colon C\to X$ be a morphism over $k$.
For a point $x\in f^{-1}(D)$ of
the inverse image,
the following conditions are equivalent.

{\rm (1)}
$f\colon C\to X$
is non-characteristic on a neighborhood of $x$
with respect to ${\cal F}$.

{\rm (2)}
We have an equality
in {\rm (\ref{eqcv})}.

{\rm (3)}
The image of the map 
$f_*\colon T_xC\to T_{f(x)}X$
on the tangent space
is not contained in
$\Sigma$ defined above.

{\rm 3}.
Let $f\colon X'\to X$ be a 
morphism
of smooth schemes over $k$
such that $f^{-1}(U)$ 
is the complement of a divisor $D'$
with simple normal crossings.
Then, the following conditions are equivalent.

{\rm (1)}
$f\colon X'\to X$
is non-characteristic with respect to ${\cal F}$.

{\rm (2)}
For every closed point
$x'$ of $D'$,
there exists a curve $C$ on $X'$
meeting $D'$ transversally at $x'$
such that the composition 
$C\to X'\to X$ is non-characteristic
with respect to ${\cal F}$.
\end{cor}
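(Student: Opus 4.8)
The plan is to reduce everything, by \'etale descent and the isoclinic decomposition built into Definition \ref{dfccy}, to the case where ${\cal F}$ is isoclinic of slope $R$; all three assertions are local on $X$ and compatible with that decomposition. Fix a finite Galois covering $V\to U$ trivializing ${\cal F}$ such that along $D$ the ramification is bounded by $R+$ and non-degenerate at multiplicity $R$ (this is hypothesis (s); in the tame case (t) one has $R=D$ and this is automatic). Let $M$ be the representation attached to ${\cal F}$; under the last ramification group $\widetilde G^{(R)}$ it splits into characters $\chi$, and by construction ${\rm rk}_{\cal F}(\chi)=\dim M_\chi$. By the definition of the maps $L_i$, the components of ${\rm Char}({\cal F})$ other than the conormal cycles $[T^*_{D_I}X]$ of the tame locus are exactly the closures of the rays through the covectors ${\rm Char}_R(V/U)(\chi)$ for the finitely many $\chi$ with $\dim M_\chi>0$; the union of the hyperplanes $H_\chi=\{v\in TX:\langle{\rm Char}_R(V/U)(\chi),v\rangle=0\}$ over these $\chi$ is the set $\Sigma$ of assertion 2.

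I would treat the curve case (assertions 1 and 2) first. The equivalence $(1)\Leftrightarrow(3)$ comes from unwinding Definition \ref{dfncF}: over $x$ the kernel of $T^*X\times_XC\to T^*C$ is the annihilator of $f_*(T_xC)$, so its intersection with the ray through a covector $\omega$ lies in the zero section precisely when $\langle\omega,f_*(T_xC)\rangle\neq0$; applying this to each component covector ${\rm Char}_R(V/U)(\chi)$ gives exactly $f_*(T_xC)\not\subset\Sigma$. For the inequality (\ref{eqcv}), Proposition \ref{prfun}.1 shows the ramification of $f_V^*{\cal F}$ along $f^*D$ is bounded by $(f^*R)+$; via Proposition \ref{prequiv} and the one-dimensional formula of Example \ref{exArt} (see (\ref{eqdt})) this translates into $\dim{\rm tot}_x{\cal F}\le(DT_D({\cal F}),C)_x$, which is (\ref{eqcv}). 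When $f$ is non-characteristic, Proposition \ref{prncF}(s) gives $DT(f_V^*{\cal F})=f^*DT({\cal F})$, hence equality, so $(1)\Rightarrow(2)$.

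The reverse implication $(2)\Rightarrow(1)$ is the heart of the matter, and I expect it to be the main obstacle. The point is that ${\cal F}$ in general realizes only a proper subset of the characters of $\widetilde G^{(R)}$, so one cannot simply quote the non-degeneracy statement for the full covering $V/U$, whose $\Sigma$ may be strictly larger. The plan is to decompose the restriction of $f_V^*{\cal F}$ near $x$ along the characters $\chi$ with $\dim M_\chi>0$: viewing the $\chi$-isotypic summand as a sheaf with characteristic form the single covector ${\rm Char}_R(V/U)(\chi)$, Lemma \ref{lmcut} (equivalently Proposition \ref{prfun}.2) applied to that summand shows it retains its full slope, and hence its full contribution to $\dim{\rm tot}_x$, exactly when $f_*(T_xC)\not\subset H_\chi$, and otherwise suffers a strict slope drop. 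Summing over $\chi$, equality in (\ref{eqcv}) holds if and only if $f_*(T_xC)$ avoids every $H_\chi$, i.e. if and only if $(1)/(3)$ holds. The tame case (t) is the same computation with $R=D$: here $\Sigma$ records the tangent hyperplane to each branch of $D$ together with the codimension-$\ge 2$ strata, so that non-characteristicity forces $C$ to meet $D$ at a smooth point transversally, and the inequality (\ref{eqcv}) becomes an equality exactly in that case.

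Finally, assertion 3 is a statement purely about the conic cycle ${\rm Char}({\cal F})\subset T^*X$, namely that non-characteristicity may be tested on curves meeting $D'$ transversally; I would deduce it from assertion 2 exactly as Corollary \ref{corfuncut} is deduced from Lemma \ref{lmcut}, using the transitivity of non-characteristicity from Corollary \ref{corfun} to pass between $C\to X'\to X$ and $C\to X$. The implication $(1)\Rightarrow(2)$ produces, through each closed point $x'$ of $D'$, a transversal curve whose tangent avoids the pulled-back $\Sigma$, while $(2)\Rightarrow(1)$ is the corresponding converse.
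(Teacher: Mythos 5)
Your proposal is correct and follows essentially the same route as the paper: reduction to the isoclinic case, the inequality (\ref{eqcv}) via Proposition \ref{prfun}.1, the equivalence (1)$\Leftrightarrow$(3) from the definition and (1)$\Rightarrow$(2) from Proposition \ref{prncF}, and the key implication (2)$\Rightarrow$(1) by the same character-by-character analysis — equality in (\ref{eqcv}) holds exactly when each character $\chi$ with ${\rm rk}_{\cal F}(\chi)\neq 0$ survives pullback, i.e.\ when ${\rm Char}_R(\chi)$ does not vanish on $T_xC$, which is your slope-drop argument in the paper's phrasing. The paper's part 3 is likewise a direct application of part 2, as you propose.
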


\begin{proof}
By the second equality in (\ref{eqdt})
in Example \ref{exArt}.1,
the left hand side of (\ref{eqcv})
is $DT$ of the pull-back ${\cal F}|_C$
to $C$.

1.
Since the assertion is \'etale local,
we may assume that
${\cal F}$ is isoclinic of slope $R$.
Then, the inequality (\ref{eqcv})
follows from Proposition \ref{prfun}.1.

2.
The equivalence (1)$\Leftrightarrow$(3)
is clear from the definition
of non-charactericity.
If $f\colon C\to X$ is
non-characteristic
with respect to ${\cal F}$, 
then (\ref{eqcv})
is an equality by 
Proposition \ref{prncF}.
Hence, we have
(1)$\Rightarrow$(2).

We show (2)$\Rightarrow$(1)
or equivalently (2)$\Rightarrow$(3).
By the inequality (\ref{eqcv}),
the equality in (\ref{eqcv})
is equivalent to those for
all isoclinic components.
Hence we may assume ${\cal F}$
is isoclinic of slope $R$.
Let $V$ be a $G$-torsor
over $U$ for a finite group
$G$ such that
the ramification of $V$ is bounded
by $R+$
and that
${\cal F}$ is corresponding
to a faithful representation ${\mathbf V}$ of $G$.

If ${\cal F}$ is tamely ramified along $D$,
the equality (\ref{eqcv})
is equivalent to the condition that $C$
meets $D$ transversally.
Thus in this case (2) implies (1).

Assume that
the condition (s) in Proposition \ref{prncF}
is satisfied.
Then, since the ramification of
$V'=V\times_XC$ over 
$U'=U\times_XC$ is bounded by $R'+$,
the equality
$\dim {\rm tot}_x{\cal F}|_C
= (DT_D({\cal F}),C)_x$
is equivalent to that
every character $\chi$ of
$G^{(R)}$ such that
${\rm rk}_{\cal F}(\chi)\neq 0$
induces a non-trivial character
of $G^{\prime(R')}$.
This means that
a non-zero differential form
${\rm Char}_R(\chi)$ in
the fiber at $x$ of the line $L_\chi$ 
for such a character $\chi$
does not vanish on 
the tangent line $T_xC$
and the assertion follows.

3.
By 2.(3)$\Rightarrow$(1),
the condition (1) implies (2).
Conversely, by 2.(1)$\Rightarrow$(3),
the condition (2) implies (1).
\end{proof}

On the integrality of
the coefficients of
the characteristic cycle
and of the total dimension divisor,
we deduce the following from
the classical Hasse-Arf theorem.

\begin{pr}\label{prHA}
Assume that the ramification of
${\cal F}$ along $D$ is non-degenerate.

{\rm 1.}
The characteristic cycle
${\rm Char}({\cal F})$
has coefficients in ${\mathbf Z}[\frac 1p]$.

{\rm 2.}
The total dimension divisor
$DT({\cal F})$
has integral coefficients.
\end{pr}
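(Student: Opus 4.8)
The plan is to reduce both assertions to the case where ${\cal F}$ is isoclinic of slope $R=r_1D_1+\cdots+r_hD_h$ and then to extract all nontrivial integrality from the classical Hasse--Arf theorem, the only denominators being the $p$-power degree of the radicial covering $F^n\colon Z^{(p^{-n})}\to Z$ over which the characteristic form is defined. By Definition \ref{dfccy} both ${\rm Char}({\cal F})$ and $DT({\cal F})$ are built by \'etale descent from the polyisoclinic case, in which they are additive over an isoclinic decomposition; since lying in $Z_d(T^*X)_{{\mathbf Z}}$ or in $Z_d(T^*X)_{{\mathbf Z}[1/p]}$ is an \'etale-local property stable under finite sums, I may assume ${\cal F}$ is isoclinic. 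The first summand ${\rm rank}\,{\cal F}\cdot\sum_{I}[T^*_{D_I}X]$ of (\ref{eqchar}) and the tame part of $DT$ have integer coefficients, so only the wild contributions need control.

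For assertion 2 the coefficient of $D_i$ in $DT({\cal F})$ is $r_i\cdot{\rm rank}\,{\cal F}$ by (\ref{eqDT}). Because ${\cal F}$ is isoclinic of slope $r_i$ along $D_i$, the representation ${\cal F}_{\bar\eta_i}$ of $G_{K_i}$ has Swan conductor $(r_i-1)\cdot{\rm rank}\,{\cal F}$, so that $r_i\cdot{\rm rank}\,{\cal F}=\dim {\rm tot}_{\xi_i}({\cal F})$ is its total dimension. By Corollary \ref{corGr}.1 the top graded piece is abelian and killed by $p$, so this total dimension is a non-negative integer by the classical Hasse--Arf theorem \cite{CL}, exactly as in the one-dimensional computation of Example \ref{exArt}.1. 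This proves 2.

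For assertion 1 it remains to bound the wild coefficients $r_i\cdot L_i({\rm rk}_{\cal F})$. The map (\ref{eqcycl}) is a flat pull-back followed by a proper push-forward of the integer-valued function ${\rm rk}_{\cal F}$ along the diagram (\ref{eqLTX}) relating $PG^{(R)\vee}$ to $T^*X$, hence carries ${\rm rk}_{\cal F}$ to a cycle with integer coefficients; by the definition (\ref{eqch2}) of $L$ the sole denominator is the degree $p^{n(d-1)}$ of the radicial covering, so each $L_i({\rm rk}_{\cal F})$ already lies in $\frac1{p^{n(d-1)}}Z_d(T^*X)$. The point is that multiplication by $r_i$ creates no prime-to-$p$ denominator: each integer multiplicity appearing in $L_i({\rm rk}_{\cal F})$ is, after the radicial descent along $Z^{(p^{-n})}\to Z$, the rank of an honest isoclinic sub-representation of ${\cal F}_{\bar\eta_i}$ over $K_i$, and for such a sub-representation $r_i$ times its rank is again a total dimension, hence an integer by Hasse--Arf. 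The residual $p$-powers produced by this descent are absorbed by the normalising factor $p^{n(d-1)}$, leaving coefficients in ${\mathbf Z}[\frac1p]$, and this proves 1.

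The hard part is the last step of assertion 1: matching each irreducible conic component of $L_i({\rm rk}_{\cal F})$ with an honest sub-representation of ${\cal F}_{\bar\eta_i}$ so that Hasse--Arf applies component by component, together with the bookkeeping of the ${\mathbf G}_m^h$-weights $m_ir_i$ from Lemma \ref{lmTR2}.4 and the finite \'etale quotient of Lemma \ref{lmGm} used in forming $PG^{(R)\vee}$. One cannot circumvent this by restricting to curves via Proposition \ref{prncF} and Corollary \ref{corncF}, since a curve transversal to $D_i$ meets every wild component in the single cotangent fibre $T^*_xC$, so only the integral sum $\dim {\rm tot}_x$ of (\ref{eqcv}) is visible there and not the individual coefficients. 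Finally, the bound cannot be sharpened to ${\mathbf Z}$ by this method, the $p$-power denominator being intrinsic to the radicial covering; full integrality is the content of Conjecture \ref{cnHA}.
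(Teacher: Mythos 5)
Your proof of both parts rests on applying the classical Hasse--Arf theorem directly to the representation ${\cal F}_{\bar \eta_i}$ of $G_{K_i}$, but $K_i={\rm Frac}\ \hat{\cal O}_{X,\xi_i}$ has residue field the function field of $D_i$, which is \emph{imperfect} (of transcendence degree $d-1$ over $k$) as soon as $d\geqq 2$. The theorem of \cite{CL} requires a perfect residue field; integrality of the total dimension over such $K_i$ is precisely what Proposition \ref{prHA}.2 asserts (and is otherwise a theorem of Xiao \cite{Liang} via $p$-adic differential equations, as remarked right after the statement), so your argument for part 2 is circular. The paper's proof of part 2 gets around exactly this obstruction by restricting to curves: by Corollary \ref{corncF}.2 one finds a smooth curve $C$ meeting $D_i$ transversally at a closed point $x$, non-characteristic for ${\cal F}$, and then $\dim {\rm tot}_x({\cal F}|_C)=(DT({\cal F}),C)_x$ equals the coefficient of $D_i$; the left-hand side is an integer because the residue field at $x$ is finite over the perfect field $k$, so the classical theorem does apply there, as in Example \ref{exArt}.1. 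Your claim that ``one cannot circumvent this by restricting to curves'' is therefore wrong as regards part 2 --- that is exactly the paper's proof --- although the observation is fair for part 1, where individual components are indeed invisible on a transversal curve.

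For part 1 the gap is more serious: besides the same illegitimate use of Hasse--Arf, the step you yourself flag as ``the hard part'' (matching each conic component of $L_i({\rm rk}_{\cal F})$ with an honest subrepresentation of ${\cal F}_{\bar\eta_i}$) is never carried out, and it is not how the prime-to-$p$ denominators are eliminated in the paper. The actual argument is a short group-theoretic divisibility statement: reduce to $D$ irreducible, ${\cal F}$ isoclinic of slope $rD$ with $r>1$, and the support of ${\rm Char}({\cal F})$ consisting of the zero section together with the image of a single line bundle defined over the radicial covering $F^n\colon D^{(p^{-n})}\to D$; if $m$ is the prime-to-$p$ part of the denominator of $r$, then the subgroup $\mu_m\subset {\mathbf G}_m$ stabilizes that line bundle and acts faithfully on the characteristic form, whence the rank of ${\cal F}$ (equivalently the multiplicity ${\rm rk}_{\cal F}$ carried by that component) is divisible by $m$. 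Thus $r$ times the multiplicity has only a $p$-power denominator, and the sole remaining denominator in the definition (\ref{eqch2}) of $L$ is the $p$-power degree $p^{n(d-1)}$ of the radicial covering, giving coefficients in ${\mathbf Z}[\frac1p]$. No appeal to Hasse--Arf over $K_i$ is needed for part 1 at all.
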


For the total dimension divisor
$DT({\cal F})$,
the integrality is proved
by Xiao Liang \cite[Corollary 4.4.3]{Liang} by
using $p$-adic differential equations.

\begin{proof}
By the definition of characteristic cycle,
it suffices to prove the case where
the condition (s) in Proposition \ref{prncF}
is satisfied. Further 
we may assume that $D$ is irreducible.
Since the question is
local on an \'etale neighborhood of
the generic point $\xi$
of $D$,
we may assume that
${\cal F}$ is isoclinic
of slope $rD$
and $r>1$.
Further
we may assume that the support of
the characteristic cycle
consists of the 0-section
and the image of one line bundle
defined over the radicial covering
$F^n\colon D^{(p^{-n})}\to D$ for an integer $n\geqq 0 $.

1.
Let $m$ be the prime-to-$p$ part
of the denominator of $r$.
Then, the subgroup $\mu_m
\subset {\mathbf G}_m$
stabilizes the line bundle
and acts faithfully on
the characteristic form.
Hence ${\rm rank}\ {\cal F}$
is divisible by $m$
and the assertion follows.

2. It follows from Corollary \ref{corncF}.2.
\end{proof}

\begin{cn}\label{cnHA}
The characteristic cycle
${\rm Char}({\cal F})$
has integral coefficients.
\end{cn}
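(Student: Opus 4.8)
The plan is to reduce to the simplest possible local situation and then to exploit the compatibility with restriction to curves, where integrality is already known from the classical Hasse--Arf theorem. Since the construction of ${\rm Char}({\cal F})$ proceeds by \'etale descent and through an isoclinic decomposition, and since the coefficients are already known to lie in ${\mathbf Z}[\frac1p]$ by Proposition \ref{prHA}.1, it suffices to treat, on an \'etale neighborhood of the generic point $\xi$ of a single irreducible component $D=D_i$ with $r=r_i>1$, a sheaf ${\cal F}$ that is isoclinic of slope $R=rD$ and totally wildly ramified, and to show that the coefficient of each component appearing in $L_i({\rm rk}_{\cal F})$ has no power of $p$ in its denominator.

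First I would record that the function ${\rm rk}_{\cal F}$ of (\ref{eqrank}) is already integer valued: by the description of the stalk $e_\chi$ as the projector onto the isotypic part $M_\chi$, its value at $\chi$ equals $\dim_\Lambda M_\chi\in{\mathbf Z}_{\geqq0}$. Thus the only source of denominators is the normalization by the degree $p^{n(d-1)}$ of the radicial covering $F^n\colon Z^{(p^{-n})}\to Z$ in the map $L$ of (\ref{eqch2}). The heart of the matter is therefore to show that, in the push-forward (\ref{eqcycl}) along $L(1)\times_{{\mathbf P}(T^*X)}PG^{(R)\vee}\to T^*X$, the multiplicity of each image component is divisible by $p^{n(d-1)}$. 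Geometrically this expresses that the characteristic form, although a priori defined only over $Z^{(p^{-n})}$, contributes to $T^*X$ through the inseparable degree $p^{n(d-1)}$ of $Z^{(p^{-n})}\to Z$, so that dividing by this degree returns an integer.

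To establish the divisibility I would restrict to non-characteristic curves. By Lemma \ref{lmcut} and Corollary \ref{corncF}.2, through each closed point $x\in D$ there pass smooth curves $f\colon C\to X$ transversal to $D$ and non-characteristic with respect to ${\cal F}$, whose tangent directions fill a dense open subset of $T_{f(x)}X$ avoiding $\Sigma$. For every such $C$ the pull-back is again isoclinic with non-degenerate ramification by Proposition \ref{prncF}, and its characteristic cycle equals $\pm f^*{\rm Char}({\cal F})$; but on a curve this cycle is $-({\rm rank}\,{\cal F}\cdot[T^*_CC]+\dim{\rm tot}_x{\cal F}\cdot[T^*_xC])$, which has integral coefficients by Example \ref{exArt}.1 and the classical Hasse--Arf theorem \cite{CL}. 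Hence $f^*{\rm Char}({\cal F})$ is an integral cycle for every non-characteristic curve. The remaining step is to deduce integrality of ${\rm Char}({\cal F})$ itself: one would interpret the coefficient of a component $\overline{L}$ of $L_i({\rm rk}_{\cal F})$ through the coefficient of $[T^*_xC]$ in $f^*{\rm Char}({\cal F})$ for curves $C$ whose direction is generic on the corresponding sheet of $PG^{(R)\vee}$, using that such a pull-back isolates the single line $L$ transversally.

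The main obstacle is exactly this last passage. A single non-characteristic curve produces only one integer, a positive-integer combination $\sum_i m_i\,l_i$ of the sought coefficients $m_i$ weighted by local intersection multiplicities $l_i$, rather than an individual coefficient; isolating each $m_i$ requires enough curves in sufficiently many independent directions together with control of the weights $l_i$ guaranteeing that they do not conspire to hide a power of $p$ in the denominator. Equivalently, one must establish a higher-dimensional Hasse--Arf phenomenon governing the inseparable degree $p^{n(d-1)}$ of the radicial covering over which the characteristic form is defined. Controlling this denominator in families, rather than fibre by fibre on curves, is precisely the step that the one-dimensional theorem does not supply, and is the reason the statement is only conjectural here.
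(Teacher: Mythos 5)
You were asked to prove Conjecture \ref{cnHA}, and the first thing to say is that the paper itself contains no proof of this statement: it is left as a conjecture. What the paper actually establishes is only the weaker Proposition \ref{prHA}.1, that the coefficients of ${\rm Char}({\cal F})$ lie in ${\mathbf Z}[\frac 1p]$ (the prime-to-$p$ denominators are killed by the $\mu_m$-action argument), plus Proposition \ref{prHA}.2, the integrality of the total dimension divisor $DT({\cal F})$, which \emph{is} proved by exactly the curve-restriction/Hasse--Arf mechanism you invoke (via Corollary \ref{corncF}.2). The paper then remarks that Conjecture \ref{cnvan} --- Deligne's expected Milnor-type formula ${\rm dimtot}_x\phi_x(f,j_!{\cal F})=-({\rm Char}({\cal F}),df(X))_{T^*X}$ --- would imply Conjecture \ref{cnHA}. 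So there is no proof to compare yours against, and your closing admission that your argument does not close is the correct assessment of the situation rather than a defect you could have repaired.

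Your analysis of the obstruction is accurate and worth keeping straight. The reductions are sound: ${\rm rk}_{\cal F}$ is integer-valued (its value at $\chi$ is the rank of the isotypic component $M_\chi$), so after Proposition \ref{prHA}.1 the whole issue is the $p$-power denominator $p^{n(d-1)}$ introduced in the map $L$ of (\ref{eqch2}) by the radicial covering $F^n\colon Z^{(p^{-n})}\to Z$ over which the characteristic form is defined. And your diagnosis of why curves cannot finish the job is exactly right: a non-characteristic curve $C$ yields, via Proposition \ref{prncF} and the classical Hasse--Arf theorem, one integer $\dim{\rm tot}_x({\cal F}|_C)$, which is a positively weighted sum of the sought coefficients --- this is precisely the information that suffices for $DT({\cal F})$ but not for ${\rm Char}({\cal F})$, since no family of curves available in the paper's framework isolates individual components of $L_i({\rm rk}_{\cal F})$ with control of the weights modulo powers of $p$. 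Note that the route the paper points to is genuinely different from yours: Conjecture \ref{cnvan} would test ${\rm Char}({\cal F})$ against Lagrangian sections $df\colon X\to T^*X$ for varying $f$, with the left-hand side manifestly an integer, rather than against the fibres $T^*_xC$ coming from curve restriction; but that conjecture is also unproved in the paper (it is known there only when the intersection is empty, when $x\in U$ by the Milnor formula of \cite{SGA7}, or for surfaces under hypotheses by \cite{La}). In short, your proposal correctly locates the missing higher-dimensional Hasse--Arf input and correctly concludes that, with the paper's tools, the statement remains a conjecture.
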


We will later state Conjecture
\ref{cnvan} stronger than 
Conjecture \ref{cnHA}.

Corollary \ref{corncF}
immediately implies
the following characterization
on the support of
the characteristic cycle
and the total dimension divisor.

\begin{pr}\label{prcv}
Keep the assumptions in 
Corollary {\rm \ref{corncF}}.

{\rm 1.}
There exists a unique linear combination
$L=\sum_{i=1}l_iD_i$
with integral coefficients $l_i\geqq  0$
satisfying the following properties 
{\rm (1)} and {\rm (2)}:
{\rm (1)} For every irreducible
curve in $X$ not contained in $D$
and every point $x\in D\cap C$,
we have
\begin{equation}
{\rm dim\ tot}_x{\cal F}|_C
\leqq (L,C)_x.
\label{eqcvm}
\end{equation}
{\rm (2)}
For each irreducible component 
$D_i$ of $D$,
there exists a 
smooth curve $C$ in $X$ meeting $D_i$
transversally and meeting no other
irreducible components of $D$
such that
we have an equality in 
{\rm (\ref{eqcvm})}.

The unique linear combination
$L$ above
equals the total dimension divisor
$DT({\cal F})$.

{\rm 2.}
There exists a closed
subset $\Sigma\subset TX$
characterized by the following
property:
For a point $x\in D$
and a   
smooth curve $C$ in $X$ meeting 
every irreducible component of $D$
transversally at $x\in C$,
an equality in 
{\rm (\ref{eqcv})} is equivalent to
that
the image of the map 
$f_*\colon T_xC\to T_{f(x)}X$
on the tangent space
is not contained in
$\Sigma$.

{\rm 3.}
The closed subset $\Sigma\subset TX$
in {\rm 2.}\
is the union of the following two parts:

{\rm (1)}
Finitely many hyperplane bundles 
$H_1,\ldots,H_l$ defined over finite \'etale
schemes of the radicial covering
$F^n\colon Z^{(p^{-n})}\to Z$ for an integer $n\geqq 0 $.

{\rm (2)}
The linear span of the union of
the tangent bundles
$TD_i\subset TX$ for
$r_i=1$.

The support of
the characteristic cycle
${\rm Char}({\cal F})$
is the union of the following two parts:

{\rm (1$'$)}
The union of the orthogonals $L_1,\ldots, L_l
\subset T^*X$
of  $H_1,\ldots,H_l\subset TX$.

{\rm (2$'$)}
The linear span of the union of
the conormal bundles
$T^*_{D_i}X\subset T^*X$ for
$r_i=1$.
\end{pr}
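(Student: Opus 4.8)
The plan is to deduce everything from Corollary~\ref{corncF}, treating the statement as a bookkeeping reformulation of the curve-testing criterion proved there, together with the explicit shape of the characteristic cycle recorded in Definition~\ref{dfccy}. For part~1 I would take $L=DT({\cal F})$. The inequality (\ref{eqcvm}) for every curve is then exactly Corollary~\ref{corncF}.1, and the coefficients are integral by Proposition~\ref{prHA}.2. To produce, for each component $D_i$, a smooth curve meeting $D_i$ transversally (and meeting no other component) with equality in (\ref{eqcvm}), I would work near the generic point of $D_i$ and invoke the equivalence (2)$\Leftrightarrow$(3) of Corollary~\ref{corncF}.2: equality holds precisely when the tangent direction of $C$ avoids the proper closed conic subset $\Sigma$, so a generic such transversal curve works. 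Uniqueness of $L$ is then a two-sided comparison: if $L'=\sum_i l'_iD_i$ also satisfies the stated property, then evaluating the inequality for $DT({\cal F})$ on a curve realizing equality for $L'$ at $D_i$ gives $l'_i\le l_i$, while evaluating the inequality for $L'$ on a curve realizing equality for $DT({\cal F})$ at $D_i$ gives $l_i\le l'_i$; hence $L'=DT({\cal F})$.

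For part~2 the existence of a closed $\Sigma\subset TX$ with the asserted equivalence is again Corollary~\ref{corncF}.2, whose $\Sigma$ is by construction the union of the hyperplane bundles cut out by the non-vanishing sections of ${\rm Char}({\cal F})$. The only genuine point is the word ``characterized'': the transversality condition constrains $\Sigma$ only on the dense open locus $TX\setminus\bigcup_i TD_i$ of directions meeting every component transversally, so I would pin $\Sigma$ down as the closure of its intersection with this locus and then verify, using the explicit description in part~3, that this closure is exactly the $\Sigma$ of Corollary~\ref{corncF}.2, in particular that $\Sigma$ has no extra component hidden inside $\bigcup_i TD_i$.

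Part~3 is where the content lies, and I expect it to be the main obstacle, since it requires matching the two descriptions through the duality between $TX$ and $T^*X$. From the formula (\ref{eqchar}) the support of ${\rm Char}({\cal F})$ splits into the conormal part $\sum_{r_i=1}[T^*_{D_I}X]$ and, for each $i$ with $r_i>1$, the cycles $L_i({\rm rk}_{\cal F})$ of (\ref{eqch2i}); the latter are supported on the finitely many line bundles $L_1,\dots,L_l\subset T^*X$ obtained as annihilators of the hyperplane bundles that the characteristic form $\widetilde G^{(R)\vee}\to T^{(R)\vee}$ cuts out over the radicial covering $F^n\colon Z^{(p^{-n})}\to Z$. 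Dualizing, these same hyperplane bundles $D_1,\dots,D_l\subset TX$ are the conic part of $\Sigma$ coming from the wild components, while the tame conormal span $\sum_{r_i=1}T^*_{D_i}X$ is dual to the linear span of $\bigcup_{r_i=1}TD_i$. The hard part will be to make this annihilator correspondence precise and compatible with the quotient by ${\mathbf G}_m^h$ used in (\ref{eqch2i}) and (\ref{eqpcf}): I would check that a transversal direction $v=f_*T_xC$ lies in a hyperplane bundle $D_j$ exactly when the corresponding character has ${\rm rk}_{\cal F}\neq 0$ and induces the trivial character upon restriction to the curve, which by the proof of Corollary~\ref{corncF}.2 is precisely the failure of equality in (\ref{eqcv}); this simultaneously identifies $\Sigma$ and the support of ${\rm Char}({\cal F})$ and closes the characterization of part~2.
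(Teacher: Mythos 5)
Your overall strategy --- deducing everything from Corollary \ref{corncF} together with the explicit formula (\ref{eqchar}) and Proposition \ref{prHA} --- is exactly the paper's route: the paper offers no written proof beyond the sentence that Corollary \ref{corncF} ``immediately implies'' the proposition. Your part 1 (taking $L=DT({\cal F})$, getting (\ref{eqcvm}) from Corollary \ref{corncF}.1, producing equality-realizing curves from the equivalence (2)$\Leftrightarrow$(3) of Corollary \ref{corncF}.2 applied at a general point of $D_i$, and the two-sided comparison for uniqueness) and your identification in part 3 of the wild part of ${\rm Supp}\,{\rm Char}({\cal F})$ with the annihilator lines of the hyperplane bundles cut out by the characteristic form are correct.

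There is, however, one step you commit to that would fail. To make the word ``characterized'' in part 2 precise, you propose to pin $\Sigma$ down as the closure of its intersection with the locus of directions transversal to every component, and then to ``verify \ldots\ that $\Sigma$ has no extra component hidden inside $\bigcup_i TD_i$.'' That verification is false in general: in Example \ref{egAS}.1 (the covering $t^p-t=1/x^n$, $D=\{x=0\}$, $R=(n+1)D$, which satisfies condition (s)), the characteristic form is a multiple of $dx/x^{n+1}$, so the unique hyperplane bundle occurring in $\Sigma$ is $\{dx=0\}=TD\subset TX|_D$, which lies entirely inside the non-transversal locus; the closure of $\Sigma\cap\bigl(TX\setminus\bigcup_iTD_i\bigr)$ is therefore empty, not $TD$. (Consistently, for this sheaf equality in (\ref{eqcv}) holds for \emph{every} transversal curve, so the empty set would satisfy the stated property just as well.) The lesson is that the transversal-curve test does not determine $\Sigma$ as a subset of $TX$: part 2 is an existence statement, and part 3 names a specific closed set doing the job, namely the $\Sigma$ already defined before Corollary \ref{corncF}.2 as the union of the zero-locus hyperplane bundles of the nonvanishing sections occurring in ${\rm Char}({\cal F})$ (which, fiberwise, unwinds to the two parts listed in part 3, the tame conormal part contributing the span of the $T_xD_i$). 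You should simply exhibit this $\Sigma$ and invoke Corollary \ref{corncF}.2 for the property, deleting the closure construction and the no-hidden-component claim; with that step removed, the rest of your argument goes through.
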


The assertion 2.\ implies
that for most smooth curve $C\subset X$
transversal to $D$ satisfies
the equality in (\ref{eqcvm}) in assertion 1.
The linear span in 3 (2$'$) arises from the
definition of the tame part of
the characteristic cycle in (\ref{eqchar}).

We introduce another condition
for a smooth morphism $f\colon X\to Y$
of smooth schemes
to be non-characteristic 
with respect to
a locally constant sheaf
on the complement
of a divisor on $X$.
In the rest of this section,
we further assume that $\Lambda$ is
a noetherian ring annihilated
by an integer invertible in $k$,
to consider nearby cycles.

\begin{df}\label{dfncf}
Let $X$ be a smooth scheme over $k$
and let ${\cal F}$ be
a locally constant sheaf of 
free $\Lambda$-modules of finite rank
on the complement $U=X\sm D$
of a divisor $D$ with simple normal crossings.
Assume that the ramification of
${\cal F}$ along $D$ is non-degenerate.
Let $f\colon X\to Y$ be a {\rm smooth} morphism of smooth schemes over $k$.

We say that $f$ is {\rm non-characteristic}
with respect to the ramification of
${\cal F}$ along $D$ if
the inverse image of
${\rm Char}({\cal F})$
by $T^*Y\times_YX\to T^*X$
is contained in
the $0$-section.
\end{df}

The proposition below partially answers
a question raised by Deligne \cite{bp}.
We recall the definition of local acyclicity
\cite[D\'efinition 2.12]{fini}.
Let $f\colon X\to S$ be a morphism
of schemes and ${\cal K}$ be
a complex of sheaves on the \'etale site of $X$.
Let $\bar x\to X$ be a geometric point of $X$
and let $\bar s\to S$
denote its image.
Then, we say that $f\colon X\to S$ 
is locally acyclic at $\bar x$ relatively to
${\cal K}$ if, for every 
geometric point $\bar t$
of the strict localization $S_{\bar s}$,
the canonical map
${\cal K}_{\bar x}\to R\Gamma(
X_{\bar x}\times_{S_{\bar s}}\bar t,
{\cal K})$
is a quasi-isomorphism.
We say that $f\colon X\to S$ 
is locally acyclic relatively to
${\cal K}$ if it is locally acyclic at every 
geometric point $\bar x\to X$.
It is universally locally acyclic relatively to
${\cal K}$ if every base change
is locally acyclic.

The following well-known fact is 
a consequence of the local acyclicity
of smooth morphism.

\begin{lm}\label{lmacy}
Let $f\colon X\to S$ be a smooth morphism
of schemes,
$D$ be a divisor of $X$
with simple normal crossings
relatively to $f\colon X\to S$
and $j\colon U\to X$ be the
open immersion of the complement
$U=X\sm D$.
Let ${\cal F}$ be a locally constant
constructible sheaf of $\Lambda$-modules
tamely ramified along $D$.
Then, $f\colon X\to S$ is universally locally acyclic
relatively to $j_!{\cal F}$
and to $Rj_*{\cal F}$.
\end{lm}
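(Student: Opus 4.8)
\emph{Plan of proof.} The statement is well known, and the point is to reduce it to the local acyclicity of smooth morphisms relative to a locally constant sheaf \cite{fini}. Since local acyclicity is a condition on the strict henselizations at geometric points of $X$ and is insensitive to \'etale localization, and since the formation of $j_!{\cal F}$ and $Rj_*{\cal F}$ commutes with \'etale pull-back, I would first work \'etale locally on $X$. After such a localization I may assume there is an \'etale $S$-morphism $X\to {\mathbf A}^n_S$ under which $D$ is the pull-back of a union of coordinate hyperplanes, so that $U=X\sm D$ is, \'etale locally, a product over $S$ of copies of ${\mathbf G}_m$, of affine directions, and of the base. Because ${\cal F}$ is tamely ramified along $D$ and $\Lambda$ is annihilated by an integer invertible in $k$, Abhyankar's lemma furnishes a Kummer covering $\pi\colon X'\to X$, given \'etale locally by raising the coordinates cutting out the components of $D$ to an $m$-th power with $m$ invertible in $\Lambda$, such that $X'$ is again smooth over $S$, $D'=\pi^{-1}(D)$ is a relative divisor with simple normal crossings, and $\pi_U^*{\cal F}$ on $U'=\pi^{-1}(U)$ extends to a locally constant constructible sheaf ${\cal G}$ on $X'$. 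As $\deg\pi$ is invertible in $\Lambda$, the trace splits the unit ${\cal F}\to \pi_{U*}\pi_U^*{\cal F}$, so ${\cal F}$ is a direct summand of $\pi_{U*}\pi_U^*{\cal F}$.

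Write $j'\colon U'\to X'$ and $f'=f\circ\pi\colon X'\to S$, and note that $f'$ is \emph{smooth} even though $\pi$ is not, since $X'$ is itself smooth over $S$. From the cartesian square relating $j,j',\pi,\pi_U$ and the finiteness of $\pi$ one gets $R\pi_*\,j'_!(\pi_U^*{\cal F})=j_!\,\pi_{U*}\pi_U^*{\cal F}$ and $R\pi_*\,Rj'_*(\pi_U^*{\cal F})=Rj_*\,\pi_{U*}\pi_U^*{\cal F}$, whence $j_!{\cal F}$ and $Rj_*{\cal F}$ are direct summands of $R\pi_*\,j'_!(\pi_U^*{\cal F})$ and of $R\pi_*\,Rj'_*(\pi_U^*{\cal F})$ respectively. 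Local acyclicity passes to direct summands, and by proper base change it is preserved under push-forward along the proper morphism $\pi$: if $f'$ is universally locally acyclic relative to a complex $K'$, then $f$ is universally locally acyclic relative to $R\pi_*K'$. It therefore suffices to treat $f'$ relative to $j'_!(\pi_U^*{\cal F})$ and to $Rj'_*(\pi_U^*{\cal F})$. Since $\pi_U^*{\cal F}=j'^*{\cal G}$ with ${\cal G}$ locally constant, the projection formula gives $j'_!(\pi_U^*{\cal F})={\cal G}\otimes_\Lambda j'_!\Lambda$ and $Rj'_*(\pi_U^*{\cal F})={\cal G}\otimes_\Lambda Rj'_*\Lambda$; as ${\cal G}$ is \'etale locally constant, tensoring with it preserves universal local acyclicity, and I am reduced to the constant coefficient case: $f'$ universally locally acyclic relative to $j'_!\Lambda$ and to $Rj'_*\Lambda$. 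All these reductions are stable under arbitrary base change $S_1\to S$, so the universal form is retained.

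For the base case I would dévisse along the stratification of $D'$ by the loci $D'_I=\bigcap_{i\in I}D'_i$, each smooth over $S$. For $j'_!\Lambda$, the triangle $j'_!\Lambda\to \Lambda_{X'}\to i'_*\Lambda_{D'}\to$ reduces the claim, via \cite{fini} applied to the smooth $f'$, to the universal local acyclicity of $D'\to S$ relative to $\Lambda_{D'}$; resolving $\Lambda_{D'}$ by the \v{C}ech complex $\bigoplus_i\Lambda_{D'_i}\leftarrow\bigoplus_{i<j}\Lambda_{D'_{ij}}\leftarrow\cdots$, whose terms are constant sheaves on the strata $D'_I$ smooth over $S$, this follows from \cite{fini} stratum by stratum. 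For $Rj'_*\Lambda$ one uses that \'etale locally $U'\subset X'$ is a product over $S$ of copies of ${\mathbf G}_m\subset {\mathbf A}^1$, so that by K\"unneth $Rj'_*\Lambda$ carries a finite filtration with graded pieces $i_{I*}\Lambda_{D'_I}(-|I|)[-|I|]$; since each $D'_I$ is smooth over $S$, \cite{fini} again yields universal local acyclicity of each piece, hence of $Rj'_*\Lambda$.

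The only genuinely external input is the local acyclicity of smooth morphisms relative to a locally constant sheaf; everything else is formal dévissage. The main step to watch is the passage to constant coefficients: it relies on choosing $\pi$ so that $X'$ stays smooth over $S$ while $\deg\pi$ is invertible in $\Lambda$ (guaranteed by tameness together with the hypothesis on $\Lambda$), and on the compatibility of universal local acyclicity with push-forward along the proper morphism $\pi$, which is precisely where proper base change enters.
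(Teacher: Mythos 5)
Your overall skeleton (\'etale localization, Kummer covering $\pi\colon X'\to X$, reduction to constant coefficients, d\'evissage along the strata $D'_I$ using the \v{C}ech-type sequence for $j'_!\Lambda$ and purity for $Rj'_*\Lambda$) matches the paper's proof, and that final d\'evissage is fine. But there is a genuine gap at your key reduction step: you assert that $\deg\pi$ is invertible in $\Lambda$, ``guaranteed by tameness together with the hypothesis on $\Lambda$,'' and use this to split ${\cal F}$ off $\pi_{U*}\pi_U^*{\cal F}$ by the trace. Tameness only guarantees that the Kummer degree $m$ is prime to $p$, i.e.\ invertible in $k$; the standing hypothesis in this section is that $\Lambda$ is a noetherian ring annihilated by an integer invertible in $k$, say by a power of a prime $\ell\neq p$. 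These two conditions do not make $m$ invertible in $\Lambda$. Concretely, take ${\cal F}$ of rank $1$ with tame monodromy of exact order $\ell$ (a Kummer sheaf attached to an $\ell$-th root of a coordinate cutting out $D$) and $\Lambda={\mathbf Z}/\ell^n[\zeta_p]$: any covering $\pi$ for which $\pi_U^*{\cal F}$ extends to a locally constant sheaf on $X'$ must have degree divisible by $\ell$, so $\deg\pi$ is zero in $\Lambda$, the composite ${\cal F}\to\pi_{U*}\pi_U^*{\cal F}\to{\cal F}$ (unit followed by trace) is zero, and the splitting you need does not exist. Everything downstream of that splitting collapses precisely in the torsion-coefficient cases for which the lemma is used.

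The paper's proof avoids this issue: instead of splitting, it observes that the unit ${\cal F}\to\pi_*\pi^*{\cal F}$ is \emph{injective} and that its cokernel is again locally constant and tamely ramified along $D$ (indeed trivialized by the same covering), so iterating produces a resolution of ${\cal F}$ by direct images under the finite covering $\pi$ of constant sheaves; universal local acyclicity is then checked term by term — push-forward along the finite morphism $\pi$ preserves it, as you correctly note — and propagated through the resolution. This requires no invertibility of $\deg\pi$ in $\Lambda$, only invertibility of $m$ on $S$, which tameness does provide. If you replace your trace argument by this resolution argument, the rest of your proof (projection formula, reduction to constant $\Lambda$, and the d\'evissage over the smooth strata $D'_I$ via the local acyclicity of smooth morphisms) goes through essentially as written.
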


For the sake of convenience of the reader,
we sketch a proof,
similar to that of \cite[1.3.3 (i)]{app}.

\begin{proof}
Since the assertion
is local on $X$, we may assume that
there is a smooth map
$X\to {\mathbf A}^n_S$
defined by functions $t_1,\ldots,t_n$ on $X$
such that
$D$ is defined by $t_1\cdots t_n$.
Further, we may assume that
the pull-back of ${\cal F}$
to the inverse image $U'\subset X'$
of $U$ by the finite flat covering $\pi\colon
X'\to X$
defined by
$s_1^m=t_1,\ldots,s_n^m=t_n$
for a integer $m\geqq  1$
invertible on $S$ 
is constant.

The canonical map
${\cal F}\to \pi_*\pi^*{\cal F}$
is injective
and the cokernel is
locally constant and
tamely ramified along $D$.
Further the morphism
$X'\to {\mathbf A}^n_S$
defined by $s_1,\ldots,s_n$ is smooth
and hence $X'\to S$ is smooth
and $U'$ is the complement of a divisor $D'$
of $X'$ with simple normal crossings
relatively to $X'\to S$.
Thus ${\cal F}$ admits a
resolution by the direct images
of constant sheaves as above
and the assertion is reduced to the case
where ${\cal F}=\Lambda$
is constant.

Let $D_1,\ldots,D_n$
be the irreducible components
of $D$ and, 
for a subset $I\subset \{1,\ldots,n\}$,
set $D_I=\bigcap_{i\in I}D_i$.
Then, we have
an exact sequence
$0\to j_!\Lambda
\to \Lambda_X
\to 
\bigoplus_{i=1}^n\Lambda_{D_i}
\to
\cdots
\to \bigoplus_{|I|=q}\Lambda_{D_I}
\to
\cdots.$
By the relative purity,
we also have an isomorphism
$\bigoplus_{|I|=q}\Lambda_{D_I}(-q)
\to R^qj_*\Lambda$.
By devissage, it suffices to
show that $X\to S$ is locally acyclic
relatively to $\Lambda_{D_I}$.
Since the intersections
$D_I$ are smooth over $S$,
it follows from 
the local acyclicity of smooth morphism \cite{Artin}.
\end{proof}

\begin{pr}\label{pracy}
Let $X$ be a smooth scheme over $k$
and let ${\cal F}$ be
a locally constant sheaf of 
free $\Lambda$-modules of finite rank
on the complement $U=X\sm D$
of a divisor $D$ with simple normal crossings.
Let $j\colon U=X\sm D
\to X$ denote the open immersion
and let $f\colon X\to Y$ be a smooth morphism
of smooth schemes over $k$.
Assume that $f\colon X\to Y$ is 
{\em non-characteristic}
with respect to the ramification of
${\cal F}$ along $D$ and
that either of the following
conditions is satisfied.

{\rm (s)}
${\cal F}$ is totally wildly ramified
along $D$ and
the ramification of ${\cal F}$
along $D$ is non-degenerate.
Further 
the restriction $D\to Y$ of $f$ is {\rm flat}.

{\rm (t)}
${\cal F}$ is tamely ramified
along $D$.

\noindent
Then
$f\colon X\to Y$ is universally locally acyclic
relatively to $j_!{\cal F}$.
In the case {\rm (t)}, 
$f\colon X\to Y$ is universally locally acyclic
relatively to $Rj_*{\cal F}$.
\end{pr}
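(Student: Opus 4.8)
The plan is to dispose of the two hypotheses separately: case (t) reduces directly to Lemma \ref{lmacy}, while case (s) is deduced from the theorem of Deligne--Laumon \cite{DL} after a reduction to relative curves.

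\emph{Case (t).} Here ${\cal F}$ is tamely ramified, so by Definition \ref{dfccy} the characteristic cycle is $(-1)^d\,{\rm rank}\,{\cal F}\cdot\sum_I[T^*_{D_I}X]$. Unwinding Definition \ref{dfncf}, the map $T^*Y\times_YX\to T^*X$ has image the conormal bundle of the fibres of $f$, so the non-characteristic hypothesis says that each $[T^*_{D_I}X]$ meets this conormal bundle only along the $0$-section; this is exactly the assertion that every intersection $D_I$ is smooth over $Y$ and meets the fibres of $f$ transversally, i.e.\ that $D$ is a divisor with simple normal crossings relative to $f\colon X\to Y$. Lemma \ref{lmacy} then applies and gives the universal local acyclicity relative to both $j_!{\cal F}$ and $Rj_*{\cal F}$.

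\emph{Case (s).} First I would reduce to a neighbourhood of a point of $D$, since over $U$ the morphism $f$ is smooth and local acyclicity relative to the locally constant sheaf ${\cal F}$ is the classical local acyclicity of smooth morphisms. As universal local acyclicity is \'etale local on $X$, by the \'etale descent built into Definition \ref{dfccy} I may assume ${\cal F}$ is isoclinic of slope $R$, trivialized by a Galois covering $V\to U$ whose ramification along $D$ is bounded by $R+$ and non-degenerate at multiplicity $R$. The theorem of Deligne--Laumon \cite{DL} identifies, for a relative curve, local acyclicity with the local constancy of the total dimension $\dim{\rm tot}$ of the fibres. To reach this situation I would use the curve characterization of Proposition \ref{prcv} and Corollary \ref{corncF}: the non-characteristic hypothesis forces the fibres of $f$ to be transverse to $D$ and non-characteristic, so that after base-changing $Y$ to a trait and choosing local coordinates in which $D$ is cut by a relative coordinate, ${\cal F}$ restricts to a relative curve transverse to $D$ computing the total dimension with equality, $\dim{\rm tot}_x{\cal F}=(DT({\cal F}),C)_x$. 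Because $D\to Y$ is flat, these intersection numbers are locally constant as the fibre moves, whence $\dim{\rm tot}$ is locally constant along $D$; Deligne--Laumon then yields local acyclicity relative to $j_!{\cal F}$, and the invariance of the total dimension under base change makes the conclusion universal.

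The main obstacle is the reduction of the higher relative dimension of $f$ to the relative-curve setting governed by \cite{DL}: one must check that universal local acyclicity relative to $j_!{\cal F}$ can be tested on the generic relative curves transverse to $D$ produced by Corollary \ref{corncF}, and that the non-characteristic condition is precisely what prevents the total dimension from jumping along $D$, so that the Deligne--Laumon constancy hypothesis holds \emph{uniformly}. Here the flatness of $D\to Y$ imposed in (s), together with the functoriality of the characteristic form established in Proposition \ref{prfun}, is what guarantees that the restrictions of ${\cal F}$ to the moving fibres remain non-degenerate of the expected total dimension; the restriction to $j_!{\cal F}$ rather than $Rj_*{\cal F}$ in case (s) reflects that in the wild case the total dimension controls only the extension by zero.
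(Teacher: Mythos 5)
Your case (t) is correct and coincides with the paper's argument: unwinding Definition \ref{dfncf} for a tame sheaf shows that the non-characteristic hypothesis means exactly that $D$ has simple normal crossings relative to $f$, and Lemma \ref{lmacy} then gives universal local acyclicity for both $j_!{\cal F}$ and $Rj_*{\cal F}$. Your treatment of case (s) in relative dimension $1$ is also essentially the paper's: flatness of $D\to Y$ gives quasi-finiteness, each fiber $X_y$ is a smooth curve whose immersion into $X$ is non-characteristic, Corollary \ref{corncF}.2 gives the equality $\dim{\rm tot}_x({\cal F}|_{X_y})=(DT({\cal F}),X_y)_x$, so the function $\varphi$ of \cite{DL} is locally constant and \cite[Th\'eor\`eme 2.1.1]{DL} applies.

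However, there is a genuine gap at the step you yourself flag as ``the main obstacle'': the passage from relative dimension $1$ to arbitrary relative dimension. Neither of your proposed fixes works. Base-changing $Y$ to a trait does not change the relative dimension of $f$, so the fibers are still not curves; and ``testing universal local acyclicity on the relative curves transverse to $D$'' is not a statement established anywhere in the paper or its references --- Corollary \ref{corncF} and Proposition \ref{prcv} characterize non-characteristicity of morphisms from curves, not local acyclicity, and local acyclicity cannot simply be checked on curves. The paper's actual device is an interpolation: at a closed point $x\in D$ above $y\in Y$, the non-characteristic hypothesis says that the finitely many characteristic lines $L_j\subset T^*_xX$ meet the image of $T^*_yY$ only at $0$; one then chooses functions $t_1,\ldots,t_n$ whose differentials form a basis of $T^*_xX/T^*_yY$ and such that the $L_j$ still meet $T^*_yY\oplus\langle dt_1,\ldots,dt_{n-1}\rangle$ only at $0$. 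The resulting morphism $g\colon X\to P={\mathbf A}^{n-1}\times_kY$ is smooth of relative dimension $1$ and non-characteristic, and after modifying the $t_i$ without changing their differentials one arranges that $D\cap g^{-1}(g(x))=\{x\}$, so the relative dimension $1$ case applies to $g$ on a neighborhood of $x$. Finally, since the projection $P\to Y$ is smooth, the universal local acyclicity of $f$ relative to $j_!{\cal F}$ follows from the composition result \cite[Corollaire 2.7]{app} combined with the local acyclicity of smooth morphisms \cite{Artin}. This construction of an auxiliary relative curve fibration, followed by composition through a smooth projection, is the missing idea in your proposal.
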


By the proper base change theorem,
if we further assume that $f$ is proper,
Proposition implies that
$R^qg_!{\cal F}$
and
$R^qg_*{\cal F}$
are locally constant for $g=f\circ j$
under the assumptions.

\begin{proof}
Since $f\colon X\to Y$ is assumed smooth,
the locally constant sheaf ${\cal F}$ on 
$U$ is universally locally acyclic by the local acyclicity
of smooth morphism \cite{Artin}.
Thus, it suffices to prove the
assertion for each point on $D$.

Assume (t) is satisfied.
Then, by the definition 
of the characteristic cycle and
of non-characteristic morphism,
the divisor $D$ has simple normal crossings
relatively to $f\colon X\to Y$.
Further, the sheaf ${\cal F}$ is
tamely ramified along $D$.
Hence, it follows from
Lemma \ref{lmacy} in this case.

Assume (s) is satisfied.
First we prove the case where
$f\colon X\to Y$ is of relative dimension 1.
Since $D$ is flat over $Y$, 
it is quasi-finite over $Y$.
For each closed point
$y$ of $Y$,
the fiber $X_y$ is a smooth curve
and the immersion $X_y\to X$ is
non-characteristic with respect to ${\cal F}$.
Let $x$ be a closed point $X_y\cap D$.
By Corollary \ref{corncF}.2,
the total dimension 
$\dim {\rm tot}_x({\cal F}|_{X_y})$
at $x$ of the restriction on the fiber
is equal to the intersection number
$(DT({\cal F}),X_y)_x$.
Thus the function $\varphi$ 
defined as the sum of total dimensions
in \cite{DL} is locally constant
after shrinking $X$ and $Y$ if necessary.
Since the condition that $\varphi$
is locally constant
implies the local acyclicity by
\cite[Th\'eor\`eme 2.1.1]{DL},
the assertion follows.

We prove the general case (s) by 
reducing it to the case of relative dimension 1.
It suffices to show that, for every closed
point $x$ of $D$,
there exists an open neighborhood of 
$x$ such that the restriction of
$f\colon X\to Y$ is universally locally
acyclic relatively to $j_!{\cal F}$.
Let $x\in D$ be a closed point of $D$
and $y\in Y$ be its image.
Since the assertion is \'etale local,
we may assume that $k$ is algebraically closed.
The fiber at $x$ of the support of
the characteristic cycle ${\rm Char}\ {\cal F}$
is a union of lines $L_j$.

By the assumption that
$f\colon X\to Y$ is non-characteristic,
their intersections with the image
of the injection
$T_y^*Y\to T_x^*X$ reduce to $0$.
Therefore, as $k$ is algebraically closed,
there exist functions
$t_1,\ldots,t_n$ defined on a neighborhood of $X$
such that
$dt_1,\ldots,dt_n$ form 
a basis of the cokernel
$T_x^*X/T_y^*Y$ and that 
the intersections of $L_j$
with 
$T_y^*Y\oplus 
\langle dt_1,\ldots,dt_{n-1}\rangle\subset T_x^*X$ reduce to $0$.
After shrinking $X$,
we define a morphism
$g\colon X\to P={\mathbf A}^{n-1}\times_kY$
by $t_1,\ldots,t_{n-1}$.
Then, $g\colon X\to P$ is smooth 
of relative dimension $1$ and
non-characteristic with respect to
the ramification of ${\cal F}$ along $D$.
By modifying $t_i$ if necessary
without changing $dt_i$,
we may assume that the
intersection of $D$
with the fiber $g^{-1}(g(x))$ is
reduced to the point $x$.
Hence, after shrinking $X$
if necessary, the morphism
$g\colon X\to P$ is
universally locally acyclic relatively
to $j_!{\cal F}$.
Since $P\to Y$ is smooth,
the assertion follows from
\cite[Corollaire 2.7]{app} and
the local acyclicity of smooth morphism.
\end{proof}

The following statement is
conjectured by Deligne
in \cite{bp}, in a more general setting.

\begin{cn}\label{cnvan}
Let $X$ be a smooth scheme
%of dimension $d$ 
over $k$
and let ${\cal F}$ be
a locally constant sheaf of 
free $\Lambda$-modules of finite rank
on the complement $U=X\sm D$
of a divisor $D$ with simple normal crossings.
Assume that the ramification of
${\cal F}$ along $D$ is non-degenerate.
Let $j\colon U=X\sm D
\to X$ denote the open immersion.

Let $f\colon 
X\to {\mathbf A}^1$ be a flat morphism
satisfying either of the 
conditions {\rm (s)} and {\rm (t)}
in Proposition {\rm \ref{pracy}}
and let $f\in \Gamma(X,{\cal O}_X)$
also denote the function
defining the morphism $f\colon 
X\to {\mathbf A}^1$.
Assume that
the intersection of
${\rm Char}({\cal F})$
and the section 
$df\colon X\to T^*X$
is supported in
the fiber $T^*_xX$ of 
a closed point $x$ of $X$.
Then, 
the total dimension of
the space of vanishing cycles
is given by the intersection number:
\begin{equation}
-{\rm dim\ tot}_x
\phi_x(f,j_!{\cal F})
=
({\rm Char}({\cal F}),
df(X))_{T^*X}.
\end{equation}
\end{cn}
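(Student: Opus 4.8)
The plan is to prove the formula by reducing to the relative dimension one case, where the theory of Deligne--Laumon \cite{DL} applies, and then matching the resulting Swan-type invariant with the conic intersection number through the characteristic form. First I would reduce to a convenient local situation. Both sides of the asserted equality are additive: the left-hand side is additive in $j_!{\cal F}$ along the distinguished triangles of vanishing cycles, while the right-hand side is linear in $\mathrm{Char}({\cal F})$. Combined with \'etale descent and Lemma \ref{lmic}, this lets me assume that ${\cal F}$ is isoclinic of slope $R$ and trivialized by a Galois covering $V\to U$ whose ramification along $D$ is bounded by $R+$ and non-degenerate at multiplicity $R$. I would also separate the point $x$ according to whether it lies on $D$. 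If $x\in U$, then near $x$ the sheaf ${\cal F}$ is lisse, $\mathrm{Char}({\cal F})$ is carried by the zero-section with multiplicity $(-1)^d\,\mathrm{rank}\,{\cal F}$, and the statement degenerates to the classical Milnor formula for an isolated critical point of $f$ multiplied by $\mathrm{rank}\,{\cal F}$. So the essential case is $x\in D$.

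For the main case I would cut the relative dimension down using the geometric construction already exploited in the proof of Proposition \ref{pracy}. After passing to an \'etale neighborhood and assuming $k$ algebraically closed, the fiber of the support of $\mathrm{Char}({\cal F})$ over $x$ is a union of lines $L_j\subset T^*_xX$ (the annihilators of the line bundles of Proposition \ref{prcv}.3 together with the conormal directions of the tame components), and the hypothesis says $df(x)$ lies on $\bigcup_j L_j$ while $df$ avoids $\mathrm{Char}({\cal F})$ everywhere else. Choosing functions $t_1,\dots,t_{n-1}$ whose differentials, together with $T^*_yY$, avoid the $L_j$, I would factor $f$ through a smooth morphism $g\colon X\to P$ of relative dimension one that is non-characteristic with respect to ${\cal F}$. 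By Proposition \ref{pracy} the complex $j_!{\cal F}$ is then universally locally acyclic relatively to $g$, so the formation of $\phi_x(f,j_!{\cal F})$ is controlled by the restriction of ${\cal F}$ to the relative curve $g^{-1}(g(x))$, and the whole computation is thrown onto the one-dimensional fibers.

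On the relative curve I would invoke Deligne--Laumon \cite{DL}: the total dimension of the vanishing cycles at $x$ equals the local contribution computed from the total dimension divisor of the restriction of ${\cal F}$ to the fiber, which by Corollary \ref{corncF}.2 is the intersection number $(DT({\cal F}),C)_x$ along a transversal curve $C$. The remaining and decisive task is to identify this one-dimensional invariant with the two-sided intersection number $(\mathrm{Char}({\cal F}),df(X))_{T^*X}$ inside the cotangent bundle. Here the geometry of the characteristic form $\mathrm{Char}_R(V/U)\colon\widetilde G^{(R)\vee}\to T^{(R)\vee}$ enters: the intersection of the graph of $df$ with each wild component $L_i({\rm rk}_{\cal F})$ of $\mathrm{Char}({\cal F})$ is dictated by the vanishing of the corresponding Artin--Schreier linear form on $df(x)$, which is exactly the refined Swan conductor datum governing the Deligne--Laumon computation on the fiber. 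Matching the two would yield the equality, including the global sign $(-1)^d$ coming from the parity of $\dim X$ as already visible for curves in Example \ref{exArt}.1.

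The hard part will be precisely this last matching step, and it is what keeps the statement a conjecture rather than a theorem. Even granting the reduction to relative dimension one, one must show that the intersection multiplicity of $df$ with the wild part of $\mathrm{Char}({\cal F})$---a cycle defined only on a neighborhood of the generic point of each $D_i$ and a priori with coefficients merely in ${\mathbf Z}[\tfrac1p]$---computes the honest nonnegative integer $\mathrm{dimtot}_x\phi_x$. This simultaneously demands the integrality predicted by Conjecture \ref{cnHA} and a compatibility between the Deligne--Laumon local constancy of the Swan conductor in families and the conic intersection theory on $T^*X$. In other words, controlling the higher-dimensional vanishing cycles themselves, and not merely their restrictions to the transversal curves furnished by Corollary \ref{corncF}, is the genuine obstacle that the present, purely ramification-theoretic, construction of $\mathrm{Char}({\cal F})$ does not yet overcome.
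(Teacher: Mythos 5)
The statement you are addressing is Conjecture \ref{cnvan}, and the paper contains no proof of it: it is attributed to Deligne's unpublished notes \cite{bp}, and the text only records three partial facts --- that Proposition \ref{pracy} settles the case where the intersection of ${\rm Char}({\cal F})$ with $df(X)$ is empty (local acyclicity forces $\phi_x(f,j_!{\cal F})=0$, and both sides vanish), that the case $x\in U$ is the Milnor formula of \cite{SGA7}, and that the surface case is treated under hypotheses in \cite{La}. So there is nothing in the paper to compare your argument against step by step; the honest question is whether your outline closes the conjecture, and it does not --- as you yourself say in your final paragraph.

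Concretely, the gap is exactly where you place it, but it is worth saying why your reduction cannot reach it. The fibration trick in the proof of Proposition \ref{pracy} produces a pencil $g\colon X\to P$ that is \emph{non-characteristic} for ${\cal F}$, and the only output of that proposition is universal local acyclicity, i.e.\ the vanishing of vanishing cycles. In the situation of the conjecture the section $df$ meets ${\rm Char}({\cal F})$ over $x$, so $f$ itself is characteristic there, and local acyclicity relative to an auxiliary non-characteristic $g$ does not express $\phi_x(f,j_!{\cal F})$ in terms of the fibers of $g$: one would need a compatibility of vanishing cycles with the fibration (a non-trivial statement about the sheaf of nearby cycles of $f$ along the curves $g^{-1}(t)$), and then the Deligne--Laumon theorem \cite{DL} only converts the jump of ${\rm dim\,tot}$ along the family of curves into a vanishing-cycle dimension; it does not identify that jump with the conic intersection number $-({\rm Char}({\cal F}),df(X))_{T^*X}$. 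That identification would require controlling the characteristic form at \emph{non-transversal} intersections of the curves with $D$, whereas everything established in the paper (Corollary \ref{corncF}, Proposition \ref{prcv}) concerns transversal, non-characteristic curves, for which both sides of the conjecture degenerate. This is also why the integrality of the coefficients of ${\rm Char}({\cal F})$ (Conjecture \ref{cnHA}) is itself only conjectural: your proposal implicitly assumes the very Milnor-type formula it is trying to prove. Your reduction steps (additivity, isoclinic reduction, the case $x\in U$, the empty-intersection case) are sound and match what the paper knows, but the core of the conjecture remains untouched, so this should be presented as a strategy with an identified obstruction, not as a proof.
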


Conjecture \ref{cnvan} implies
the integrality of the
coefficients of ${\rm Char}({\cal F})$ 
(Conjecture \ref{cnHA}).
Proposition \ref{pracy} implies that
Conjecture \ref{cnvan} holds
if the intersection of
${\rm Char}(j_!{\cal F})$
and the section 
$df\colon X\to T^*X$
is empty.
If $x$ is in $U$,
it is a consequence of the Milnor formula
proved in \cite{SGA7}.
In the case where $X$ is a surface,
Conjecture \ref{cnvan} is proved
under a certain assumption
in \cite{La}.

\subsection{Characteristic cycle and characteristic class}

We study the relation of the characteristic cycle
and the characteristic cycle defined in \cite{cc}.
We briefly recall the definition of the characteristic class 
\cite[Definition 2.1.1]{cc} specialized to the following situation.
Let $k$ be a field
and $X$ be a smooth separated scheme
of finite type of dimension $d$ over $k$.
Let $\ell$ be a prime number
invertible in $k$.
We assume that a local ring $\Lambda$ 
is finite over ${\mathbf Z}_\ell[\zeta_p]$
or is a finite extension of ${\mathbf Q}_\ell(\zeta_p)$,
to consider cohomology.
Let $j\colon U\to X$ be an
open immersion
and ${\cal F}$ be 
a smooth sheaf of flat $\Lambda$-modules
of finite rank on $U$.

Let $j_1\colon U\times_kX\to X\times_kX$
and
$j_2\colon U\times_kU\to U\times_kX$
denote the open immersions.
Let ${\cal H}_0$
denote the smooth sheaf
${\cal H}om({\rm pr}_2^*{\cal F},
{\rm pr}_1^*{\cal F})$ on $U\times_kU$
and set
${\cal H}=j_{1!}Rj_{2*}{\cal H}_0$
on $X\times_kX$.
We consider the
commutative diagram
$$\begin{CD}
X@<j<<U\\
@V{\delta}VV @VV{\delta_U}V\\
X\times_kX
@<{j\times j}<<
U\times_kU
\end{CD}$$
where the vertical arrows
are the diagonal immersions and 
regard $X$ and $U$
as closed subschemes of
$X\times_kX$ and $U\times_kU$
respectively.
By the relative purity
and the canonical isomorphism
$\delta_U^*{\cal H}_0\to {\cal E}nd({\cal F})$,
we identify
$H^{2d}_U(U\times_kU,
{\cal H}_0(d))
=
H^0(U,
{\cal E}nd({\cal F}))$.
Then, it is shown in \cite[(1.22)]{cc} that
the restriction map
\begin{equation}
\begin{CD}
H^{2d}_X(X\times_kX,
{\cal H}(d))
@>>>
H^{2d}_U(U\times_kU,
{\cal H}_0(d))
=
H^0(U,
{\cal E}nd({\cal F}))
=
{\rm End}_U({\cal F})
\end{CD}
\label{eqH0}
\end{equation}
is an isomorphism.
The pull-back to the diagonal 
and the trace map define
\begin{equation}
\begin{CD}
H^{2d}_X(X\times_kX,
{\cal H}(d))
@>>>
H^{2d}(X,j_!{\cal E}nd({\cal F})(d))
@>>>
H^{2d}(X,\Lambda(d))
\end{CD}
\label{eqH1}
\end{equation}
since the isomorphism
$\delta_U^*{\cal H}_0\to {\cal E}nd({\cal F})$
induces an isomorphism
$\delta^*{\cal H}(d)
\to j_!{\cal E}nd({\cal F})(d)$.
The characteristic class
$$C(j_!{\cal F})\in
H^{2d}(X,\Lambda(d))$$
is defined as
the image by (\ref{eqH1})
of the inverse image of 
the identity $1_{\cal F}
\in {\rm End}_U({\cal F})$
by the isomorphism
(\ref{eqH0}).
If $X$ is proper over $k$,
the Lefschetz trace formula
implies that
the Euler number
$\chi_c(U_{\bar k},{\cal F})
=\sum_q(-1)^q
\dim H^q_c(U_{\bar k},{\cal F})$
is equal to the image of
the characteristic class
$C(j_!{\cal F})$
by the trace map
${\rm Tr}\colon
H^{2d}(X,\Lambda(d))
\to \Lambda.$

Assume that $k$ is perfect.
We compute the characteristic class
assuming that the ramification
is isoclinic and non-degenerate.
Let $U=X\sm D$ be the complement
of a divisor with simple normal crossings
and $R=r_1D_1+\cdots+r_hD_h$ 
and $M=m_1D_1+\cdots+m_hD_h$ be
as in the first paragraph of Section 2.
Recall that
in the case where $R$
has non integral coefficients,
the definition of
the characteristic cycle
involves the oversimplicial
scheme $P_\bullet^{(R,M)}$
that carries a canonical ${\mathbf G}_m^h$-action.
In order to treat the cohomology of the quotient
by the ${\mathbf G}_m^h$-action,
we will use the cohomology
of classifying space which
we recall briefly.
The canonical ${\mathbf G}_m^h$-equivariant map
$\widetilde X^{(M)}=P_0^{(R,M)}
\to X$ induces a morphism
$[\widetilde X^{(M)}/{\mathbf G}_m^h]
\to X$ of algebraic stacks.

The cohomology of
the quotient stack
$[\widetilde X^{(M)}/{\mathbf G}_m^h]$
(\cite[(12.10)]{LB})
is computed as that of
the simplicial scheme
$[\widetilde X^{(M)}/{\mathbf G}_m^h]_\bullet$
\cite[(6.1.2.1)]{Hdg3}
by \cite[Proposition (12.4.5)]{LB}.
The canonical map
$[\widetilde X^{(M)}/{\mathbf G}_m^h]
\to X$ of stacks
is interpreted
as an augmentation morphism
$[\widetilde X^{(M)}/{\mathbf G}_m^h]_\bullet
\to X$ to the constant simplicial scheme.

\begin{lm}\label{lmcs}
There exists a constant 
$c\geqq  1$ independent of
$M$ or $\Lambda$
such that the kernel and the cokernel
of the pull-back
\begin{equation}
H^{2d}(X, \Lambda(d))
\to
H^{2d}([\widetilde X^{(M)}/{\mathbf G}_m^h],
\Lambda(d))
\label{eqiso}
\end{equation}
are killed by $(m_1\cdots m_h)^c$.
\end{lm}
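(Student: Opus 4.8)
The plan is to read the pull-back (\ref{eqiso}) as pull-back along the coarse-space morphism $\pi\colon [\widetilde X^{(M)}/{\mathbf G}_m^h]\to X$ recalled just before the statement, and to control its kernel and cokernel by the Leray spectral sequence of $\pi$, whose positive-degree terms turn out to be torsion annihilated by $N=m_1\cdots m_h$. Set ${\mathcal Y}=[\widetilde X^{(M)}/{\mathbf G}_m^h]$. First I would record the geometry of the ${\mathbf G}_m^h$-action on $\widetilde X^{(M)}$. By Lemma \ref{lmPDM}.1 the complement of $\widetilde D^{(M)}$ in $\widetilde X^{(M)}$ is the trivial torsor $U\times_k{\mathbf G}_m^h$, so the action is free over $U$ and $\pi$ is an isomorphism there; by Lemma \ref{lmPDq}.1 the case $M=D$ is precisely the ${\mathbf G}_m^h$-torsor $\widetilde X^{(D)}\to X$, for which (\ref{eqiso}) is an isomorphism. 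Applying the cartesian square of Lemma \ref{lmPDq}.2 with $M'=M$ and $M=D$ (so $l_i=m_i$), $\widetilde X^{(M)}$ is the base change of $\widetilde X^{(D)}$ along the $m_i$-th power map ${\mathbf G}_m^h\to{\mathbf G}_m^h$, which is finite flat of degree $N$. Consequently the stabilizers of the ${\mathbf G}_m^h$-action on $\widetilde X^{(M)}$ are trivial over $U$ and equal to $\prod_{i\in I}\mu_{m_i}$ along $\widetilde D_I^{(M)\circ}$, so $\pi$ is proper, an isomorphism over $U$, and exhibits $X$ as the coarse space of ${\mathcal Y}$ with cyclic stabilizer $\mu_{m_i}$ along each $D_i$.

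Next I would compute $R\pi_*\Lambda$. Since $\pi$ is proper and an isomorphism over $U$ with coarse space $X$, we have $\pi_*\Lambda=\Lambda$ and the sheaves $R^q\pi_*\Lambda$ for $q>0$ are supported on $D$. Using the local Kummer model furnished by Lemma \ref{lmPDq}.2, \'etale-locally around a point of $D_I^\circ$ the morphism $\pi$ is modelled on $[{\mathbf A}^I/\prod_{i\in I}\mu_{m_i}]\to{\mathbf A}^I$ given by $s_i\mapsto s_i^{m_i}$, so the stalk of $R^q\pi_*\Lambda$ at such a point is the group cohomology $H^q(\prod_{i\in I}\mu_{m_i},\Lambda)$ of the stabilizer. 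Because a finite group of order $n$ annihilates its cohomology in positive degrees (via $\mathrm{cor}\circ\mathrm{res}=n$), each such stalk is killed by $\prod_{i\in I}m_i$, hence by $N$; this holds for every coefficient ring $\Lambda$, in particular also when $\ell$ divides some $m_i$. Therefore $R^q\pi_*\Lambda$ is killed by $N$ for all $q>0$, uniformly in $\Lambda$.

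Finally I would feed this into the Leray spectral sequence $E_2^{p,q}=H^p(X,R^q\pi_*\Lambda)\Rightarrow H^{p+q}({\mathcal Y},\Lambda(d))$, whose edge homomorphism $H^\bullet(X,\Lambda(d))=E_2^{\bullet,0}\to H^\bullet({\mathcal Y},\Lambda(d))$ is exactly (\ref{eqiso}). All terms with $q>0$ are killed by $N$ by the previous step, while $X$ has cohomological dimension bounded by $2d$, so only boundedly many differentials and filtration steps intervene. Hence the kernel and cokernel of the edge map in degree $2d$ are killed by $N^{c}$ with $c$ bounded purely in terms of $d$, in particular independent of $M$ and $\Lambda$, which is the assertion of Lemma \ref{lmcs}.

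The main obstacle I anticipate is the local computation of $R^q\pi_*\Lambda$ along $D$ and its uniform annihilation by $N$: one must make precise the descent for the quotient by the non-finite group ${\mathbf G}_m^h$ (so that $H^\bullet({\mathcal Y})$ is the stack cohomology computed by the simplicial scheme as recalled before the lemma) and verify that all torsion is genuinely localized at the \emph{finite} stabilizers $\mu_{m_i}$. On the classifying direction this reflects the fact that the $m_i$-th power map induces multiplication by $m_i$ on $H^2(B{\mathbf G}_m)$, which is the source of the factors $m_i$; the delicate point is to keep the annihilation exponent $c$ independent of $\Lambda$ in the case $\ell\mid m_i$, where the twisted sectors do not split off and only the order-annihilation of group cohomology is available.
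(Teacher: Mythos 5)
Your proposal is correct in substance, but it takes a genuinely different route from the paper's proof. The paper never forms $R\pi_*$ and never invokes the local structure of the quotient stack. Instead it stratifies: using the stratifications $(D_I^\circ)$ of $X$ and $(\widetilde D_I^{(M)\circ})$ of $\widetilde X^{(M)}$, it reduces to bounding the kernel and cokernel of $H^q(D_I^\circ,\Lambda(r))\to H^q([\widetilde D_I^{(M)\circ}/{\mathbf G}_m^h],\Lambda(r))$ stratum by stratum; it notes that for $M=D$ all these maps are isomorphisms because $\widetilde X^{(D)}\to X$ is a ${\mathbf G}_m^h$-torsor (Lemma \ref{lmPDq}.1); and for general $M$ it compares with the case $M=D$ through the finite flat Kummer covering $\widetilde D_I^{(M)\circ}\to\widetilde D_I^{(D)\circ}$ of Lemma \ref{lmPDq}.2, which is equivariant for the isogeny $(t_i)\mapsto(t_i^{m_i})$ of ${\mathbf G}_m^h$. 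The annihilation by powers of $N=m_1\cdots m_h$ then falls out of the K\"unneth formula applied to the bar-construction simplicial schemes $[\,\cdot\,/{\mathbf G}_m^h]_\bullet$, the elementary source of the factors $m_i$ being that the $m_i$-power map is multiplication by $m_i$ on $H^1({\mathbf G}_m)$, equivalently on $H^2(B{\mathbf G}_m)$ --- exactly the point you make at the end. What your coarse-space argument buys is conceptual transparency: the torsion is exhibited as stabilizer cohomology concentrated along $D$, and one first-quadrant Leray spectral sequence gives an explicit exponent $c\leqq 2d$. What it costs is stack-theoretic infrastructure that the paper's argument avoids entirely: the identification of the simplicial-scheme cohomology of \cite[(6.1.2.1)]{Hdg3} with lisse-\'etale cohomology of the Artin stack (which you rightly flag), the slice description $[{\mathbf A}^I/\prod_{i\in I}\mu_{m_i}]$ of $\pi$ near $D_I^\circ$, and the stalk computation of $R^q\pi_*\Lambda$; for the last point no properness is actually needed, since $\pi$ is quasi-compact and quasi-separated and stalks are computed on strict henselizations by the usual limit argument.

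One step of your justification does need repair. The coefficients $m_i$ may be divisible by $p$: the denominators of the $r_i$ can have nontrivial $p$-parts (this is exactly why Proposition \ref{prHA} only yields coefficients in ${\mathbf Z}[\frac1p]$), and then $m_ir_i\in{\mathbf Z}$ forces $p\mid m_i$. In that case the stabilizer $\mu_{m_i}$ is a non-\'etale finite flat group scheme; its infinitesimal part $\mu_{p^a}$ has no underlying finite group, so the transfer identity $\mathrm{cor}\circ\mathrm{res}=n$ that you invoke is not available for it. Thus the delicate case is $p\mid m_i$, not $\ell\mid m_i$ as you wrote: $\ell\mid m_i$ only makes the stabilizer cohomology nonzero, which your argument already handles. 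The conclusion nevertheless survives because $p$ is invertible in $\Lambda$ (a ${\mathbf Z}_\ell[\zeta_p]$-algebra with $\ell\neq p$): writing $B\mu_m=[{\mathbf G}_m/{\mathbf G}_m]$ (action by $m$-th powers) as the complement of the zero section of the line bundle on $B{\mathbf G}_m$ attached to the character $t\mapsto t^m$, the Gysin sequence shows that $H^q(B\mu_m,\Lambda)$ for $q>0$ is killed by $m$, indeed by its prime-to-$p$ part, and the infinitesimal factor $B\mu_{p^a}$ is $\Lambda$-acyclic. With this substitution for the group-cohomology step, your argument goes through uniformly in $M$ and $\Lambda$. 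The paper's K\"unneth computation sidesteps this issue automatically, since multiplication by the integer $m_i$ on $H^1({\mathbf G}_m)=\Lambda(-1)$ has kernel and cokernel killed by $m_i$ regardless of whether $p$ divides $m_i$.
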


\begin{proof}
By the stratifications
$(D_I^\circ)$ of $X$
and
$(\widetilde D_I^{(M)\circ})$ of 
$\widetilde X^{(M)}$,
it suffices to show the assertion 
for 
$H^q(D_I^\circ, \Lambda(r))
\to
H^q([\widetilde D_I^{(M)\circ}/{\mathbf G}_m^h],
\Lambda(r))$ for $q\geqq 0$.
For $M=D$,
$\widetilde X^{(D)}$ is a
${\mathbf G}_m^h$-torsor over $X$
and the morphisms
$[\widetilde D_I^{(M)\circ}/{\mathbf G}_m^h]
\to D_I^\circ$ are isomorphisms.
Hence
(\ref{eqiso}) is an isomorphism
for $M=D$.

We show the general case.
It suffices to show the assertion for
the map on the cohomology defined by
$[\widetilde D_I^{(M)\circ}/{\mathbf G}_m^h]
\to
[\widetilde D_I^{(D)\circ}/{\mathbf G}_m^h]$.
The canonical map
$\widetilde D_I^{(M)\circ }\to
\widetilde D_I^{(D)\circ}$
fits in a cartesian diagram
$$\begin{CD}
\widetilde D_I^{(M)\circ }
@>>>
\prod_{i\notin I}{\mathbf G}_m\\
@VV{\quad\quad\quad \Box }V @VV{(t_i)\mapsto (t_i^{m_i})}V\\
\widetilde D_I^{(D)\circ}
@>>>
\prod_{i\notin I}{\mathbf G}_m
\end{CD}$$
and is compatible with
the map
${\mathbf G}_m^h\to
{\mathbf G}_m^h$
sending $(t_i)$ to $(t_i^{m_i})$
by Lemma \ref{lmPDq}.1.
By computing the cohomology using 
the simplicial schemes
$[\widetilde D_I^{(M)\circ }/{\mathbf G}_m^h]_\bullet$
and 
$[\widetilde D_I^{(D)\circ }/{\mathbf G}_m^h]_\bullet$,
it suffices to apply
the K\"unneth formula.
\end{proof}

We compute
the pull-back of the characteristic class
$C(j_!{\cal F})$
in $H^{2d}([\widetilde X^{(M)}/{\mathbf G}_m^h],\Lambda(d))$
using the Chern class of
the tangent bundle.

\begin{pr}\label{prcc}
{\rm (cf.\ \cite[Theorem 3.4]{mu})}
Assume that the ramification of
${\cal F}$ along $D$
is isoclinic of slope 
$R=r_1D_1+\cdots+r_hD_h$,
is bounded by $R+$
and is non-degenerate at multiplicity $R$.
Assume that ${\cal F}$ is totally wildly ramified
along $D$.
Then the pull-back of the characteristic class
$C(j_!{\cal F})$
in $H^{2d}([\widetilde X^{(M)}/{\mathbf G}_m^h],\Lambda(d))$
equals 
\begin{equation}
{\rm rank}({\cal F})\cdot
c_d\bigl((TX\times_X\widetilde X^{(M)})
(-\widetilde R^{(M)})\bigr).
\label{eqcc}
\end{equation}
If the denominators of the coefficients
in $R$ are invertible in $\Lambda$,
it is further equal to the pull-back of the cycle class
\begin{equation}
{\rm rank}({\cal F})\cdot
\bigl((X,X)_{T^*X}
+\bigl(c(\Omega^1_{X/k})(1-R)^{-1}[R]
\bigr)_{\dim 0}
\bigr).
\label{eqccc}
\end{equation}
\end{pr}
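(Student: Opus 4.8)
The plan is to compute the pull-back of $C(j_!{\cal F})$ by localizing its defining class along the diagonal and transporting the computation to the normal bundle of the lifted diagonal inside the dilatation $P_1^{(R,M)}$. Recall that $C(j_!{\cal F})$ is extracted from the identity $1_{\cal F}\in{\rm End}_U({\cal F})$ through the isomorphism (\ref{eqH0}) and the pull-back--trace maps (\ref{eqH1}); the class in $H^{2d}_X(X\times_kX,{\cal H}(d))$ matching $1_{\cal F}$ is thus a class supported on the diagonal $\delta(X)\subset X\times_kX$, i.e.\ a localized self-intersection class with multiplicity sheaf ${\cal H}$. First I would pull the entire picture back, $\mathbf{G}_m^h$-equivariantly, along the canonical smooth map $P_1^{(R,M)}\to X^2=P_1$. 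Under it the diagonal lifts to the regular closed immersion $i\colon\widetilde X^{(M)}=P_0^{(R,M)}\to P_1^{(R,M)}$, whose normal bundle is identified by Lemma \ref{lmTR}, equation (\ref{eqNR}), with
\[
T_{\widetilde X^{(M)}}P_1^{(R,M)}\;\cong\;(TX\times_X\widetilde X^{(M)})(-\widetilde R^{(M)}).
\]
In this way the slope $R$ enters the computation geometrically, through the dilatation, precisely as the twist $-\widetilde R^{(M)}$ of the normal bundle of the diagonal.

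Next I would invoke Lemma \ref{lmH} to control ${\cal H}$ near the diagonal over the wild locus $\widetilde Z$ (here $Z=D$ since ${\cal F}$ is totally wildly ramified): it gives $i^*j_*{\cal H}\cong{\rm pr}_{1*}\bigl((e_{\cal F}\cdot i^*j_*{\cal E}nd({\cal F}))\otimes{\cal L}\bigr)$, where $e_{\cal F}$ decomposes the identity section as $1_{\cal F}=\sum_\chi e_\chi 1_{\cal F}$ into the projectors onto the $\chi$-isotypic pieces $M_\chi$ for the characteristic-form characters $\chi\in\widetilde G^{(R)\vee}$, and ${\cal L}$ is the attached Artin--Schreier sheaf. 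The point is that every isotypic piece is localized on the \emph{same} twisted normal bundle, so its contribution to the localized class of $1_{\cal F}$ at the zero section is the top Chern (Euler) class of that bundle weighted by ${\rm rk}_{\cal F}(\chi)=\dim M_\chi$; away from $\widetilde Z$ this is the classical diagonal self-intersection with coefficient sheaf ${\cal E}nd({\cal F})$ and weight given by the trace. Since $\sum_\chi\dim M_\chi={\rm rank}({\cal F})$, the contributions patch to
\[
{\rm rank}({\cal F})\cdot c_d\bigl((TX\times_X\widetilde X^{(M)})(-\widetilde R^{(M)})\bigr),
\]
which is (\ref{eqcc}). This is carried out in $\mathbf{G}_m^h$-equivariant cohomology on $[\widetilde X^{(M)}/\mathbf{G}_m^h]$, the natural home of the fractional twist, and the comparison with $H^{2d}(X,\Lambda(d))$ is governed by Lemma \ref{lmcs}.

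For the final identity with (\ref{eqccc}) the hypothesis that the denominators of the coefficients of $R$ be invertible in $\Lambda$ plays a double role: it makes the fractional twist $-R$ define an honest $\Lambda$-valued class on $X$, and it inverts the torsion bound $(m_1\cdots m_h)^c$ so that the pull-back of Lemma \ref{lmcs} is an isomorphism, allowing the equivariant class $c_d\bigl(TX(-\widetilde R^{(M)})\bigr)$ to descend to $c_d\bigl(TX(-R)\bigr)$ on $X$. The remaining step is then a formal Chern-class manipulation: by the splitting principle $c_d\bigl(TX(-R)\bigr)$ expands as a sum $\sum_{k}[R]^{\,d-k}c_k$ over the Chern classes of $TX$ (equivalently of $\Omega^1_{X/k}$ up to sign), whose leading term is the self-intersection $(X,X)_{T^*X}=c_d(\Omega^1_{X/k})$ of the zero section and whose lower terms match, codimension by codimension, the top part $\bigl(c(\Omega^1_{X/k})(1-R)^{-1}[R]\bigr)_{\dim 0}=\sum_{k<d}c_k(\Omega^1_{X/k})[R]^{\,d-k}$; this yields (\ref{eqccc}).

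The hard part will be the localized self-intersection computation in the second paragraph: rigorously identifying the abstractly defined class of $1_{\cal F}$, restricted to the lifted diagonal, with the Euler class of the twisted normal bundle. This requires transporting the cohomological definition of the characteristic class through the equivariant dilatation geometry, using non-degeneracy --- via the connectedness of the fibers of $E^{(R)0}$ and Lemma \ref{lmconn} --- to know that $i^*j_*{\cal H}$ is a well-controlled lisse sheaf on $T^{(R)}$, and checking that the Artin--Schreier twists ${\cal L}_\chi$ do not perturb the Euler class of the ambient bundle (only the refined Lagrangian data of the characteristic \emph{cycle} sees them, whereas the \emph{class} retains only the rank and the twist). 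Keeping track of the $\mathbf{G}_m^h$-action and of the fractional divisor $\widetilde R^{(M)}$ throughout is the principal technical burden; by contrast the second assertion is purely formal once the descent of Lemma \ref{lmcs} is in place.
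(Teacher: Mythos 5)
Your geometric setup is the same as the paper's: pull everything back ${\mathbf G}_m^h$-equivariantly to $[P_1^{(R,M)}/{\mathbf G}_m^h]$, lift the diagonal to $i\colon \widetilde X^{(M)}\to P_1^{(R,M)}$, identify its normal bundle with $(TX\times_X\widetilde X^{(M)})(-\widetilde R^{(M)})$ via (\ref{eqNR}), and obtain (\ref{eqccc}) from (\ref{eqcc}) by the formal expansion of $c_d(E\otimes L^{-1})$. But the step you defer as ``the hard part'' --- identifying the pulled-back class of $1_{\cal F}$ with the cycle class of the lifted diagonal cupped with the identity section --- is precisely the heart of the proof, and the mechanism you sketch for it is not the one that works. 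It is not a matter of computing, isotypic piece by isotypic piece, a ``localized Euler class weighted by ${\rm rk}_{\cal F}(\chi)$'' and checking that the twists ${\cal L}_\chi$ do not perturb it. The actual argument kills the wild contribution cohomologically: the inverse image $\varphi^{-1}(X)$ of the diagonal under $\varphi\colon [P_1^{(R,M)}/{\mathbf G}_m^h]\to X\times_kX$ contains, besides the lifted diagonal, a part lying in the boundary $[T_1^{(R,M)}/{\mathbf G}_m^h]$, and one must prove that classes supported there vanish. Concretely, for $p\colon [T_1^{(R,M)}/{\mathbf G}_m^h]\to[\widetilde D^{(M)}/{\mathbf G}_m^h]$ one shows $Rp_*Ri^!\widetilde{\cal H}=0$, by decomposing $Ri^!\widetilde{\cal H}$ (as in Lemma \ref{lmH}) into pieces $e_\chi{\cal E}nd({\cal F})\otimes{\cal L}_\chi$ with $\chi\neq 0$ and using $Rp_*{\cal L}_\chi=0$ for every non-trivial character $\chi$ (the pushforward of a non-trivial Artin--Schreier sheaf along the fibers of the vector bundle vanishes). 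This is also exactly where total wild ramification enters, a hypothesis your proposal never actually uses: it guarantees $Z=D$, so that $T_1^{(R,M)}$ is the entire boundary $D_1^{(R,M)}$, and that the trivial character does not occur ($e_0=0$); a tame constituent would contribute ${\cal L}_0=\Lambda$, whose pushforward does not vanish, and the vanishing would fail.

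Once this vanishing is available there is no localized self-intersection computation left to do: in the exact sequence $H^{2d}_{[T_1^{(R,M)}/{\mathbf G}_m^h]}([P_1^{(R,M)}/{\mathbf G}_m^h],\widetilde{\cal H}(d))\to H^{2d}_{\varphi^{-1}(X)}([P_1^{(R,M)}/{\mathbf G}_m^h],\widetilde{\cal H}(d))\to H^{2d}_U(U\times_kU,{\cal H}_0(d))$, the first term is zero, so the pull-back of the class defining $C(j_!{\cal F})$ and the cup product of the cycle class of $[\widetilde X^{(M)}/{\mathbf G}_m^h]$ with $1_{\cal F}$ coincide, both restricting to $1_{\cal F}$ over $U\times_kU$; the pullback-to-diagonal-and-trace map then sends this cup product to ${\rm rank}({\cal F})$ times the top Chern class of the twisted normal bundle, which is (\ref{eqcc}). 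A secondary inaccuracy: the equality (\ref{eqccc}) is asserted between pull-backs in $H^{2d}([\widetilde X^{(M)}/{\mathbf G}_m^h],\Lambda(d))$; the invertibility of the denominators of the $r_i$ is needed so that $(1-R)^{-1}[R]$ defines a $\Lambda$-class on $X$, not to invert the bound $(m_1\cdots m_h)^c$ of Lemma \ref{lmcs} --- no descent along (\ref{eqiso}) is required for the statement itself, and in any case invertibility of the denominators of the $r_i$ does not control $m_1\cdots m_h$ for an arbitrary admissible $M$.
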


\begin{proof}
By the assumption on $R$,
we have $Z=D$ and 
$T_1^{(R,M)}$ is equal to the complement
$D_1^{(R,M)}=
P_1^{(R,M)}\sm
(U\times_kU\times_k{\mathbf G}_m^h)$.
We consider the cartesian diagram
\begin{equation}
\begin{CD}
[T_1^{(R,M)}/{\mathbf G}_m^h]
@>i>>
[P_1^{(R,M)}/{\mathbf G}_m^h]
@<{\tilde j}<<
[(U\times_kU\times_k{\mathbf G}_m^h)
/{\mathbf G}_m^h]\
&=&\
U\times_kU\\
@A0A{\quad\quad\quad \Box }A
@AA{\tilde \delta}A{ \Box }&&@AA{\delta_U}A\\
[\widetilde D^{(M)}/{\mathbf G}_m^h]
@>>>
[\widetilde X^{(M)}/{\mathbf G}_m^h]
@<{j^{(M)}}<<
[(U\times_k{\mathbf G}_m^h)
/{\mathbf G}_m^h]\
&=&
U
\end{CD}\label{eqcd}
\end{equation}
where the vertical arrows are 
regular closed immersions
and the right horizontal arrows
are the open immersions of
complements of the images of
the left horizontal arrows.
The cycle class of the middle vertical arrow
is defined as a cohomology class
\begin{equation}
[\ [\widetilde X^{(M)}/{\mathbf G}_m^h]\ ]
\in 
H^{2d}_{[\widetilde X^{(M)}/{\mathbf G}_m^h]}(
[P_1^{(R,M)}/{\mathbf G}_m^h],
\Lambda(d))
\label{eqcccl}
\end{equation}
by cohomological purity
\cite[Proposition 4.10.1]{LO}.

Set $\widetilde {\cal H}=
\tilde j_*{\cal H}_0$
on 
$[P_1^{(R,M)}/{\mathbf G}_m^h]$.
Since 
${\cal H}_0$ is isomorphic
to the pull-back of
${\cal E}nd({\cal F})$
on $(V\times_kV)/\Delta G$
and $W^{(R,M)}_1\to P_1^{(R,M)}$
(Theorem \ref{thmmain})
is \'etale,
the base change map
\begin{equation}
\tilde\delta^* \widetilde {\cal H}=
\tilde\delta^* \tilde j_*{\cal H}_0
\to 
j^{(M)}_*\delta_U^*{\cal H}_0
=j^{(M)}_*{\cal E}nd({\cal F})
\label{eqjMEnd}
\end{equation}
is an isomorphism.
Hence the restriction map
$$
H^0([\widetilde X^{(M)}/{\mathbf G}_m^h],
\tilde\delta^* \widetilde {\cal H})
\to 
H^0(U,{\cal E}nd({\cal F}))
={\rm End}_U({\cal F})$$
is an isomorphism
and the identity of ${\cal F}$
defines a section
\begin{equation}
1_{\cal F}
\in 
H^0([\widetilde X^{(M)}/{\mathbf G}_m^h],
\tilde\delta^*\widetilde {\cal H}).
\label{eq1F}
\end{equation}
The cup product of
(\ref{eqcccl}) and (\ref{eq1F})
defines a cohomology class
\begin{equation}
[\ [\widetilde X^{(M)}/{\mathbf G}_m^h]\ ]
\cdot
1_{\cal F}
\in 
H^{2d}_{[\widetilde X^{(M)}/{\mathbf G}_m^h]}(
[P_1^{(R,M)}/{\mathbf G}_m^h],
\widetilde {\cal H}(d)).
\label{eqcppt}
\end{equation}

Let $\varphi\colon
[P_1^{(R,M)}/{\mathbf G}_m^h]\to 
X\times X$ be the canonical map.
We show that the
pull-back
$$\begin{CD}
\varphi^*
\colon
H^{2d}_X(X\times X,{\cal H}(d))
@>>>
H^{2d}_{\varphi^{-1}(X)}
([P_1^{(R,M)}/{\mathbf G}_m^h],
\widetilde {\cal H}(d))
\end{CD}$$
of the identity
$1_{\cal F}
\in
H^{2d}_X(X\times X,{\cal H}(d))$
has the same image as
the cup product (\ref{eqcppt}).
The top line in (\ref{eqcd})
defines an exact sequence
\begin{equation*}
H^{2d}_{[T_1^{(R,M)}/{\mathbf G}_m^h]}
([P_1^{(R,M)}/{\mathbf G}_m^h],
\widetilde {\cal H}(d))
\to
H^{2d}_{\varphi^{-1}(X)}
([P_1^{(R,M)}/{\mathbf G}_m^h],
\widetilde {\cal H}(d))
\to
H^{2d}_U
(U\times U,{\cal H}_0(d))
\end{equation*}
since $\varphi^{-1}(X)\sm
[T_1^{(R,M)}/{\mathbf G}_m^h]=
[U\times {\mathbf G}_m^h
/{\mathbf G}_m^h]=
U\subset U\times U$.

Since both the identity
$1_{\cal F}
\in
H^{2d}_X(X\times X,{\cal H}(d))$
and the cup product (\ref{eqcppt}) have 
the identity $1_{\cal F}$
as their images in
$H^{2d}_U
(U\times U,{\cal H}_0(d))
={\rm End}_U({\cal F})$,
it suffices to show
$H^{2d}_{[T_1^{(R,M)}/{\mathbf G}_m^h]}
([P_1^{(R,M)}/{\mathbf G}_m^h],
\widetilde {\cal H}(d))
=0$.
Let $p\colon 
[T_1^{(R,M)}/{\mathbf G}_m^h]
\to [\widetilde D^{(M)}/{\mathbf G}_m^h]$
denote the canonical map.
Then, it is reduced to showing
\begin{equation}
Rp_*Ri^!
\widetilde {\cal H}=0.
\label{eqRp0}
\end{equation}

Since the assertion is \'etale local on
$[\widetilde D^{(M)}/{\mathbf G}_m^h]$,
we may assume that $\widetilde G^{(R)}$
is constant
and regard the idempotent $e_{\cal F}$
as a family of idempotent $(e_\chi)$
indexed by non-trivial characters
$\chi \in\widetilde G^{(R)\vee},\chi\neq 0$.
By the second isomorphism
in (\ref{eq1}) in Lemma \ref{lmH}, 
we have a canonical isomorphism
$$
\bigoplus_{\chi\neq 0} 
p^*Ri^!j_*e_\chi
{\cal E}nd({\cal F})
\otimes {\cal L}_\chi
\to
Ri^!
\widetilde {\cal H}.$$
Since
$Rp_*{\cal L}_\chi=0$
for non-trivial character $\chi\neq 0$
of $\widetilde G^{(R)}$,
it induces an isomorphism
$Rp_*Ri^!
\widetilde {\cal H}
\to
\bigoplus_{\chi\neq 0} Ri^!j_*
e_\chi
{\cal E}nd({\cal F})
\otimes Rp_*{\cal L}_\chi
=0$
and (\ref{eqRp0}) is proved.

Let
$\delta\colon
[\widetilde X^{(M)}/{\mathbf G}_m^h]
\to [P_1^{(R,M)}/{\mathbf G}_m^h]$
and
$\delta_U\colon
U=[U\times {\mathbf G}_m^h/{\mathbf G}_m^h]
\to 
U\times U=[U\times U
\times {\mathbf G}_m^h/{\mathbf G}_m^h]$
denote the diagonal morphisms.
By the isomorphism (\ref{eqjMEnd}),
we obtain a variant 
\begin{align}
H^{2d}_{\varphi^{-1}(X)}
([P_1^{(R,M)}/{\mathbf G}_m^h],
\widetilde {\cal H}(d))
\to&
H^{2d}([\widetilde X^{(M)}/{\mathbf G}_m^h],j^{(M)}_*{\cal E}nd({\cal F})(d))\label{eqH2}
\\
&\to
H^{2d}([\widetilde X^{(M)}/{\mathbf G}_m^h],\Lambda(d))
\nonumber\end{align}
of (\ref{eqH1}).
By the definition of the characteristic cycle
$C(j_!{\cal F})$ recalled above,
its image in $H^{2d}([\widetilde X^{(M)}/{\mathbf G}_m^h],\Lambda(d))$ is equal to the
image of the cup-product (\ref{eqcppt}) by (\ref{eqH2}).
The trace map
sends the identity to the rank
and the pull-back of the 
cycle class
$[\widetilde X^{(M)}/{\mathbf G}_m^h]$
by the lifting of
the diagonal map
is the self-intersection product
of
$[\widetilde X^{(M)}/{\mathbf G}_m^h]$
in 
$[P_1^{(R,M)}/{\mathbf G}_m^h]$
and is the top Chern class
$c_d((TX\times_X\widetilde X^{(M)})
(-\widetilde R^{(M)}))$
of the normal bundle.
Hence we obtain (\ref{eqcc}).

To deduce (\ref{eqccc}) from 
(\ref{eqcc}), it suffices to apply the formula
\begin{align*}
c_d(E\otimes L^{-1})
&=
(-1)^d\sum_{i=0}^d
c_i(E^\vee)c_1(L)^{d-i}
\\&=
(-1)^d\Bigl(c_d(E^\vee)
+\sum_{i=1}^d
c_i(E^\vee)(1-c_1(L))^{-1}
c_1(L)\Bigr)_{\deg d}
\end{align*}
for a vector bundle
$E$ of rank $d$,
the dual $E^\vee$
and a line bundle $L$.
\end{proof}

\begin{cor}\label{corcc}
Assume that the ramification of
${\cal F}$ along $D$
is isoclinic of slope $R=r_1D_1+\cdots+r_hD_h$,
is bounded by $R+$
and is non-degenerate at multiplicity $R$.
Assume that we have $r_i>1$
for every $i=1,\ldots,h$.
If the denominators of $r_i$ are 
invertible in $\Lambda$,
for the pull-back of the characteristic class,
we have an equality
\begin{equation}
C(j_!{\cal F})=
[{\rm Char}({\cal F})]
\label{eqchcc}
\end{equation}
in $H^{2d}([\widetilde X^{(M)}/{\mathbf G}_m^h],
\Lambda(d))$.
\end{cor}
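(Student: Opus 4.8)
The plan is to deduce the corollary from Proposition \ref{prcc} by computing the pull-back of the cycle class $[{\rm Char}({\cal F})]$ to the quotient stack and matching it with the top Chern class (\ref{eqcc}). Since the ramification is isoclinic of slope $R$ with $r_i>1$ for every $i$, the union $Z$ equals $D$ and, in Definition \ref{dfccy}.1, the only subset $I$ contributing to the tame sum is $I=\emptyset$, so that
\begin{equation*}
{\rm Char}({\cal F})=(-1)^d\Bigl({\rm rank}\,{\cal F}\cdot[T^*_XX]+\sum_{i=1}^hr_i\,L_i({\rm rk}_{\cal F})\Bigr).
\end{equation*}
By Proposition \ref{prcc}, the pull-back of $C(j_!{\cal F})$ to $H^{2d}([\widetilde X^{(M)}/{\mathbf G}_m^h],\Lambda(d))$ equals ${\rm rank}({\cal F})\cdot c_d(E)$ with $E=(TX\times_X\widetilde X^{(M)})(-\widetilde R^{(M)})$, and the twisting formula used there expands this as ${\rm rank}({\cal F})\sum_{i=0}^dc_i(TX)(-\widetilde R^{(M)})^{d-i}$. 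Hence it suffices to match the pull-back of $[{\rm Char}({\cal F})]$ with this expansion term by term.

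First I would record the contribution of the zero section. Restricting the cycle class of $[T^*_XX]$ along the zero section is the self-intersection, i.e.\ the top Chern class of the normal bundle $T^*X$, so $[T^*_XX]$ pulls back to $c_d(\Omega^1_{X/k})=(-1)^dc_d(TX)$. Consequently $(-1)^d{\rm rank}\,{\cal F}\cdot[T^*_XX]$ pulls back to ${\rm rank}({\cal F})\cdot c_d(TX)$, which is exactly the top ($i=d$) term of the expansion above. It then remains to show that the pull-back of $(-1)^d\sum_ir_iL_i({\rm rk}_{\cal F})$ equals the remaining part ${\rm rank}({\cal F})\sum_{i=0}^{d-1}c_i(TX)(-\widetilde R^{(M)})^{d-i}$.

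For this I would unwind the definition of $L_i$ through the diagram (\ref{eqLTX}) and the maps (\ref{eqcycl})--(\ref{eqch2}). Pulled back to the quotient stack, the projectivized characteristic form $PG^{(R)\vee}\to{\mathbf P}(T^{(R)\vee})$ is finite \'etale over $Z=\widetilde Z/{\mathbf G}_m^h$ on the radicial covering $F^n\colon Z^{(p^{-n})}\to Z$, and the cone construction $L(1)\times_{{\mathbf P}(T^*X)}PG^{(R)\vee}\to T^*X$ computes, after dividing by the degree $p^{n(d-1)}$, the push-forward of the $0$-cycle ${\rm rk}_{\cal F}$. Because ${\cal F}$ is totally wildly ramified, $\sum_{\chi\neq0}{\rm rk}_{\cal F}(\chi)={\rm rank}\,{\cal F}$ (from $\sum_\chi e_\chi=1$ in Lemma \ref{lmGG} together with the vanishing of the $\chi=0$ part), so summing over the characters collapses the weighting by ${\rm rk}_{\cal F}$ into the factor ${\rm rank}({\cal F})$ multiplying the universal class attached to the characteristic form over each component $\widetilde D_i^{(M)}$. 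Expressing that universal class through the tautological line bundle $L(1)$ and the projective bundle formula on ${\mathbf P}(T^{(R)\vee})$, and using $\widetilde R^{(M)}=\sum_im_ir_i\widetilde D_i^{(M)}$, should reproduce precisely the Segre-class expansion ${\rm rank}({\cal F})\sum_{i=0}^{d-1}c_i(TX)(-\widetilde R^{(M)})^{d-i}$, after the sign $(-1)^d$ is accounted for.

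The main obstacle is this last intersection-theoretic identification: one must compute the cohomology class of the cone over the projectivized characteristic form in terms of the Chern and Segre classes of $E$, carefully tracking the twist by $\widetilde R^{(M)}$, the weights $r_i$, and the radicial degree $p^{n(d-1)}$, and then verify that the cross terms $\widetilde D_i^{(M)}\cdot\widetilde D_j^{(M)}$ arising from expanding $(\widetilde R^{(M)})^{d-i}$ are correctly produced by the sum $\sum_ir_iL_i({\rm rk}_{\cal F})$ of contributions supported over the individual components. The hypothesis that the denominators of the $r_i$ be invertible in $\Lambda$ enters exactly here, rendering the rational class $(\widetilde R^{(M)})^{d-i}$ — equivalently the fractional cycle $[{\rm Char}({\cal F})]$ — meaningful in $H^{2d}([\widetilde X^{(M)}/{\mathbf G}_m^h],\Lambda(d))$; once the two polynomials in the Chern classes are seen to coincide, the equality (\ref{eqchcc}) follows.
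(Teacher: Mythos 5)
Your reduction is set up correctly and follows the same route as the paper: you invoke Proposition \ref{prcc}, match the zero-section term $(-1)^d\,{\rm rank}\,{\cal F}\cdot[T^*_XX]$ with the top Chern class term ${\rm rank}({\cal F})\,c_d(TX)$, and correctly isolate what remains to be proved, namely that the pull-back of $(-1)^d\sum_ir_i\,L_i({\rm rk}_{\cal F})$ accounts for the lower-order terms. But at exactly this point the proposal stops being a proof: you write that unwinding $L_i$ through (\ref{eqLTX}) and (\ref{eqcycl})--(\ref{eqch2}) ``should reproduce precisely the Segre-class expansion,'' and you yourself flag this identification as ``the main obstacle.'' That identification is the entire content of the corollary beyond Proposition \ref{prcc}; leaving it as a plausibility claim, with the cross terms $\widetilde D_i^{(M)}\cdot\widetilde D_j^{(M)}$ explicitly listed as an unresolved worry, means the argument is a plan rather than a proof.

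The missing idea is the \emph{excess intersection formula}. The zero section $X\subset T^*X$ and the cone $L(1)\times_{{\mathbf P}(T^*X)}PG^{(R)\vee}$ meet along (the image of) $PG^{(R)\vee}$, in excess dimension, and the cartesian diagram (\ref{eqLTX}) is precisely the setting in which the formula applies: it yields
\begin{equation*}
(X,\;L(1)\times_{{\mathbf P}(T^*X)}PG^{(R)\vee})_{T^*X}
=\bigl(c(\Omega^1_{X/k})\,c(L(1))^{-1}\,[PG^{(R)\vee}]\bigr)_{\dim 0},
\end{equation*}
with no hand expansion of the cone's class and no separate tracking of cross terms. The only other input is that the restriction of the tautological bundle $L(1)\times_{{\mathbf P}(T^*X)}PG^{(R)\vee}$ to $\widetilde G^{(R)\vee}\sm\widetilde D$ is $L(-qr\widetilde D)$, which converts $c(L(1))^{-1}$ into $(1-R)^{-1}$; combined with the weighting of $[PG^{(R)\vee}]$ by ${\rm rk}_{\cal F}$ (your observation that the ranks sum to ${\rm rank}\,{\cal F}$ is the right ingredient here) this gives
\begin{equation*}
(X,\;r\cdot L({\rm rk}_{\cal F}))_{T^*X}
={\rm rank}({\cal F})\bigl(c(\Omega^1_{X/k})(1-R)^{-1}[R]\bigr)_{\dim 0},
\end{equation*}
i.e.\ exactly the wild part of (\ref{eqccc}), and the corollary follows. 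One further inaccuracy: the invertibility in $\Lambda$ of the denominators of the $r_i$ is not what makes your Chern-class expansion of (\ref{eqcc}) meaningful, since $\widetilde R^{(M)}=\sum_im_ir_i\widetilde D_i^{(M)}$ has integral coefficients by the choice of $M$; it is needed to make sense of the rational cycle classes $[R]$ and $[{\rm Char}({\cal F})]$, that is, to pass from (\ref{eqcc}) to (\ref{eqccc}) and to state (\ref{eqchcc}) at all.
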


\begin{proof}
By the definition of the characteristic cycle
(\ref{eqchar})
and by (\ref{eqccc}),
it suffices to show
\begin{equation}
(X,{\textstyle \sum_i}r_i\cdot L_i({\rm rk}_{\cal F}))_{T^*X}=
{\rm rank}({\cal F})
\bigl(c(\Omega^1_{X/k})(1-R)^{-1}[R]
\bigr)_{\dim 0}.
\label{eqLrkF}
\end{equation}
By the assumption that
the ramification along $D$
is isoclinic of slope $R$,
the push-forward of
${\rm rk}_{\cal F}\cdot [PG^\vee]$
by the finite \'etale morphism $PG^\vee\to D$
is ${\rm rank}\ {\cal F}\cdot [D]$.
Hence we have
$$
(X,{\textstyle \sum_i}r_i\cdot L_i({\rm rk}_{\cal F}))_{T^*X}=
\dfrac{{\rm rank}\ {\cal F}}{[PG^\vee:D]}
\cdot
(X,{\textstyle \sum_i}r_i\cdot 
{\mathbf L}(1)\times_{{\mathbf P}(T^*X)} PG^\vee
\times_DD_i)_{T^*X}.$$

By applying the excess intersection formula
to the cartesian diagram (\ref{eqLTX}),
we obtain
\begin{align*}
(X,{\mathbf L}(1)\times_{{\mathbf P}(T^*X)} PG^\vee
\times_DD_i)_{T^*X}
&=
\bigl(c(\Omega^1_{X/k})c({\mathbf L}(1))^{-1}[PG^\vee
\times_DD_i]
\bigr)_{\dim 0}\\
&
=[PG^\vee:D]\cdot
\bigl(c(\Omega^1_{X/k})c({\mathbf L}(1))^{-1}[D_i]
\bigr)_{\dim 0}
\end{align*}
for each $i$.
Since the pull-back of the line bundle
${\mathbf L}(1)\times_{{\mathbf P}(T^*X)} PG^\vee$
to $\widetilde G^\vee\sm \widetilde D$
is $L(-\widetilde R)$,
the assertion follows.
\end{proof}

In the case where 
${\cal F}$ is tamely ramified along $D$
namely $R=D$,
the equality (\ref{eqchcc})
is an immediate consequence of
\cite[Corollary 3.2]{mu}.
In fact, in this case,
$[{\rm Char}\ {\cal F}]$
is ${\rm rank}\ {\cal F}$-times
the pull-back of the class of
the $0$-section of the logarithmic
cotangent bundle $T^*X(\log D)$
by the canonical map $T^*X\to T^*X(\log D)$.

\begin{ex}\label{excc}
{\rm
Let $k$ be a perfect field of
characteristic $2$,
set
$X={\mathbf P}^2_k
\supset
U={\mathbf A}^2_k
={\rm Spec}\ k[x,y]$
and let $
D={\mathbf P}^1_k$
be the line at infinity.
Define a finite \'etale morphism
$V\to U$ by
the Artin-Schreier equation $t^2-t=xy$.
Then, the ramification of $V$ over $U$
is bounded by $2D+$
and non-degenerate along $D$
at multiplicity $R=2D$.

By Example {\rm \ref{egAS}},
the characteristic form
${\rm Char}_R(V/U)
\colon {\mathbf Z}/2{\mathbf Z}
\to F^*(T^*X(2D)\times_XD)$
sends $1$ to the section
\begin{equation}
d(xy)+\sqrt{\dfrac yx}dx
\label{eqdxy}
\end{equation}
of $\Omega^1_{X/k}(2D)$
defined on
the radicial double covering $F\colon D^{(2^{-1})}\to D$.
The characteristic cycle
of the locally constant sheaf 
${\cal L}$ on $U$
of rank $1$
corresponding to the injection
${\rm Gal}(V/U)={\mathbf F}_2
\to \{\pm 1\}\subset \Lambda^\times$
is the sum of
the $0$-section $T^*_XX$ of $T^*X$
and the image of a sub line bundle
$L\to F^*(T^*X\times_XD)$
defined by 
(\ref{eqdxy})
defined over 
the radicial double covering $F\colon D^{(2^{-1})}\to D$.
The composition 
$L\to F^*(T^*D\times_DD)$ with the 
the map induced by the canonical surjection
$T^*X\times_XD\to T^*D\times_DD$
is an isomorphism.

By Proposition {\rm \ref{prcc}},
the characteristic class $C(j_!{\cal L})$ is
$(X,X)_{T^*X}+
(K_X+2D)\cdot 2D$
where the canonical divisor
$K_X$ of the projective plane
$X={\mathbf P}^2_k$
is $-3D$.
Hence, the Euler number is
$\chi_c(U_{\bar k},{\cal L})
=\chi(X,\Lambda)-2=1$.
The normalization $Y$ of
$X$ in $V$ is a smooth quadric,
is isomorphic to
${\mathbf P}^1\times
{\mathbf P}^1$
and has the Euler number
$\chi(Y,\Lambda)
=\chi(X,\Lambda)
+
\chi_c(U_{\bar k},{\cal L})
=3+1=4$.
}
\end{ex}

\addcontentsline{toc}{section}{References}

\noindent
Department of Mathematical Sciences,\\
University of Tokyo, Tokyo 153-8914, Japan\\
t-saito@ms.u-tokyo.ac.jp

\end{document}